\documentclass{article}
\usepackage{PRIMEarxiv}
\usepackage[utf8]{inputenc} 
\usepackage[T1]{fontenc}    
\usepackage{hyperref}       
\usepackage{url}            
\usepackage{booktabs}       
\usepackage{amsfonts}       
\usepackage{nicefrac}       
\usepackage{microtype}      
\usepackage{graphicx}
\usepackage{enumitem}

\graphicspath{{media/}}     
\usepackage{float}
\usepackage{multirow}
\usepackage{amsmath}
\usepackage{amsthm}
\usepackage{amssymb}
\usepackage{pst-node}
\usepackage{mathtools,tikz-cd} 
\usepackage{pst-plot}
\usepackage{subfigure}
\usepackage{bbm}
\usepackage[numbers,square]{natbib}
\usepackage{algorithm}
\usepackage{algpseudocode}
\usepackage[titletoc,toc,title]{appendix}
\usepackage{xcolor}

\DeclareMathOperator*{\argmin}{arg\,min}
\DeclareMathOperator{\diag}{diag}
\DeclareMathOperator{\Diag}{Diag}

\DeclareMathOperator*{\minimize}{minimize}

\DeclareMathOperator{\subjectto}{subject\ to}
\DeclareMathOperator{\Gl}{GL}
\newtheorem{thm}{Theorem}[section]
\newtheorem{coro}[thm]{Corollary}
\newtheorem{lem}[thm]{Lemma}
\newtheorem{rem}[thm]{Remark}
\newtheorem{prop}[thm]{Proposition}

\newtheorem{defn}{Definition}[section]
\newtheorem{assm}{Assumption}[section]
\newcommand{\grad}{\mbox{grad\,}}
\newcommand{\Hess}{\mbox{Hess\,}}
\newcommand{\D}{\mbox{D\,}}
\newcommand{\St}{\mbox{St\,}}
\newcommand{\ci}{\mathbbm{i}}
\newcommand{\dt}[1]{\left.{\frac{d}{dt}}\right|_{t={#1}}}

\newcommand{\bmat}[1]{\begin{bmatrix} #1 \end{bmatrix}}

\newcommand{\norm}[1]{\left\lVert#1\right\rVert}
\newcommand{\abs}[1]{\left \lvert #1 \right \rvert}
\newcommand{\ip}[1]{\left\langle #1 \right\rangle}

\newcommand{\MINone}[3]{\begin{array}{ll} \displaystyle \minimize_{#1} & {#2} \\ \subjectto & {#3} \end{array}}

\newcommand{\StatexIndent}[1][3]{%
  \setlength\@tempdima{\algorithmicindent}%
  \Statex\hskip\dimexpr#1\@tempdima\relax}

\allowdisplaybreaks

\title{Riemannian optimization using three different metrics for Hermitian PSD fixed-rank constraints: an extended version
}

\author{
   Shixin Zheng \\
  Department of Mathematics \\
  Purdue University\\
  West Lafayette, USA\\
  \texttt{zheng513@purdue.edu}
   \And
  Wen Huang \\
  School of Mathematical Sciences \\
  Xiamen University \\
  Xiamen, China\\
  \texttt{wen.huang@xmu.edu.cn} \\
  \AND
  Bart Vandereycken \\
  Section of Mathematics\\
  University of Geneva\\
 Geneva, Switzerland\\
  \texttt{bart.vandereycken@unige.ch} \\
  \And
  Xiangxiong Zhang \\
  Department of Mathematics \\
  Purdue University\\
  West Lafayette, USA\\
  \texttt{zhan1966@purdue.edu}
}

\begin{document}
\maketitle

\begin{abstract}

For smooth optimization problems with a Hermitian positive semi-definite fixed-rank constraint, we consider three existing approaches 
including the simple Burer--Monteiro method, and Riemannian optimization over quotient geometry and the embedded geometry.  
These three methods can be all represented via quotient geometry with three Riemannian metrics $g^i(\cdot, \cdot)$ $(i=1,2,3)$. 
By taking the nonlinear conjugate gradient method (CG) as an example, we show that CG in the factor-based Burer--Monteiro approach is equivalent to Riemannian CG on the quotient geometry with the  Bures-Wasserstein  metric $g^1$. 
Riemannian CG on the quotient geometry with the metric $g^3$ is equivalent to Riemannian CG on the embedded geometry. 
For comparing the three approaches, we analyze
 the condition number of the Riemannian Hessian near the minimizer under the three different metrics. Under certain assumptions, the condition number from the  Bures-Wasserstein  metric $g^1$ is significantly different from the other two metrics.
 Numerical experiments show that the Burer--Monteiro CG method has obviously slower asymptotic convergence rate when the minimizer is rank deficient, which is consistent with the condition number analysis. 
\end{abstract}

\keywords{Riemannian optimization \and Hermitian  fixed-rank positive semidefinite matrices \and embedded manifold \and quotient manifold \and Burer--Monteiro \and conjugate gradient\and Riemannian Hessian\and  Bures-Wasserstein  metric }

\section{Introduction}
\subsection{The Hermitian PSD low-rank  constraints}
In this paper we are interested in algorithms for  
minimizing a real-valued function $f$ with a Hermitian positive semi-definite (PSD) low-rank  constraint
\begin{equation}\label{lowrank_prob}
	\MINone{X}{f(X)}{X \in \mathcal{H}^{n,p}_+},
\end{equation} 
where $\mathcal{H}^{n,p}_+$ denotes the set of $n$-by-$n$ Hermitian PSD matrices of fixed rank $p\ll n$.  
 Even though $ X\in \mathcal{H}^{n,p}_+$ is a nonconvex constraint, in practice~\eqref{lowrank_prob} is often used for approximating solutions to a minimization with a convex PSD constraint:
\begin{equation}\label{min-psd}
	\MINone{X\in \mathbb C^{n\times n}}{f(X)}{X\succcurlyeq 0}.
\end{equation} 
There are many applications of PSD constraints. They arise in semidefinite programming serving as covariance matrices in statistics and kernels in machine learning, etc. See \cite{massart_quotient_2020} and \cite{vandereycken_riemannian_2013} for some of these applications. If the solution of~\eqref{min-psd} is of low rank and $\mathcal O(n^2)$ complexity is too large for storage or computation, it is preferable to consider a low-rank representation of PSD matrices.
For example, real symmetric PSD fixed-rank  matrices were used in \cite{bonnabel2010adaptive, vandereycken_riemannian_2010}.

Since the elements in the constraint set $\mathcal{H}^{n,p}_+$ have a low-rank structure, they can be represented in a low-rank compact form on the order of $\mathcal O(n p^2)$, which is smaller than the $\mathcal O(n^2)$ storage when directly using $X\in \mathbb C^{n\times n}$. In many applications, the cost function in \eqref{min-psd} takes the form $f(X)=\frac12 \|\mathcal A(X)-b\|_F^2$ where $\mathcal A$ is a linear operator and the norm is the Frobebius norm, and  $f(X)$ can be evaluated efficiently by  $\mathcal O(p n\log n)$  flops for $X \in \mathcal{H}^{n,p}_+$, e.g., the PhaseLift problem \cite{candes2013phaselift, candes2015phase} and the interferometry recovery problem \cite{jugnon2013interferometric, demanet2017convex}.
For these kinds of problems,  solving \eqref{lowrank_prob} with an iterative algorithm that works with low-rank representations for $X\in \mathcal{H}^{n,p}_+$ can lead to a good approximate solution to \eqref{min-psd} with compact storage and computational cost.

\subsection{The real inner product and Fr\'{e}chet derivatives}
Since $f(X)$ is real-valued and thus not holomorphic,  $f(X)$ does not have a complex derivative with respect to $X\in\mathbb C^{n\times n}$. 
In this paper, all linear spaces of complex matrices will therefore be regarded as vector spaces over $\mathbb R$. 
For any real vector space $\mathcal{E}$, the inner product on $\mathcal{E}$ is denoted by $\ip{\cdot,\cdot}_\mathcal{E}$.  For real matrices $A,B \in \mathbb{R}^{m\times n}$, the Hilbert–Schmidt inner product is $\ip{A,B}_{\mathbb{R}^{m \times n}} = {tr(A^T B)}$.
Let  $\Re({A})$ and $\Im(B)$ represent the real and imaginary parts of a complex matrix $A$.
For $A,B \in \mathbb{C}^{m\times n}$, the real inner product for the real vector space $\mathbb{C}^{m\times n}$ then equals
 \begin{equation}
     \ip{A,B}_{\mathbb{C}^{m\times n}} := \Re({tr(A^* B)}),
     \label{real-inner-product}
 \end{equation} where ${}^*$ is the conjugate transpose. We emphasize that \eqref{real-inner-product} is a real inner product, rather than the complex Hilbert–-Schmidt inner product. It is straightforward to verify that \eqref{real-inner-product} can be written as
 \[\ip{A,B}_{\mathbb{C}^{m\times n}}=tr(\Re(A)^T \Re(B))+tr(\Im(A)^T \Im(B))=\ip{\Re(A),\Re(B)}_{\mathbb{R}^{m\times n}}+\ip{\Im(A),\Im(B)}_{\mathbb{R}^{m\times n}}. \]
With the real inner product \eqref{real-inner-product} for the real vector space $\mathbb C^{m\times n}$, 
a Fr\'{e}chet derivative for any real valued function $f(X)$ can be defined as
\begin{equation}
     \nabla{f}(X) = \frac{\partial f(X)}{\partial \Re(X)}   +\ci\frac{\partial f(X)}{\partial \Im(X)}
     \in \mathbb{C}^{m\times n}.
     \label{frechet-derivative}
\end{equation}  
In particular, for $f(X)=\frac12 \|\mathcal A(X)-b\|_F^2$ with a linear operator $\mathcal A$, the Fr\'{e}chet derivative \eqref{frechet-derivative} becomes
\[ \nabla{f}(X)=\mathcal A^*(\mathcal A(X)-b)\]
where $\mathcal A^*$ is the adjoint operator of $\mathcal A$. 
See Appendix \ref{appendix-derivative} for details.

\subsection{Three different methodologies}
\label{intro:method}
In this paper we will consider three straightforward ideas and methodologies for solving \eqref{lowrank_prob}. 
\subsubsection{The Burer--Monteiro method}
The first approach, often called the Burer--Monteiro method \cite{burer_local_2005}, 
is to solve the unconstrained problem 

\begin{equation}\label{min-BR}
	\min_{Y\in \mathbb C^{n\times p}}F(Y):= f(YY^*).
\end{equation} 

As proven in Appendix \ref{appendix-derivative},
the chain rule of Fr\'{e}chet derivatives gives 
$$\nabla F(Y) =2 \nabla f(YY^*)Y \in \mathbb C^{n\times p}.$$

The gradient descent method simply takes the form of 
\begin{equation*}
    \label{br-gd}Y_{n+1}=Y_n-\tau\nabla F(Y_n)=Y_n-\tau 2\nabla f(Y_nY_n^*)Y_n,
\end{equation*} 
 which is one of the simplest low-rank algorithms.  
The nonlinear conjugate gradient and quasi-Newton type methods, like L-BFGS, can also be easily used for \eqref{min-BR}. 
On the other hand, $F(Y)=F(YO)$ for any unitary matrix $O\in\mathbb O^{p\times p}$, 
where $$\mathcal{O}_p =\{ O \in \mathbb{C}^{p\times p}: O^*O = OO^* = I\}.$$ Even though this ambiguity of unitary matrices is never explicitly addressed in the Burer--Monteiro method, in this paper we will prove that the gradient descent and nonlinear conjugate gradient   methods for solving  \eqref{min-BR} are exactly equivalent to the Riemannian  gradient descent and Riemannian conjugate gradient  methods on a quotient manifold  with a Euclidean metric, which is also referred to as the  Bures-Wasserstein  metric \cite{massart2019curvature, massart_quotient_2020}. Thus the convergence of the Burer--Monteiro method
can be understood within the context of Riemannian optimizaiton on a quotient manifold.

\subsubsection{Riemannian optimization with the embedded geometry of $\mathcal{H}^{n,p}_+$}

Another natural approach is to regard $\mathcal{H}^{n,p}_+$ as an embedded manifold in the Euclidean space $\mathbb C^{n\times n}$. 
For instance, Riemannian optimization algorithms  on the embedded manifold of low-rank matrices and tensors are quite efficient and popular \cite{vandereycken_low-rank_2013, kressner_low-rank_2014}.
Even though it is possible to study $\mathcal{H}^{n,p}_+\subset \mathbb C^{n\times n}$ as a complex manifold, we will regard $\mathbb C^{n\times n}$  as a $2n^2$-dimensional real vector space and $\mathcal{H}^{n,p}_+\subset \mathbb C^{n\times n}$ as a manifold over $\mathbb R$ since $f(X)$ is real-valued. 
  In particular, the embedded geometry of $\mathcal{S}^{n,p}_+$, representing the set real symmetric PSD low-rank matrices, was studied in \cite{vandereycken_embedded_2009}. 
  
 A \textit{Riemannian metric} is a smoothly varying inner product defined on the tangent space. 
The Riemannian metric of the real embedded manifold  $\mathcal{H}^{n,p}_+$ can simply be taken as the inner product \eqref{real-inner-product} on $\mathbb C^{n\times n}$.   
The embedded geometry of the real manifold  $\mathcal{H}^{n,p}_+\subset \mathbb C^{n\times n}$  will be discussed 
in Section \ref{section:embedded_manifold}.

Even though the tangent space and the Riemannian gradient in Section \ref{section:embedded_manifold} for $\mathcal{H}^{n,p}_+$ look like a natural extension of those for $\mathcal{S}^{n,p}_+$, it is not obvious why this should be true. The subtlety lies in the fact that we have to regard $X\in\mathcal{H}^{n,p}_+$ as an element of a real vector space. For instance, for regarding $X\in\mathcal{H}^{n,p}_+$ as a real vector, one can   either regard a complex matrix $X$ as the pair of  its real and imaginary part, or regard $X\in \mathbb C^{n\times n}$ with its realification, which is a $2n$-by-$2n$ real matrix generated by replacing each complex entry $a+\mathbbm{i}b$ of $X$ by a 2-by-2 block $\bmat{a&-b\\b&a}$. Unfortunately, neither way gives a straightforward generalization from the real case in \cite{vandereycken_embedded_2009} to the complex case in Section \ref{section:embedded_manifold}.  Instead, with the real inner product \eqref{real-inner-product} and the corresponding Fr\'{e}chet derivative, it is possible to achieve the desired generalization.

\subsubsection{Riemannian optimization by using quotient geometry}

The third approach is to consider the quotient manifold $\mathbb{C}^{n\times p}_*/\mathcal{O}_p$, which will be reviewed in Section \ref{section:quotient_manifold}.
Here $\mathbb{C}^{n\times p}_*$ is the noncompact Stiefel manifold of full rank $n$-by-$p$ matrices:
\[
\mathbb{C}^{n\times p}_*=\{X\in \mathbb{C}^{n\times p}: \mbox{rank}(X)=p\}.\]
Define an equivalent class by
\[[Y]=\{Z\in \mathbb{C}^{n\times p}_*: Z=YO, O\in \mathcal O_p\}\]
and denote the natural projection as
\[
    \begin{aligned}
   \pi:  \mathbb{C}^{n\times p}_* &\rightarrow \mathbb{C}^{n\times p}_*/\mathcal{O}_p\\
 Y  &\mapsto [Y] 
\end{aligned}.
\]
Since there is a one-to-one correspondence between $X=YY^*\in \mathcal{H}^{n,p}_+$ and $\pi(Y)\in\mathbb{C}^{n\times p}_*/\mathcal{O}_p$, the optimization problem (\ref{lowrank_prob}) is equivalent to
\begin{equation}\label{quotient_prob}
	\MINone{\pi(Y)}{h(\pi(Y))}{\pi(Y) \in \mathbb{C}^{n \times p}_*/\mathcal{O}_p},
\end{equation} 
where the cost function $h$ is defined as $h(\pi(Y)) =F(Y)= f(YY^*)$.

For the quotient manifold $\mathbb{C}^{n\times p}_*/\mathcal{O}_p$, one can first choose a metric for its total space $\mathbb{C}^{n\times p}_*$, which induces a Riemannian metric on the quotient manifold under suitable conditions. In particular, a special metric was used in \cite{huang_solving_2017} to construct efficient Riemannian optimization algorithms
for the problem \eqref{min-BR}. The horizontal lift of the Riemannian gradient for $h(\pi(Y))$ under this particular metric satisfies
\begin{equation}
(\grad h(\pi(Y)))_{ Y} = \nabla F(Y) (Y^*Y)^{-1}=2\nabla f(YY^*)Y(Y^*Y)^{-1}.
\label{quotient-grad-metric2}
\end{equation}
From the representation of the Riemannian gradient \eqref{quotient-grad-metric2}, we see that this approach generates different algorithms from the simpler Burer--Monteiro approach.

\subsection{Main results: a unified representation and analysis of three methods using quotient geometry}

A natural question arises: which of the three methods is the best? Even though the unconstrained Burer--Monteiro method is quite straightforward to use, it has an ambiguity up to a unitary matrix, and its performance is usually observed to be inferior to Riemannian optimization on embedded and quotient geometries. In order to compare these three methods, in this paper we will show that it is possible to equivalently rewrite both the Burer--Monteiro approach and embedded manifold approach as Riemannian optimization over the quotient manifold  
$\mathbb{C}^{n\times p}_*/\mathcal{O}_p$ with suitable metrics, retractions and vector transports.

For any $Y\in \mathbb{C}^{n\times p}_*$, we  consider three different Riemannian metrics $g_Y^i(\cdot,\cdot)$ $(i=1,2,3)$ for any $A, B$ in the total space $\mathbb{C}^{n\times p}_*$:
\begin{align*}
    g_Y^1(A,B) &= \ip{A,B}_{\mathbb{C}^{n\times p}} = \Re(tr(A^*B))\\
    g_Y^2(A,B) &= \ip{AY^*,BY^*}_{\mathbb{C}^{n\times n}} = \Re(tr((Y^*Y)A^*B))\\
    g_Y^3(A,B) &=\ip{YA^*+AY^*,YB^*+BY^*}_{\mathbb{C}^{n\times n}}+ \ip{P^\mathcal{V}_Y(A)Y^*,P^\mathcal{V}_Y(B)Y^*}_{\mathbb{C}^{n\times n}},
\end{align*}
where  $P_Y^\mathcal{V}$ is given by 
\[P_Y^\mathcal{V} (A) = Y \left(\frac{(Y^*Y)^{-1}Y^*A - A^* Y(Y^*Y)^{-1}}{2} \right).\]

In particular, the Burer--Monteiro approach corresponds to the   Bures-Wasserstein metric $g_Y^1$
and the embedded manifold approach corresponds to the third metric $g_Y^3$. The second metric $g_Y^2$ is the one used in \cite{huang_solving_2017}.

 We will show that both the gradient descent and the conjugate gradient  method for the unconstrained problem \eqref{min-BR} are equivalent to a Riemannian 
gradient descent  and a Riemannian conjugate gradient  method on the quotient manifold $\mathbb{C}^{n\times p}_*/\mathcal{O}_p$ with the simplest  Bures-Wasserstein  $g_Y^1$ and a specific vector transport. 

Furthermore, we will prove that the Riemannian 
gradient descent  and the Riemannian conjugate gradient  methods using the embedded geometry of $\mathcal{H}^{n,p}_+$ are equivalent to a Riemannian 
gradient descent  and a Riemannian conjugate gradient  algorithms on the quotient manifold $\mathbb{C}^{n\times p}_*/\mathcal{O}_p$ with the metric $g_Y^3$ and a specific vector transport.

Finally, we will analyze and compare the condition numbers of the Riemannian Hessian using these three different metrics by estimating their Rayleigh quotient. It is well known that the condition number of the Hessian of the cost function is closely related to the asymptotic performance of optimization methods. Under the assumption that the Fr\'{e}chet Hessian $\nabla^2 f(X)$ is well conditioned and some other technical assumptions, we will show that the the condition numbers of the Riemannian Hessian using the first metric can be significantly worse than the other two if the minimizer of \eqref{min-psd} has a rank smaller than $p$.

\subsection{Related work}

The Burer--Monteiro approach for the PSD constraint has been  popular in applications due to its simplicity. For instance, an L-BFGS method for \eqref{min-BR} was used for solving  convex recovery from interferometric measurements in \cite{demanet2017convex}.
It is straightforward to verify that \eqref{br-gd}  with $p=1$ and a suitable step size $\tau$ for the PhaseLift problem \cite{candes2013phaselift}  is precisely the Wirtinger flow algorithm \cite{candes2015phase}. 
In \cite{boumal2020deterministic}, it was shown that first-order and second-order optimality conditions of the nonconvex Burer--Monteiro approach are sufficient to find the global minimizer of the convex semi-definite program under certain assumptions.

Riemannian optimization on various matrix manifolds such as the Stiefel manifold, the Grassmann manifold and the set of fixed-rank matrices,  have been used 
for  applications in data science, machine learning, signal processing, bio-science, etc.   
The geometry of real symmetric PSD matrices of fixed rank $\mathcal{S}^{n,p}_+$ has also been studied intensively in the literature. 
 Its embedded geometry was studied in \cite{vandereycken_embedded_2009}.   
  The quotient geometry was studied in  \cite{bonnabel_riemannian_2010, journee_low-rank_2010, massart_quotient_2020}.
    As we will show in Section \ref{sec-bwmetric},  the Bures-Wasserstein metric $g^1$ for low-rank PSD matrices is consistent with the Bures-Wasserstein metric for Hermitian positive-definite matrices $\mathcal{H}^{n,n}_+$ \cite{oostrum2022bures, bhatia2019bures, han2021riemannian}. 
   
Riemannian optimization based on the embedded geometry has been well studied in \cite{vandereycken_low-rank_2013} for  real matrices of fixed rank, which can be easily extended to 
 real symmetric PSD matrices of fixed rank \cite{ vandereycken_embedded_2009}. 
As expected, Section \ref{section:embedded_manifold} is its natural extensions to Hermitian PSD  matrices of fixed rank. This is not  surprising, but it is not a straightforward result either, because such a natural extension holds only when using the real inner product \eqref{real-inner-product} and its associated  Fr\'{e}chet derivatives.   

The quotient geometry of Hermitian PSD matrices of fixed-rank for the metric $g_Y^2$ has been studied in \cite{huang2013optimization, huang_solving_2017}. The quotient geometry with metric $g_Y^2$ in this paper is exactly the same one as the one in \cite{huang2013optimization, huang_solving_2017}.

It is not uncommon to explore different metrics of a manifold for Riemannian optimization  \cite{absil_geometric_2009, mishra2014riemannian}. In \cite{vandereycken_riemannian_2013}, a new embedded geometry and complete geodesics for real PSD fixed-rank matrices were for example obtained from a special quotient metric.

\subsection{Contributions}

In this paper, for simplicity, we only focus on the nonlinear conjugate gradient method. 

First, we will prove that the nonlinear conjugate gradient method for the unconstrained Burer--Monteiro formulation \eqref{min-BR} is equivalent to a Riemannian conjugate gradient method on the quotient manifold $(\mathbb{C}^{n\times p}_*/\mathcal{O}_p, g^1)$ for solving \eqref{quotient_prob}. Thus the convergence of the simple Burer--Monteiro approach can be understood in the context of Riemannian optimization on the quotient manifold with the  Bures-Wasserstein  metric $(\mathbb{C}^{n\times p}_*/\mathcal{O}_p, g^1)$.
In other words, the Riemannian conjugate gradient method under the  Bures-Wasserstein  metric $(\mathbb{C}^{n\times p}_*/\mathcal{O}_p, g^1)$ can be implemented as the simple unconstrained factor based Burer--Monteiro formulation.
This is one major contribution of this paper.

Second,
 we will show that a Riemannian conjugate gradient method on the embedded manifold $\mathcal H^{n,p}_+$ for solving \eqref{lowrank_prob} is equivalent to a Riemannian conjugate gradient method on the quotient manifold $(\mathbb{C}^{n\times p}_*/\mathcal{O}_p, g^3)$ for solving \eqref{quotient_prob}. For implementation, this is completely unnecessary. However, this allows a comparison a Riemannian optimization algorithm on an embedded geometry with a Riemannian optimization algorithm on a quotient geometry.

Finally, for the sake of understanding the differences among the three methodologies, we will analyze the condition number of the Riemannian Hessian on the quotient manifold $(\mathbb{C}^{n\times p}_*/\mathcal{O}_p, g^i)$ for the three different metrics $g^i$ $(i=1,2,3)$. One metric is equivalent to the simple Burer--Monteiro approach and another to Riemannian optimization on the embedded manifold $\mathcal H^{n,p}_+$. 
Since the three methods in Section \ref{intro:method} can all be regarded as Riemannian optimization algorithms on a quotient manifold with three different metrics, such a comparison is meaningful.

In certain problems, such as PhaseLift \cite{candes2013phaselift} and interferometry recovery \cite{demanet2017convex}, the rank $r$ of the minimizer of \eqref{min-psd} is known. However, it has been observed in practice that the basin of  attraction is larger when  solving the nonconvex problems \eqref{min-BR} or \eqref{quotient_prob} with rank $p>r$ instead of with rank $p=r$; see  \cite{demanet2017convex, huang_solving_2017}. We will also  demonstrate this in the numerical tests in Section \ref{section:numerical_experiments}.  Under suitable assumptions, we will show that the condition number of the Riemannian Hessian on the quotient manifold $(\mathbb{C}^{n\times p}_*/\mathcal{O}_p, g^1)$ can be unbounded if $p>r$. On the other hand, the condition numbers of the Riemannian Hessians on the quotient manifold $\mathbb{C}^{n\times p}_*/\mathcal{O}_p$ with metrics $g^1$ and $g^2$  are still bounded. This is consistent with the numerical observation that the Burer--Monteiro approach has a much slower asymptotic convergence rate than the Riemannian optimization approach on the embedded manifold and the quotient manifold $(\mathbb{C}^{n\times p}_*/\mathcal{O}_p, g^2)$ when $p>r$.

\subsection{Organization of the paper}

The outline of the paper is as follows. We summarize the notation in Section \ref{sec:notation}.
Then we discuss the geometric operators such as the Riemannian gradient and vector transport in Section \ref{section:embedded_manifold} for the embedded manifold $\mathcal{H}^{n,p}_+$ and in Section \ref{section:quotient_manifold} for  the quotient manifold $\mathbb{C}^{n\times p}_*/\mathcal{O}_p$.  In Section \ref{sec:RCG}, we outline the Riemannian Conjugate Gradient (RCG) methods on different geometries 
and discuss equivalences among them. 
In particular, we show that RCG on the quotient manifold $(\mathbb{C}^{n\times p}_*/\mathcal{O}_p, g^1)$ is exactly the Burer--Monteiro CG method, that is, CG directly on \eqref{min-BR}. We also show that Riemannian CG on the embedded manifold for solving (\ref{lowrank_prob}) is equivalent to RCG on the quotient manifold $(\mathbb{C}^{n\times p}_*/\mathcal{O}_p, g^3)$ with a specific retraction and vector transport for solving (\ref{quotient_prob}).
Implementation details are given in Section \ref{sec:details}. 
In Section \ref{section:rayleigh_quotient}, we analyze and compare the condition numbers of the Riemannian Hessian operators, which can be used to understand the difference in the asymptotic convergence rates between using the simple Burer--Monteiro method and the more sophisticated Riemannian optimization using an embedded geometry or a \textbf{}quotient geometry with metric $g^2$.
Numerical tests are given in Section \ref{section:numerical_experiments}. 

\section{Notation}
\label{sec:notation}
 Let $\mathbb{C}^{m\times n}$ denote all complex matrices of size $m\times n$. 
Let $p\leq n$ and define
\begin{eqnarray*}
\mathbb{C}^{n\times p}_*&=&\{X\in \mathbb{C}^{n\times p}: \mbox{rank}(X)=p\},\\
\St(p,n) &=& \{X\in \mathbb{C}^{n\times p}: X^*X = I_p \}, \\
\mathcal{H}_{+}^{n,p}&=&\{X\in \mathbb C^{n\times n}: X^*=X, X\succcurlyeq 0, \mbox{rank}(X)=p\},\\
\mathcal{S}_{+}^{n,p}&=&\{X\in \mathbb R^{n\times n}: X^T=X,  X\succcurlyeq 0, \mbox{rank}(X)=p\},\\
\mathcal{O}_p & =& \{ O \in \mathbb{C}^{p\times p}: O^*O = OO^* = I\},
\end{eqnarray*} 
where  $\St(p,n)$ is also called the compact Stiefel manifold. For a matrix $X$, $X^*$ denotes its conjugate transpose and $\overline X$ denotes its complex conjugate. If $X$ is real, $X^*$ becomes the matrix transpose and is denoted by $X^T$. We define
$$Herm(X):= \frac{X+X^*}{2},\quad Skew(X) := \frac{X-X^*}{2}.$$  

Let $\Re(X)$ and $\Im(X)$ denote the real and imaginary part of $X$ respectively so that $X = \Re(X) + \mathbbm{i} \Im(X)$. 
Let $I_p$ be the identity matrix of size $p$-by-$p$. For any $n$-by-$p$ matrix $Z$, $Z_\perp$ denotes the $n$-by-$(n-p)$ matrix such that $Z_\perp^* Z_\perp = I_{n-p}$ and $Z_\perp^* Z=\mathbf 0 $. 

Let $\Diag(m,n)$ be the set of all $m$-by-$n$ diagonal matrices. Let $\diag(M)$ be the $n$-by-$1$ vector that is the diagonal of the $n$-by-$n$ matrix $M$. Given a vector $v$, $\Diag(v)$ is a square matrix with its $i$th diagonal entry equal to $v_i$. Given a matrix $A$, $tr(A)$ denotes the trace of $A$ and $A_{ij}$ denotes the $(i,j)$th entry of $A$.

For any $X\in \mathcal{H}^{n,p}_+$, its eigenvalues coincide with its singluar values. The compact singular value decomposition (SVD) of $X$ is denoted by $X = U\Sigma U^*$, where $U\in \St(p,n)$ and $\Sigma = \Diag(\sigma)$ with $\sigma = (\sigma_1, \cdots, \sigma_p)^T$ and $\sigma_1 \geq \cdots \geq \sigma_p >0$. In the rest of the paper, $U$ and $\Sigma$ are reserved for denoting the compact SVD of $X\in \mathcal{H}^{n,p}_+$. 

In this paper, all manifolds of complex matrices are viewed as manifolds over $\mathbb{R}$. Given a Euclidean space $\mathcal{E}$, the inner product on $\mathcal{E}$ is denoted by $\ip{.,.}_\mathcal{E}$. Specifically,  $\ip{A,B}_{\mathbb{R}^{m \times n}} = {tr(A^T B)}$ for $A,B \in \mathbb{R}^{m\times n}$ and $\ip{A,B}_{\mathbb{C}^{m\times n}} = \Re({tr(A^* B)})$ for $A,B \in \mathbb{C}^{m\times n}$ denotes the canonical inner product on $\mathbb{R}^{m\times n}$ and $\mathbb{C}^{m\times n}$, respectively.

\section{Embedded geometry of $\mathcal{H}^{n,p}_+$ }\label{section:embedded_manifold}
The results in this section are natural extensions of results for $\mathcal{S}^{n,p}_+$ in \cite{vandereycken_embedded_2009}.
Such an extension is not entirely obvious since $\mathcal{H}^{n,p}_+$ is treated as a real manifold and the real inner product \eqref{real-inner-product} is not the complex Hilbert--Schmidt inner product. For completeness, we discuss these extensions. 

\subsection{Tangent space}
We first need to show that $\mathcal{H}^{n,p}_+$ is a smooth embedded submanifold of $\mathbb{C}^{n\times n}$. 
See \cite[Prop.~2.1]{helmke_critical_1995} and \cite[Chap.~5]{helmke_optimization_2012} for the case of $\mathcal{S}^{n,p}_+$ being a smooth embedded submanifold of $\mathbb{R}^{n\times n}$.
\begin{thm}\label{thm:submanifold_embedded}
Regard $\mathbb{C}^{n\times n}$ as a real vector space over $\mathbb R$ of dimension $2n^2$. Then $\mathcal{H}^{n,p}_+$ is a smooth embedded submanifold of $\mathbb{C}^{n\times n}$ of dimension $2np- p^2$. 
\end{thm}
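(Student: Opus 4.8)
The plan is to realize $\mathcal{H}^{n,p}_+$ locally as the \emph{regular} zero set of an explicit smooth map; this simultaneously yields the embedded-submanifold property and the dimension. First I would reduce the ambient space. The Hermitian matrices $\mathcal{H}^n := \{X\in\mathbb{C}^{n\times n}: X^*=X\}$ form a real-linear subspace of the $2n^2$-dimensional real vector space $\mathbb{C}^{n\times n}$ (with real-linear complement the skew-Hermitian matrices), of real dimension $n^2$, since a Hermitian $k\times k$ block has $k$ real diagonal and $k(k-1)$ real off-diagonal parameters. As a linear subspace, $\mathcal{H}^n$ is a closed embedded submanifold of $\mathbb{C}^{n\times n}$; because $\mathcal{H}^{n,p}_+\subset\mathcal{H}^n$, it suffices to show that $\mathcal{H}^{n,p}_+$ is an embedded submanifold of $\mathcal{H}^n$ of dimension $2np-p^2$, equivalently of codimension $n^2-(2np-p^2)=(n-p)^2$ in $\mathcal{H}^n$.

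Now fix $X_0\in\mathcal{H}^{n,p}_+$ and pick a unitary $Q=[\,Q_1\ Q_2\,]\in\mathbb{C}^{n\times n}$ whose first $p$ columns span the range of $X_0$, so that $Q^*X_0Q = \begin{bmatrix}A_0 & 0\\ 0 & 0\end{bmatrix}$ with $A_0=Q_1^*X_0Q_1$ Hermitian positive definite. For $X$ near $X_0$ write $Q^*XQ=\begin{bmatrix}A & B\\ B^* & C\end{bmatrix}$ with $A=A^*$, $C=C^*$, $B\in\mathbb{C}^{p\times(n-p)}$, and choose a neighborhood $\mathcal{O}$ of $X_0$ in $\mathcal{H}^n$ small enough that $A=Q_1^*XQ_1$ remains positive definite on $\mathcal{O}$. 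Define the smooth map $\Phi:\mathcal{O}\to\mathcal{H}^{n-p}$ by the Schur complement $\Phi(X)=C-B^*A^{-1}B$. The key identity is
\[
\begin{bmatrix}A & B\\ B^* & C\end{bmatrix}
= M^*\begin{bmatrix}A & 0\\ 0 & \Phi(X)\end{bmatrix}M,\qquad
M=\begin{bmatrix}I & A^{-1}B\\ 0 & I\end{bmatrix}\ \text{(invertible)},
\]
which exhibits $Q^*XQ$ as congruent to the block-diagonal matrix with blocks $A$ and $\Phi(X)$. Since congruence preserves both positive semidefiniteness and rank and $A\succ0$, we get $X\succcurlyeq0\iff\Phi(X)\succcurlyeq0$ and $\rank X = p+\rank\Phi(X)$, hence $X\in\mathcal{H}^{n,p}_+\iff\Phi(X)=0$, i.e.\ $\mathcal{H}^{n,p}_+\cap\mathcal{O}=\Phi^{-1}(0)$. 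Moreover $\Phi$ is a submersion throughout $\mathcal{O}$: taking a variation supported on the $(2,2)$ block ($\dot A=\dot B=0$) gives $D\Phi(X)[\dot X]=\dot C$, so $D\Phi(X):\mathcal{H}^n\to\mathcal{H}^{n-p}$ is surjective. By the regular level set theorem, $\mathcal{H}^{n,p}_+\cap\mathcal{O}$ is an embedded submanifold of $\mathcal{H}^n$ of dimension $\dim\mathcal{H}^n-\dim\mathcal{H}^{n-p}=n^2-(n-p)^2=2np-p^2$; since $X_0$ was arbitrary, the theorem follows. Equivalently, $(A,B)\mapsto Q\begin{bmatrix}A & B\\ B^* & B^*A^{-1}B\end{bmatrix}Q^*$ (with $A$ Hermitian near $A_0$, $B\in\mathbb{C}^{p\times(n-p)}$) is an explicit local parametrization whose domain has real dimension $p^2+2p(n-p)=2np-p^2$.

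The one step I expect to require genuine care, rather than routine checking, is the identity $\mathcal{H}^{n,p}_+\cap\mathcal{O}=\Phi^{-1}(0)$: one must verify that the PSD constraint \emph{and} the constraint of rank \emph{exactly} $p$ are jointly encoded by the vanishing of the single Schur complement $\Phi$, for which the congruence identity above — together with the fact that $A$ is positive definite, not merely invertible, throughout $\mathcal{O}$ — is the essential input. It is also worth recording the real-versus-complex subtlety stressed earlier: $\Phi$ is real-analytic but not holomorphic (it involves $B^*$), which is exactly why $\mathcal{H}^{n,p}_+$ is a real and not a complex submanifold, and every differential, rank and dimension above is taken over $\mathbb{R}$, with the regular level set theorem applied in its real form. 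A route that sidesteps the preliminary reduction to $\mathcal{H}^n$ is to build near $X_0$ the submersion $X\mapsto\big(X-X^*,\ \Phi(\text{blocks of }\tfrac{X+X^*}{2})\big)$ on a neighborhood in $\mathbb{C}^{n\times n}$, whose zero set is again $\mathcal{H}^{n,p}_+$ locally, with total codimension $n^2+(n-p)^2$.
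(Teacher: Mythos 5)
Your proof is correct and takes a genuinely different route from the paper's. The paper argues globally via Lie-theoretic orbit structure: it realizes $\mathcal{H}^{n,p}_+$ as the orbit of the rank-$p$ projector $E$ under the congruence action of $\Gl(n,\mathbb{C})$, invokes a theorem on orbits of semialgebraic group actions for smoothness (together with a side lemma that $\Gl(n,\mathbb{C})$ is semialgebraic after realification), and then computes the dimension by showing the orbit map $\gamma\mapsto\gamma E\gamma^*$ has constant rank and counting $\dim\ker\D\eta(I_n)$ by hand. You argue locally and analytically: near each $X_0$ you exhibit $\mathcal{H}^{n,p}_+$ as the regular zero set of the Schur complement $\Phi(X)=C-B^*A^{-1}B$ inside the real-linear subspace of Hermitian matrices, and invoke the regular level set theorem. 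The orbit approach is uniform --- no basepoint or block structure is chosen, the dimension count is done once at the identity, and connectedness falls out of the connectedness of $\Gl^+(n,\mathbb{C})$ --- and it aligns with the homogeneous-space viewpoint exploited later for the quotient geometry; your approach is more elementary and self-contained, avoids the semialgebraic machinery, and immediately hands over the explicit local parametrization $(A,B)\mapsto Q\bmat{A & B\\ B^* & B^*A^{-1}B}Q^*$. Both handle the real-over-$\mathbb{R}$ subtlety correctly. One remark that tightens your argument: when $A\succ0$ on $\mathcal{O}$, the single equation $\Phi(X)=0$ already forces $X\succcurlyeq 0$, since $X$ is then congruent to $\bmat{A & 0\\ 0 & 0}\succcurlyeq 0$, so positive semidefiniteness need not be imposed separately on the level set --- your congruence identity does close exactly this point.
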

\begin{proof}
    Let 
    \[
    E = \bmat{I_{p\times p} & 0_{p\times (n-p)} \\ 0_{(n-p)\times p} & 0_{(n-p)\times (n-p)} }
    \]
    and consider the smooth Lie group action 
    \begin{equation*}
    \begin{aligned}
        \Phi: \Gl(n,\mathbb{C}) \times \mathbb{C}^{n\times n} &\rightarrow& \mathbb{C}^{n\times n} \\ 
        (g,N) & \mapsto& gNg^* 
    \end{aligned}
    \end{equation*}
    where
    \begin{eqnarray*}
         gNg^* & =& \left(\Re(g)\Re(N)-\Im(g)\Im(N)\right)\Re(g)^T + (\Im(g)\Re(N) +\Re(g)\Im(N))\Im(g)^T \\ 
            & & +  \mathbbm{i } \left( (\Im(g)\Re(N) +\Re(g)\Im(N))\Re(g)^T  - (\Re(g)\Re(N)-\Im(g)\Im(N))\Im(g)^T \right).
    \end{eqnarray*}
    
    It is easy to see that $\Phi$ is a rational mapping. Since $\Gl(n,\mathbb{C})$ is a semialgebraic set by Lemma (\ref{lem:semialgebraic}) in the Appendix, we have that $\Gl(n,\mathbb{C}) \times \mathbb{C}^{n\times n}$ is also a semialgebraic set \cite[section 2.1.1]{coste_introduction_2000}. It follows from (B1) in \cite{gibson_singular_1979} that $\Phi$ is a semialgebraic mapping. 
    Observe that $\mathcal{H}^{n,p}_+$ is the orbit of $E$ through $\Phi$. It therefore follows from (B4) in \cite{gibson_singular_1979} that $\mathcal{H}^{n,p}_+$ is a smooth submanifold of $\mathbb{C}^{n\times n}$. 
    
    Next, we compute the dimension of $\mathcal{H}^{n,p}_+$. Consider the smooth surjective mapping
\[
	\eta: \text{GL}(n,\mathbb{C}) \rightarrow \mathcal{H}^{n,p}_+ \quad \gamma \mapsto \gamma E \gamma^*.
\]

The differential of $\eta$ at $\gamma \in \text{GL}(n,\mathbb{C})$ is the linear mapping $\text{D}\eta(\gamma): T_\gamma \text{GL}(n,\mathbb{C}) =  \mathbb{C}^{n\times n}\to T_X\mathcal{H}^{n,p}_+$, where $X = \eta(\gamma) = \gamma E \gamma^*$, by $\text{D}\eta(\gamma)[\Delta] = \Delta E \gamma^* + \gamma E \Delta^* $. Observe that the differential at arbitrary $\gamma$ is related to the differential at $I_{n}$ by a full-rank linear transformation:
	\begin{eqnarray}\label{eqn:2}
		\text{D}\eta(\gamma) [\Delta] = \gamma \text{D}\eta(I_n)[\gamma^{-1} \Delta] \gamma^* .
	\end{eqnarray}
Recall that the rank of a differentiable mapping $f$ between two differentiable manifolds is the dimension of the image of the differential of $f$. So, from equation (\ref{eqn:2}) we see that the rank of $\eta$ is constant. It follows from Theorem 4.14 in  \cite{lee_introduction_2012} that $\eta$ is a smooth submersion. As a consequence $\text{D}\eta(\gamma)$ maps $T_\gamma \text{GL}(n,\mathbb{C}) = \mathbb{C}^{n\times n} $ surjectively onto $T_X\mathcal{H}^{n,p}_+$ and we obtain
\begin{equation}\label{eqn:tangent_space}
	T_X\mathcal{H}^{n,p}_+ = \left\{ \Delta X + X \Delta^*: \Delta \in \mathbb{C}^{n\times n}\right\}.
\end{equation}
Let $\Delta = \bmat{\Delta_{11} & \Delta_{12} \\ \Delta_{21} & \Delta_{22} } $ be partitioned according to the partition of $E = \text{diag}(I_{p\times p}) = \bmat{I_{p\times p} & 0 \\ 0 & 0}$. Then it can be easily verified that $\Delta \in \text{Ker}\text{D}\eta(I)$ if and only if 
\[
	\Delta_{11} = - \Delta_{11}^* ,\quad \Delta_{21} = 0.
\]
This implies that $\Delta_{11}$ is a skew-Hermitian matrix, hence its diagonal entries are purely imaginary and its off diagonal entries satisfy $a_{ij} = - \overline{a_{ji}}$. This gives us $p + 2\times (1+2+ \cdots + (p-1))$ degrees of freedom. For $\Delta_{12}$ and $\Delta_{22}$ there are $2n(n-p)$ degrees of freedom. 
So, the dimension of $\text{Ker}(\text{D}\eta(I))$ is $2n(n-p) +p+2p(p-1)/2  = 2n^2 -2np + p^2$ and by rank-nullity we get
\[
    \dim \D\eta(I) = 2n^2 - \dim \ker \D\eta(I) = 2np - p^2.
\]
Since $\eta$ is of constant rank, the dimension of $T_X\mathcal{H}^{n,p}_+$ is therefore $2np - p^2$. Remember that the dimension of the tangent space at every point of a connected manifold is the same as that of the manifold itself. Let $\Gl^+(n,\mathbb{C})$ denote the connected subset of $\Gl(n,\mathbb{C})$ with positive determinant, then $\mathcal{H}^{n,p}_+$ is the image of the connected set $\Gl^+(n,\mathbb{C})$ under a continuous mapping $\eta$, so $\mathcal{H}^{n,p}_+$ is connected.  We conclude that the dimension of $\mathcal{H}^{n,p}_+$ is $2np-p^2$. 
\end{proof}

The next result characterizes the tangent space.   See  \cite[Proposition 2.1]{vandereycken_low-rank_2013} for the tangent space of $\mathcal{S}^{n,p}_+$.

\begin{thm}\label{thm:tangent_space}
	Let $X = U\Sigma U^* \in \mathcal{H}^{n,p}_+$. Then the tangent space of $\mathcal{H}^{n,p}_+$ at $X$ is given by 
	\[
		T_X\mathcal{H}^{n,p}_+ = \left\{ \bmat{U & U_\perp}\bmat{H & K^* \\ K & 0} \bmat{U^* \\ U_\perp^*}\right\}
	\]
	 where $H = H^* \in \mathbb{C}^{p\times p} $, $ K \in \mathbb{C}^{(n-p)\times p}$.
\end{thm}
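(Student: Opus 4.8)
The plan is to start from the characterization of the tangent space already obtained in Theorem~\ref{thm:submanifold_embedded}, namely
\[
T_X\mathcal{H}^{n,p}_+ = \left\{ \Delta X + X \Delta^* : \Delta \in \mathbb{C}^{n\times n}\right\},
\]
and to rewrite this set in the rotated basis $\bmat{U & U_\perp}$. First I would note that $\bmat{U & U_\perp}$ is unitary, so conjugating by it is a bijection of $\mathbb{C}^{n\times n}$ onto itself; writing $\widetilde\Delta = \bmat{U & U_\perp}^* \Delta \bmat{U & U_\perp}$, the matrix $\Delta$ ranges over all of $\mathbb{C}^{n\times n}$ iff $\widetilde\Delta$ does. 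Since $X = \bmat{U & U_\perp}\bmat{\Sigma & 0\\ 0 & 0}\bmat{U & U_\perp}^*$, a direct substitution gives
\[
\Delta X + X\Delta^* = \bmat{U & U_\perp}\left(\widetilde\Delta \bmat{\Sigma & 0\\0&0} + \bmat{\Sigma & 0\\0&0}\widetilde\Delta^*\right)\bmat{U^*\\ U_\perp^*}.
\]
So the claim reduces to showing that as $\widetilde\Delta = \bmat{\widetilde\Delta_{11} & \widetilde\Delta_{12}\\ \widetilde\Delta_{21} & \widetilde\Delta_{22}}$ ranges over $\mathbb{C}^{n\times n}$ (blocks of sizes $p$ and $n-p$), the inner matrix $\widetilde\Delta \bmat{\Sigma & 0\\0&0} + \bmat{\Sigma & 0\\0&0}\widetilde\Delta^*$ ranges exactly over the set $\bmat{H & K^*\\ K & 0}$ with $H = H^* \in \mathbb{C}^{p\times p}$ and $K \in \mathbb{C}^{(n-p)\times p}$.

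The second step is the block computation. Multiplying out, the inner matrix has $(1,1)$ block $\widetilde\Delta_{11}\Sigma + \Sigma\widetilde\Delta_{11}^*$, $(2,1)$ block $\widetilde\Delta_{21}\Sigma$, $(1,2)$ block $\Sigma\widetilde\Delta_{21}^*$, and $(2,2)$ block $0$. The $(2,2)$ block being zero and the $(1,2)$ block being the conjugate transpose of the $(2,1)$ block shows the result always has the stated form, with $H := \widetilde\Delta_{11}\Sigma + \Sigma\widetilde\Delta_{11}^*$ (which is Hermitian by construction) and $K := \widetilde\Delta_{21}\Sigma$. For the converse I need surjectivity onto the claimed set: given arbitrary $K \in \mathbb{C}^{(n-p)\times p}$, since $\Sigma$ is invertible I can take $\widetilde\Delta_{21} = K\Sigma^{-1}$; and given arbitrary Hermitian $H$, I need some $\widetilde\Delta_{11}$ with $\widetilde\Delta_{11}\Sigma + \Sigma\widetilde\Delta_{11}^* = H$. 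Taking $\widetilde\Delta_{11} = \tfrac12 H\Sigma^{-1}$ works when one checks $\tfrac12 H\Sigma^{-1}\Sigma + \tfrac12 \Sigma \Sigma^{-1}H^* = \tfrac12 H + \tfrac12 H^* = H$. The blocks $\widetilde\Delta_{12}, \widetilde\Delta_{22}$ are free and play no role. This establishes the set equality.

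The final bookkeeping step is to confirm consistency of dimensions, which serves as a useful sanity check rather than a logical necessity: the Hermitian $H$ contributes $p^2$ real parameters and $K \in \mathbb{C}^{(n-p)\times p}$ contributes $2(n-p)p$ real parameters, for a total of $p^2 + 2np - 2p^2 = 2np - p^2$, matching the dimension computed in Theorem~\ref{thm:submanifold_embedded}. I do not expect any genuine obstacle here; the only mild subtlety is being careful that we are working over $\mathbb{R}$, so that ``$H = H^*$'' is the right $p^2$-real-dimensional condition (purely imaginary diagonal, conjugate-symmetric off-diagonal) rather than imposing a complex-linear constraint, and that the parametrizing map $\widetilde\Delta \mapsto \widetilde\Delta_{11}\Sigma + \Sigma\widetilde\Delta_{11}^*$ is $\mathbb{R}$-linear and onto the real vector space of Hermitian matrices — both of which follow from the explicit formulas above. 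Invertibility of $\Sigma$ (guaranteed since $\sigma_p > 0$) is what makes the surjectivity argument clean.
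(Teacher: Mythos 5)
Your proof is correct, and it takes a genuinely different — and arguably cleaner — route than the paper's. Both start from the characterization $T_X\mathcal{H}^{n,p}_+ = \{\Delta X + X\Delta^* : \Delta \in \mathbb{C}^{n\times n}\}$ from Theorem~\ref{thm:submanifold_embedded}, but after that the strategies diverge. The paper proves only the inclusion $\supseteq$, by explicitly exhibiting a $\Delta$ (namely $\Delta = (UH/2 + U_\perp K)\Sigma^{-1}(U^*U)^{-1}U^*$) that produces a given $\bmat{U & U_\perp}\bmat{H & K^*\\ K & 0}\bmat{U^*\\U_\perp^*}$, and then closes the gap by matching the real dimensions: $p^2 + 2p(n-p) = 2np - p^2$, which equals $\dim T_X\mathcal{H}^{n,p}_+$ by Theorem~\ref{thm:submanifold_embedded}. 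Your proof instead conjugates by the unitary $\bmat{U & U_\perp}$ so the whole problem becomes a block computation, and then establishes both inclusions directly: the $(2,2)$ block of $\widetilde\Delta\bmat{\Sigma & 0\\0&0} + \bmat{\Sigma & 0\\0&0}\widetilde\Delta^*$ is always zero and the $(1,1)$ block always Hermitian (proving $\subseteq$), while invertibility of $\Sigma$ lets you solve for $\widetilde\Delta_{11}, \widetilde\Delta_{21}$ given any target $H = H^*$, $K$ (proving $\supseteq$). What this buys you is a self-contained set equality that does not lean on the dimension count; the dimension check becomes a sanity test rather than a logical step, and the reader sees exactly which degrees of freedom of $\Delta$ matter ($\widetilde\Delta_{11}$, $\widetilde\Delta_{21}$) and which are inert ($\widetilde\Delta_{12}$, $\widetilde\Delta_{22}$). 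The paper's route, by contrast, skips the forward computation and lets the dimension argument do the work — slightly shorter once Theorem~\ref{thm:submanifold_embedded} is in hand, but less transparent about where the kernel of $\Delta \mapsto \Delta X + X\Delta^*$ sits. Your remark at the end about working over $\mathbb{R}$ (so that $H = H^*$ really is a $p^2$-real-dimensional condition) is exactly the right point of care and mirrors the paper's emphasis throughout Section~\ref{section:embedded_manifold}.
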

\begin{rem}
Notice that there is no need to compute and store $U_\perp\in \mathbb C^{n\times (n-p)}$ and it suffices to store $U_\perp K\in \mathbb C^{n\times p}$. See Section~\ref{sec:details} for the implementation details.
\end{rem}
\begin{proof}
Let $t\mapsto U(t)$ be any smooth curve in $\St(p,n)$ through $U$ at $t=0$ such that $U(t) \in \mathbb{C}^{n\times p}, U(0)= U$ and $U(t)^*U(t) = I_p$ for all $t$. Let $t\mapsto \Sigma(t)$ be any smooth curve in $\Diag(p,p)$ through $\Sigma$ at $t=0$.  Then $X(t) := U(t)\Sigma(t)U(t)^*$ defines a smooth curve in $\mathcal{H}^{n,p}_+$ through $X$. 
It follows by differentiating $X(t) := U(t)\Sigma(t)U(t)^*$ that 
\[
    X'(t) = U'(t)\Sigma(t)U(t)^* + U(t)\Sigma'(t)U(t)^* + U(t)\Sigma(t)U'(t)^*.
\]
Without loss of generality, since $U'(t)$ is an element of $\mathbb{C}^{n\times p}$ and $U(t)$ has full rank, we can set 
\[
    U'(t) = U(t)A(t) + U_\perp(t)B(t).
\]
Hence, we have
\[
    X'(t) = \bmat{U(t) & U_\perp(t)}\bmat{A(t)\Sigma(t)+\Sigma'(t)+\Sigma(t)A(t)^* & \Sigma(t)B(t)^* \\ B(t)\Sigma(t) & 0} \bmat{U(t)^* \\ U_\perp(t)^*}.
\]
Thus we consider the tangent vectors in the form of $\bmat{U & U_\perp}\bmat{H & K^* \\ K & 0} \bmat{U^* \\ U_\perp^*}$ with $H=H^*$. For any $H=H^*\in\mathbb{C}^{p\times p}$ and $K\in \mathbb{C}^{(n-p)\times p}$, taking  $\Delta = (UH/2 + U_\perp K) \Sigma^{-1}(U^*U)^{-1} U^*$ in (\ref{eqn:tangent_space}), we see that 
\begin{equation}\label{eqn:tangent_space_2}
\left\{ \bmat{U & U_\perp}\bmat{H & K^* \\ K & 0} \bmat{U^* \\ U_\perp^*}\right\} \subseteq  T_X\mathcal{H}^{n,p}_+.
\end{equation}
Now counting the real dimension we see that  $H$ has $ p +  2\times \frac{p(p-1)}{2} = p^2$ number of freedom and $K$ has $2\times p(n-p)$ number of freedom. So the LHS of the inclusion (\ref{eqn:tangent_space_2}) has freedom $2np - p^2$, which is equal to the dimension of $T_X\mathcal{H}^{n,p}_+$. Hence, the inclusion in (\ref{eqn:tangent_space_2}) is an equality. 
\end{proof}

\subsection{Riemannian gradient}
The \textit{Riemannian metric} of the embedded manifold at $X\in \mathcal{H}^{n,p}_+$ is induced from the Euclidean inner product on $\mathbb{C}^{n\times n}$,
\begin{equation}
    g_X(\zeta_1, \zeta_2) = \ip{\zeta_1,
    \zeta_2}_{\mathbb{C}^{n\times n}} = \Re(tr(\zeta_1^*\zeta_2)),\quad \zeta_1,\zeta_2 \in T_X\mathcal{H}^{n,p}_+.
    \label{metric-embedded-z}
\end{equation}

Let $f(X)$ be a smooth real-valued function for $X\in \mathbb{C}^{n\times n}$
and Fr\'{e}chet gradient \eqref{frechet-derivative}, is denoted by $\nabla {f}(X)$. See Appendix \ref{frechet_derivatives} for more details about Fr\'{e}chet gradient. 

The \textit{Riemannian gradient} of $f$ at $X\in \mathcal{H}^{n,p}_+$, denoted by $\grad f(X)$, is the projection of $\nabla {f}(X)$ onto $T_X\mathcal{H}^{n,p}_+$ (
\cite[Sect.~3.6.1]{absil_optimization_2008}):
\[
    \grad f(X) = P^t_X(\nabla f(X)),
\]
where $P^t_X$ denotes the orthogonal projection onto $T_X\mathcal{H}^{n,p}_+$. 
In order to get a closed-form expression of $P^t_X$, we should characterize the \textit{normal space} to $\mathcal{H}^{n,p}_+$ at $X$, denoted by $(T_X\mathcal{H}^{n,p}_+)^\perp$ or $N_X\mathcal{H}^{n,p}_+$, 
\[
    N_X\mathcal{H}^{n,p}_+ = \{\xi_X \in T_X\mathbb{C}^{n\times n}: \ip{\xi_X,\eta_X}_{\mathbb{C}^{n\times n}} =0 \text{ for all } \eta_X \in T_X\mathcal{H}^{n,p}_+  \},
\]
which is the orthogonal complement of $T_X\mathcal{H}^{n,p}_+$ in $\mathbb{C}^{n\times n}$. 

\begin{lem}\label{thm:normal_space}
	The normal space $N_X\mathcal{H}^{n,p}_+$  at $X = U\Sigma U^*\in \mathcal{H}^{n,p}_+$ is given by
	\begin{equation}\label{eqn:normal_space}
	 N_X\mathcal{H}^{n,p}_+ = \left\{ \bmat{U&U_\perp} \bmat{\Omega &-L^*\\L & M} \bmat{U^* \\ U^*_\perp}\right\},
	 \end{equation}
where $\Omega = - \Omega^* \in \mathbb{C}^{p\times p}$, $M \in \mathbb{C}^{(n-p)\times (n-p)}$and $L \in \mathbb{C}^{(n-p) \times p}$.
\end{lem}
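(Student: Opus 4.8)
## Proof proposal for Lemma \ref{thm:normal_space}

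The plan is to characterize the normal space by its defining orthogonality condition and then match real dimensions, exactly as in the proof of the tangent space description in Theorem \ref{thm:tangent_space}. First I would parametrize a generic element of $\mathbb{C}^{n\times n}$ in the orthonormal basis given by $\bmat{U & U_\perp}$: any $\xi \in \mathbb{C}^{n\times n} \cong T_X\mathbb{C}^{n\times n}$ can be written uniquely as $\xi = \bmat{U & U_\perp}\bmat{\Omega & N \\ L & M}\bmat{U^* \\ U_\perp^*}$ with $\Omega \in \mathbb{C}^{p\times p}$, $N \in \mathbb{C}^{p\times (n-p)}$, $L \in \mathbb{C}^{(n-p)\times p}$, $M \in \mathbb{C}^{(n-p)\times (n-p)}$ (this is just an orthogonal change of basis, which preserves the inner product \eqref{real-inner-product}). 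Similarly a generic tangent vector is $\eta = \bmat{U & U_\perp}\bmat{H & K^* \\ K & 0}\bmat{U^* \\ U_\perp^*}$ with $H = H^*$, by Theorem \ref{thm:tangent_space}.

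Next I would compute $\ip{\xi,\eta}_{\mathbb{C}^{n\times n}} = \Re(tr(\xi^*\eta))$. Since $\bmat{U & U_\perp}$ is unitary, this equals $\Re\left(tr\left(\bmat{\Omega & N \\ L & M}^*\bmat{H & K^* \\ K & 0}\right)\right) = \Re\left(tr(\Omega^* H) + tr(L^* K^*) + tr(N^* K)\right)$. The requirement that this vanish for all admissible $H = H^*$ and all $K \in \mathbb{C}^{(n-p)\times p}$ splits into two independent conditions. Requiring $\Re(tr(\Omega^* H)) = 0$ for all Hermitian $H$ forces the Hermitian part of $\Omega$ to vanish, i.e. $\Omega = -\Omega^*$ is skew-Hermitian; here I would use that $\{H : H = H^*\}$ and $\{\Omega : \Omega = -\Omega^*\}$ are orthogonal complements of each other inside $\mathbb{C}^{p\times p}$ under the real inner product, since $\Re(tr(\Omega^* H)) = \Re(tr((\Omega^*)^* H^*)) \overline{\phantom{x}}\!$... more directly, writing $\Omega = \Omega_{\mathrm{Herm}} + \Omega_{\mathrm{Skew}}$, one checks $\Re(tr(\Omega_{\mathrm{Skew}}^* H)) = 0$ automatically and $\Re(tr(\Omega_{\mathrm{Herm}}^* H))$ can be made nonzero unless $\Omega_{\mathrm{Herm}} = 0$. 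Requiring $\Re(tr(L^* K^*) + tr(N^* K)) = 0$ for all $K$ forces $N = -L^*$ (take $K$ ranging over a real basis of $\mathbb{C}^{(n-p)\times p}$), while $M$ is left completely free. This yields precisely the stated form $\bmat{\Omega & -L^* \\ L & M}$ with $\Omega = -\Omega^*$.

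Finally, I would close the argument with a dimension count, mirroring the end of the proof of Theorem \ref{thm:tangent_space}: the set described on the right-hand side of \eqref{eqn:normal_space} has real dimension $p^2$ (for skew-Hermitian $\Omega$: $p$ imaginary diagonal entries plus $2 \cdot \frac{p(p-1)}{2}$ off-diagonal) plus $2p(n-p)$ (for $L$) plus $2(n-p)^2$ (for $M$), totalling $2n^2 - (2np - p^2) = 2n^2 - \dim T_X\mathcal{H}^{n,p}_+$, which is the correct dimension of the normal space. Combined with the inclusion already established (every such $\xi$ is orthogonal to every $\eta$), this forces equality.

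I expect the only mild subtlety — and the main thing to get right rather than a genuine obstacle — is the bookkeeping with the \emph{real} inner product \eqref{real-inner-product} when extracting the conditions on $\Omega$ and $N$: because we work over $\mathbb{R}$, one must be careful that "$\Re(tr(\Omega^* H)) = 0$ for all Hermitian $H$" really does kill the full Hermitian part of $\Omega$ (testing against $H = E_{ii}$ and $H = E_{ij} + E_{ji}$ handles the real part, and $H = \mathbbm{i}(E_{ij} - E_{ji})$ the imaginary off-diagonal part), and likewise that the coupling between $N$ and $L$ pins down $N = -L^*$ exactly. Everything else is the same orthonormal-basis-plus-dimension-count template already used twice in this section.
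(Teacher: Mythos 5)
Your proposal is correct and follows essentially the same template as the paper's proof: pass to the orthonormal frame $\bmat{U & U_\perp}$, reduce the orthogonality condition to an inner product of the $2\times 2$ block matrices, and confirm by a real dimension count. The minor variant is that you solve the orthogonality conditions to \emph{derive} the constraints $\Omega=-\Omega^*$ and $N=-L^*$ (which in principle makes the dimension count redundant), whereas the paper only \emph{verifies} inclusion and leans on the dimension count for the reverse containment; since you do both, the net work is the same. One small slip worth fixing: the trace expansion should be
\[
\Re\!\left(tr\!\left(\bmat{\Omega & N \\ L & M}^{\!*}\bmat{H & K^* \\ K & 0}\right)\right) = \Re\bigl(tr(\Omega^* H) + tr(L^* K) + tr(N^* K^*)\bigr),
\]
not $\Re\bigl(tr(\Omega^*H) + tr(L^*K^*) + tr(N^*K)\bigr)$ — as written the products $L^*K^*$ and $N^*K$ have incompatible dimensions. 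Because the real inner product is invariant under conjugate transpose, the corrected expression still forces $N = -L^*$ by the same argument, so your conclusion stands.
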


\begin{proof}
First we show that every vector in (\ref{eqn:normal_space}) is orthogonal to $T_X\mathcal{H}^{n,p}_+$. 
Since $U$ is orthonormal, we only need to show that $\left\langle \bmat{H & K^* \\ K & 0}  ,  \bmat{\Omega &-L^*\\L & M}  \right\rangle_{\mathbb{C}^{n\times n}}= 0$ for all $H,K,\Omega, L$ and $M$ defined in  Theorem \ref{thm:tangent_space}  and Lemma \ref{thm:normal_space}. Indeed we have
 \begin{eqnarray*}
 	\left\langle \bmat{H & K^* \\ K & 0}  ,  \bmat{\Omega &-L^*\\L & M}  \right\rangle_{\mathbb{C}^{n\times n}} &=&  \langle \Omega, H \rangle_{\mathbb{C}^{n\times n}} - \langle L^*, K^* \rangle _{\mathbb{C}^{n\times n}}+ \langle L, K \rangle_{\mathbb{C}^{n\times n}} \\
 	&=& \langle \Omega ,H \rangle_{\mathbb{C}^{n\times n}} = 0.
 \end{eqnarray*}

 Next, we count the real dimension of $N_X\mathcal{H}^{n,p}_+$. Remember that a skew-Hermitian matrix has purely imaginary numbers on its diagonal entries, and $\omega_{ij} = - \overline\omega_{ji}$ on its off diagonal entries. So the number of degree of freedoms in $\Omega$ is $p+2\times \frac{p(p-1)}{2} = p^2$. The number of degree of freedoms in $L$ is $2\times p(n-p)$, and the number of degree of freedoms in $M$ is $2\times (n-p)^2$. So, the dimension of $N_X\mathcal{H}^{n,p}_+$ is $2n^2 + p^2 -2np$.  This gives us the desired dimension since the sum of the dimension of the tangent space and its normal space should be $2 n^2$. 
\end{proof}

The orthogonal projection from $\mathbb{C}^{n\times n}$ onto $T_X\mathcal{H}^{n,p}_+$ is given in the following theorem.
\begin{thm}
Let $X = YY^*=U \Sigma U^*$ be the compact SVD for $X\in \mathcal{H}^{n,p}_+$
with $Y\in\mathbb C^{n\times p}_*$.
Let $Z \in \mathbb{C}^{n\times n}$. Then the operator $P_X^t$ defined below is the orthogonal projection onto $T_X\mathcal{H}^{n,p}_+$:
\begin{eqnarray}
    P_X^t(Z) &=& \frac{1}{2}\left(P_Y(Z+Z^*)P_Y + P_Y^\perp(Z+Z^*)P_Y + P_Y(Z+Z^*)P_Y^\perp\right)\notag \\
    &=& \frac{1}{2}\left(P_U(Z+Z^*)P_U + P_U^\perp(Z+Z^*)P_U + P_U(Z+Z^*)P_U^\perp\right) \label{eqn:projection_tangent_space}\\
    &=&  \bmat{U &U_\perp} \bmat{U^*\frac{(Z+Z^*)}{2}U & U^*\frac{(Z+Z^*)}{2}U_\perp \notag \\
		U_\perp^*\frac{(Z+Z^*)}{2}U & 0}\bmat{U^*\\U_\perp^*}, \notag
\end{eqnarray}
where $P_Y = Y(Y^*Y)^{-1}Y^*$, $P_Y^\perp = I - P_Y = P_{Y_\perp}$, $P_U = UU^*$ and $P_U^\perp = I -P_U = P_{U_\perp}$. 
\begin{proof}
	First, observe that 
	\begin{eqnarray*}
		P_X^t(Z) &=& \bmat{P_Y &P_{Y_\perp}}\bmat{\frac{Z+Z^*}{2} &\frac{Z+Z^*}{2} \\\frac{Z+Z^*}{2}&0}\bmat{P_Y \\P_{Y_\perp}}\\
		&=& \bmat{U &U_\perp} \bmat{U^*\frac{(Z+Z^*)}{2}U & U^*\frac{(Z+Z^*)}{2}U_\perp \\
		U_\perp^*\frac{(Z+Z^*)}{2}U & 0}\bmat{U^*\\U_\perp^*}  
	\end{eqnarray*}
	is a tangent vector at $X$. So it suffices to show that $Z - P_X^t(Z)$ is a normal vector. 
	Write $Z$ as $Z = P_YZP_Y + P_YZP_{Y_\perp} + P_{Y_\perp}Z P_Y + P_{Y_\perp}ZP_{Y_\perp} = \bmat{P_Y&P_{Y_\perp}}\bmat{Z&Z \\Z&Z}\bmat{P_Y \\P_{Y_\perp}}$ . Then we have 
	\begin{eqnarray*}
	Z - P_X^t(Z) &=& \bmat{P_Y &P_{Y_\perp}} \bmat{\frac{Z-Z^*}{2}&\frac{Z-Z^*}{2}\\\frac{Z-Z^*}{2}&Z}\bmat{P_Y \\P_{Y_\perp}} \\ 
	&=& \bmat{U &U_\perp} \bmat{U^*\frac{(Z-Z^*)}{2}U & U^*\frac{(Z-Z^*)}{2}U_\perp\\
		U_\perp^*\frac{(Z-Z^*)}{2}U & U_\perp^*ZU_\perp}\bmat{U^* \\ U_\perp^*}
	\end{eqnarray*}
	Hence, $Z - P_X^t(Z)$ is a normal vector, which completes the proof.
\end{proof}
\end{thm}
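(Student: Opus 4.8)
The plan is to invoke the standard characterization of an orthogonal projection relative to an orthogonal direct sum. Lemma~\ref{thm:normal_space} already shows that $N_X\mathcal{H}^{n,p}_+$ is the orthogonal complement of $T_X\mathcal{H}^{n,p}_+$ in $\mathbb{C}^{n\times n}$ (their dimensions $2np-p^2$ and $2n^2+p^2-2np$ sum to $2n^2$), so $\mathbb{C}^{n\times n}=T_X\mathcal{H}^{n,p}_+\oplus N_X\mathcal{H}^{n,p}_+$ orthogonally. Consequently, an $\mathbb{R}$-linear map $P$ equals the orthogonal projection onto $T_X\mathcal{H}^{n,p}_+$ as soon as $P(Z)\in T_X\mathcal{H}^{n,p}_+$ and $Z-P(Z)\in N_X\mathcal{H}^{n,p}_+$ for every $Z\in\mathbb{C}^{n\times n}$. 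The map $P_X^t$ is visibly $\mathbb{R}$-linear in $Z$ (it is built from $Z\mapsto Z+Z^*$ and fixed left/right multiplications), so the entire proof reduces to verifying these two membership statements, after first checking that the three displayed formulas for $P_X^t(Z)$ really do coincide.

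For the coincidence of the formulas: since $X=YY^*=U\Sigma U^*$ with $\Sigma$ invertible, the columns of $Y$ and of $U$ span the same $p$-dimensional subspace, hence $P_Y=Y(Y^*Y)^{-1}Y^*=UU^*=P_U$ and $P_Y^\perp=P_U^\perp$, which gives the first equality. The second equality is obtained by expanding the block product in the first displayed line using $P_U=UU^*$, $P_{U_\perp}=U_\perp U_\perp^*$ and $U^*U_\perp=0$; it collapses to the $\begin{bmatrix}U&U_\perp\end{bmatrix}$-sandwiched block matrix written on the third line, with blocks $U^*\frac{Z+Z^*}{2}U$, $U^*\frac{Z+Z^*}{2}U_\perp$, $U_\perp^*\frac{Z+Z^*}{2}U$ and $0$.

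For the two membership statements I would work in the orthonormal basis furnished by the unitary matrix $\begin{bmatrix}U&U_\perp\end{bmatrix}$. The third expression for $P_X^t(Z)$ has $(1,1)$-block $U^*\frac{Z+Z^*}{2}U$, which is Hermitian, and a zero $(2,2)$-block, so by Theorem~\ref{thm:tangent_space} it lies in $T_X\mathcal{H}^{n,p}_+$. Subtracting, and writing $Z=\begin{bmatrix}U&U_\perp\end{bmatrix}\begin{bmatrix}U^*ZU&U^*ZU_\perp\\U_\perp^*ZU&U_\perp^*ZU_\perp\end{bmatrix}\begin{bmatrix}U^*\\U_\perp^*\end{bmatrix}$, one finds that $Z-P_X^t(Z)$ has $(1,1)$-block $U^*\frac{Z-Z^*}{2}U$ (skew-Hermitian), $(2,1)$-block $L:=U_\perp^*\frac{Z-Z^*}{2}U$, $(1,2)$-block $U^*\frac{Z-Z^*}{2}U_\perp$, which equals $-L^*$ because $(\frac{Z-Z^*}{2})^*=-\frac{Z-Z^*}{2}$, and $(2,2)$-block $U_\perp^*ZU_\perp$, an unconstrained $(n-p)\times(n-p)$ matrix; by Lemma~\ref{thm:normal_space} this is exactly the form of a normal vector, so $Z-P_X^t(Z)\in N_X\mathcal{H}^{n,p}_+$. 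Together with $\mathbb{R}$-linearity and the orthogonal splitting, this identifies $P_X^t$ as the orthogonal projection onto $T_X\mathcal{H}^{n,p}_+$.

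I do not expect a genuine obstacle: the argument is essentially bookkeeping once Theorem~\ref{thm:tangent_space} and Lemma~\ref{thm:normal_space} are in hand. The only place needing mild care is the interaction of the Hermitian/skew-Hermitian splitting $Z=\frac{Z+Z^*}{2}+\frac{Z-Z^*}{2}$ with the $\begin{bmatrix}U&U_\perp\end{bmatrix}$ block decomposition — specifically, confirming that the off-diagonal blocks of $Z-P_X^t(Z)$ are negatives of each other's adjoints, so that $Z-P_X^t(Z)$ genuinely matches the normal-space form; everything else follows from unitarity of $\begin{bmatrix}U&U_\perp\end{bmatrix}$.
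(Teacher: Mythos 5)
Your proposal is correct and follows essentially the same route as the paper: both establish the result by showing $P_X^t(Z)$ has the tangent-space block form of Theorem~\ref{thm:tangent_space} while the residual $Z - P_X^t(Z)$ has the normal-space block form of Lemma~\ref{thm:normal_space}, working in the $\bmat{U & U_\perp}$ basis. You are merely more explicit than the paper about two book-keeping points it leaves implicit — that $P_Y = P_U$ because $Y$ and $U$ have the same column space, and that the off-diagonal blocks of the residual are $-L^*$ and $L$ because $\frac{Z-Z^*}{2}$ is skew-Hermitian.
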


\begin{rem}
We can write $P^t_X = P^s_X+P^p_X$ by introducing the two operators
\begin{subequations}\label{eqn:projection_sp}
\begin{eqnarray}
    &&P_X^s:  Z \mapsto P_U\frac{Z+Z^*}{2}P_U\\ 
    &&P_X^p:  Z \mapsto P_{U_\perp}\frac{Z+Z^*}{2}P_U+ P_U\frac{Z+Z^*}{2}P_{U_\perp}
\end{eqnarray}
\end{subequations}
\end{rem}

\subsection{A retraction by projection to the embedded manifold}
A retraction is essentially a first-order approximation to the exponential map; see \cite[Def.~4.1.1]{absil_optimization_2008}.
\begin{defn}[\protect{\cite[Def.~4.1.1]{absil_optimization_2008}}] A retraction on a manifold $\mathcal{M}$ is a smooth mapping $R$ from the tangent bundle $T\mathcal{M}$ onto $\mathcal{M}$ with the following properties. Let $R_x$ denote the restriction of $R$ to $T_x\mathcal{M}$.
\begin{enumerate}
    \item $R_x(0_x) = x$, where $0_x$ denotes the zero element of $T_x\mathcal{M}$. 
    \item With the canonical identification $T_{0_x} T_x\mathcal{M} \equiv T_x\mathcal{M}$, $R_x$ satisfies 
    \[
        \D R_x(0_x) = \mbox{id}_{T_x\mathcal{M}},
    \]
    where $\mbox{id}_{T_x\mathcal{M}}$ denotes the identity mapping on $T_x\mathcal{M}.$
\end{enumerate}

\end{defn}

Suppose  $\mathcal{M}$ is an embedded submanifold of a Euclidean space $\mathcal{E}$, then by 
\cite[Props.~3.2 and 3.3]{absil_projection-like_2012}, the mapping $R$ from the tangent bundle $T\mathcal{M}$ to the manifold $\mathcal M$ defined by
\begin{equation}\label{eqn:embedded_retraction_by_projection}
R : \begin{cases} T\mathcal{M} \rightarrow \mathcal{M}\\ (x, u ) \mapsto P_{\mathcal{M}}(x+u) \end{cases}
\end{equation}
is a retraction, where $P_{\mathcal{M}}$ is the orthogonal projection onto the manifold $\mathcal{M}$ with respect to the Euclidean distance, that is,  the closest point.
In our case $\mathcal{M} = \mathcal{H}^{n,p}_+$ and $\mathcal{E} = \mathbb{C}^{n\times n}$. 
Hence, a retraction on $\mathcal{H}^{n,p}_+$ is defined by the truncated SVD:
\begin{equation*}
    R_X(Z) := P_{\mathcal{H}^{n,p}_+}(X+Z)=  \sum_{i=1}^p \sigma_i(X+Z) v_i v_i^*,
\end{equation*}
where $v_i $ is the singular vector of $X+Z$ corresponding to the $i$th largest singular value $\sigma_i(X+Z)$. 

Let $X =U \Sigma U^*\in \mathcal{H}^{n,p}_+$ be the compact SVD and let $Z =  \bmat{U & U_\perp}\bmat{H & K^* \\ K & 0} \bmat{U^* \\ U_\perp^*}   \in T_X \mathcal{H}^{n,p}_+$. Then 
\begin{equation}\label{XplusZ}
X+Z = \bmat{U & U_\perp}\bmat{H+\Sigma & K^* \\ K & 0} \bmat{U^* \\ U_\perp^*} = U(H+\Sigma)U^* + U_\perp K U^* + UK^*U_\perp^* .
\end{equation}

Consider the compact QR factorization of $U_\perp K= Q_K R_K$
where $Q_K$ is $n\times p $ and $R_K$ is $p\times p $. 
Then (\ref{XplusZ}) becomes
\begin{equation}\label{qr}
	X+ Z = U(H + \Sigma ) U^* + Q_KR_K U^*+  (Q_K R_K U^*)^*=\bmat{U & Q_K} \bmat{H+\Sigma & R_K^* \\ R_K &0}\bmat{U^*\\ Q_K^*}. 
\end{equation}
Now notice that $\bmat{H+\Sigma & R_K^* \\ R_K &0}$ from the RHS of (\ref{qr}) is a small  $2p \times 2p$ Hermitian matrix. We can therefore efficiently compute its SVD as
\begin{equation}\label{fast_svd}
	\bmat{H+\Sigma & R_K^* \\ R_K &0} = \bmat{V_1 & V_2} \bmat{S_1 & 0 \\ 0 & S_2}\bmat{V_1^*\\V_2^*},
\end{equation}
where $S_1$ and $S_2$ are $p\times p $ diagonal matrices that contain the singular values of  $\bmat{H+\Sigma & R_K^* \\ R_K &0}  $ in descending order. The matrices $V_1$ and $V_2 $ are $2 p \times p$ contain the corresponding singular vectors.  Combining (\ref{fast_svd}) and (\ref{qr}), we can write $X+Z $ as 
\begin{equation}\label{svdofXplusZ}
	X + Z  =\bmat{U & Q_K}  \bmat{V_1 & V_2} \bmat{S_1 & 0 \\ 0 & S_2}\bmat{V_1^*\\V_2^*}\bmat{U^*\\ Q_K^*}
\end{equation}
with $\bmat{U&Q_K}\bmat{V_1&V_2}$ a unitary matrix. So (\ref{svdofXplusZ}) is the SVD of $X+Z$ with singular values in descending order. The orthogonal projection of $X+Z$ onto the manifold $\mathcal{H}^{n\times p}_+$ is therefore given by 
\[
P_{\mathcal{H}^{n, p}_+} (X+Z) = (\bmat{U & Q_K} V_1)  S_1  (\bmat{U & Q_K} V_1) ^*.
\]

\subsection{Vector transport}
The vector transport is a mapping that transports a tangent vector from one tangent space to another tangent space.

\begin{defn}[\protect{\cite[definition 8.1.1]{absil_optimization_2008}}]
A vector transport on a manifold $\mathcal{M}$ is a smooth mapping
\[
    T\mathcal{M}\oplus T\mathcal{M} \rightarrow T\mathcal{M}:(\eta_x,\xi_x)\mapsto \mathcal{T}_{\eta_x}(\xi_x)\in T\mathcal{M}
\]
satisfying the following properties for all $x\in \mathcal{M}$:
\begin{enumerate}
    \item (Associated retraction) There exists a retraction $R$, called the retraction associated with $\mathcal{T}$, such that the following diagram commutes
    \[ \begin{tikzcd}
(\eta_x,\xi_x) \arrow{r}{\mathcal{T}} \arrow[swap]{d}{} & \mathcal{T}_{\eta_x}(\xi_x) \arrow{d}{\Pi} \\
\eta_x \arrow{r}{R}& \Pi(\mathcal{T}_{\eta_x}(\xi_x))
\end{tikzcd}
\]
where $\Pi(\mathcal{T}_{\eta_x}(\xi_x))$ denotes the foot of the tangent vector $\mathcal{T}_{\eta_x}(\xi_x)$.
    
    \item (Consistency) $\mathcal{T}_{0_x}\xi_x= \xi_x$ for all $\xi_x \in T_x\mathcal{M} $;
    
    \item (Linearity) $\mathcal{T}_{\eta_x}(a \xi_x + b \zeta_x) = a\mathcal{T}_{\eta_x}( \xi_x) + b\mathcal{T}_{\eta_x}( \zeta_x)$.
\end{enumerate}
\end{defn}
Let $\xi_X, \eta_X \in T_X\mathcal{H}^{n,p}_+$ and let $R$ be a retraction on $\mathcal{H}^{n,p}_+$. By \cite[section 8.1.3]{absil_optimization_2008}, the projection of one tangent vector onto another tangent space is a vector transport,
\begin{equation}\label{eqn:vector_transport_embedded}
	\mathcal{T}_{\eta_X} \xi_X := P^t_{R_X(\eta_X)} \xi_X, 
\end{equation}
where $P_Z^t$ is the projection operator onto $T_Z\mathcal{H}^{n,p}_+$.  Namely, we first apply retraction to $X+\eta_X$ to arrive at a new point on the manifold, then we project the old tangent vector $\xi_X$ onto the tangent space at that new point.

Now, we derive the expression of the vector transport (\ref{eqn:vector_transport_embedded}) in closed form. Given $X_1 = U_1 \Sigma_1 U_1^* \in \mathcal{H}^{n,p}_+$, the retracted point $X_2 = U_2 \Sigma_2 U_2^* \in \mathcal{H}^{n,p}_+$,  and a tangent vector $\nu_1 = \bmat{U_1 & {U_1}_\perp}\bmat{H_1 & K_1^* \\ K_1 & 0} \bmat{U_1^* \\ {U_1}_\perp^* } = U_1H_1U_1^* + {U_1}_\perp K_1 U_1^* + U_1 K_1^* {U_1}_\perp^* \in T_{X_1}\mathcal{H}^{n,p}_+ $, we need to determine $H_2$ and $K_2$ of the transported tangent vector $\nu_2 = \bmat{U_2 & {U_2}_\perp}\bmat{H_2 & K_2^* \\ K_2 & 0} \bmat{U_2^* \\ {U_2}_\perp^* } \in T_{X_2}\mathcal{H}^{n,p}_+ $.

By the projection formula (\ref{eqn:projection_tangent_space}), we have 
\[
\nu_2 = P_{X_2}^t (\nu_1) = \bmat{U_2 & {U_2}_\perp } \bmat{U_2^*\nu_1 U_2 & U_2^*\nu_1{U_2}_\perp \\ {U_2}_\perp^* \nu_1 U_2 & 0} \bmat{U_2^* \\ {U_2}_\perp^*}. 
\]
Hence, $H_2$ and $K_2$ satisfy
\begin{eqnarray*}
	H_2 &=& U_2^*\nu_1 U_2 = U_2^* U_1 H_1 U_1^* U_2 + U_2^* {U_1}_\perp K_1 U_1^* U_2+ U_2^* U_1 K_1^* {U_1}_\perp^* U_2,\\
	K_2 &=& {U_2}_\perp^* \nu_1 U_2 =  {U_2}_\perp^* U_1H_1U_1^* U_2+  {U_2}_\perp^* {U_1}_\perp K_1 U_1^* U_2+  {U_2}_\perp^*  U_1 K_1^* {U_1}_\perp^* U_2 . 
\end{eqnarray*}

In implementation, we observe a better numerical performance if we only keep the first term in the above sum of $H_2$ and the second term of $K_2$. That is, we define $H_2$ and $K_2$ by
\begin{subequations}
    \label{embedded-simple-vector-transport}
\begin{eqnarray}
	H_2 &=& U_2^* U_1 H_1 U_1^* U_2 \\
	K_2 &=& {U_2}_\perp^* {U_1}_\perp K_1 U_1^* U_2. 
\end{eqnarray}
One can verify that the vector transport in~\eqref{embedded-simple-vector-transport} is a vector transport by parallelization in~\cite{HAG2017VT}. In numerical tests we have observed that the nonlinear conjugate gradient method using this simpler version of vector transport is usually more efficient. 
So in all our numerical tests, we do not use the more complicated  \eqref{eqn:vector_transport_embedded},
instead we use the following simplified vector transport:
\begin{enumerate}
    \item Given  $X_1 = U_1 \Sigma_1 U_1^* \in \mathcal{H}^{n,p}_+$, and $\eta_{X_1}, \xi_{X_1}\in T_{X_1}\mathcal{H}^{n,p}_+$, first compute $$X_2=R_{X_1}(\eta_{X_1}):=P_{\mathcal{H}^{n,p}_+}(X_1+\eta_{X1})=U_2 \Sigma_2 U_2^*\in \mathcal{H}^{n,p}_+.$$
    \item Let $\xi_{X_1} = \bmat{U_1 & {U_1}_\perp}\bmat{H_1 & K_1^* \\ K_1 & 0} \bmat{U_1^* \\ {U_1}_\perp^* } \in T_{X_1}\mathcal{H}^{n,p}_+$, then compute
    \begin{equation}
    \mathcal{T}_{\eta_{X_1}} \xi_{X_1}=\bmat{U_2 & {U_2}_\perp}\bmat{H_2 & K_2^* \\ K_2 & 0} \bmat{U_2^* \\ {U_2}_\perp^* } \in T_{X_2}\mathcal{H}^{n,p}_+.
\end{equation}
\end{enumerate}
\end{subequations}

\subsection{Riemannian Hessian operator}

For a real-valued function $f(X)$ defined on the Euclidean space $\mathbb{C}^{n\times n}$,
 the Fr\'{e}chet Hessian $\nabla^2 f(X)$ is defined in the sense of the Fr\'{e}chet derivative; see Appendix \ref{frechet_hessian} for the definition of the Fr\'{e}chet Hessian.  The \textit{Riemannian Hessian} of $f$ at $X$, denoted by $\Hess f(X)$ is related to, but different from its  Fr\'{e}chet Hessian. 
 
 \begin{defn}[\protect{\cite[definition 5.5.1]{absil_optimization_2008}}]Given a real-valued function $f$ on a Riemannian manifold $\mathcal{M}$, the \textit{Riemannian Hessian} of $f$ at a point $x$ in $\mathcal{M}$ is the linear mapping $\Hess f(x)$ of $T_x\mathcal{M}$ into itself defined by 
 \[
 \Hess f(x)[\xi_x] = \nabla_{\xi_x} \grad f
 \]
 for all $\xi_x$ in $T_x\mathcal{M}$, where $\nabla$ is the Riemannian connection on $\mathcal{M}$. 
 \end{defn}

The following proposition gives the Riemannian Hessian of $f$. 
The proof follows similar ideas  as in  \cite[Prop.~5.10]{vandereycken_riemannian_2010} and \cite[Prop.~2.3]{vandereycken_low-rank_2012} where a second-order retraction based on a simple power expansion is constructed. We will leave the outline of the proof in Appendix \ref{Appen:riemannian_hessian_embedded}. 

\begin{prop}\label{prop:embedded_hessian}
Let $f(X)$ be a real-valued function defined on $\mathcal{H}^{n,p}_+$. Let $X\in \mathcal{H}^{n,p}_+$ and $\xi_X \in T_X\mathcal{H}^{n,p}_+$. Then the Riemannian Hessian operator of $f$ at $X$ is given by 
\[
 \Hess f(X) [\xi_X] = P^t_X(\nabla^2 f(X)[\xi_X]) + P^p_X\left(\nabla f(X)(X^\dagger\xi_X^p)^* + (\xi_X^p X^\dagger)^*\nabla f(X)\right)
\]
 where  $\xi_X^s = P_X^s(\xi_X)$ and $\xi_X^p = P_X^p(\xi_X)$ and $P^t_X $ and $P^p_X$ are defined in \eqref{eqn:projection_sp}. 
\end{prop}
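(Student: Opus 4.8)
The plan is to derive the Riemannian Hessian from its defining formula $\Hess f(X)[\xi_X] = P^t_X(\D(\grad f)(X)[\xi_X])$, using a suitably chosen second-order retraction so that the connection term reduces to an ordinary directional derivative followed by projection. First I would recall that $\grad f(X) = P^t_X(\nabla f(X))$, so I need to differentiate the map $X \mapsto P^t_X(\nabla f(X))$ along a curve $t \mapsto \gamma(t)$ on $\mathcal{H}^{n,p}_+$ with $\gamma(0)=X$, $\gamma'(0)=\xi_X$; the cleanest choice is a second-order retraction whose curve has vanishing acceleration component in the tangent direction, which is exactly the power-expansion retraction from \cite{vandereycken_riemannian_2010, vandereycken_low-rank_2012} that the statement references. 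With that choice, $\Hess f(X)[\xi_X] = P^t_X\!\left(\frac{d}{dt}\big|_{t=0}\, P^t_{\gamma(t)}(\nabla f(\gamma(t)))\right)$.

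Next I would apply the product rule to split this derivative into two pieces: the term where the derivative hits $\nabla f$, giving $P^t_X(\nabla^2 f(X)[\xi_X])$ after the outer projection (here using that $P^t_X$ is idempotent and self-adjoint so $P^t_X P^t_X = P^t_X$); and the term where the derivative hits the projector $P^t_{\gamma(t)}$ evaluated at the fixed vector $\nabla f(X)$. For the second term I would use the explicit block form of $P^t_X$ from \eqref{eqn:projection_tangent_space}, written in terms of $P_U, P_{U_\perp}$, and differentiate the spectral projectors along $\gamma$. The key computational fact is that $\dt{0} P_{U(t)} = P_{U_\perp} \dot U U^* + U \dot U^* P_{U_\perp}$-type expressions, and that the derivative of $U(t)$ in the direction of $\xi_X$ is governed by the off-diagonal block $K$, i.e. by $\xi_X^p$, the "$P^p_X$-part" of $\xi_X$; concretely $\dot U = U_\perp K \Sigma^{-1} + (\text{tangential})$, and the tangential part of $\dot U$ contributes only terms killed by the outer $P^t_X$ or absorbed. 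Tracking these carefully should collapse the projector-derivative term into $P^p_X$ applied to $\nabla f(X)(X^\dagger \xi_X^p)^* + (\xi_X^p X^\dagger)^* \nabla f(X)$, where $X^\dagger = U\Sigma^{-1}U^*$ is the pseudoinverse; the $X^\dagger$ appears precisely because converting $K$ back to $\xi_X^p = U_\perp K U^* + U K^* U_\perp^*$ requires dividing by $\Sigma$.

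The main obstacle I expect is the bookkeeping in the second term: one must carefully identify which parts of $\dot\gamma(0)$ and of $\D P^t$ survive the outer projection $P^t_X$ and reorganize a sum of several $U, U_\perp, \Sigma, H, K$ blocks into the compact symmetric expression involving $X^\dagger$ and $\xi_X^p$. In particular one has to verify that the "$P^s_X$-part" $\xi_X^s$ (the $H$-block) makes no contribution to the curvature term — intuitively because moving along directions that only change $\Sigma$ and rotate within $\operatorname{range}(U)$ does not tilt the tangent space to first order in a way that is not already captured — and that only the range-changing component $\xi_X^p$ matters. I would handle this by computing $\dt{0}P_{U(t)}$ explicitly, substituting into the block form of $P^t$, and using $P^p_X$ to package the result; the second-order retraction guarantees no extra Christoffel-type terms appear. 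Since the full calculation is routine but lengthy, I would relegate it to Appendix \ref{Appen:riemannian_hessian_embedded} as indicated, presenting here only the reduction to the two terms and the statement of the key derivative identities.
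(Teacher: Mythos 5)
Your approach is genuinely different from the paper's. You take the classical Riemannian-submanifold route: start from $\Hess f(X)[\xi_X] = P^t_X\bigl(\D\grad f(X)[\xi_X]\bigr)$, apply the product rule to $\grad f(X) = P^t_X(\nabla f(X))$, and compute the derivative of the tangent-space projector $P^t$ in the direction $\xi_X$ (the Weingarten-map term). Carried through, this does reproduce the stated formula and has the virtue of delivering the \emph{operator} directly. The paper instead never differentiates $P^t$: it constructs an explicit second-order retraction with the power expansion $R^{(2)}_X(\xi_X)=X+\xi_X+\xi_X^p X^\dagger\xi_X^p+O(\|\xi_X\|^3)$, invokes \cite[Prop.~5.5.5]{absil_optimization_2008} to identify $\Hess f(X)$ with the Euclidean Hessian of the pullback $f\circ R^{(2)}_X$ at $0$, reads the quadratic form $g_X(\Hess f(X)[\xi_X],\xi_X)$ off the Taylor expansion, and then recovers the operator via a short polarization step (splitting the first-order term with a free constant $c$ and fixing $c=\tfrac12$ by self-adjointness). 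Your route avoids both the retraction construction and the polarization at the cost of the projector-derivative bookkeeping you rightly flag as the bulk of the work. One conceptual correction, though: the identity $\Hess f(X)[\xi_X]=P^t_X(\D\grad f(X)[\xi_X])$ holds for \emph{any} Riemannian submanifold of a Euclidean space by \cite[Prop.~5.3.2]{absil_optimization_2008} (cited in the paper's own appendix) and requires no retraction whatsoever; the second-order retraction is what powers the \emph{paper's} argument through Prop.~5.5.5, not yours. So the sentences about choosing the retraction so that ``the connection term reduces to an ordinary directional derivative followed by projection'' and about it guaranteeing ``no extra Christoffel-type terms appear'' are misplaced: the Christoffel/second-fundamental-form contribution is precisely $P^t_X\bigl((\D P^t)[\xi_X]\,\nabla f(X)\bigr)$, which your method computes rather than suppresses.
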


\section{The quotient geometry of $\mathbb{C}^{n\times p}_*/\mathcal{O}_p$ using three Riemannian metrics}\label{section:quotient_manifold}
Besides being regarded as an embedded manifold in $\mathbb C^{n\times n}$, $\mathcal{H}^{n,p}_+$ can also be viewed as a quotient set $\mathbb{C}^{n\times p}_*/\mathcal{O}_p$ since any $X\in \mathcal{H}^{n,p}_+$ can be written as $X = YY^*$ with $Y \in \mathbb{C}^{n\times p}_*$. But there is an ambiguity in $Y$ because such an $Y$ is not uniquely determined by $X$. We define an equivalence relation on $\mathbb{C}^{n\times p}_*$ through the smooth Lie group action of $\mathcal{O}_p$ on the manifold $\mathbb{C}^{n\times p}_*$:
\[
\begin{aligned}
     \mathbb{C}^{n\times p}_* \times \mathcal{O}_p &\rightarrow& \mathbb{C}^{n\times p}_* \\
    (Y,O) &\mapsto& YO.
\end{aligned}
\]
This action defines an equivalence relation on $\mathbb{C}^{n\times p}_*$ by setting $Y_1 \sim Y_2$ if there exists an $O \in \mathcal{O}_p$ such that $Y_1 = Y_2 O$. Hence we have constructed a quotient space $\mathbb{C}^{n\times p}_*/\mathcal{O}_p$ that removes this ambiguity. The set $\mathbb{C}^{n\times p}_*$ is called the \textit{total space} of $\mathbb{C}^{n\times p}_*/\mathcal{O}_p$. 

Denote the natural projection as
$$\pi: \mathbb{C}^{n\times p}_* \rightarrow \mathbb{C}^{n\times p}_*/\mathcal{O}_p.$$ 
For any $Y\in \mathbb C_*^{n\times p}$,  $\pi(Y)$ is an element in $\mathbb{C}^{n\times p}_* /\mathcal{O}_p$. We denote the equivalence class containing $Y$ as 
$$[Y]=\pi^{-1}(\pi(Y)) =\left\{ YO \vert O\in \mathcal{O}_p \right\}. $$

Define $$\begin{aligned} \beta : \mathbb{C}^{n\times p}_* & \rightarrow \mathcal{H}^{n, p }_+\\
 Y &\mapsto YY^*.\end{aligned}$$  Then $\beta$ is invariant under the equivalence relation $\sim$ and induces a unique function $\Tilde{\beta}$ on $\mathbb{C}^{n\times p}_*/\mathcal{O}_p$, called the projection of $\beta$, such that $\beta = \Tilde{\beta} \circ \pi$ \cite[section 3.4.2]{absil_optimization_2008}. One can easily check that $\tilde{\beta}$ is a bijection.
For any real-valued function $f$ defined on $\mathcal{H}^{n,p}_+$, there is a real-valued function $F$ defined on $\mathbb{C}^{n\times p}_*$ that induces $f$: for any $X = YY^* \in \mathcal{H}^{n,p}_+$, $F(Y) := f\circ \beta (Y) = f(YY^*)$. This is summarized in the diagram below:
\[
  \begin{tikzcd}[baseline=\the\dimexpr\fontdimen22\textfont2\relax]
     \mathbb{C}^{n\times p}_* \arrow[rd,"\beta:= \tilde{\beta}\circ \pi",dashrightarrow] \arrow[d,"\pi"] & & \\
     \mathbb{C}^{n\times p}_*/\mathcal{O}_p \arrow[r,leftrightarrow,"\tilde{\beta}"]  & \mathcal{H}^{n,p}_+ \arrow[r,"f"] & \mathbb{R}
  \end{tikzcd}
\]

The next theorem shows that $\mathbb{C}^{n\times p}_*/\mathcal{O}_p$ is a smooth manifold. 
 
\begin{thm}
The quotient space $\mathbb{C}^{n\times p}_*/\mathcal{O}_p$ is a quotient manifold over $\mathbb R$ of dimension  $2np - p^2$ and has a unique smooth structure such that the natural projection $\pi$ is a smooth submersion. 
\begin{proof}
The proof follows from Corollary 21.6 and Theorem 21.10 of \cite{lee_introduction_2012}. 
\end{proof}
\end{thm}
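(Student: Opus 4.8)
The plan is to deduce this from the Quotient Manifold Theorem (e.g.\ \cite[Thm.~21.10 and Cor.~21.6]{lee_introduction_2012}): if a Lie group $G$ acts smoothly, freely, and properly on a smooth manifold $M$, then the orbit space $M/G$ is a topological manifold of dimension $\dim M - \dim G$ and carries a \emph{unique} smooth structure for which the projection $\pi\colon M\to M/G$ is a smooth submersion. Here $M=\mathbb{C}^{n\times p}_*$, regarded as an open (hence embedded) submanifold of the real vector space $\mathbb{C}^{n\times p}\cong\mathbb{R}^{2np}$, since the rank-deficient locus is closed; and $G=\mathcal{O}_p$, the unitary group $U(p)$, regarded as a real Lie group. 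Thus $\dim_{\mathbb R}M=2np$ and $\dim_{\mathbb R}U(p)=p^2$ (its Lie algebra is the space of skew-Hermitian $p\times p$ matrices, of real dimension $p+2\cdot p(p-1)/2=p^2$), so the quotient has dimension $2np-p^2$ as claimed.

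It then remains to verify the three hypotheses for the action $(Y,O)\mapsto YO$. Smoothness is immediate: written in real and imaginary parts, the entries of $YO$ are polynomials in those of $Y$ and $O$. Freeness: if $YO=Y$ with $Y$ of full column rank $p$, then $Y(O-I_p)=0$ and left-invertibility of $Y$ forces $O=I_p$. Properness is the only condition with any content, but it holds automatically here because $U(p)$ is compact, and every continuous action of a compact group on a Hausdorff space is proper; equivalently, the map $U(p)\times\mathbb{C}^{n\times p}_*\to\mathbb{C}^{n\times p}_*\times\mathbb{C}^{n\times p}_*$, $(O,Y)\mapsto(YO,Y)$, is proper. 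With these three checks the theorem applies verbatim.

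I expect essentially no serious obstacle; the only points needing care are bookkeeping ones. One must consistently treat $\mathbb{C}^{n\times p}_*$ and $\mathcal{O}_p$ as \emph{real} manifolds, so that the dimension count gives $2np-p^2$ (this is the same subtlety flagged throughout Section~\ref{section:embedded_manifold}). One should also note that $\mathbb{C}^{n\times p}_*$ is connected --- for $p<n$ because the rank-deficient locus has real codimension at least $2(n-p+1)>1$, and for $p=n$ because $\Gl(n,\mathbb C)$ is connected --- so that ``dimension'' refers unambiguously to a single integer.

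As an alternative route worth recording as a remark, one can bypass the Quotient Manifold Theorem: transport a smooth structure to $\mathbb{C}^{n\times p}_*/\mathcal{O}_p$ through the bijection $\tilde\beta$ onto the embedded manifold $\mathcal{H}^{n,p}_+$ of Theorem~\ref{thm:submanifold_embedded}, then check directly that $\pi=\tilde\beta^{-1}\circ\beta$ is a submersion. This reduces to the surjectivity of $\mathrm D\beta(Y)[\Delta]=\Delta Y^*+Y\Delta^*$ onto $T_{YY^*}\mathcal{H}^{n,p}_+$, which is proved exactly as the corresponding step for $\eta$ in the proof of Theorem~\ref{thm:submanifold_embedded}; uniqueness of the structure then follows by the usual local-section argument. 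I would present the Quotient Manifold Theorem as the main argument and keep this as a secondary remark.
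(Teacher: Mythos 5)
Your proposal is correct and follows exactly the route the paper takes: it invokes the Quotient Manifold Theorem (\cite[Thm.~21.10, Cor.~21.6]{lee_introduction_2012}), which the paper's one-line proof simply cites. The value you add is making explicit the verification of the three hypotheses (smoothness, freeness, and properness via compactness of $U(p)$) and the real-dimension count $\dim_{\mathbb R}\mathbb{C}^{n\times p}_* - \dim_{\mathbb R}U(p) = 2np - p^2$, all of which the paper leaves to the reader.
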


The next theorem shows that $\mathcal{H}^{n,p}_+$ and $\mathbb{C}^{n\times p}_*/\mathcal{O}_p$ are essentially the same in the sense that there is a diffeomorphism between them. The proof uses the same technique in \cite[Prop.~A.7]{massart_quotient_2020}
\begin{thm}
The quotient manifold $\mathbb{C}^{n\times p}_*/\mathcal{O}_p$ is diffeomorphic to  $\mathcal{H}^{n,p}_+$ under $\tilde{\beta}$.
\begin{proof}
Recall from Theorem \ref{thm:tangent_space}, any tangent vector in $T_{\beta(Y)}\mathcal{H}^{n,p}_+$ can be written as 
\[
    \zeta_{\beta(Y)} = YHY^* + Y_\perp KY^* + YK^*Y_\perp^*,
\]
where $Y_\perp$ has orthonormal columns. 
Let $V = YH/2 + Y_\perp K$, then $D\beta(Y)[V] = \zeta_{\beta(Y)}$. This implies that $\beta$ is a submersion. 

Now notice that $\pi = \Tilde{\beta}^{-1} \circ \beta$ and $\beta = \Tilde{\beta} \circ \pi$. By \cite[Prop.~6.1.2]{brickell_differentiable_1970}, we conclude that $\Tilde{\beta}^{-1} $ and $\Tilde{\beta}$ are both differentiable. So $\Tilde{\beta}$ is a diffeomorphism between $\mathbb{C}^{n\times p}_*/\mathcal{O}_p$ 
and $\mathcal{H}^{n,p}_+$.
\end{proof}
\end{thm}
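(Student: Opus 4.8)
The plan is to verify that $\tilde\beta$ is a smooth bijection whose inverse is also smooth; bijectivity has already been recorded above, so the content lies in the two smoothness claims, and the right tool is the universal property of surjective submersions: if $\phi=\psi\circ\sigma$ with $\sigma$ a surjective submersion and $\phi$ smooth, then $\psi$ is smooth.

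First I would show that $\beta\colon\mathbb C^{n\times p}_*\to\mathcal H^{n,p}_+$ is a surjective submersion. Surjectivity is immediate from $X=YY^*$. For the submersion property, compute the Fr\'echet differential at $Y$: for $V\in\mathbb C^{n\times p}\cong T_Y\mathbb C^{n\times p}_*$,
\[
\D\beta(Y)[V]=VY^*+YV^*.
\]
Comparing with \eqref{eqn:tangent_space}, we have $T_{YY^*}\mathcal H^{n,p}_+=\{\Delta YY^*+YY^*\Delta^*:\Delta\in\mathbb C^{n\times n}\}$, and since $\Delta\mapsto\Delta Y$ maps $\mathbb C^{n\times n}$ onto $\mathbb C^{n\times p}$ (as $Y$ has full column rank), this set equals $\{VY^*+YV^*:V\in\mathbb C^{n\times p}\}=\mathrm{range}\,\D\beta(Y)$. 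Hence $\D\beta(Y)$ is onto $T_{YY^*}\mathcal H^{n,p}_+$ for every $Y$, so $\beta$ is a submersion; concretely, for a generic tangent vector $YHY^*+Y_\perp KY^*+YK^*Y_\perp^*$ one may take $V=YH/2+Y_\perp K$ as its preimage, as stated.

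Next I would exploit the two commuting relations $\beta=\tilde\beta\circ\pi$ and $\pi=\tilde\beta^{-1}\circ\beta$. Applying the universal property with $\sigma=\pi$ (a smooth submersion, by the preceding theorem) and $\phi=\beta$ (smooth) shows $\tilde\beta$ is smooth; applying it with $\sigma=\beta$ (a surjective submersion, by the previous step) and $\phi=\pi$ (smooth) shows $\tilde\beta^{-1}$ is smooth. Therefore $\tilde\beta$ is a diffeomorphism — this is \cite[Prop.~6.1.2]{brickell_differentiable_1970} applied twice.

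The only delicate point is the submersion claim for $\beta$: one must correctly match $\mathrm{range}\,\D\beta(Y)$ with $T_{YY^*}\mathcal H^{n,p}_+$, bearing in mind that all spaces are over $\mathbb R$ and $\D\beta$ is the Fr\'echet derivative attached to the real inner product \eqref{real-inner-product}. A dimension count makes surjectivity transparent: $\dim\mathbb C^{n\times p}_*=2np$, while $\ker\D\beta(Y)=\{Y\Omega:\Omega=-\Omega^*\}$ has real dimension $p^2$, so $\dim\mathrm{range}\,\D\beta(Y)=2np-p^2=\dim\mathcal H^{n,p}_+$. As an alternative to the universal-property argument, the same kernel computation shows $\ker\D\beta(Y)=\ker\D\pi(Y)$ (both equal the vertical space $\mathcal V_Y$), whence $\D\tilde\beta(\pi(Y))$ is a linear isomorphism at every point; a bijective local diffeomorphism is a diffeomorphism.
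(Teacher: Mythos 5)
Your proposal is correct and follows essentially the same route as the paper: establish that $\beta\colon Y\mapsto YY^*$ is a surjective submersion (you give the same preimage $V=YH/2+Y_\perp K$ the paper uses, plus a redundant dimension count), then apply the universal property of surjective submersions twice via $\beta=\tilde\beta\circ\pi$ and $\pi=\tilde\beta^{-1}\circ\beta$ to get smoothness of $\tilde\beta$ and $\tilde\beta^{-1}$. The extra remarks (kernel identification with $\mathcal V_Y$, the bijective-local-diffeomorphism alternative) are sound but do not change the underlying argument.
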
 


\subsection{Vertical space, three Riemannian metrics and horizontal space}\label{Sec:Vertical space, three Riemannian metrics and horizontal space}

The equivalence class $[Y]$ is an embedded submanifold of  $\mathbb{C}^{n\times p}_*$(\cite[Prop.~3.4.4]{absil_optimization_2008}). The tangent space of $[Y] $ at $Y$ is therefore a subspace of $T_Y \mathbb{C}^{n\times p}_*$ called the \textit{vertical space} at $Y$ and is denoted by $\mathcal{V}_Y $. The following proposition characterizes $\mathcal{V}_Y$. 
 
 \begin{prop}
The vertical space at $Y\in [Y]=\left\{ YO \vert O\in \mathcal{O}_p \right\} $, which is the tangent space of $[Y] $ at $Y$ is 
\[
    \mathcal{V}_Y = \left\{Y\Omega\vert \Omega^*  = - \Omega, \Omega \in \mathbb{C}^{p\times p} \right\}. 
\]
\end{prop}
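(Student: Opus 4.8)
The plan is to compute the vertical space $\mathcal{V}_Y$ as the tangent space to the fiber $[Y] = \{YO : O \in \mathcal{O}_p\}$ at the point $Y$, using the fact that $[Y]$ is the orbit of $Y$ under the smooth action of the compact Lie group $\mathcal{O}_p$. First I would parametrize curves inside the fiber: take any smooth curve $t \mapsto O(t)$ in $\mathcal{O}_p$ with $O(0) = I_p$, so that $t \mapsto Y O(t)$ is a smooth curve in $[Y]$ through $Y$ at $t=0$. Differentiating at $t=0$ gives the tangent vector $Y O'(0)$. Since $\mathcal{O}_p$ is the unitary group (over $\mathbb{R}$), its Lie algebra — the tangent space at $I_p$ — is exactly the set of skew-Hermitian matrices $\{\Omega \in \mathbb{C}^{p\times p} : \Omega^* = -\Omega\}$; this follows by differentiating the defining relation $O(t)^* O(t) = I_p$ at $t=0$, which yields $\Omega^* + \Omega = 0$ with $\Omega = O'(0)$. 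Hence every tangent vector to $[Y]$ at $Y$ has the form $Y\Omega$ with $\Omega$ skew-Hermitian, establishing the inclusion $\mathcal{V}_Y \subseteq \{Y\Omega : \Omega^* = -\Omega\}$.

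For the reverse inclusion, I would note that conversely, given any skew-Hermitian $\Omega$, the matrix exponential $O(t) = \exp(t\Omega)$ defines a smooth curve in $\mathcal{O}_p$ with $O(0)=I_p$ and $O'(0) = \Omega$, so $t \mapsto Y\exp(t\Omega)$ is a curve in $[Y]$ realizing $Y\Omega$ as an element of $\mathcal{V}_Y$. This gives $\{Y\Omega : \Omega^* = -\Omega\} \subseteq \mathcal{V}_Y$, so the two sets coincide. Alternatively, one can finish by a dimension count: the map $\Omega \mapsto Y\Omega$ is injective since $Y$ has full column rank $p$, and the real dimension of the space of $p\times p$ skew-Hermitian matrices is $p^2$ (purely imaginary diagonal contributes $p$, and the $\binom{p}{2}$ strictly-upper entries are free complex numbers contributing $2 \cdot \frac{p(p-1)}{2} = p(p-1)$, for a total of $p^2$); this equals $\dim \mathcal{O}_p = \dim [Y]$, so the established inclusion of the full-dimensional subspace into $\mathcal{V}_Y$ must be an equality.

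I do not anticipate a genuine obstacle here — the statement is essentially the identification of the Lie algebra of $\mathcal{O}_p$ together with the observation that left translation by $Y$ on matrices is linear and injective on full-rank $Y$. The only point requiring a modicum of care is the insistence throughout the paper that $\mathbb{C}^{p\times p}$ is treated as a real vector space: one should make sure the differentiation of $O(t)^*O(t) = I_p$ is carried out consistently with the real structure, but since matrix multiplication, conjugate-transpose, and the product rule all behave as expected over $\mathbb{R}$, this causes no difficulty. I would keep the proof to the few lines of the curve-differentiation argument plus the one-line remark that $\exp(t\Omega) \in \mathcal{O}_p$ for skew-Hermitian $\Omega$ supplies surjectivity.
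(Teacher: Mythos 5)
Your proof is correct and follows essentially the same route as the paper's: identify $T_{I_p}\mathcal{O}_p$ with the skew-Hermitian matrices by differentiating $O(t)^*O(t)=I_p$, then left-multiply by $Y$. You just spell out the two inclusions (via $\exp(t\Omega)$ or a dimension count) a bit more explicitly than the paper, which states the equality directly.
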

\begin{proof}
The tangent space of $\mathcal{O}_p$ at $I_p$ is  $T_{I_p} \mathcal{O}_p = \{ \Omega: \Omega^* = - \Omega ,\Omega \in \mathbb{C}^{p\times p}\}$, which is also the set $\{\gamma'(0): \gamma \text{ is a curve in } \mathcal{O}_p, \gamma(0) = I_p\}$.  Hence $T_Y\left\{YO\vert O\in \mathcal{O}_p \right\} = \{ Y\gamma'(0): \gamma \text{ is a curve in } \mathcal{O}_p, \gamma(0) = I_p\} = \left\{Y\Omega\vert \Omega^*  = - \Omega, \Omega \in \mathbb{C}^{p\times p} \right\} $.
\end{proof}

 A \textit{Riemannian metric} $g$ is a smoothly varying inner product defined on the tangent space. That is, $g_Y(\cdot,\cdot)$ is an inner product on $T_Y\mathbb{C}^{n\times p}_*$. Once we choose a Riemannian metric $g$ for $\mathbb{C}^{n\times p}_*$, we can obtain the orthogonal complement in $T_Y \mathbb{C}^{n\times p}_*$ of $\mathcal{V}_Y$ with respect to the metric. In other words, we choose the \textit{horizontal distribution} as orthogonal complement w.r.t. Riemannian metric, see \cite[Section 3.5.8]{absil_optimization_2008}. This orthogonal complement to $\mathcal{V}_Y$ is called \textit{horizontal space} at $Y$ and is denoted by $\mathcal{H}_Y$. We thus have
\begin{equation}
\label{tangentspace-decomp}
    T_Y\mathbb{C}^{n\times p}_*= \mathcal{H}_Y \oplus \mathcal{V}_Y.
\end{equation}

Once we have the horizontal space, there exists a unique vector $\bar{\xi}_Y \in \mathcal{H}_Y$ that satisfies $\D\pi(Y)[\bar{\xi}_Y] = \xi_{\pi(Y)}$ for each $\xi_{\pi(Y)} \in T_{\pi(Y)}\mathbb{C}^{n\times p}_*/\mathcal{O}_p$. This  $\bar{\xi}_Y$ is called the \textit{horizontal lift} of $\xi_{\pi(Y)}$ at $Y$.

There exist more than one choice of Riemannian metric on $\mathbb{C}^{n\times p}_*$. Different Riemanian metrics do not affect the vertical space, but  generally result in different horizontal spaces. In this paper, we discuss three  Riemannian metrics on $\mathbb{C}^{n\times p}_*$ and study how each metric affects the convergence of Riemannian optimization algorithms.

\subsubsection{The  Bures-Wasserstein metric}
\label{sec-bwmetric}
The most straightforward choice of a Riemannian metric on $\mathbb{C}^{n\times p}_*$ is the canonical Euclidean inner product on $\mathbb{C}^{n\times p}$ defined by 
\[
    g_Y^1(A,B) := \ip{A,B}_{\mathbb{C}^{n\times p}} = \Re(tr(A^*B)), \quad \forall A,B\in T_Y\mathbb{C}^{n\times p}_* = \mathbb{C}^{n\times p}. 
\]

The metric $g^1$ is also called the  Bures-Wasserstein metric \cite{massart2019curvature} for the 
quotient manifold $\mathbb{C}^{n\times p}_*/\mathcal{O}_p$. On the other hand,
the following metric for Hermitian positive-definite matrices $\mathcal{H}^{n,n}_+$ \cite{oostrum2022bures, bhatia2019bures, han2021riemannian}
is also called the Bures-Wasserstein metric.
\begin{defn}[The Bures-Wasserstein metric for $\mathcal{H}^{n,n}_+$]\label{def:bw_metric}
Let $X \in \mathcal{H}^{n,n}_+$ and $A,B \in T_X \mathcal{H}^{n,n}_+$. Then 
\[
g^{\mbox{BW}}_{X}(A,B) := \frac{1}{2} \ip{\mathcal{L}_{X}(A),B},
\]
where $\mathcal{L}_X(A) = M$ solves the following Lyapunov equation
\begin{equation}\label{eqn:lyapunov_M}
X M + MX = A
\end{equation}
which has a unique solution provided $X$ is Hermitian  positive-definite. 
\end{defn}

Notice that it is not clear whether  Definition~\ref{def:bw_metric} can also apply to a low-rank matrix $X\in \mathcal{H}^{n,p}_+$. \textcolor{black}{In this subsection, we show how the metric $g^1$ can be used to generalize Definition~\ref{def:bw_metric} to Definition~\ref{def:pBW_metric}, which defines the Bures-Wasserstein metric in the low-rank case $\mathcal{H}^{n,p}_+$. This non-trivial generalization is presented as Theorem~\ref{lem:pBW_metric}. Although the theorem is not the primary focus of this paper, it is of interest to see how $g^1$ connects the Bures-Wasserstein metric on the quotient manifold to its counterpart on the embedded manifold. 
}


\begin{defn}[The Bures-Wasserstein metric on $\mathcal{H}^{n,p}_+$]\label{def:pBW_metric}
Let $A, B \in T_{YY^*}\mathcal{H}^{n,p}_+$, then by the 1-to-1 correspondence between $T_{YY^*}\mathcal{H}^{n,p}_+$ and the horizontal space $\mathcal{H}^1_Y$, there exist unique $\xi_Y, \eta_Y \in \mathcal{H}^1_Y$ such that   $A = Y\xi_Y^* + \xi_Y Y^*$ and $B = Y\eta_Y^* + \eta_Y Y^*$. 
\textcolor{black}{We define the Bures-Wasserstein metric at the low-rank $X = YY^*$ as}
    \begin{equation*}
        g_{YY^*}^{\mbox{BW}}(A,B) := g_Y^1(\xi_Y, \eta_Y).
    \end{equation*}
\end{defn}

\begin{lem}
    For any $A, B \in T_{X}\mathcal{H}^{n,p}_+$ with $X=YY^*$, there is a unique solution $M \in T_X\mathcal{H}^{n,p}_+$ satisfying both
\begin{equation}
\label{low-rank-lyapunov}
    Y^*X MY + Y^*MXY = Y^*AY
\end{equation}  and
\begin{equation}\label{eqn:gbw_equation2}
    g_{YY^*}^{\mbox{BW}}(A,B) = \frac{1}{2} \ip{M,B}_{\mathbb{C}^{n\times n}}.
\end{equation}
\end{lem}
\begin{proof}
Let $\xi_Y = Y(Y^*Y)^{-1} S + Y_\perp K \in \mathcal{H}^1_Y$ with $S^*=S$ be the unique horizontal vector such that $A = Y\xi_Y^* + \xi_Y Y^*$.  Let $Y = UR$ where $U$ has size $n$-by-$p$ with orthonormal columns and $R$ is an $p$-by-$p$ invertible matrix. 
Thus \eqref{low-rank-lyapunov} is equivalent to 
\begin{equation}\label{eqn:lyapunov_bw}
    RR^* (U^*MU) + (U^*MU) RR^* = RSR^{-1} + (R^*)^{-1} S R^*.
\end{equation}
Since $RR^*$ is positive definite, \eqref{eqn:lyapunov_bw} has a unique solution in $U^*MU$; see  Remark \ref{rem-Lyapunov} below, which can be written explicitly: 
\begin{equation}
    U^*MU = (R^*)^{-1}SR^{-1}. 
\end{equation}

Thus $M = \bmat{U& Y_\perp} \bmat{ (R^*)^{-1}SR^{-1} & K_M^* \\ K_M & 0 }\bmat{U^* \\ Y_\perp ^*}$, where $K_M$ is to be determined by the additional equation \eqref{eqn:gbw_equation2}. With  $ B = Y\eta_Y^* + \eta_Y Y^*$ we have,  
    \[
    \frac{1}{2}\ip{M,B}_{\mathbb{C}^{n\times n}} = \frac{1}{2}\ip{M,Y\eta_Y^* }_{\mathbb{C}^{n\times n}} +\frac{1}{2}\ip{M,  \eta_Y Y^*}_{\mathbb{C}^{n\times n}}=\ip{M Y,  \eta_Y}_{\mathbb{C}^{n\times p}}. 
    \] 
Thus in order for \eqref{eqn:gbw_equation2} to hold, $M$ needs to satisfy $MY = \xi_Y$. Recall that $ \xi_Y = Y (Y^*Y)^{-1} S + Y_\perp K = U(R^*)^{-1} S + Y_\perp K$. Thus $K_M$ needs to satisfy $Y_\perp K_M R = Y_\perp K$, which gives the unique $K_M = K R^{-1}$. 
\end{proof}

\begin{rem}
\label{rem-Lyapunov}
The solution $X$ to the Lyapunov equation $XE +EX = Z$ for a Hermitian $E$ is unique if $E$ is Hermitian positive-definite \cite[Section 2.2]{massart_quotient_2020}. Let $E = U\Lambda U^*$ be the SVD, then the Lyapunov equation  $XE +EX = Z$ becomes 
\begin{equation*}
     (U^*XU)\Lambda +\Lambda (U^*XU)  = U^*ZU, 
\end{equation*}
which gives the solution 
\begin{equation*}
    (U^*XU)_{i,j} = (U^*ZU)_{i,j}/(\Lambda_{i,i}+\Lambda_{j,j}).
\end{equation*}
\end{rem}

Now we can show  that Definition~\ref{def:bw_metric} generalizes  the Definition~\ref{def:pBW_metric} and defines the Bures-Wasserstein metric in the low-rank case $\mathcal{H}^{n,p}_+$ in the following theorem.
\begin{thm}[Equivalence of the two Bures-Wasserstein metrics] \label{lem:pBW_metric}
If $p=n$, then the Definition~\ref{def:pBW_metric} reduces to the Definition~\ref{def:bw_metric}.  
\end{thm}
\begin{proof} 
For the case $p=n$, $Y$ is invertible, thus 
\eqref{low-rank-lyapunov} is equivalent to the Lyapunov equation \eqref{eqn:lyapunov_M}. Therefore, the Definition \ref{def:pBW_metric}
indeed reduces to the Definition \ref{def:bw_metric} when $p=n$.
\end{proof}

The next proposition characterizes the horizontal space for metric $g^1$.  
\begin{prop}\label{prop:horizontal-space-1}
Under metric $g^1$, the horizontal space at $Y$ satisfies 
\begin{eqnarray*}
    \mathcal{H}^1_Y &=& \left\{Z\in \mathbb{C}^{n\times p}: Y^*Z =Z^*Y \right\} \\
                    &=& \left\{Y(Y^*Y)^{-1}S +Y_\perp K \vert S^* = S, S \in \mathbb{C}^{p\times p}, K \in \mathbb{C}^{(n-p)\times p} \right\},
\end{eqnarray*}
where $Y_\perp$ has orthonormal columns. 
\end{prop}
\begin{proof}
    The result of real case can be found in \cite{massart2019curvature} but the proof was omitted. For completeness, we outline the proof here.  $Z \in \mathbb C^ {n\times p}$ belongs to $\mathcal H ^1_Y$ if and only if  $Z$ is orthogonal to $\mathcal{V}_Y$ under the metric $g^1_Y$, i.e., $g^1_Y(Z,Y\Omega) = \ip{Z,Y\Omega}_{\mathbb{C}^{n\times p}}= \ip{Y^*Z,\Omega}_{\mathbb C^{n\times p}}=0, \forall \Omega 
 = - \Omega^*$. This is equivalent to $Y^*Z = Z^*Y$. The second equality can be obtained by writing any $Z\in \mathcal{H}_Y^1$ as $Z = Y(Y^*Y)^{-1} S + Y_\perp K$ as $Y(Y^*Y)^{-1}$ and $Y_\perp$ forms a basis for the column space of $\mathbb C^{n\times p}$, and verify that $S=S^*$
\end{proof}

\subsubsection{The second quotient metric}
Another Riemannian metric used in \cite{huang_solving_2017, huang2013optimization} is defined by 
\[
    g_Y^2(A,B) := \ip{AY^*,BY^*}_{\mathbb{C}^{n\times n}} = \Re(tr((Y^*Y)A^*B)),\quad \forall A,B\in T_Y\mathbb{C}^{n\times p}_* = \mathbb{C}^{n\times p}. 
\]

\begin{prop}
Under metric $g^2$, the horizontal space at $Y$ satisfies 
\begin{eqnarray*}
    \mathcal{H}^2_Y &=& \left\{Z\in \mathbb{C}^{n\times p}: (Y^*Y)^{-1}Y^* Z = Z^*Y(Y^*Y)^{-1}\right\} \\
                    &=& \left\{YS +Y_\perp K \vert S^* = S, S \in \mathbb{C}^{p\times p}, K \in \mathbb{C}^{(n-p)\times p} \right\}.
\end{eqnarray*}
\end{prop}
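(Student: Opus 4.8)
The plan is to characterize $\mathcal{H}^2_Y$ by two complementary arguments: first identify the orthogonal complement of $\mathcal{V}_Y$ with respect to $g^2$ by a direct computation, and then exhibit an explicit parametrization and check dimensions. For the first description, I would take an arbitrary $Z \in T_Y\mathbb{C}^{n\times p}_* = \mathbb{C}^{n\times p}$ and require $g_Y^2(Z, Y\Omega) = 0$ for every skew-Hermitian $\Omega \in \mathbb{C}^{p\times p}$. Unfolding the definition, $g_Y^2(Z,Y\Omega) = \Re\bigl(tr((Y^*Y)Z^*Y\Omega)\bigr)$; setting $N := (Y^*Y)^{-1}Y^*Z$, this equals $\Re\bigl(tr((Y^*Y) N^* (Y^*Y) \Omega)\bigr)$, which I would manipulate into the real trace inner product of $\Omega$ against a suitable matrix. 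Since $\Omega$ ranges over all skew-Hermitian matrices, the pairing vanishes for all such $\Omega$ precisely when that matrix is Hermitian, i.e.\ when $(Y^*Y)^{-1}Y^*Z$ (up to symmetrization by $Y^*Y$ on both sides) is Hermitian. Being careful with the $Y^*Y$ weights, this yields the stated condition $(Y^*Y)^{-1}Y^*Z = Z^*Y(Y^*Y)^{-1}$.

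For the second (parametrized) description, I would verify the inclusion $\{YS + Y_\perp K : S^*=S,\ K \in \mathbb{C}^{(n-p)\times p}\} \subseteq \mathcal{H}^2_Y$ by plugging $Z = YS + Y_\perp K$ into the condition just derived: using $Y^*Y_\perp = 0$ we get $(Y^*Y)^{-1}Y^*Z = (Y^*Y)^{-1}(Y^*Y)S = S$, and $Z^*Y(Y^*Y)^{-1} = S^*(Y^*Y)(Y^*Y)^{-1} + K^*Y_\perp^*Y(Y^*Y)^{-1} = S^* = S$, so the two sides agree. For the reverse inclusion I would use a real-dimension count: every $Z \in \mathbb{C}^{n\times p}$ decomposes uniquely as $Z = YM + Y_\perp K$ with $M = (Y^*Y)^{-1}Y^*Z \in \mathbb{C}^{p\times p}$ and $K \in \mathbb{C}^{(n-p)\times p}$, and the horizontal condition forces $M = M^*$; the set of such pairs has real dimension $p^2 + 2p(n-p) = 2np - p^2$, which matches $\dim T_{\pi(Y)}(\mathbb{C}^{n\times p}_*/\mathcal{O}_p)$, so the two sets coincide.

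Alternatively, and perhaps more cleanly, I would note that $g^2$ is related to $g^1$ by the change of variable / weighting: $g_Y^2(A,B) = g_Y^1(A(Y^*Y)^{1/2}, B(Y^*Y)^{1/2})$ in a suitable sense, or rather observe that $Z \in \mathcal{H}^2_Y$ iff $Z(Y^*Y)^{1/2}$ lies in a $g^1$-type orthogonal complement relative to the rescaled representative; this lets me transport the already-stated result for $\mathcal{H}^1_Y$ (the preceding proposition) to $\mathcal{H}^2_Y$. Concretely, since $g^2_Y(A,B) = \Re(tr((Y^*Y)A^*B))$, the $g^2$-orthogonality of $Z$ to $Y\Omega$ reads $\Re(tr(\Omega(Y^*Y)Z^*Y)) = 0$; because $\Omega$ is an arbitrary skew-Hermitian matrix, this is equivalent to $(Y^*Y)Z^*Y$ being Hermitian, i.e.\ $(Y^*Y)Z^*Y = Y^*Z(Y^*Y)$, which upon multiplying by $(Y^*Y)^{-1}$ on both sides gives the stated identity.

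The main obstacle I anticipate is purely bookkeeping: correctly tracking where the weight $Y^*Y$ sits so that the "Hermitian part" extracted from orthogonality against \emph{all} skew-Hermitian $\Omega$ lands in exactly the form $(Y^*Y)^{-1}Y^*Z = Z^*Y(Y^*Y)^{-1}$ rather than some conjugated variant; the key algebraic fact used repeatedly is that $\Re(tr(\Omega H)) = 0$ for all skew-Hermitian $\Omega$ if and only if $H$ is Hermitian, which follows because the skew-Hermitian and Hermitian matrices are orthogonal complements under $\langle A, B\rangle = \Re(tr(A^*B))$ and $tr(\Omega H) = \overline{tr(\Omega^* H^*)}$ manipulations. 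The dimension count then closes the argument with no further analytic input.
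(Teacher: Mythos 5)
Your proof is correct and uses the natural direct argument: identify the vertical space $\mathcal{V}_Y=\{Y\Omega:\Omega^*=-\Omega\}$, impose $g_Y^2(Z,Y\Omega)=0$ for all skew-Hermitian $\Omega$, use cyclicity of the trace to reduce this to $\Re\bigl(tr(\Omega\,(Y^*Y)Z^*Y)\bigr)=0$, and invoke the fact that a matrix pairs to zero against all skew-Hermitian matrices under $\Re(tr(\cdot\,\cdot))$ exactly when it is Hermitian; this gives $(Y^*Y)Z^*Y=Y^*Z(Y^*Y)$, i.e.\ $(Y^*Y)^{-1}Y^*Z=Z^*Y(Y^*Y)^{-1}$. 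The paper states this proposition without proof, treating it as a routine verification, and your argument (including the unique decomposition $Z=Y\,(Y^*Y)^{-1}Y^*Z+Y_\perp Y_\perp^*Z$ for the parametrized description, which makes the dimension count a sanity check rather than a necessity) is exactly the verification the authors have in mind.
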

\begin{proof}
    The proof follows
the same idea used in proving Proposition \ref{prop:horizontal-space-1}. 
\end{proof}

\subsubsection{The third quotient metric}
The third Riemannian metric for $\mathbb{C}^{n\times p}_*$ is motivated by the Riemannian metric of $\mathcal{H}^{n,p}_+$ and the diffeomorphism between $\mathbb{C}^{n\times p}_*/\mathcal{O}_p$ and $\mathcal{H}^{n,p}_+$. We know that $\beta$ is a submersion. Every tangent vector of $\mathcal{H}^{n,p}_+$  corresponds to a tangent vector of $\mathbb{C}^{n\times p}_*$. We can use the Riemannian metric of $\mathcal{H}^{n,p}_+$ and the correspondence of tangent vectors between $\mathcal{H}^{n,p}_+$ and $\mathbb{C}^{n\times p}_*$ to  define a Riemannian metric for $\mathbb{C}^{n\times p}_*$.
A natural first attempt would be to use
\[
    g_Y(A,B) := \ip{\D\beta(Y)[A],\D\beta(Y)[B]}_{\mathbb{C}^{n\times n}}=  \ip{YA^*+AY^*,YB^*+BY^*}_{\mathbb{C}^{n\times n}}, 
\]
which is however not a Riemannian metric because it is not positive-definite. To see this, notice that $\ker(\D\beta(Y)[\cdot]) = \mathcal{V}_Y$. Consider $C \neq 0 \in \mathcal{V}_Y$, then $g_Y^3(C,C) = 0$. 
To modify this definition for $g^3$, we can use the Riemannian metric $g^2$ and the decomposition $T_Y\mathbb{C}^{n\times p}_*= \mathcal{H}^2_Y \oplus \mathcal{V}_Y$, by which $A \in T_Y\mathbb{C}^{n\times p}_*$ can be uniquely decomposed as 
\[
    A = A^{\mathcal{V}} +A^{\mathcal{H}^2},
\]
where $A^{\mathcal{V}} \in \mathcal{V}_Y$ and $ A^{\mathcal{H}^2} \in \mathcal{H}^2_Y$. Now define $g^3$ as
\begin{eqnarray*}
    g^3_Y(A,B) &:=& \ip{\D\beta(Y)[A^{\mathcal{H}^2}],\D\beta(Y)[B^{\mathcal{H}^2}]}_{\mathbb{C}^{n\times n}} + g^2_Y\left(A^\mathcal{V},B^\mathcal{V}\right) \\ 
    &=& \ip{\D\beta(Y)[A],\D\beta(Y)[B]}_{\mathbb{C}^{n\times n}} + \ip{P^\mathcal{V}_Y(A)Y^*,P^\mathcal{V}_Y(B)Y^*}_{\mathbb{C}^{n\times n}},\\
    &=&\ip{YA^*+AY^*,YB^*+BY^*}_{\mathbb{C}^{n\times n}}+ \ip{P^\mathcal{V}_Y(A)Y^*,P^\mathcal{V}_Y(B)Y^*}_{\mathbb{C}^{n\times n}}
\end{eqnarray*}
where $P_Y^\mathcal{V}$ is the projection of any tangent vector of $\mathbb{C}^{n\times p}_*$ to the vertical space $\mathcal{V}_Y$. It is straightforward to verify that $g^3$ defined above is now a Riemannian metric.
 With the definition \eqref{real-inner-product}, the properties $tr(UV)=tr(VU)$ for two matrices $U,V$ and $\Re(tr(C+C^*))=2\Re(tr(C))$, we have 
\begin{equation}
    \label{metric-3-horizontal}
\forall A, B\in {\mathcal{H}_Y^2}, \quad 
g^3_Y(A,B)=\ip{YA^*+AY^*,YB^*+BY^*}_{\mathbb{C}^{n\times n}}=2\ip{AY^*Y+YA^*Y, B}_{\mathbb{C}^{n\times p}}.
\end{equation}

\begin{prop} 
Under metric $g^3$, the horizontal space at $Y$ is the same set as $\mathcal{H}_Y^2$. That is,
\begin{eqnarray*}
    \mathcal{H}^3_Y &=& \left\{Z\in \mathbb{C}^{n\times p}: (Y^*Y)^{-1}Y^* Z = Z^*Y(Y^*Y)^{-1}\right\} \\
                    &=& \left\{YS +Y_\perp K \vert S^* = S, S \in \mathbb{C}^{p\times p}, K \in \mathbb{C}^{(n-p)\times p} \right\}.
\end{eqnarray*}
\end{prop}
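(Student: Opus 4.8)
The plan is to use directly how $g^3$ was built, namely from the $g^2$-orthogonal splitting $T_Y\mathbb{C}^{n\times p}_*=\mathcal{H}^2_Y\oplus\mathcal{V}_Y$ together with the fact that $\ker\D\beta(Y)=\mathcal{V}_Y$. First I would record that for $A\in T_Y\mathbb{C}^{n\times p}_*$ with decomposition $A=A^{\mathcal{V}}+A^{\mathcal{H}^2}$ one has $\D\beta(Y)[A]=\D\beta(Y)[A^{\mathcal{H}^2}]$ and $P^{\mathcal{V}}_Y(A)=A^{\mathcal{V}}$, so that the definition of $g^3$ takes the blockwise form
\[
g^3_Y(A,B)=\ip{\D\beta(Y)[A^{\mathcal{H}^2}],\D\beta(Y)[B^{\mathcal{H}^2}]}_{\mathbb{C}^{n\times n}}+g^2_Y(A^{\mathcal{V}},B^{\mathcal{V}}),
\]
with no cross terms between the horizontal and vertical parts.

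For the inclusion $\mathcal{H}^2_Y\subseteq\mathcal{H}^3_Y$, I would take $Z\in\mathcal{H}^2_Y$ and an arbitrary $C\in\mathcal{V}_Y$; then $Z^{\mathcal{V}}=0$ and $C^{\mathcal{H}^2}=0$, so both summands in $g^3_Y(Z,C)$ vanish and $Z\perp_{g^3}\mathcal{V}_Y$. For the reverse inclusion $\mathcal{H}^3_Y\subseteq\mathcal{H}^2_Y$, I would take $Z\in\mathcal{H}^3_Y$ and test it against the particular vertical vector $C=Z^{\mathcal{V}}$: the blockwise formula collapses to $0=g^3_Y(Z,Z^{\mathcal{V}})=g^2_Y(Z^{\mathcal{V}},Z^{\mathcal{V}})$, and positive-definiteness of $g^2$ forces $Z^{\mathcal{V}}=0$, i.e.\ $Z=Z^{\mathcal{H}^2}\in\mathcal{H}^2_Y$. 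Combining the two inclusions gives $\mathcal{H}^3_Y=\mathcal{H}^2_Y$, and the second displayed description of $\mathcal{H}^3_Y$ in the statement is then simply the second line of the Proposition already proved for $\mathcal{H}^2_Y$.

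There is essentially no serious obstacle here: the statement is almost a tautology once the blockwise form of $g^3$ is written down, which is precisely the reason $g^3$ was defined the way it was. The only point needing (routine) care is the identification $\ker\D\beta(Y)=\mathcal{V}_Y$ — i.e.\ that $YA^*+AY^*=0$ holds exactly for $A=Y\Omega$ with $\Omega^*=-\Omega$ — but this was already invoked when the naive bilinear form $\ip{\D\beta(Y)[A],\D\beta(Y)[B]}_{\mathbb{C}^{n\times n}}$ was discarded for failing to be positive-definite, so it can be reused verbatim.
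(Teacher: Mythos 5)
Your proof is correct, and it is essentially the verification the paper implicitly expects: the paper states this proposition without proof, and the definition of $g^3$ was engineered precisely so that the $g^2$-decomposition $T_Y\mathbb{C}^{n\times p}_*=\mathcal{H}^2_Y\oplus\mathcal{V}_Y$ is $g^3$-orthogonal, which is exactly what you exploit. Your two ingredients — that $\D\beta(Y)$ kills $\mathcal{V}_Y$ and that $P^{\mathcal{V}}_Y$ kills $\mathcal{H}^2_Y$ — make the bilinear form block-diagonal, and then the second inclusion drops out of positive-definiteness of $g^2$ on the vertical block; alternatively, once you have $\mathcal{H}^2_Y\subseteq\mathcal{H}^3_Y$ you could finish by the dimension count $\dim\mathcal{H}^2_Y=\dim\mathcal{H}^3_Y=2np-p^2$, but your direct argument is cleaner.
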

\begin{proof}
$Z \in \mathcal{H}^3_Y$ if and only if $g^3_Y(Z, Y\Omega) = 0$ for all $\Omega = \Omega^*$. That is, $\forall \Omega = \Omega^*$, 
\[
\ip{YZ^*+ZY^*, 2Y\Omega Y^*}_{\mathbb{C}^{n\times n}} + \ip{P^\mathcal{V}_Y(Z)Y^*,Y\Omega Y^*}_{\mathbb{C}^{n\times n}} = 0. 
\]
Hence we must have 
\begin{subequations}
\begin{equation}
\ip{YZ^*+ZY^*, 2Y\Omega Y^*}_{\mathbb{C}^{n\times n}} =0 \label{eqn:projection_horizontal_g3a}
\end{equation}
and
\begin{equation}\label{eqn:projection_horizontal_g3b}
\ip{P^\mathcal{V}_Y(Z)Y^*,Y\Omega Y^*}_{\mathbb{C}^{n\times n}} = 0. 
\end{equation}
\end{subequations}
\eqref{eqn:projection_horizontal_g3a}  is equivalent to 
\[
\ip{YZY^*Y, \Omega}_{\mathbb{C}^{n\times n}} = 0 \quad \forall \Omega = \Omega^*. 
\]
Hence $YZY^*Y$ must be Hermitian since $\Omega$ is an arbitrary skew Hermitian matrix.  Therefore $Z$ is  in $\mathcal{H}^2_Y$ as well and hence \eqref{eqn:projection_horizontal_g3b} is satisfied. 

Thus we have shown that 
\[
\mathcal{H}^3_Y = \mathcal{H}^2_Y = \left\{Z\in \mathbb{C}^{n\times p}: (Y^*Y)^{-1}Y^* Z = Z^*Y(Y^*Y)^{-1}\right\}.
\]
\end{proof}

\subsection{Projections onto vertical space and horizontal space}\label{section:quotient_projection}
Due to the direct sum property
\eqref{tangentspace-decomp}, for our choices of $\mathcal{H}_Y^i$, there exist projection operators for any $A \in T_Y\mathbb{C}^{n\times p}_*$ to $\mathcal{H}_Y^i$ as
\begin{equation*}
    A = P_Y^{\mathcal{V}}(A) + P_Y^{\mathcal{H}^i}(A).
\end{equation*}

It is straightforward to verify the following formulae for projection operators $P_Y^\mathcal{V}$ and $P_Y^{\mathcal{H}^i}$.

\begin{prop}
If we use $g^1$ as our Riemannian metric on $\mathbb{C}^{n\times p}_*$, then the orthogonal projections  of any $A \in \mathbb{C}^{n \times p}$ to $\mathcal{V}_Y$ and $\mathcal{H}^1_Y$ are 
\begin{equation*}
    P^\mathcal{V}_Y(A) = Y\Omega,\quad  P^{\mathcal{H}^1}_Y (A)= A - Y\Omega,
\end{equation*}
where $\Omega$ is the skew-symmetric matrix that solves the Lyapunov equation
\[
    \Omega Y^*Y + Y^*Y \Omega = Y^* A - A^* Y.
\]
\end{prop}
\begin{prop}
If we use $g^2$ as our Riemannian metric on $\mathbb{C}^{n\times p}_*$, then the orthogonal projection of any $A \in \mathbb{C}^{n\times p}$ to vertical space $\mathcal{V}_Y$ satisfies 
\begin{equation*}
    P_Y^\mathcal{V} (A) = Y \left(\frac{(Y^*Y)^{-1}Y^*A - A^* Y(Y^*Y)^{-1}}{2} \right) = YSkew\left((Y^*Y)^{-1}Y^* A\right),
\end{equation*}
and the orthogonal projection of any $A\in \mathbb{C}^{n\times p }$ to the horizontal space $\mathcal{H}_Y^2$ is 
\begin{eqnarray*}
    P_Y^{\mathcal{H}^2} (A ) &=& A - P_Y^\mathcal{V}(A)\\
    &=& Y \left(\frac{(Y^*Y)^{-1}Y^*A + A^* Y(Y^*Y)^{-1}}{2}\right) + Y_\perp Y_\perp^*A \\
    &=& YHerm\left((Y^*Y)^{-1}Y^*A\right) + Y_\perp Y_\perp^*A.
\end{eqnarray*}
\end{prop}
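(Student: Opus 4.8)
The plan is to verify two claims in Proposition on the $g^2$ metric: first, that $P_Y^{\mathcal V}(A) = YSkew\left((Y^*Y)^{-1}Y^*A\right)$ is the $g^2$-orthogonal projection of any $A\in\mathbb C^{n\times p}$ onto $\mathcal V_Y = \{Y\Omega : \Omega^*=-\Omega\}$, and second, that the complementary projection onto $\mathcal H^2_Y$ has the stated closed form. Since the two formulas are complementary ($P_Y^{\mathcal H^2}(A) = A - P_Y^{\mathcal V}(A)$), it suffices to establish the first one and then simplify $A - P_Y^{\mathcal V}(A)$ using the block decomposition $A = YC + Y_\perp K$ with $C\in\mathbb C^{p\times p}$, $K\in\mathbb C^{(n-p)\times p}$.

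First I would fix the candidate $\hat A^{\mathcal V} := YSkew\left((Y^*Y)^{-1}Y^*A\right)$ and check it lies in $\mathcal V_Y$: the matrix $\Omega := Skew\left((Y^*Y)^{-1}Y^*A\right)$ is skew-Hermitian by construction since $Skew(M) = (M-M^*)/2$ always produces a skew-Hermitian matrix, so $\hat A^{\mathcal V} = Y\Omega \in \mathcal V_Y$. Then I would check the orthogonality condition: for every $C\neq 0$ in $\mathcal V_Y$, i.e. $C = Y\Psi$ with $\Psi^* = -\Psi$, we need $g_Y^2(A - \hat A^{\mathcal V}, Y\Psi) = 0$. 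Using the definition $g_Y^2(P,Q) = \Re(tr((Y^*Y)P^*Q))$, this becomes
\begin{equation*}
  \Re\left(tr\left((Y^*Y)(A - Y\Omega)^* Y\Psi\right)\right) = \Re\left(tr\left((Y^*Y)\left(A^*Y - \Omega^*(Y^*Y)\right)\Psi\right)\right) = 0.
\end{equation*}
Here I would substitute $\Omega^* = -\Omega = -Skew((Y^*Y)^{-1}Y^*A)$ and expand $Skew$, reducing the bracket $A^*Y - \Omega^*(Y^*Y)$ to a Hermitian matrix times $(Y^*Y)^{-1}$ or similar; the point is that $\Re(tr(H\Psi)) = 0$ whenever $H$ is Hermitian and $\Psi$ is skew-Hermitian, because $tr(H\Psi)$ is then purely imaginary (indeed $\overline{tr(H\Psi)} = tr(\Psi^* H^*) = tr(-\Psi H) = -tr(H\Psi)$). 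So the main task is the algebraic simplification showing the relevant factor is Hermitian. For the horizontal-space formula, I would then compute $A - P_Y^{\mathcal V}(A) = A - YSkew((Y^*Y)^{-1}Y^*A)$, write $A = Y(Y^*Y)^{-1}Y^*A + Y_\perp Y_\perp^* A$ via the orthogonal splitting of $\mathbb C^n$ into $\mathrm{range}(Y)\oplus\mathrm{range}(Y_\perp)$, and combine $Y(Y^*Y)^{-1}Y^*A - YSkew((Y^*Y)^{-1}Y^*A) = YHerm((Y^*Y)^{-1}Y^*A)$ since $M = Herm(M) + Skew(M)$; the leftover $Y_\perp Y_\perp^* A$ is exactly the $Y_\perp K$ term, and one checks $YHerm(\cdot) + Y_\perp K$ indeed sits in $\mathcal H^2_Y = \{YS + Y_\perp K : S^*=S\}$ with $S = Herm((Y^*Y)^{-1}Y^*A)$, which is manifestly Hermitian.

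The main obstacle is purely bookkeeping: keeping track of the $(Y^*Y)^{-1}$ factors when expanding $g_Y^2$ and confirming that the inner factor in the trace is genuinely Hermitian so that the trace is purely imaginary and its real part vanishes. No conceptual difficulty arises — uniqueness of the projection follows from the direct sum $T_Y\mathbb C^{n\times p}_* = \mathcal H^2_Y \oplus \mathcal V_Y$ established earlier together with $g^2$ being a genuine (positive-definite) Riemannian metric, so once I exhibit one decomposition $A = \hat A^{\mathcal V} + (A - \hat A^{\mathcal V})$ with the first summand vertical, the second summand horizontal, and the two $g^2$-orthogonal, it must be \emph{the} projection. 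I would also double-check the alternative characterization $\mathcal H^2_Y = \{Z : (Y^*Y)^{-1}Y^*Z = Z^*Y(Y^*Y)^{-1}\}$ is consistent: writing $Z = YS + Y_\perp K$ gives $(Y^*Y)^{-1}Y^*Z = S$ and $Z^*Y(Y^*Y)^{-1} = S^*$, so the condition is exactly $S = S^*$, matching the second description.
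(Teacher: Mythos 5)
Your proposal is correct and is exactly the elementary verification the paper has in mind (the paper itself does not spell out a proof, calling these formulae ``straightforward to verify''). One small remark: you establish both that the residual $A - Y\,Skew((Y^*Y)^{-1}Y^*A)$ is $g^2$-orthogonal to $\mathcal V_Y$ (via the Hermitian-times-skew-Hermitian trace being purely imaginary) and that it lies in $\mathcal H^2_Y = \{YS+Y_\perp K : S=S^*\}$; since the paper has already defined $\mathcal H^2_Y$ as the $g^2$-orthogonal complement of $\mathcal V_Y$, either check alone would pin down the projection, so your argument is correct but slightly over-complete.
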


\begin{prop}
If we use $g^3$ as our Riemannian metric on $\mathbb{C}^{n\times p}_*$, then the orthogonal projection of any $A \in \mathbb{C}^{n\times p}$ to vertical space $\mathcal{V}_Y$ satifies 
\begin{equation*}
    P_Y^\mathcal{V} (A) = Y \left(\frac{(Y^*Y)^{-1}Y^*A - A^* Y(Y^*Y)^{-1}}{2} \right) = Yskew((Y^*Y)^{-1}Y^* A),
\end{equation*}
and the orthogonal projection of any $A \in \mathbb{C}^{n\times p}$ to the horizontal space $\mathcal{H}_Y^3$ is 
\begin{eqnarray*}
    P_Y^{\mathcal{H}^3} (A) &=& A - P_Y^\mathcal{V}(A)\\
    &=& Y \left(\frac{(Y^*Y)^{-1}Y^*A + A^* Y(Y^*Y)^{-1}}{2}\right) + Y_\perp Y_\perp^*A \\
    &=& YHerm\left((Y^*Y)^{-1}Y^*A\right) + Y_\perp Y_\perp^*A.
\end{eqnarray*}
\end{prop}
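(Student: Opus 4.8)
The plan is to verify directly that the metric $g^3$ induces exactly the same horizontal space as $g^2$, i.e. that the $g^3$-orthogonal complement of $\mathcal{V}_Y$ in $T_Y\mathbb{C}^{n\times p}_*$ coincides with $\mathcal{H}^2_Y = \{YS + Y_\perp K : S^* = S,\ K \in \mathbb{C}^{(n-p)\times p}\}$. Since $\mathcal{H}^2_Y$ and $\mathcal{H}^3_Y$ are both $g^3$-orthogonal (resp. $g^2$-orthogonal) complements of the \emph{same} vertical space $\mathcal{V}_Y = \{Y\Omega : \Omega^* = -\Omega\}$ inside the same tangent space, and both have the same dimension, it suffices to show that $\mathcal{H}^2_Y \perp_{g^3} \mathcal{V}_Y$; the direct-sum decomposition \eqref{tangentspace-decomp} together with a dimension count then forces $\mathcal{H}^3_Y = \mathcal{H}^2_Y$.

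First I would take an arbitrary $Z = YS + Y_\perp K \in \mathcal{H}^2_Y$ (with $S = S^*$) and an arbitrary $C = Y\Omega \in \mathcal{V}_Y$ (with $\Omega = -\Omega^*$), and compute $g^3_Y(Z, C)$ using the definition
\[
g^3_Y(Z,C) = \ip{YZ^* + ZY^*,\, YC^* + CY^*}_{\mathbb{C}^{n\times n}} + \ip{P^\mathcal{V}_Y(Z)Y^*,\, P^\mathcal{V}_Y(C)Y^*}_{\mathbb{C}^{n\times n}}.
\]
For the first term I would use that $Z \in \mathcal{H}^2_Y$ means, by definition of $\mathcal{H}^2_Y$, that $P^\mathcal{V}_Y(Z) = 0$ under the $g^2$-projection, and moreover that $\D\beta(Y)[Z] = YZ^* + ZY^*$ is $g^2$-orthogonal to $\D\beta(Y)[C]$ in a way that reduces to checking $\ip{YZ^*+ZY^*,\, Y\Omega^*Y^* + Y\Omega Y^*}_{\mathbb{C}^{n\times n}}$; since $\Omega^* = -\Omega$, the combination $Y\Omega^*Y^* + Y\Omega Y^* = 0$, so the first inner product vanishes identically. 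For the second term, $P^\mathcal{V}_Y(Z) = 0$ (because $Z$ lies in the $g^2 = g^3$ horizontal space, and by the preceding propositions the vertical projections for $g^2$ and $g^3$ are the \emph{same} operator), so the second inner product also vanishes. Hence $g^3_Y(Z,C) = 0$ for all such $Z$ and $C$, giving $\mathcal{H}^2_Y \subseteq (\mathcal{V}_Y)^{\perp_{g^3}}$.

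To finish, I would invoke the direct-sum decomposition $T_Y\mathbb{C}^{n\times p}_* = \mathcal{H}^3_Y \oplus \mathcal{V}_Y$ (valid once $g^3$ is confirmed to be a genuine Riemannian metric, which the excerpt already asserts), so $\dim \mathcal{H}^3_Y = 2np - p^2$; the set $\mathcal{H}^2_Y$ has exactly this same real dimension ($p^2$ for $S = S^*$ plus $2p(n-p)$ for $K$), and since $\mathcal{H}^2_Y \subseteq (\mathcal{V}_Y)^{\perp_{g^3}} = \mathcal{H}^3_Y$, equality of dimensions forces $\mathcal{H}^2_Y = \mathcal{H}^3_Y$. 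The explicit parametrization $\{Y(Y^*Y)^{-1}\cdot\} $ vs.\ $\{YS + Y_\perp K\}$ and the equivalent implicit characterization $(Y^*Y)^{-1}Y^*Z = Z^*Y(Y^*Y)^{-1}$ then carry over verbatim from the $g^2$ case.

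The main obstacle is bookkeeping rather than conceptual: one must be careful that the first term $\ip{\D\beta(Y)[A], \D\beta(Y)[B]}_{\mathbb{C}^{n\times n}}$ in $g^3$ is evaluated on the full vectors $Z$ and $C$ (not their horizontal parts), and to see that it still vanishes on $\mathcal{H}^2_Y \times \mathcal{V}_Y$ one uses precisely that $\D\beta$ kills $\mathcal{V}_Y$, so $\D\beta(Y)[C] = 0$ and that term is trivially zero. The only genuinely substantive point is confirming that the $g^2$- and $g^3$-vertical projections $P^\mathcal{V}_Y$ agree (which the two propositions above make explicit), since the second summand of $g^3$ is defined through $P^\mathcal{V}_Y$ and a circular definition must be avoided; but this is already resolved by the fact that $P^\mathcal{V}_Y$ in the definition of $g^3$ refers to the $g^2$-orthogonal vertical projection, which is well-defined independently of $g^3$.
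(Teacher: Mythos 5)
Your proposal is correct, and it follows what is essentially the paper's own route: the paper first records (in the proposition immediately preceding this one) that $\mathcal{H}^3_Y$ coincides with $\mathcal{H}^2_Y$ as a set, and then simply asserts the projection formulas as straightforward, since the projections onto $\mathcal{V}_Y$ and $\mathcal{H}_Y$ are determined entirely by the direct-sum decomposition $T_Y\mathbb{C}^{n\times p}_* = \mathcal{H}_Y \oplus \mathcal{V}_Y$ and therefore carry over verbatim once the horizontal spaces agree. Your verification that $\mathcal{H}^2_Y \perp_{g^3} \mathcal{V}_Y$ plus the dimension count is exactly the right mechanism, and you correctly identify the two reasons each summand in $g^3_Y(Z,C)$ vanishes: $\D\beta(Y)[C] = Y\Omega^*Y^* + Y\Omega Y^* = 0$ for $\Omega$ skew-Hermitian, and $P^\mathcal{V}_Y(Z) = Y\,\mathrm{skew}(S) = 0$ for $S$ Hermitian (where $P^\mathcal{V}_Y$ in the definition of $g^3$ is the $g^2$-vertical projection, so there is no circularity). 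One small remark on presentation: the clause in your second paragraph about $\D\beta(Y)[Z]$ being ``$g^2$-orthogonal'' to $\D\beta(Y)[C]$ is a distraction, since the reason that inner product vanishes is simply the algebraic identity $\D\beta(Y)[C] = 0$, not any orthogonality statement; you do get to the correct reason a sentence later and again in your final paragraph, but the argument would read more cleanly if you led with it.
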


\subsection{$\mathbb{C}^{n\times p}_*/\mathcal{O}_p$ as  Riemannian quotient manifold}
First we show in the following lemma the relationship between the horizontal lifts of the  quotient tangent vector $\xi_{\pi(Y)}$ lifted at different representatives in $[Y]$. 
\begin{lem}\label{lem:horizontal_lift}
Let $\eta$ be a vector field on $\mathbb{C}^{n\times p}_*/\mathcal{O}_p$, and let $\bar{\eta}$ be the horizontal lift of $\eta$. Then for each $Y\in \mathbb{C}^{n\times p}_*$, we have $$\bar{\eta}_{YO} = \bar{\eta}_YO$$ for all $O\in \mathcal{O}_p$. 
\end{lem}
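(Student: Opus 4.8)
The plan is to exploit the fact that right multiplication by a fixed $O\in\mathcal{O}_p$, which I will denote $\Psi_O:\mathbb{C}^{n\times p}_*\to\mathbb{C}^{n\times p}_*$, $\Psi_O(Y)=YO$, is a Riemannian isometry of $(\mathbb{C}^{n\times p}_*,g^i)$ for each of the three metrics $i=1,2,3$, and that it carries the vertical distribution to itself. Since $\pi\circ\Psi_O=\pi$ (the points $Y$ and $YO$ lie in the same fiber), these two facts together force the horizontal lift to be equivariant, which is exactly the assertion $\bar{\eta}_{YO}=\bar{\eta}_YO$.

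Concretely, I would proceed in the following steps. First, since $\Psi_O$ is linear, $\D\Psi_O(Y)[\xi]=\xi O$ for every $\xi\in T_Y\mathbb{C}^{n\times p}_*\cong\mathbb{C}^{n\times p}$, and $\Psi_O$ is a diffeomorphism with inverse $\Psi_{O^*}$. Second, I would check that $\D\Psi_O(Y)$ maps $\mathcal{V}_Y$ onto $\mathcal{V}_{YO}$: for $\Omega$ skew-Hermitian one has $Y\Omega O=(YO)(O^*\Omega O)$ with $O^*\Omega O$ again skew-Hermitian, and this correspondence is invertible. Third, I would verify that $\Psi_O$ is an isometry for each $g^i$ by a direct computation using $O^*O=OO^*=I_p$ and the cyclicity of the trace: for $g^1$ and $g^2$ this is immediate, and for $g^3$ one first establishes the equivariance $P^{\mathcal V}_{YO}(AO)=\bigl(P^{\mathcal V}_Y(A)\bigr)O$ straight from the definition of $P^{\mathcal V}$ (using $((YO)^*(YO))^{-1}=O^*(Y^*Y)^{-1}O$), after which both terms in $g^3_{YO}(AO,BO)$ collapse to the corresponding terms of $g^3_Y(A,B)$, the first because $YO(AO)^*+(AO)(YO)^*=YA^*+AY^*$. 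Fourth, since an isometry that preserves $\mathcal{V}_Y$ also preserves its $g^i$-orthogonal complement, $\D\Psi_O(Y)$ maps $\mathcal{H}_Y$ onto $\mathcal{H}_{YO}$. Fifth, differentiating $\pi\circ\Psi_O=\pi$ at $Y$ gives $\D\pi(YO)\circ\D\Psi_O(Y)=\D\pi(Y)$. Finally, apply this identity to $\bar{\eta}_Y\in\mathcal{H}_Y$: the vector $\D\Psi_O(Y)[\bar{\eta}_Y]=\bar{\eta}_YO$ lies in $\mathcal{H}_{YO}$ and satisfies $\D\pi(YO)[\bar{\eta}_YO]=\D\pi(Y)[\bar{\eta}_Y]=\eta_{\pi(Y)}=\eta_{\pi(YO)}$, so by the uniqueness of the horizontal lift it must coincide with $\bar{\eta}_{YO}$, giving $\bar{\eta}_{YO}=\bar{\eta}_YO$.

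The only step that requires genuine computation is the isometry check for $g^3$, and within it the equivariance of the projection $P^{\mathcal V}$; everything else is bookkeeping with the trace identity and linearity. A minor point to keep in mind is that the statement is really one statement per metric, since the horizontal space (and hence $\bar{\eta}$) depends on $g^i$; fortunately the same trace manipulations dispatch all three $g^i$ at once, so I do not expect any serious obstacle.
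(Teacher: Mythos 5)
Your proposal is correct and complete. The paper itself does not give an explicit proof of this lemma; it simply cites \cite[Prop.~A.8]{massart_quotient_2020} and \cite[Lemma 5.1]{huang_solving_2017}, and the argument in those sources is essentially the one you reconstruct: right translation by $O$ is a fiber-preserving isometry (which you verify for all three metrics, including the equivariance $P^{\mathcal V}_{YO}(AO)=P^{\mathcal V}_Y(A)O$ needed for $g^3$), hence carries $\mathcal V_Y$ to $\mathcal V_{YO}$ and therefore $\mathcal H_Y$ to $\mathcal H_{YO}$, and the identity $\D\pi(YO)\circ\D\Psi_O(Y)=\D\pi(Y)$ combined with uniqueness of the horizontal lift finishes the argument.
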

\begin{proof}
 \cite[Prop.~A.8]{massart_quotient_2020} gives a proof based on metric $g^1$ for real case, and \cite[Lemma 5.1]{huang_solving_2017} proves the result for metric $g^2$. For completeness we will provide a proof applying to all three metrics $g^i$. 
 
By the definition of horizontal lift we have
\[
\eta_{\pi(Y)} = \eta_{\pi(YO)} = \D\pi(YO)[\overline{\eta}_{YO}]. 
\]
On the other hand, notice that $\pi(Y) = \pi(YO)$. Take the differential w.r.t. $Y$ we have 
\[
\D\pi(Y)[A] = \D \pi(YO)[A O]\quad \forall A \in \mathbb{C}^{n\times p}. 
\]
In particular, let $A = \overline{\eta}_Y \in \mathcal{H}_Y$ we have 
\[
\eta_{\pi(Y)} = \D\pi(Y)[\overline{\eta}_Y] = \D \pi(YO)[\overline{\eta}_Y O].
\]

Thus we have
\[
\D \pi(YO)[\overline{\eta}_Y O] = \D\pi(YO)[\overline{\eta}_{YO}]
\]
So 
\[
\overline{\eta}_Y O - \overline{\eta}_{YO} \in \ker(\D \pi(YO)[\cdot]) = \mathcal{V}_{YO}. 
\]

Now one can verify that for each $g^i$ and $YO\Omega \in \mathcal{V}_{YO}$, $g_{YO}^i(\overline{\eta}_YO, YO\Omega) = 0$. So $\overline{\eta}_YO$ is orthogonal to  $\mathcal{V}_Y$ and  hence $\overline{\eta}_YO \in \mathcal{H}^i_{YO}$. So we have
\[
\overline{\eta}_Y O - \overline{\eta}_{YO} \in  \mathcal{H}_{YO}^i.
\]
Therefore $\overline{\eta}_Y O - \overline{\eta}_{YO} \in \mathcal{V}_{YO} \cap \mathcal{H}^i_{YO} = \{ 0 \}$ and we completes the proof.

\end{proof}

Recall from \cite[Section 3.6.2]{absil_optimization_2008} that if the expression $g_{Y}(\bar{\xi}_Y,\bar{\zeta}_Y)$ does not depend on the choice of $Y\in [Y]$ for every $\pi(Y) \in \mathbb{C}^{n\times p}_*/\mathcal{O}_p$ and every $\xi_{\pi(Y)}, \zeta_{\pi(Y)}\in T_{\pi(Y)}\mathbb{C}^{n\times p}_*/\mathcal{O}_p$, then 
\begin{equation}\label{eqn:riemannian_metric_quotient}
    g_{\pi(Y)}(\xi_{\pi(Y)},\zeta_{\pi(Y)}) := g_Y(\bar{\xi}_Y,\bar{\zeta}_Y)
\end{equation}
defines a Riemannian metric on the quotient manifold $\mathbb{C}^{n\times p}_*/\mathcal{O}_p$. By Lemma \ref{lem:horizontal_lift}, it is straightforward to verify that each Riemannian metric $g^i$ on $\mathbb{C}^{n\times p}_*$ induces a Riemannian metric on $\mathbb{C}^{n\times p}_*/\mathcal{O}_p$. The quotient manifold $\mathbb{C}^{n\times p}_*/\mathcal{O}_p$ endowed with a Riemannian metric defined in (\ref{eqn:riemannian_metric_quotient}) is called a \textit{Riemannian quotient manifold}. By abuse of notation, we use $g^i$ for denoting Riemannian metrics on both total space $\mathbb{C}^{n\times p}_*$ and quotient space $\mathbb{C}^{n\times p}_*/\mathcal{O}_p$.

\subsection{Riemannian gradient}
Recall that  $\mathbb{C}^{n\times p}_*/\mathcal{O}_p$ is diffeomorphic to $\mathcal{H}^{n,p}_+$ under $\tilde{\beta}$. Given a smooth real-valued function $f$  on $\mathcal{H}^{n,p}_+$, the corresponding cost function $h$ on $\mathbb{C}^{n\times p}_*/\mathcal{O}_p$ satisfies
\begin{equation}\label{eqn:cost_function_quotient}
\begin{aligned}
    h: \mathbb{C}^{n\times p}_*/\mathcal{O}_p & \rightarrow \mathbb{R}\\
     \pi(Y) & \mapsto f(\tilde{\beta}(\pi(Y))) = f(\beta(Y)) = f(YY^*). 
\end{aligned}
\end{equation}

Observe that the function $F(Y):=f(YY^*)$ satisfies  $F(Y)= h\circ \pi(Y) = f \circ \beta(Y)$. 

The Riemannian gradient of $h$ at $\pi(Y)$ is a tangent vector in $T_{\pi(Y)}\mathbb{C}^{n\times p}_*/\mathcal{O}_p$ . The next theorem shows that the horizontal lift of $\grad h(\pi(Y))$ can be obtained from the Riemannian gradient of $F$ defined on $\mathbb{C}^{n\times p}_*$. 
\begin{thm}\label{thm:lift_gradient_quotient} The horizontal lift of the gradient of $h$ at $\pi(Y)$ is the Riemannian gradient of $F$ at $Y$. That is, 
\[
\overline{\grad h(\pi(Y)) }_Y  = \grad F(Y).
\]
Therefore, $\grad F(Y)$ is automatically in $\mathcal{H}_Y$. 
\end{thm}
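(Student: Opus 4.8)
The plan is to exploit the fact that the natural projection $\pi$ is a Riemannian submersion from $(\mathbb{C}^{n\times p}_*, g^i)$ onto $(\mathbb{C}^{n\times p}_*/\mathcal{O}_p, g^i)$ and that $F = h \circ \pi$. The defining property of the Riemannian gradient of $h$ is that $g_{\pi(Y)}(\grad h(\pi(Y)), \xi_{\pi(Y)}) = \D h(\pi(Y))[\xi_{\pi(Y)}]$ for all $\xi_{\pi(Y)} \in T_{\pi(Y)}\mathbb{C}^{n\times p}_*/\mathcal{O}_p$. First I would lift this identity to the total space: pick the horizontal lift $\bar\xi_Y \in \mathcal{H}_Y^i$ of $\xi_{\pi(Y)}$, so that by the definition \eqref{eqn:riemannian_metric_quotient} of the quotient metric, the left-hand side equals $g_Y(\overline{\grad h(\pi(Y))}_Y, \bar\xi_Y)$. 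For the right-hand side, I would use $F = h \circ \pi$ together with the chain rule and $\D\pi(Y)[\bar\xi_Y] = \xi_{\pi(Y)}$ to get $\D h(\pi(Y))[\xi_{\pi(Y)}] = \D F(Y)[\bar\xi_Y] = g_Y(\grad F(Y), \bar\xi_Y)$.

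Next I would combine these to obtain $g_Y(\overline{\grad h(\pi(Y))}_Y - \grad F(Y), \bar\xi_Y) = 0$ for all horizontal $\bar\xi_Y$. Since $\overline{\grad h(\pi(Y))}_Y$ is horizontal by definition of the horizontal lift, the conclusion $\overline{\grad h(\pi(Y))}_Y = \grad F(Y)$ would follow \emph{provided} I also know that $\grad F(Y)$ is itself horizontal, i.e., $g^i$-orthogonal to the vertical space $\mathcal{V}_Y$. This is the one genuine verification needed, and I expect it to be the main (though modest) obstacle: I would check it by showing $\D F(Y)[V] = 0$ for every $V = Y\Omega \in \mathcal{V}_Y$ with $\Omega^* = -\Omega$. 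This holds because $F(Y) = f(YY^*)$ is constant along the orbit $[Y]$ — indeed $F(Y(I+t\Omega)) = F(Y) + O(t^2)$ since $(I+t\Omega)(I+t\Omega)^* = I + O(t^2)$ — hence $\D F(Y)$ annihilates $\mathcal{V}_Y$, and as $\grad F(Y)$ is the $g^i$-Riesz representative of $\D F(Y)$, it must lie in the $g^i$-orthogonal complement $\mathcal{H}_Y^i$ of $\mathcal{V}_Y$.

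Once horizontality of $\grad F(Y)$ is established, the difference $\overline{\grad h(\pi(Y))}_Y - \grad F(Y)$ lies in $\mathcal{H}_Y^i$ and is $g^i$-orthogonal to all of $\mathcal{H}_Y^i$ (using that horizontal and vertical are $g^i$-orthogonal, the vanishing inner product against every $\bar\xi_Y \in \mathcal{H}_Y^i$ already characterizes it), so it is zero. I would remark that this argument is uniform in $i \in \{1,2,3\}$: nothing used about the metric beyond the fact that $\mathcal{H}_Y^i$ is the $g^i$-orthogonal complement of $\mathcal{V}_Y$ and that $F$ is $\mathcal{O}_p$-invariant. One may also note consistency with the earlier formula \eqref{quotient-grad-metric2}: for $g^2$ the horizontal Riesz representative of $\D F(Y)$ is exactly $\nabla F(Y)(Y^*Y)^{-1}$, which indeed lies in $\mathcal{H}_Y^2$.
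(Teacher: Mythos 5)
Your proof is correct and is precisely the standard Riemannian-submersion argument that the paper outsources to \cite[Section 3.6.2]{absil_optimization_2008}: lift the Riesz characterization of $\grad h$ through $\pi$, use $F = h\circ\pi$ and the chain rule, and close by verifying that $\grad F(Y)$ is horizontal because $F$ is $\mathcal{O}_p$-invariant so $\D F(Y)$ annihilates $\mathcal{V}_Y$. You have simply spelled out the cited result; nothing is missing.
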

\begin{proof}
See \cite[Section 3.6.2]{absil_optimization_2008}. 
\end{proof}

The next proposition summarizes the expression of $\grad F(Y)$ under different metrics.
\begin{prop}\label{thm:gradF(Y)}
Let $f$ be a smooth real-valued function defined on $\mathcal{H}^{n,p}_+$ and let $F: \mathbb{C}^{n\times p}_* \rightarrow \mathbb{R}: Y \mapsto f(YY^*)$. Assume $YY^* = X$. Then the Riemannian gradient of $F$ is given by
\[
    \grad F(Y) =\left\{\begin{aligned} 
     2\nabla f(YY^*)Y, \quad \text{if using metric } g^1\\
     2\nabla f(YY^*)Y(Y^*Y)^{-1}, \quad  \text{if using metric } g^2 \\ 
     \left(I - \frac{1}{2}P_Y\right)\nabla f(YY^*) Y(Y^*Y)^{-1} \quad \text{if using metric } g^3\\ 
\end{aligned}\right.
\]
where $\nabla f$ denotes Fr\'{e}chet gradient \eqref{frechet-derivative} and $P_Y = Y(Y^*Y)^{-1}Y^*$.
\end{prop}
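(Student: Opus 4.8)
The plan is to compute $\grad F(Y)$ from its defining property: for any metric $g^i$, $\grad F(Y)$ is the unique tangent vector in $T_Y\mathbb{C}^{n\times p}_* = \mathbb{C}^{n\times p}$ satisfying $g_Y^i(\grad F(Y), A) = \D F(Y)[A]$ for all $A \in \mathbb{C}^{n\times p}$. First I would record the Euclidean (Fr\'echet) derivative of $F$: by the chain rule applied to $F(Y) = f(YY^*)$, together with the Fr\'echet derivative formula $\nabla F(Y) = (\nabla f(YY^*) + \nabla f(YY^*)^*)Y$ already stated in the excerpt (and proven in the appendix), we have $\D F(Y)[A] = \ip{\nabla F(Y), A}_{\mathbb{C}^{n\times p}} = \Re\,tr(\nabla F(Y)^* A)$. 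Writing $G := \nabla F(Y) = (\nabla f(X) + \nabla f(X)^*)Y$ for brevity, the three cases then reduce to solving $g_Y^i(\grad F(Y), A) = \ip{G, A}_{\mathbb{C}^{n\times p}}$ for all $A$.

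For metric $g^1$ the statement is immediate: $g_Y^1 = \ip{\cdot,\cdot}_{\mathbb{C}^{n\times p}}$, so $\grad F(Y) = G = (\nabla f(X)+\nabla f(X)^*)Y$. For metric $g^2$, we use $g_Y^2(W,A) = \Re\,tr((Y^*Y)W^*A) = \ip{W(Y^*Y), A}_{\mathbb{C}^{n\times p}}$; setting this equal to $\ip{G,A}$ for all $A$ forces $W(Y^*Y) = G$, i.e. $\grad F(Y) = G(Y^*Y)^{-1} = (\nabla f(X)+\nabla f(X)^*)Y(Y^*Y)^{-1}$. For metric $g^3$, the cleanest route is to split $A = A^{\mathcal{V}} + A^{\mathcal{H}^3}$ using the projections computed in Section~\ref{section:quotient_projection}, and likewise split the candidate gradient. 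Because $F$ is constant along the orbit, $\D F(Y)[A^{\mathcal{V}}] = 0$, so $\grad F(Y)$ must be horizontal, and on the horizontal block $g^3$ agrees with $\ip{\D\beta(Y)[\cdot],\D\beta(Y)[\cdot]}_{\mathbb{C}^{n\times n}}$. Writing $W = \grad F(Y) \in \mathcal{H}_Y^3$ and unfolding $\ip{YW^*+WY^*, YA^*+AY^*}_{\mathbb{C}^{n\times n}} = \ip{G,A}$, one expands the left side into four traces, uses $W \in \mathcal{H}_Y^3$ (so $(Y^*Y)^{-1}Y^*W$ is Hermitian), and matches against $\ip{G(Y^*Y)^{-1}, A(Y^*Y)}$-type terms. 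Alternatively, and more efficiently, one observes that the $g^3$-gradient is obtained from the $g^2$-gradient by a correction on the vertical part of $\D\beta$; projecting $G(Y^*Y)^{-1}$ and keeping track of the extra factor coming from $\ip{YA^*+AY^*,\cdot}$ having "weight two" on the horizontal part yields exactly $(I - \tfrac12 P_Y)\tfrac{\nabla f(X)+\nabla f(X)^*}{2}Y(Y^*Y)^{-1}$ after simplification using $Herm(\nabla f(X)+\nabla f(X)^*) = \nabla f(X)+\nabla f(X)^*$ when $X$ is Hermitian.

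The main obstacle is the $g^3$ case: one must correctly bookkeep the asymmetry between the horizontal and vertical contributions in the definition of $g^3$ (the horizontal part enters through $\D\beta$, whose kernel is $\mathcal{V}_Y$, while the vertical part is measured by $g^2$), and verify that the claimed expression indeed lies in $\mathcal{H}_Y^3$ and reproduces $\D F(Y)$ on all of $T_Y\mathbb{C}^{n\times p}_*$, not merely on horizontal vectors. A clean way to close this is: (i) check $(I-\tfrac12 P_Y)\tfrac{S}{2}Y(Y^*Y)^{-1}$ with $S = \nabla f(X)+\nabla f(X)^*$ is horizontal by verifying $(Y^*Y)^{-1}Y^*\bigl[(I-\tfrac12 P_Y)\tfrac{S}{2}Y(Y^*Y)^{-1}\bigr]$ is Hermitian (which uses $S = S^*$); (ii) evaluate $g_Y^3$ of this vector against an arbitrary $A$, reducing via horizontality to $\ip{\D\beta(Y)[\cdot],\D\beta(Y)[A]}$, and simplify to $\Re\,tr(SYA^*) = \ip{SY,A}_{\mathbb{C}^{n\times p}} = \D F(Y)[A]$. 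The remaining computations are routine trace manipulations using $P_Y^2 = P_Y$, $P_Y Y = Y$, and $Y^*Y_\perp = 0$, so I would not grind through them here.
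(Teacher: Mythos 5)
Your proposal is correct and follows essentially the same route as the paper: both start from the defining relation $g_Y^i(\grad F(Y),A) = \D F(Y)[A]$, use the chain rule to rewrite $\D F(Y)[A] = \ip{\nabla f(YY^*), YA^*+AY^*}_{\mathbb{C}^{n\times n}}$, and then match this pairing against each $g^i$. The only genuine difference is in the $g^3$ case, where the paper simply asserts the identity $\ip{\nabla f(YY^*), YA^*+AY^*} = g^3_Y\bigl((I-\tfrac12 P_Y)\tfrac{\nabla f+\nabla f^*}{2}Y(Y^*Y)^{-1}, A\bigr)$ and leaves verification to the reader, while you spell out the two ingredients that make it go through: the gradient is automatically horizontal because $F$ is fiber-constant and $g^3$ makes $\mathcal{V}_Y \perp \mathcal{H}^3_Y$, so only the $\ip{\D\beta(Y)[\cdot],\D\beta(Y)[\cdot]}$ term of $g^3$ contributes; and the candidate vector is indeed in $\mathcal{H}^3_Y$ because $(Y^*Y)^{-1}Y^*\bigl[(I-\tfrac12 P_Y)\tfrac{S}{2}Y(Y^*Y)^{-1}\bigr]=\tfrac14 (Y^*Y)^{-1}Y^*SY(Y^*Y)^{-1}$ is Hermitian when $S=S^*$. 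This elaboration is a welcome clarification of what the paper glosses over, not a different argument.
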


\begin{proof}
Let  $A \in T_Y\mathbb{C}^{n\times p}_* = \mathbb{C}^{n\times p}$. By chain rule, we have
\[
    \D F(Y)[A] = \D f(YY^*)[YA^* + AY^*].
\]
This yields to 
\[
    g^i_Y(\grad F(Y), A) = g_X\left({\grad f(YY^*), YA^*+AY^*}\right),
\]
where $g_X$ is the metric \eqref{metric-embedded-z}.
Since $YA^*+AY^* \in T_{YY^*}\mathcal{H}^{n,p}_+$, we have 
\begin{equation*}
    g_X\left(\grad f(YY^*), YA^*+AY^*\right) =  \ip{P^t_{YY^*}(\nabla f(YY^*)), YA^*+YA^*}_{\mathbb{C}^{n\times n}}=  \ip{\nabla f(YY^*), YA^*+AY^*}_{\mathbb{C}^{n\times n}}.  
\end{equation*}

It is straightforward to verify that
\begin{align*}
   \ip{\nabla f(YY^*), YA^*+AY^*}_{\mathbb{C}^{n\times n}} &= g^1_Y\left(2 \nabla f(YY^*)Y,A\right) \\
  & = g^2_Y\left(2\nabla f(YY^*)Y(Y^*Y)^{-1},A \right),
\end{align*} 
which yields the expression of $\grad F(Y)$ under $g^1$ and $g^2$. 

The Riemannian gradient for $g^3$ is due to
\begin{eqnarray*}
     \ip{P^t_{YY^*}(\nabla f(YY^*)), YA^*+YA^*}_{\mathbb{C}^{n\times n}} &=& g^3_Y\left(\left(I-\frac{1}{2}P_Y\right)P^t_X(\nabla f(YY^*))Y(Y^*Y)^{-1},A\right) \\
     &=& g^3_Y\left(\left(I - \frac{1}{2}P_Y\right)\nabla f(YY^*)Y(Y^*Y)^{-1}, A\right).
\end{eqnarray*}
\end{proof}

\subsection{Retraction}\label{sec:retraction_quotient}
The retraction on the quotient manifold $\mathbb{C}^{n\times p}_*/\mathcal{O}_p$ can be defined using the retraction on the total space $\mathbb{C}^{n\times p}_*$.  
For any $A \in T_Y \mathbb{C}^{n\times p}_*$ and a step size $\tau>0$,
\[
\overline{R}_Y(\tau A):= Y + \tau A, 
\]
is a retraction on $\mathbb{C}^{n\times p}_*$ if  $Y+\tau A$ remains full rank, which is ensured for small enough $\tau$. 
Then Lemma \ref{lem:horizontal_lift} indicates that $\overline{R}$ satisfies the conditions of \cite[Prop.~4.1.3]{absil_optimization_2008}, which implies that 
\begin{equation}\label{eqn:retraction_quotient}
    R_{\pi(Y)}(\tau \eta_{\pi(Y)}) := \pi(\overline{R}_Y(\tau  \overline{\eta}_Y)) = \pi(Y+\tau \overline{\eta}_Y)
\end{equation}
defines a retraction on the quotient manifold $\mathbb{C}^{n\times p}_*/\mathcal{O}_p$
for a small enough step size $\tau >0.$

\subsection{Vector transport}\label{sec:vector_transport_quotient}
A vector transport on $\mathbb{C}^{n\times p}_*/\mathcal{O}_p$ introduced in \cite[Section 8.1.4]{absil_optimization_2008}  is  projection to horizontal space.
\begin{equation}\label{eqn:vector_transport_quotient}
    \overline{\left(\mathcal{T}_{\eta_{\pi(Y)}} \xi_{\pi(Y)}\right)}_{ Y + \overline{\eta}_Y} := P^\mathcal{H}_{Y + \overline{\eta}_Y}(\overline{\xi}_Y).
\end{equation}
It can be shown that this vector transport is actually the differential of the retraction $R$ defined in (\ref{eqn:retraction_quotient}) (see \cite[Section 8.1.2]{absil_optimization_2008}) since 
\begin{eqnarray*}
    \D R_{\pi(Y)}(\eta_{\pi(Y)})[\xi_{\pi(Y)}] &=& \D\pi\left(\overline{R}_{Y}(\overline{\eta}_Y)\right)\left[ \D \overline{R}_Y(\overline{\eta}_Y)[\overline{\xi}_Y]\right] \\
    &=& \D\pi(Y+\overline{\eta}_Y) \left[ \dt{0}(Y + \overline{\eta}_Y + t\overline{\xi}_Y)\right] \\
    &=& \D\pi(Y+\overline{\eta}_Y) \left[\overline{\xi}_Y\right] \\
    &=& \D\pi(Y+\overline{\eta}_Y) \left[P^\mathcal{H}_{Y + \overline{\eta}_Y}(\overline{\xi}_Y)\right].
\end{eqnarray*}

Based on the projection formulae in Section \ref{section:quotient_projection}, we can obtain formulae of vector transports using different Riemannian metrics. Denote $Y_2 = Y_1 + \overline{\eta}_{Y_1}$. If we use metric $g^1$, then 
\[
    \overline{\left( \mathcal{T}_{\eta_{\pi(Y_1)}}\xi_{\pi(Y_1)}\right)}_{ Y_1 + \overline{\eta}_{Y_1}} = \overline{\xi}_{Y_1} - Y_2 \Omega,
\]
where $\Omega$ solves the Lyapunov equation 
\[Y_2^*Y_2\Omega+\Omega Y_2^*Y_2=Y_2^* \overline{\xi}_{Y_1} - \overline{\xi}_{Y_1}^* Y_2.\]


If we use metric $g^2$ or $g^3$, then 
\begin{eqnarray*}
    \overline{\left( \mathcal{T}_{\eta_{\pi(Y_1)}}\xi_{\pi(Y_1)}\right)}_{ Y_1+ \overline{\eta}_{Y_1}} &=& \overline{\xi}_{Y_1} - P_{Y_2}^\mathcal{V}(\overline{\xi}_{Y_1})\\
    &=& \overline{\xi}_{Y_1} - Skew\left((Y_2^*Y_2)^{-1}Y_2^* \overline{\xi}_{Y_1}\right)\\
    &=& Y_2 \left(\frac{(Y_2^*Y_2)^{-1}Y_2^*\overline{\xi}_{Y_1} + \overline{\xi}_{Y_1}^* Y_2(Y_2^*Y_2)^{-1}}{2}\right) + {Y_2}_\perp {Y_2}_\perp^*\overline{\xi}_{Y_1}.
\end{eqnarray*}

\subsection{Riemannian Hessian operator}
Recall that the cost function $h$ on $\mathbb{C}^{n\times p}_*/\mathcal{O}_p$ is defined in (\ref{eqn:cost_function_quotient}). In this section, we summarize the Riemannian Hessian of $h$ under the three different metrics $g^i$. The proofs are tedious calculations and given in Appendix \ref{Appen:riemannian_hessian_quotient}. 

\begin{prop} Using $g^1$, the Riemannian Hession of $h$ is given by
\[
    \overline{\left( \Hess h(\pi(Y))[\xi_{\pi(Y)}]\right)}_{ Y} = P^{\mathcal{H}^1}_Y \left(2\nabla^2 f(YY^*)[Y\overline{\xi}_Y^*+\overline{\xi}_YY^*]Y + 2\nabla f(YY^*)\overline{\xi}_Y \right).
\]

\end{prop}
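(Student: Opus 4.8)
The plan is to compute the Riemannian Hessian of $h$ at $\pi(Y)$ under the metric $g^1$ by working entirely with horizontal lifts on the total space $\mathbb{C}^{n\times p}_*$, using the general formula for the Riemannian Hessian of a quotient manifold. Specifically, for a Riemannian quotient manifold $\overline{\mathcal M}/\sim$ with a Riemannian submersion $\pi$, one has
\[
\overline{\left(\Hess h(\pi(Y))[\xi_{\pi(Y)}]\right)}_Y = P^{\mathcal{H}}_Y\left(\overline{\nabla}_{\overline{\xi}_Y}\grad F\right),
\]
where $\overline{\nabla}$ is the Levi-Civita connection of the total space $(\mathbb{C}^{n\times p}_*, g^1)$, and $\grad F$ is the Riemannian (here Euclidean, since $g^1$ is the flat metric) gradient of $F(Y) = f(YY^*)$; this is e.g. \cite[Prop.~5.3.3]{absil_optimization_2008}. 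Since $g^1$ is the constant Euclidean inner product on $\mathbb{C}^{n\times p}$, the Levi-Civita connection is just the ordinary directional derivative, so $\overline{\nabla}_{\overline{\xi}_Y}\grad F = \D(\grad F)(Y)[\overline{\xi}_Y]$, i.e. the Euclidean (Fréchet) Hessian of $F$ applied to the horizontal lift $\overline{\xi}_Y$.

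The next step is to differentiate the expression $\grad F(Y) = (\nabla f(YY^*) + \nabla f(YY^*)^*)Y = 2\,Herm(\nabla f(YY^*))\,Y$ from Proposition~\ref{thm:gradF(Y)} in the direction $\overline{\xi}_Y$. By the product rule this splits into two terms: the derivative hitting the $Y$ factor, which produces $2\,Herm(\nabla f(YY^*))\,\overline{\xi}_Y$; and the derivative hitting $\nabla f(YY^*)$, which by the chain rule for the Fréchet Hessian gives $2\,Herm\bigl(\nabla^2 f(YY^*)[\,\D(YY^*)[\overline{\xi}_Y]\,]\bigr)Y = 2\,Herm\bigl(\nabla^2 f(YY^*)[\,\overline{\xi}_Y Y^* + Y\overline{\xi}_Y^*\,]\bigr)Y$. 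Here one must be careful that $Herm$ commutes with $\nabla^2 f$ appropriately — this follows because $f$ is real-valued, so $\nabla^2 f(X)[\cdot]$ maps Hermitian directions to Hermitian matrices and commutes with conjugate transposition in the natural sense, and the argument $\overline{\xi}_Y Y^* + Y\overline{\xi}_Y^*$ is already Hermitian. Summing the two terms and applying the horizontal projection $P^{\mathcal{H}^1}_Y$ yields exactly the claimed formula.

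The main obstacle I anticipate is twofold. First, justifying rigorously that the connection term $\overline{\nabla}_{\overline{\xi}_Y}\grad F$ equals the naive directional derivative — this needs the observation that $g^1$ is a \emph{constant} metric so its Christoffel symbols vanish, together with checking that $\grad F$ is a well-defined smooth vector field on the total space whose restriction is being differentiated. Second, and more delicate, is verifying that the general quotient-Hessian formula applies here: one must confirm that $\grad F(Y)$ is horizontal (which follows from Theorem~\ref{thm:lift_gradient_quotient}, since it is the lift of $\grad h$), and that the horizontal lift $\overline{\xi}_Y$ used as the differentiation direction is legitimate. The bookkeeping with $Herm\{\cdot\}$ versus the raw $\nabla f + (\nabla f)^*$ notation, and making sure all the chain-rule terms for the Fréchet derivative of $X = YY^*$ are accounted for (cf. Appendix \ref{appendix-derivative}), is where most of the routine-but-error-prone work lies; I would relegate that to the appendix as the paper does, and in the main text simply cite Proposition~\ref{thm:gradF(Y)}, the constancy of $g^1$, and the standard quotient-Hessian formula.
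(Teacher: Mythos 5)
Your proposal is correct and follows essentially the same route as the paper: invoke \cite[Prop.~5.3.3]{absil_optimization_2008} (the paper's Lemma~\ref{lemma:horizontal_lift_of_quotient_hessian}) to reduce to $P^{\mathcal{H}^1}_Y(\Hess F(Y)[\overline{\xi}_Y])$, observe that under the flat metric $g^1$ the Riemannian connection on $\mathbb{C}^{n\times p}_*$ is the ordinary directional derivative, and then differentiate $\grad F(Y)=2\,Herm(\nabla f(YY^*))Y$ by the product and chain rules. The only stylistic difference is that you spell out the two product-rule terms explicitly and flag the $Herm$-commutes-with-differentiation step; the paper's appendix does the same calculation more tersely.
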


\begin{prop}
Using $g^2$, the Riemannian Hession of $h$ is given by
\begin{eqnarray*}
    \overline{\left( \Hess h(\pi(Y))[\xi_{\pi(Y)}]\right)}_{ Y} &=& P^{\mathcal{H}^2}_Y \left\{2\nabla^2f(YY^*)[Y\overline{\xi}_Y^*+\overline{\xi}_YY^*]Y(Y^*Y)^{-1} \right. \\
    &&+ \nabla f(YY^*)P_Y^\perp\overline{\xi}_Y(Y^*Y)^{-1}+P_Y^\perp \nabla f(YY^*)\overline{\xi}_Y(Y^*Y)^{-1}  \\
    &&  +2skew(\overline{\xi}_YY^*)\nabla f(YY^*)Y(Y^*Y)^{-2}\\
    &&+ \left. 2skew\{\overline{\xi}_Y(Y^*Y)^{-1}Y^*\nabla f(YY^*)\}Y(Y^*Y)^{-1} \right\}.
\end{eqnarray*}

\end{prop}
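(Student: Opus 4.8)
The plan is to compute the Riemannian Hessian of $h$ under $g^2$ by following the same strategy that produced the $g^1$ formula in the previous proposition, namely through the identity
\[
\Hess h(\pi(Y))[\xi_{\pi(Y)}] = \overline{\nabla}_{\xi} \grad h,
\]
lifted horizontally to the total space. Concretely, I would start from the known horizontal lift of the gradient, $\overline{(\grad h(\pi(Y)))}_Y = \grad F(Y) = (\nabla f(YY^*)+\nabla f(YY^*)^*)Y(Y^*Y)^{-1}$ from Proposition \ref{thm:gradF(Y)}, and differentiate it as a function on the total space $\mathbb{C}^{n\times p}_*$ along a curve $Y(t)$ with $Y(0)=Y$, $Y'(0)=\overline{\xi}_Y$. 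Since $g^2$ is not the Euclidean metric, the Riemannian connection on $(\mathbb{C}^{n\times p}_*,g^2)$ is not just the Euclidean directional derivative; I would either (i) use the Koszul formula to get the Christoffel correction terms for $g^2_Y(A,B)=\Re\,tr((Y^*Y)A^*B)$, or (ii) use the fact (from \cite[Section 5.3.2]{absil_optimization_2008} and \cite{huang_solving_2017}) that for a Riemannian submersion the horizontal lift of $\Hess h[\xi]$ equals $P^{\mathcal{H}^2}_Y$ applied to the total-space covariant derivative of the lifted gradient. Route (ii) is cleaner: after computing $\frac{D}{dt}\big|_{0}\grad F(Y(t))$ in the $g^2$-connection and projecting with $P^{\mathcal{H}^2}_Y$, the vertical Christoffel contributions drop out, which is why only the connection terms tangential to the horizontal distribution survive in the final expression.

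The key steps, in order, are: first, write $\grad F(Y) = 2\,Herm(\nabla f(YY^*))\,Y(Y^*Y)^{-1}$ and differentiate each of the three factors — $\nabla f(YY^*)$ (giving the Fréchet Hessian term $\nabla^2 f(YY^*)[Y\overline{\xi}_Y^*+\overline{\xi}_Y Y^*]$ via the chain rule $\D(X\mapsto \nabla f(X))[Y\overline{\xi}_Y^*+\overline{\xi}_Y Y^*]$), the linear factor $Y$ (giving the $Herm(\nabla f)\,\overline{\xi}_Y$ contribution), and the inverse Gram factor $(Y^*Y)^{-1}$ whose derivative is $-(Y^*Y)^{-1}(\overline{\xi}_Y^*Y+Y^*\overline{\xi}_Y)(Y^*Y)^{-1}$ (this is where the $(Y^*Y)^{-2}$ terms originate). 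Second, add the Christoffel symbol correction $\Gamma^2_Y(\overline{\xi}_Y,\grad F(Y))$ coming from the $g^2$ metric; by the Koszul formula this correction involves $Y^*Y$-weighted symmetrizations and is what produces the $skew(\overline{\xi}_Y Y^*)$ and $skew(\overline{\xi}_Y(Y^*Y)^{-1}Y^*Herm(\nabla f))$ terms. Third, apply the horizontal projection $P^{\mathcal{H}^2}_Y$ (whose formula is recalled in Section \ref{section:quotient_projection}) to the sum, and verify that the $P_Y$ versus $P_Y^\perp$ split of the $Herm(\nabla f)\overline{\xi}_Y$ term is exactly what the projection produces. Fourth, invoke the standard fact that since $g^2$ makes $\pi$ a Riemannian submersion and $\overline{\xi}_Y$ is horizontal, this projected covariant derivative is indeed the horizontal lift of $\Hess h(\pi(Y))[\xi_{\pi(Y)}]$.

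The main obstacle is the bookkeeping of the Christoffel correction terms for the non-standard metric $g^2$: one must carefully apply the Koszul formula
\[
2g^2_Y(\nabla_A B, C) = D_A\big(g^2(B,C)\big) + D_B\big(g^2(A,C)\big) - D_C\big(g^2(A,B)\big)
\]
with $g^2_Y(A,B)=\Re\,tr((Y^*Y)A^*B)$, where the $Y$-dependence of the metric tensor (through $Y^*Y$) contributes extra terms of the form $\Re\,tr\big((A^*Y+Y^*A)B^*C\big)$ and its permutations. Getting these into the compact $skew(\cdot)$ and $Herm(\cdot)$ forms that appear in the statement, and matching them against the Fréchet-Hessian and $(Y^*Y)^{-1}$-derivative pieces, requires patient but routine manipulation using $\overline{\xi}_Y\in\mathcal{H}^2_Y$ (so $(Y^*Y)^{-1}Y^*\overline{\xi}_Y$ is Hermitian). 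I would relegate the full expansion to Appendix \ref{Appen:riemannian_hessian_quotient} as the text already indicates, and in the main proof sketch only indicate which term comes from which differentiation.
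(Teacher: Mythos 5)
Your plan coincides with the paper's proof: the paper derives the Riemannian connection of $(\mathbb{C}^{n\times p}_*, g^2)$ from the Koszul formula, applies it to $\grad F(Y) = 2\,Herm(\nabla f(YY^*))Y(Y^*Y)^{-1}$ to compute $\Hess F(Y)[\overline{\xi}_Y]$, and then uses the submersion identity $\overline{(\Hess h(\pi(Y))[\xi_{\pi(Y)}])}_Y = P_Y^{\mathcal{H}^2}(\Hess F(Y)[\overline{\xi}_Y])$ (Lemma~\ref{lemma:horizontal_lift_of_quotient_hessian}), which is exactly your route~(ii). Two attributions in your sketch are slightly off but would self-correct during the computation: the $skew(\cdot)$ terms do not come from the Christoffel correction alone, but from its cancellation against the derivative of the $(Y^*Y)^{-1}$ factor, and the $P_Y$ versus $P_Y^\perp$ split of $Herm(\nabla f)\overline{\xi}_Y(Y^*Y)^{-1}$ is an algebraic rearrangement already present inside $\Hess F$, not something produced by the outer horizontal projection.
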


\begin{prop}
Using $g^3$, the Riemannian Hession of $h$ is given by
\begin{eqnarray*}
    \overline{\left( \Hess h(\pi(Y))[\xi_{\pi(Y)}]\right)}_{ Y}  &=& \left(I-\frac{1}{2}P_Y\right)\nabla^2f(YY^*)[Y\overline{\xi}_Y^*+\overline{\xi}_YY^*]Y(Y^*Y)^{-1}\\
    && + (I-P_Y)\nabla f(YY^*)(I-P_Y)\overline{\xi}_Y(Y^*Y)^{-1}.
\end{eqnarray*}

\end{prop}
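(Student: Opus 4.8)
The plan is to deduce the formula from the Riemannian Hessian on the embedded manifold (Proposition~\ref{prop:embedded_hessian}), exploiting the fact that, by the very definition of $g^3$, the map $\beta$ becomes a Riemannian submersion onto $(\mathcal H^{n,p}_+,g)$. First I would record that on horizontal vectors $g^3$ coincides with the pullback of $g$ under $\beta$: for $A,B\in\mathcal H^3_Y$ one has $A^{\mathcal V}=B^{\mathcal V}=0$, so $g^3_Y(A,B)=\ip{\D\beta(Y)[A],\D\beta(Y)[B]}_{\mathbb{C}^{n\times n}}=g_{YY^*}\!\bigl(\D\beta(Y)[A],\D\beta(Y)[B]\bigr)$; combined with $\ker\D\beta(Y)=\mathcal V_Y$ this shows $\beta\colon(\mathbb C^{n\times p}_*,g^3)\to(\mathcal H^{n,p}_+,g)$ is a Riemannian submersion and $\tilde\beta$ an isometry. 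Since the horizontal lift of $\grad h$ is $\grad F$ (Theorem~\ref{thm:lift_gradient_quotient}), the standard Riemannian-quotient Hessian identity $\overline{(\Hess h(\pi(Y))[\xi_{\pi(Y)}])}_Y=P^{\mathcal H^3}_Y(\overline{\nabla}_{\overline{\xi}_Y}\grad F)=P^{\mathcal H^3}_Y(\Hess F(Y)[\overline{\xi}_Y])$ holds, and O'Neill's relation between the two Levi--Civita connections further identifies this vector as the horizontal lift of $\Hess f(X)[\xi_X]$, where $X=YY^*$ and $\xi_X:=\D\beta(Y)[\overline{\xi}_Y]=Y\overline{\xi}_Y^*+\overline{\xi}_YY^*\in T_X\mathcal H^{n,p}_+$. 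Since $\Hess f(X)$ maps $T_X\mathcal H^{n,p}_+$ to itself, $\Hess f(X)[\xi_X]$ is a genuine (hence Hermitian) tangent matrix, so the lift is well defined.

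Next I would make the inverse of $\D\beta(Y)|_{\mathcal H^3_Y}$ explicit: for $\zeta_X\in T_X\mathcal H^{n,p}_+$ the horizontal lift is $\zeta_X\mapsto(I-\tfrac12 P_Y)\zeta_X Y(Y^*Y)^{-1}$. To check this, set $A=(I-\tfrac12 P_Y)\zeta_X Y(Y^*Y)^{-1}$ and use $\zeta_X^*=\zeta_X$, $P_{Y_\perp}\zeta_XP_{Y_\perp}=0$ (Theorem~\ref{thm:tangent_space}) and $P_Y=UU^*$ to obtain $YA^*+AY^*=P_Y\zeta_X+\zeta_XP_Y-P_Y\zeta_XP_Y=\zeta_X$; that $(Y^*Y)^{-1}Y^*A$ is Hermitian confirms $A\in\mathcal H^3_Y$. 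Combined with the first step, this gives the working form
\[
\overline{(\Hess h(\pi(Y))[\xi_{\pi(Y)}])}_Y=\bigl(I-\tfrac12 P_Y\bigr)\,\bigl(\Hess f(X)[\xi_X]\bigr)\,Y(Y^*Y)^{-1}.
\]

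Finally I would substitute Proposition~\ref{prop:embedded_hessian}, $\Hess f(X)[\xi_X]=P^t_X(\nabla^2 f(X)[\xi_X])+P^p_X\!\bigl(\nabla f(X)(X^\dagger\xi_X^p)^*+(\xi_X^pX^\dagger)^*\nabla f(X)\bigr)$, and treat the two summands separately. For the curvature term, using $P^t_X(W)=P_U\,Herm(W)\,P_U+P_{U_\perp}\,Herm(W)\,P_U+P_U\,Herm(W)\,P_{U_\perp}$ together with $P_U=P_Y$, $P_UY=Y$, $P_{U_\perp}Y=0$ and $\tfrac12 P_U+P_{U_\perp}=I-\tfrac12 P_U$ collapses $(I-\tfrac12 P_Y)P^t_X(W)Y(Y^*Y)^{-1}$ to $(I-\tfrac12 P_Y)\,Herm(W)\,Y(Y^*Y)^{-1}$ with $W=\nabla^2 f(YY^*)[Y\overline{\xi}_Y^*+\overline{\xi}_YY^*]$, which is the first term of the claim. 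For the gradient term, the same projector bookkeeping gives $(I-\tfrac12 P_Y)P^p_X(Z)Y(Y^*Y)^{-1}=P_{Y_\perp}\,Herm(Z)\,Y(Y^*Y)^{-1}$; then, with $Z=\nabla f(X)(X^\dagger\xi_X^p)^*+(\xi_X^pX^\dagger)^*\nabla f(X)$, the identities $X^\dagger=Y(Y^*Y)^{-2}Y^*$, $\xi_X^p=P_{Y_\perp}\overline{\xi}_YY^*+Y\overline{\xi}_Y^*P_{Y_\perp}$, $P_{Y_\perp}X^\dagger=0$ and $\xi_X^pX^\dagger Y(Y^*Y)^{-1}=P_{Y_\perp}\overline{\xi}_Y(Y^*Y)^{-1}$ reduce this to $P_{Y_\perp}\,Herm(\nabla f(YY^*))\,P_{Y_\perp}\,\overline{\xi}_Y(Y^*Y)^{-1}=(I-P_Y)Herm(\nabla f(YY^*))(I-P_Y)\overline{\xi}_Y(Y^*Y)^{-1}$, the second term of the claim.

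\textbf{Main obstacle.} The first step is soft once the submersion structure is recognized; the real work is the projector bookkeeping in the last step, especially the gradient term — tracking how $P^p_X$, $X^\dagger$, the ``$p$-part'' $\xi_X^p$ and the asymmetric projector $I-\tfrac12 P_Y$ interact, and invoking Hermiticity of $\xi_X$, of $\xi_X^p$ and of $\nabla^2 f(X)[\xi_X]$ at precisely the right moments so that $Herm(\cdot)$ lands where stated. A more self-contained route, parallel to the $g^1$ and $g^2$ proofs, would instead compute the $g^3$-Levi--Civita connection on $\mathbb C^{n\times p}_*$ via the Koszul formula and evaluate $P^{\mathcal H^3}_Y\overline{\nabla}_{\overline{\xi}_Y}(\grad F)$ directly, trading submersion theory for heavier Christoffel-symbol calculations.
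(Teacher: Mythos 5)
Your proposal is correct, and it takes a genuinely different route from the paper's. The paper's Appendix \ref{Appen:riemannian_hessian_quotient} computes the Levi--Civita connection of $(\mathbb C^{n\times p}_*,g^3)$ directly via the Koszul formula and then evaluates $P^{\mathcal H^3}_Y(\nabla_{\overline\xi_Y}\grad F)$ by a long chain of algebraic cancellations — precisely the ``more self-contained route'' you describe at the end. You instead observe that the extra term in $g^3$ on the vertical space is exactly what is needed so that on $\mathcal H^3_Y$ the metric $g^3$ coincides with the pullback of the embedded metric $g$ under $\beta$, making $\beta$ a Riemannian submersion and $\tilde\beta$ an isometry; then $\Hess h$ must be the pull-back of $\Hess f$, and all that remains is to write down the inverse of $\D\beta(Y)|_{\mathcal H^3_Y}$ (your formula $\zeta_X\mapsto(I-\tfrac12P_Y)\zeta_X Y(Y^*Y)^{-1}$ is correct and easy to verify) and to simplify the embedded-Hessian formula from Proposition~\ref{prop:embedded_hessian}. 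I checked the projector bookkeeping you sketch: $P^t_X(W)Y=Herm(W)Y$, $(I-\tfrac12P_Y)P^p_X(Z)Y(Y^*Y)^{-1}=P_{Y_\perp}Herm(Z)Y(Y^*Y)^{-1}$, $\xi_X^p=P_{Y_\perp}\overline\xi_YY^*+Y\overline\xi_Y^*P_{Y_\perp}$, $X^\dagger\xi_X^p=Y(Y^*Y)^{-1}\overline\xi_Y^*P_{Y_\perp}$, and the final reduction to $(I-P_Y)Herm(\nabla f)(I-P_Y)\overline\xi_Y(Y^*Y)^{-1}$ — all correct. What your approach buys is conceptual clarity and reuse of Proposition~\ref{prop:embedded_hessian}; what the paper's approach buys is uniformity with the $g^1$ and $g^2$ derivations (which cannot exploit the isometry, since those metrics do not make $\beta$ a Riemannian submersion onto $(\mathcal H^{n,p}_+,g)$) and the ability to produce the intermediate expression $\Hess F(Y)[\overline\xi_Y]$ on the total space as a by-product. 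One small terminological quibble: the identification of $\Hess h$ with $\Hess f$ is a consequence of $\tilde\beta$ being an isometry rather than of O'Neill's formula per se (O'Neill relates $\nabla^M$ and $\nabla^N$ for the submersion $\pi$; what you need is invariance of the Levi--Civita connection and Hessian under the isometry $\tilde\beta$), but this does not affect the validity of the argument.
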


\section{The Riemannian conjugate gradient method}\label{sec:RCG}
For simplicity, in this paper we only consider the Riemannian conjugate gradient (RCG) method described as Algorithm 1 in \cite{vandereycken_low-rank_2013} with the geometric variant of Polak–Ribi\'{e}re (PR+) for computing the conjugate direction. It is possible to explore other methods such as the limited-memory version of the Riemannian BFGS method (LRBFGS) as in \cite{huang_broyden_2015}.  
However, RCG performs very well on a wide variety of problems and is easier to implement for our numerical examples.

In this section, we focus on establishing two equivalences in algorithms.
First, we show that the Burer--Monteiro CG method, which is simply applying CG method for the unconstrainted problem \eqref{min-BR}, is equivalent to RCG on the Riemannian quotient  manifold $(\mathbb{C}^{n\times p}_*/\mathcal{O}_p, g^1)$ with  our retraction and vector transport defined in the previous sections. 
Second, we  show that RCG on the embedded manifold $\mathcal{H}^{n,p}_+$ is equivalent to RCG on the quotient manifold $(\mathbb{C}^{n\times p}_*/\mathcal{O}_p, g^3)$ with a specific retraction and vector transport. 
 


We first summarize two Riemannian CG algorithms in Algorithm  \ref{alg:CG_embedded}  and Algorithm \ref{alg:CG_quotient}  below. Algorithm  \ref{alg:CG_embedded} is the RCG on the embedded manifold for solving \eqref{lowrank_prob} and Algorithm \ref{alg:CG_quotient} is the RCG on the quotient manifold $(\mathbb{C}^{n\times p}_*/\mathcal{O}_p, g^i)$ for solving \eqref{quotient_prob}. We remark that the explicit constants $0.0001$ and $0.5$ in the Armijo backtracking are chosen for convenience.
\begin{algorithm}[htbp]
\caption{Riemannian Conjugate Gradient on the embedded manifold $\mathcal{H}^{n,p}_+$}
\label{alg:CG_embedded}
\begin{algorithmic}[1]
\Require initial iterate $X_0 \in \mathcal{H}^{n,p}_+$, initial gradient $\xi_0 = \grad f(X_0)$, initial conjugate direction $\eta_0 = -\grad f(X_0)$, tolerance $\varepsilon>0$
\For{ $k =1,2,\dots$}
    \State Compute an initial step $t_k$. For special cost functions, it is possible to compute: 
    \par\hskip\algorithmicindent $t_k = \argmin_t{f(X_{k-1} + t \eta_{k-1})}$

    \State Perform Armijo backtracking to find the smallest integer  $m \geq 0$ such that 
    \[
        f(X_{k-1}) - f(R_{X_{k-1}}(0.5^mt_k\eta_{k-1})) \geq -0.0001\times 0.5^m t_k g_{X_{k-1}}(\xi_{k-1},\eta_{k-1})
    \]
    \[
    \zeta_k : = 0.5^m t_k \eta_{k-1}
    \]
    \State Obtain the new iterate by retraction
    \par\hskip\algorithmicindent $X_{k} = {R}_{X_{k-1}}(\zeta_k)$ \Comment{ See Algorithm \ref{alg:retraction_embedded}}
    
    \State{Compute gradient}
    \par\hskip\algorithmicindent $\xi_k := \grad f(X_k)$ 
    \Comment{ See Algorithm \ref{alg:grad_embedded}} 
    \State{Check convergence}
     \par\hskip\algorithmicindent if $\norm{\xi_k}:=\sqrt{g_{X_k}(\xi_k, \xi_k)} < \varepsilon $ or $f(X_k) < \varepsilon$, then break 
    \State Compute a conjugate direction by $\mbox{PR}_+$ and vector transport
    \par\hskip\algorithmicindent $\eta_k = -\xi_k + \beta_k \mathcal{T}_{\zeta_k}(\eta_{k-1}),$   \Comment{ See Algorithm \ref{alg:vector_transport_embedded},  \ref{alg:vector_transport_embedded_2}
    }
    
    \[\text{with }    \beta_k := \frac{g_{X_k}\left(\xi_k, \xi_k-\mathcal{T}_{\zeta_k}(\xi_{k-1})\right)}{g_{X_{k-1}}\left( \xi_{k-1}, \xi_{k-1} \right)}. \]
\EndFor
\end{algorithmic}
\end{algorithm}

\begin{algorithm}[htbp]
\caption{Riemannian Conjugate Gradient on the quotient manifold $\mathbb{C}^{n\times p}_*/\mathcal{O}_p$ with metric $g^i$
}
\label{alg:CG_quotient}
\begin{algorithmic}[1]
\Require initial iterate $Y_0 \in \pi^{-1}(\pi(Y_0))$, initial horizontal lift of gradient $\overline{\xi}_0 = \grad F(Y_0)$, initial conjugate direction $\overline{\eta}_0 = - \overline{\xi}_0$, tolerance $\varepsilon>0$
\For{ $k =1,2,\dots$}
    \State Compute an initial step $t_k$. For special cost functions, it is possible to compute:
    \par\hskip\algorithmicindent $t_k = \argmin_t{F(Y_{k-1} + t \overline{\eta}_{k-1})}$
    \State Perform Armijo backtracking to find the smallest integer  $m \geq 0$ such that 
    \begin{equation*}
        F(Y_{k-1}) - F(\overline{R}_{Y_{k-1}}(0.5^mt_k\overline{\eta}_{k-1})) \geq -0.0001\times 0.5^m t_k g^i_{Y_{k-1}}(\overline{\xi}_{k-1},\overline{\eta}_{k-1})
    \end{equation*}
    \[
    \overline{\zeta}_k := 0.5^m t_k \overline{\eta}_k
    \]
    \State Obtain the new iterate by retraction
    \par\hskip\algorithmicindent $Y_{k} = \overline{R}_{Y_{k-1}}(\overline{\zeta}_k)$
    \State{Compute the horizontal lift of gradient}
    \par\hskip\algorithmicindent $\overline{\xi}_k := \overline{(\grad h(\pi(Y_k)))}_{ Y_k} = \grad F(Y_k)$   \Comment{See Algorithm \ref{alg:grad_quotient}}
    \State{Check convergence}
     \par\hskip\algorithmicindent if $\norm{\overline{\xi}_k}:=\sqrt{g_{Y_k}^i(\overline{\xi}_k, \overline{\xi}_k)} < \varepsilon $ or $F(Y_k) < \varepsilon$, then break 
    \State Compute a conjugate direction by $\mbox{PR}_+$ and vector transport
    \par\hskip\algorithmicindent $\overline{\eta}_k = -\overline{\xi}_k + \beta_k \overline{(\mathcal{T}_{\zeta_k}\eta_{k-1})}_{Y_k},$ \Comment{See Algorithm \ref{alg:vector_transport_quotient1}}
    \[\text{with }    \beta_k := \frac{g_{Y_k}^i\left(\grad F(Y_k), \grad F(Y_k)-\overline{(\mathcal{T}_{\zeta_k}\xi_{k-1})}_{Y_k}\right)}{g^i_{Y_{k-1}}\left( \grad F(Y_{k-1}), \grad F(Y_{k-1}) \right)}. \]
\EndFor
\end{algorithmic}
\end{algorithm}

\subsection{Equivalence between Burer--Monteiro CG and RCG on the Riemannian quotient manifold with the Bures-Wasserstein metric $(\mathbb{C}^{n\times p}_*/\mathcal{O}_p,g^1)$}\label{sec:BMCG_equal_RCG}

\begin{thm}
 Using retraction (\ref{eqn:retraction_quotient}), vector transport (\ref{eqn:vector_transport_quotient}) and metric $g^1$, Algorithm \ref{alg:CG_quotient}  is equivalent to the conjugate gradient method solving \eqref{min-BR} in the sense that they produce exactly the same iterates if started from the same initial point. 
\end{thm}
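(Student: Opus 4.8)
The plan is to prove by induction on $k$ that the two methods generate the same data. Write $Y_k^{\mathrm{BM}}$, $\xi_k^{\mathrm{BM}}:=\nabla F(Y_k^{\mathrm{BM}})$ and $\eta_k^{\mathrm{BM}}$ for the iterate, the Fr\'echet gradient and the conjugate direction produced by nonlinear CG on \eqref{min-BR}, and $Y_k$, $\xi_k$, $\eta_k$ for the representative, the horizontal lift of the Riemannian gradient, and the horizontal lift of the conjugate direction produced by Algorithm~\ref{alg:CG_quotient} with metric $g^1$, where the representative is the one selected by the retraction \eqref{eqn:retraction_quotient}, namely $Y_{k+1}=Y_k+(\text{step})\,\eta_k$. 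I will show $Y_k=Y_k^{\mathrm{BM}}$, $\xi_k=\xi_k^{\mathrm{BM}}$ and $\eta_k=\eta_k^{\mathrm{BM}}$, carrying along the auxiliary invariant that $Y_k^*\eta_k$ is Hermitian, i.e.\ $\eta_k\in\mathcal H^1_{Y_k}=\{Z\in\mathbb C^{n\times p}:Y_k^*Z=Z^*Y_k\}$.

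Two basic facts set up the induction. First, under $g^1$ the Riemannian gradient of $F$ is the Fr\'echet gradient, $\grad F(Y)=\nabla F(Y)=(\nabla f(YY^*)+\nabla f(YY^*)^*)Y$ by Proposition~\ref{thm:gradF(Y)}, and this is precisely the horizontal lift of $\grad h(\pi(Y))$ by Theorem~\ref{thm:lift_gradient_quotient}; hence as soon as $Y_k=Y_k^{\mathrm{BM}}$ we get $\xi_k=\grad F(Y_k)=\nabla F(Y_k)=\xi_k^{\mathrm{BM}}$. Second, $Y^*\nabla F(Y)=Y^*(\nabla f(YY^*)+\nabla f(YY^*)^*)Y$ is Hermitian, so $\nabla F(Y)$ always lies in $\mathcal H^1_Y$; in particular $\xi_k$ is horizontal at $Y_k$.

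The heart of the argument is that the vector transport \eqref{eqn:vector_transport_quotient}, which for $g^1$ is the $g^1$-orthogonal projection $P^{\mathcal H^1}_{Y_k}$ onto the horizontal space at $Y_k$, acts as the identity on the vectors it is applied to in Algorithm~\ref{alg:CG_quotient}. For the conjugate direction, since $Y_k=Y_{k-1}+t_{k-1}\eta_{k-1}$ (the common step), $Y_k^*\eta_{k-1}=Y_{k-1}^*\eta_{k-1}+t_{k-1}\eta_{k-1}^*\eta_{k-1}$ is Hermitian (the first summand by the inductive invariant at $k-1$, the second because $\eta_{k-1}^*\eta_{k-1}$ is), so $\eta_{k-1}\in\mathcal H^1_{Y_k}$ and $\mathcal T_{Y_{k-1}\to Y_k}(\eta_{k-1})=P^{\mathcal H^1}_{Y_k}(\eta_{k-1})=\eta_{k-1}$. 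For the gradient term in the $\mathrm{PR}_+$ coefficient it is not even needed that $\xi_{k-1}$ be horizontal at $Y_k$: since $P^{\mathcal H^1}_{Y_k}$ is $g^1$-self-adjoint and $\xi_k$ is already horizontal, $g^1_{Y_k}\big(\xi_k,\mathcal T_{Y_{k-1}\to Y_k}(\xi_{k-1})\big)=g^1_{Y_k}\big(\xi_k,P^{\mathcal H^1}_{Y_k}(\xi_{k-1})\big)=g^1_{Y_k}(\xi_k,\xi_{k-1})$. Because $g^1$ is the fixed Euclidean inner product, independent of the base point, the numerator $g^1_{Y_k}(\xi_k,\xi_k-\xi_{k-1})$ and denominator $g^1_{Y_{k-1}}(\xi_{k-1},\xi_{k-1})$ reduce exactly to those of the Polak--Ribi\'ere formula for $F$, so $\beta_k=\beta_k^{\mathrm{BM}}$, the $\mathrm{PR}_+$ truncation being identical. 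Then $\eta_k=-\xi_k+\beta_k\eta_{k-1}=-\nabla F(Y_k)+\beta_k^{\mathrm{BM}}\eta_{k-1}^{\mathrm{BM}}=\eta_k^{\mathrm{BM}}$, and the invariant is preserved since $Y_k^*\eta_k=-Y_k^*\nabla F(Y_k)+\beta_k Y_k^*\eta_{k-1}$ is a sum of Hermitian matrices. Finally, the initial step $t_k=\argmin_t F(Y_k+t\eta_k)$, the Armijo test using $\overline{R}_{Y_k}(0.5^m t_k\eta_k)=Y_k+0.5^m t_k\eta_k$ and slope $g^1_{Y_k}(\xi_k,\eta_k)$, and the resulting update $Y_{k+1}=Y_k+0.5^m t_k\eta_k$ are verbatim the line search and update of CG on \eqref{min-BR}, giving $Y_{k+1}=Y_{k+1}^{\mathrm{BM}}$. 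The base case $k=1$ holds because $\eta_0=0$ forces $\eta_1=-\xi_1=-\nabla F(Y_1)$ in both algorithms and $Y_1^*\eta_1$ is Hermitian.

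I expect the only genuinely nontrivial step to be isolating the invariant that the CG search direction stays $g^1$-horizontal at the point where it is used and checking that it propagates; this is exactly what collapses the horizontal-projection vector transport to the identity, after which the remaining ingredients (the identity $\grad F=\nabla F$ under $g^1$, horizontality of the Riemannian gradient, and the self-adjointness argument that removes the transport from the $\mathrm{PR}_+$ numerator) are routine. It is also worth stating explicitly that Algorithm~\ref{alg:CG_quotient} tracks the specific representative $Y_{k+1}=Y_k+(\text{step})\,\eta_k$ rather than an arbitrary member of $[Y_{k+1}]$, so the asserted equality of iterates is an equality of these representatives, and hence of $X_k=Y_kY_k^*$.
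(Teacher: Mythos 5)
Your proof is correct and follows essentially the same route as the paper: identify $\grad F=\nabla F$ under $g^1$, observe that the Riemannian gradient is automatically horizontal, and then show by induction that the horizontal projection (the vector transport) acts as the identity on the conjugate direction, so that the two CG recursions coincide. Your bookkeeping is slightly cleaner --- you carry the invariant $\eta_k\in\mathcal H^1_{Y_k}$ and propagate it via $Y_{k+1}=Y_k+t_k\eta_k$, and you make explicit the self-adjointness argument that removes the transport from the $\mathrm{PR}_+$ numerator --- whereas the paper directly verifies by induction that the right-hand side of the Lyapunov equation \eqref{eqn:Lyapunov} vanishes, but the underlying idea is the same.
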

\begin{proof}
First of all,
for $g^1$, the Riemannian gradient of $F$ at $Y$ is $\grad F(Y) = 2 \nabla f(YY^*)Y$, which is equal to the Fr\'{e}chet gradient of $F(Y)=f(YY^*)$ at $Y$. Since vector transport is the orthogonal projection to the horizontal space, the $\mbox{PR}_+$ $\beta_k$ used in Riemannian CG becomes 
 \begin{equation}\label{eqn:PR_quotient}
    \beta_k = \frac{g_{Y_k}^1\left(\grad F(Y_k), \grad F(Y_k)-{P}^{\mathcal{H}^1}_{Y_k}(\grad F(Y_{k-1}))\right)}{g^1_{Y_{k-1}}\left( \grad F(Y_{k-1}), \grad F(Y_{k-1}) \right)}.
 \end{equation}
Now observe that 
\[
    P^{\mathcal{H}^1}_{Y_k} (\grad F(Y_{k-1})) = \grad F(Y_{k-1}) -  P^{\mathcal{V}}_{Y_k} (\grad F(Y_{k-1}))
\]
 and $g^1$ is equivalent to the classical inner product for $\mathbb{C}^{n\times p}$. Hence $\beta_k$ computed by (\ref{eqn:PR_quotient}) is equal to $\mbox{PR}_+$ $\beta_k$ in conjugate gradient for \eqref{min-BR}. 
 
 The first conjugate direction is $\eta_1 = - \grad F(Y_1) = - \nabla F(Y_1)$, so  Burer--Monteiro CG coincides with Riemannian CG for the first iteration. It remains to show that $\eta_k$ generated in Riemannian CG by 
 \begin{equation*}
     \eta_k = - \xi_k + \beta_k P^{\mathcal{H}^1}_{Y_k}(\eta_{k-1})
 \end{equation*}
is equal to $\eta_k$ generated in Burer--Monteiro CG for each $k\geq 2$. It suffices to show that
\begin{equation*}
    P^{\mathcal{H}^1}_{Y_k}(\eta_{k-1}) = \eta_{k-1}, \quad \forall k \geq 2.
\end{equation*}
Equivalently we need to show that for all $k\geq 2$, the Lyapunov equation
\begin{equation}\label{eqn:Lyapunov}
    (Y_k^*Y_k)\Omega + \Omega (Y_k^*Y_k) = Y_k^*\eta_{k-1} - \eta_{k-1}^* Y_k
\end{equation}
only has trivial solution $\Omega = 0$. By invertibility of the equation, this means that we only need to show the right hand side is zero. We prove it by induction. 

For $k=2$, $\eta_{k-1} = \eta_1 = - \xi_1 = - \grad F(Y_1)$. The following computations show that the RHS of (\ref{eqn:Lyapunov}) satisfies
\begin{eqnarray*}
    Y_2^*\eta_1 - \eta_1^*Y_2 &=& -Y_2^*\xi_1 + \xi_1^* Y_2 \\
    &=& -(Y_1 - c\xi_1)^* \xi_1 +\xi_1^*(Y_1-c \xi_1)\\
    &=& \xi_1^*Y_1 - Y_1^*\xi_1 \\
    &=& Y_1^*(2\nabla f(Y_1Y_1^*))Y_1 - Y_1^*(2 \nabla f(Y_1Y_1^*))Y_1 \\
    &=& 0. 
\end{eqnarray*}
Hence $\Omega = 0$ and $P^{\mathcal{H}^1}_{Y_k}(\eta_{k-1}) = \eta_{k-1}$ for $k=2$. 

Now suppose for $k\geq 2$, the RHS of (\ref{eqn:Lyapunov}) is 0 and hence $P^{\mathcal{H}^1}_{Y_k}(\eta_{k-1}) = \eta_{k-1}$ holds. Then the RHS of the Lyapunov equation of step $k+1$ is
\begin{eqnarray*}
    Y_{k+1}^*\eta_k - \eta_k^*Y_{k+1} &=&  (Y_{k}+c\eta_k)^*\eta_k - \eta_k^*(Y_{k} +c \eta_k) \\
    &=& Y_k^*\eta_k - \eta_k^* Y_k \\
    &=& Y_k^*\left(-\xi_k + \beta_k P^{\mathcal{H}^1}_{Y_k}(\eta_{k-1}) \right) - \left(-\xi_k+\beta_k P^{\mathcal{H}^1}_{Y_k}(\eta_{k-1}) \right)^*Y_k \\
    &=& Y_k^*(-\xi_k + \beta_k \eta_{k-1}) - (-\xi_k+\beta_k \eta_{k-1})^*Y_k \\
    &=& -Y_k^*\xi_k + \xi_k^* Y_k\\
    &=& -Y_k^*(2\nabla f(Y_kY_k^*))Y_k + Y_k^*(2\nabla f(Y_kY_k^*))Y_k\\
    &=& 0.
\end{eqnarray*}
Hence $P^{\mathcal{H}^1}_{Y_{k+1}}(\eta_{k}) = \eta_{k}$ also holds. We have thus proven that Riemannian CG is equivalent to Burer--Monteiro CG.
\end{proof}

Since the gradient descent corresponds to $\beta_k\equiv 0$, the same discussion also implies the following

\begin{coro}
 Using retraction (\ref{eqn:retraction_quotient}) and metric $g^1$, the Riemannian gradient descent on the quotient manifold is equivalent to the Burer--Monteiro gradient descent method with suitable step size \eqref{br-gd} in the sense that they produce exactly the same iterates. 
\end{coro}

\subsection{Equivalence between RCG on embedded manifold and RCG on the quotient manifold $(\mathbb{C}^{n\times p}_*/\mathcal{O}_p,g^3)$}
\label{sec:embedded_equal_quotient}
In this subsection we show that Algorithm \ref{alg:CG_embedded} is equivalent to Algorithm \ref{alg:CG_quotient} with  Riemannian metric $g^3$, a specific initial line-search in step 5, a specific retraction and a specific vector transport. The idea is to take the advantage of the diffeomorphism $\tilde{\beta}$ between $\mathbb{C}^{n\times p}_*/\mathcal{O}_p$ and $\mathcal{H}^{n,p}_+$,
as well as the fact that the metric $g^3$ of $\mathbb{C}^{n\times p}_*/\mathcal{O}_p$ is induced from the metric of $\mathcal{H}^{n,p}_+$. 

The Lemma below shows that there is a one-to-one correspondence between $\grad f$ and $\grad h$. 
\begin{lem} 
If we use $g^3$ as the Riemannian metric for $\mathbb{C}^{n\times p}_*/\mathcal{O}_p$, then the Riemannian gradient of $f$ and $h$ is related by the diffeomorphism $\tilde{\beta}$ in the following way:
\[
(\D \tilde{\beta}) ( \pi(Y) )[\grad h(\pi(Y))]  = \grad f(YY^*).
\]
\end{lem}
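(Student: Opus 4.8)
The plan is to verify the claimed identity by pairing both sides against an arbitrary tangent vector of $\mathcal{H}^{n,p}_+$ at $X := YY^*$ and appealing to nondegeneracy of the embedded metric. Throughout, for $\xi \in T_{\pi(Y)}\mathbb{C}^{n\times p}_*/\mathcal{O}_p$ I write $\bar\xi_Y \in \mathcal{H}^3_Y$ for its horizontal lift at $Y$, and $g_X$ for the embedded metric \eqref{metric-embedded-z} on $T_X\mathcal{H}^{n,p}_+$.

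First I would extract two elementary consequences of the factorizations $\beta = \tilde{\beta}\circ\pi$ and $h = f\circ\tilde{\beta}$. Differentiating the first and using $\D\pi(Y)[\bar\xi_Y] = \xi$ gives $\D\tilde{\beta}(\pi(Y))[\xi] = \D\beta(Y)[\bar\xi_Y]$ for every $\xi$. Since $\tilde{\beta}$ is a diffeomorphism (proved earlier) and $\beta$ is a submersion with $\ker\D\beta(Y) = \mathcal{V}_Y$, the restriction $\D\beta(Y)\colon \mathcal{H}^3_Y \to T_X\mathcal{H}^{n,p}_+$ is a linear isomorphism, so $w := \D\tilde{\beta}(\pi(Y))[\xi]$ exhausts $T_X\mathcal{H}^{n,p}_+$ as $\xi$ varies. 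Differentiating the second gives the chain rule $\D h(\pi(Y))[\xi] = \D f(X)[w] = g_X(\grad f(X), w)$, the last step being the defining property of the embedded Riemannian gradient.

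The key structural fact I would then isolate is that, when $\mathcal{H}^3_Y$ carries $g^3$, the map $\D\beta(Y)\colon (\mathcal{H}^3_Y, g^3) \to (T_X\mathcal{H}^{n,p}_+, g_X)$ is an isometry. Indeed, since $\mathcal{H}^3_Y = \mathcal{H}^2_Y$, any horizontal $A$ has trivial $\mathcal{H}^2$-decomposition, i.e.\ $P^\mathcal{V}_Y(A) = 0$, so the defining formula for $g^3$ collapses to $g^3_Y(A,B) = \ip{\D\beta(Y)[A],\D\beta(Y)[B]}_{\mathbb{C}^{n\times n}} = g_X(\D\beta(Y)[A],\D\beta(Y)[B])$ for all $A,B \in \mathcal{H}^3_Y$. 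Assembling the pieces: for any $\xi$, writing $w = \D\tilde{\beta}(\pi(Y))[\xi] = \D\beta(Y)[\bar\xi_Y]$ and noting $\D\tilde{\beta}(\pi(Y))[\grad h(\pi(Y))] = \D\beta(Y)[\overline{\grad h(\pi(Y))}_Y] \in T_X\mathcal{H}^{n,p}_+$, the isometry property, the definition \eqref{eqn:riemannian_metric_quotient} of the quotient metric, and the definition of $\grad h$ give
\begin{align*}
 g_X\bigl(\D\tilde{\beta}(\pi(Y))[\grad h(\pi(Y))],\, w\bigr)
 &= g^3_Y\bigl(\overline{\grad h(\pi(Y))}_Y,\, \bar\xi_Y\bigr)
 = g^3_{\pi(Y)}\bigl(\grad h(\pi(Y)),\, \xi\bigr)\\
 &= \D h(\pi(Y))[\xi] = g_X\bigl(\grad f(X),\, w\bigr).
\end{align*}
Since $w$ ranges over all of $T_X\mathcal{H}^{n,p}_+$ and $g_X$ is nondegenerate there, the two tangent vectors coincide, which is the assertion.

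The step I expect to require the most care is the isometry claim: one must be certain that the vertical projection $P^\mathcal{V}_Y$ appearing in the definition of $g^3$ is the projection for the $g^3$-orthogonal splitting $T_Y\mathbb{C}^{n\times p}_* = \mathcal{H}^3_Y \oplus \mathcal{V}_Y$, so that it genuinely annihilates horizontal lifts. This is exactly the content of the earlier proposition identifying $\mathcal{H}^3_Y$ with $\mathcal{H}^2_Y$, and it is precisely what makes $g^3$ (rather than the degenerate pullback $\ip{\D\beta(Y)[\cdot],\D\beta(Y)[\cdot]}$) the right metric; everything else is bookkeeping with the commuting diagram $\beta = \tilde{\beta}\circ\pi$.
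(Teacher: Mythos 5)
Your proposal is correct and follows essentially the same route as the paper: both reduce the identity to a pairing against arbitrary tangent vectors, both rely on the fact that for horizontal $A$ the vertical projection term in $g^3$ vanishes so that $g^3_Y(\grad F(Y),A)=g_X(\D\beta(Y)[\grad F(Y)],\D\beta(Y)[A])$, and both close by nondegeneracy of $g_X$. The only stylistic difference is that you package this as an explicit isometry claim $\D\beta(Y)\colon(\mathcal{H}^3_Y,g^3)\to(T_X\mathcal{H}^{n,p}_+,g_X)$, which the paper leaves implicit in its chain of equalities.
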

\begin{proof}
Recall that $\beta = \tilde{\beta}\circ \pi$ and we have Theorem \ref{thm:lift_gradient_quotient}. By chain rule and the definition of horizontal lift we have
\begin{eqnarray*}
 LHS = (\D \tilde{\beta}) (\pi(Y) )[\grad h(\pi(Y))]  &=&  (\D\tilde{\beta})(\pi(Y))\left[\D\pi(Y)\left[\overline{\grad h(\pi(Y))}_{Y}\right]\right] \\
 &=& \D \beta(Y)\left[\overline{\grad h(\pi(Y))}_{Y} \right] \quad \text{(Inverse direction of chain rule)}\\
 &=& \D \beta(Y)\left[ \grad F(Y) \right].
\end{eqnarray*}

Now recall that $F = f \circ \beta$. Let $A \in \mathbb{C}^{n\times p}$ then 
\begin{equation*}
    \D F(Y)[A] = \D f(YY^*)[YA^*+YA^*].
\end{equation*}
Let $X = YY^*$. Then we have
\begin{equation*}
    g^3_Y(\grad F(Y), A) = g_X(\grad f(YY^*), YA^*+AY^*).
\end{equation*}
Apply the definition of $g^3$, we  have
\begin{equation*}
    g_X\left(\D \beta(Y)[\grad F(Y)], YA^*+AY^*\right) = g_X\left(\grad f(YY^*), YA^*+AY^*\right),
\end{equation*}
or 
\begin{equation*}
    g_X\left(LHS, YA^*+AY^*\right) = g_X\left( RHS, YA^*+AY^*\right).
\end{equation*}
Now notice that $A$ is arbitrary and $YA^*+AY^*$ can be any tangent vector in $T_X\mathcal{H}^{n,p}_+$. Hence we must have $LHS = RHS$
\end{proof}

Since $\tilde{\beta}$ is a diffeomorphism bewteen $\mathbb{C}^{n\times p}_*/\mathcal{O}_p$ and $\mathcal{H}^{n,p}_+$, $D\tilde{\beta}(\pi(Y))[\cdot]$ defines an isomorphism between the tangent space $T_{\pi(Y)}\mathbb{C}^{n\times p}_*/\mathcal{O}_p$ and $T_{YY^*} \mathcal{H}^{n,p}_+$. We denote this isomorphism by $L_{\pi(Y)}$. When the tangent space is clear from the context, $\pi(Y)$ is omitted and we only use the notation $L$ for simplicity. The previous lemma then simply reads as
\[
    L_{\pi(Y)}(\grad h(\pi(Y))) = \grad f( \tilde{\beta}(\pi(Y))). 
\]

In Algorithm \ref{alg:CG_embedded}, we have a retraction $R^{E}$ and a vector transport $\mathcal{T}^E$ on the embedded manifold $\mathcal{H}^{n, p}_+$, (with the superscript $E$ for {\it Embedded}), such that $R^E$ is the retraction associated with $\mathcal{T}^E$. Then we claim that there is a retraction $R^Q$ and a vector transport $\mathcal{T}^Q$, (with the superscript $Q$ denoting {\it Quotient}), on the Riemannian quotient manifold $(\mathbb{C}^{n\times p}_*/\mathcal{O}_p,g^3)$, such that Algorithm \ref{alg:CG_quotient} is equivalent to Algorithm \ref{alg:CG_embedded}. The idea is again to use the diffeomorphism $\tilde{\beta}$ and the isomorphism $L_{\pi(Y)}$. We give the desired expression of $R^Q$ and $\mathcal{T}^Q$ as follows.

\begin{equation}\label{eqn:retraction_q}
    R^Q_{\pi(Y)}(\xi_{\pi(Y)}) := \tilde{\beta}^{-1}\left(R^E_{\tilde{\beta}(\pi(Y))}\left(L(\xi_{\pi(Y)})\right) \right). 
\end{equation}
\begin{equation}\label{eqn:vector_transport_q}
    \mathcal{T}^Q_{\eta_{\pi(Y)}}(\xi_{\pi(Y)}) := L_{\pi(Y_2)}^{-1}\left(\mathcal{T}^E_{L(\eta_{\pi(Y)})}\left(L(\xi_{\pi(Y)})\right) \right),
\end{equation}
where $\pi(Y_2)$ is in $\mathbb{C}^{n\times p}_*/\mathcal{O}_p$ such that  $\tilde{\beta}(\pi(Y_2))$ denotes the foot of the tangent vector $\mathcal{T}^E_{L(\eta_{\pi(Y)})}\left(L(\xi_{\pi(Y)})\right)$.

Now it remains to show that 
$R^Q$ defined in (\ref{eqn:retraction_q}) is indeed a retraction and $\mathcal{T}^Q$ defined in (\ref{eqn:vector_transport_q}) is indeed a vector transport.

\begin{lem}
$R^Q$ defined in (\ref{eqn:retraction_q}) is a retraction. 
\end{lem}
\begin{proof}
First it is easy to see that $R^Q_{\pi(Y)}(0_{\pi(Y)}) = \pi(Y)$. 
Then we also have for all $v_{\pi(Y)} \in T_{\pi(Y)}\mathbb{C}^{n\times p}_*/\mathcal{O}_p$
\begin{eqnarray*}
    \D R^Q_{\pi(Y)}(0_{\pi(Y)})[v_{\pi(Y)}] &=& (\D \tilde{\beta}^{-1}) (\tilde{\beta}(\pi(Y))\left[\D R^E_{\tilde{\beta}(\pi(Y))}(0)\left[\D L(0)\left[v_{\pi(Y)}\right]\right] \right] \\ 
    &=& (\D\tilde{\beta}^{-1}) (\tilde{\beta}(\pi(Y))\left[\D R^E_{\tilde{\beta}(\pi(Y))}(0)\left[L(v_{\pi(Y)})\right] \right] \\ 
    &=& (\D\tilde{\beta}^{-1})(\tilde{\beta}(\pi(Y))\left[ L(v_{\pi(Y)}) \right] \\
    &=& \left( \D\tilde{\beta}(\pi(Y))\right)^{-1} [L(v_{\pi(Y)})] \\
    &=& L^{-1}(L(v_{\pi(Y)}))\\
    &=& v_{\pi(Y)}
\end{eqnarray*}
Hence $\D R^Q_{\pi(Y)}(0_{\pi(Y)})[\cdot]$ is an identity map. 
\end{proof}

\begin{lem}
$\mathcal{T}^E$ defined in (\ref{eqn:vector_transport_q}) is a vector transport and $R^Q$ is the retraction associated with $\mathcal{T}^E$.
\end{lem}
\begin{proof}
Consistency and linearity are straightforward. It thus suffices to verify that the foot of $\mathcal{T}^Q_{\eta_{\pi(Y)}}(\xi_{\pi(Y)})$ is equal to $R^Q_{\pi(Y)}(\eta_{\pi(Y)})$. Since $R^E$ is the associated retraction with $\mathcal{T}^E$, the foot of $\mathcal{T}^E_{L(\eta_{\pi(Y)})}(L(\xi_{\pi(Y)}))$ is equal to $R^E_{\tilde{\beta}(\pi(Y))}\left(L(\eta_{\pi(Y)})\right)$, which we denote by $\tilde{\beta}(\pi(Y_2))$ for some $\pi(Y_2)$.  Hence $R^Q_{\pi(Y)}(\eta_{\pi(Y)}) = \tilde{\beta}^{-1}\left(R^E_{\tilde{\beta}(\pi(Y))}\left(L(\eta_{\pi(Y)})\right)\right) = \pi(Y_2)$.

Furthermore, we have that $\mathcal{T}^Q_{\eta_{\pi(Y)}}(\xi_{\pi(Y)}) = L^{-1}_{\pi(Y_2)}\left(\mathcal{T}^E_{L(\eta_{\pi(Y)})}\left(L(\xi_{\pi(Y)})\right) \right)$ is a tangent vector in $T_{\pi(Y_2)}\mathbb{C}^{n\times p}_*/\mathcal{O}_p$. Hence, the foot of $\mathcal{T}^Q_{\eta_{\pi(Y)}}(\xi_{\pi(Y)})$ is also $\pi(Y_2)$.
\end{proof}

Finally,  in order to reach an equivalence, we also need the initial step size to match the one in step 5 of Algorithm \ref{alg:CG_quotient}. 
We simply replace the original initial step size $t_k$ by 
\[
     t_k = \argmin_t{f(Y_kY_k^* + t (Y_k\eta_k^*+\eta_k Y_k^*))}. 
\]
This value of $t_k$ now is equivalent to the initial step size in step 5 of Algorithm \ref{alg:CG_embedded}. This gives us the following result:

\begin{thm}
With the newly constructed initial step size, retraction and vector transport in this subsection, Algorithm \ref{alg:CG_quotient} for solving \eqref{quotient_prob} is equivalent to Algorithm \ref{alg:CG_embedded} solving \eqref{lowrank_prob} in the sense that they produce exactly the same iterates. 
\end{thm}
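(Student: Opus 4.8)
The plan is to prove the equivalence by induction on $k$, carrying along the invariant that the embedded iterate $X_k$ and the quotient iterate $\pi(Y_k)$ satisfy $X_k = \tilde{\beta}(\pi(Y_k)) = Y_kY_k^*$, and, in addition, that every tangent-space quantity produced at step $k$ of Algorithm~\ref{alg:CG_quotient} is mapped to the corresponding quantity of Algorithm~\ref{alg:CG_embedded} by the isomorphism $L = L_{\pi(Y_k)} = \D\tilde{\beta}(\pi(Y_k))[\cdot]$. The first thing I would establish is that $L$ is a linear isometry from $(T_{\pi(Y)}\mathbb{C}^{n\times p}_*/\mathcal{O}_p,\, g^3)$ onto $(T_{YY^*}\mathcal{H}^{n,p}_+,\, g)$. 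Writing a quotient tangent vector through its horizontal lift $\bar{\xi}_Y \in \mathcal{H}^3_Y$, the vertical part of $\bar{\xi}_Y$ vanishes so the correction term in the definition of $g^3$ drops out, and using $\D\beta(Y) = \D\tilde{\beta}(\pi(Y))\circ\D\pi(Y)$ together with $\D\pi(Y)[\bar{\xi}_Y] = \xi$ one gets $g^3_{\pi(Y)}(\xi,\zeta) = \langle \D\beta(Y)[\bar{\xi}_Y],\D\beta(Y)[\bar{\zeta}_Y]\rangle_{\mathbb{C}^{n\times n}} = g_{YY^*}(L\xi,L\zeta)$. This is exactly the property that forces the Polak--Ribi\'{e}re coefficient $\beta_k$, the norm used in the stopping test, and the directional-derivative term $g^3_{Y_k}(\xi_k,\eta_k)$ appearing in the Armijo condition of Algorithm~\ref{alg:CG_quotient} (with $i=3$) to take the same numerical values on the two sides.

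Next I would record how the newly constructed $R^Q$, its local inverse, and $\mathcal{T}^Q$ intertwine with $L$. From $R^Q_{\pi(Y)} = \tilde{\beta}^{-1}\circ R^E_{\tilde{\beta}(\pi(Y))}\circ L$ one reads off $\tilde{\beta}\bigl(R^Q_{\pi(Y)}(\xi)\bigr) = R^E_{X}(L\xi)$ and, locally, $(R^Q_{\pi(Y)})^{-1} = L^{-1}\circ (R^E_{X})^{-1}\circ\tilde{\beta}$; combining this with \eqref{eqn:vector_transport_q} gives $L_{\pi(Y_k)}\bigl(\mathcal{T}^Q_{Y_{k-1}\to Y_k}(\bar{\eta}_{Y_{k-1}})\bigr) = \mathcal{T}^E_{X_{k-1}\to X_k}\bigl(L_{\pi(Y_{k-1})}\bar{\eta}_{Y_{k-1}}\bigr)$. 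I would also note that the replaced initial step size is consistent, since $Y_k\eta_k^*+\eta_kY_k^* = \D\beta(Y_k)[\bar{\eta}_{Y_k}] = L\eta_k$, whence $f\bigl(Y_kY_k^* + t(Y_k\eta_k^*+\eta_kY_k^*)\bigr) = f(X_k + tL\eta_k)$ and the two values of $t_k$ coincide; and that $F(Y) = f(YY^*) = h(\pi(Y))$ gives $F\bigl(R^Q_{\pi(Y_k)}(t\eta_k)\bigr) = f\bigl(R^E_{X_k}(tL\eta_k)\bigr)$ once the simple retraction in step~7 of Algorithm~\ref{alg:CG_quotient} is replaced by $R^Q$.

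With these facts in hand the induction becomes bookkeeping. For the base case, $X_1 = Y_1Y_1^*$ by hypothesis, the Lemma relating the Riemannian gradients of $f$ and $h$ gives $\grad f(X_1) = \D\tilde{\beta}(\pi(Y_1))[\grad h(\pi(Y_1))] = L\xi_1$, and because $\eta_0 = 0$ both first search directions equal $-\xi_1$ and correspond under $L$. For the inductive step, assume $X_k = Y_kY_k^*$ and that the step-$k$ gradients, transported previous directions, and hence $\beta_k$ and $\eta_k$ correspond under $L$; the isometry property then makes the Armijo inequality on the quotient side numerically identical to the one on the embedded side, so the same backtracking exponent $m$ is selected; consequently $\tilde{\beta}(\pi(Y_{k+1})) = \tilde{\beta}\bigl(R^Q_{\pi(Y_k)}(0.5^m t_k\eta_k)\bigr) = R^E_{X_k}(0.5^m t_k L\eta_k) = X_{k+1}$, which closes the invariant $X_{k+1} = Y_{k+1}Y_{k+1}^*$; the correspondence of the step-$(k+1)$ gradient then follows from the same Lemma, and the correspondence of the transported direction from the intertwining identities for $\mathcal{T}^Q$ and $(R^Q)^{-1}$ recorded above.

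The main obstacle I anticipate is not any individual computation but the bookkeeping around base points and horizontal lifts: $L_{\pi(Y)}$ depends on the chosen representative $Y$, and $\mathcal{T}^Q$ and $(R^Q)^{-1}$ must be shown compatible with the specific representatives $Y_{k-1},Y_k$ produced by the algorithm. The clean way through is to invoke Lemma~\ref{lem:horizontal_lift} to see that every object appearing in Algorithm~\ref{alg:CG_quotient} genuinely descends to a well-defined object on $\mathbb{C}^{n\times p}_*/\mathcal{O}_p$, hence depends only on $\pi(Y_k)$ and, through $\tilde{\beta}$, only on $X_k$; once this is noted the intertwining relations apply verbatim no matter which representative is carried. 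A secondary point requiring care is that $R^E$ (the truncated-SVD retraction) together with its inverse and $\mathcal{T}^E$ are only defined or invertible locally, so the equivalence is to be understood for iterates that remain in the common domain on which all of these maps and their inverses are defined.
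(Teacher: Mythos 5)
Your proposal is correct and follows exactly the route the paper intends: the paper states this theorem without writing out a proof, having already built the needed machinery (the gradient correspondence lemma, the isomorphism $L$, the intertwining definitions of $R^Q$ and $\mathcal{T}^Q$, and the modified initial step size), and your induction on $k$ together with the key observation that $L$ is an isometry from $(T_{\pi(Y)}\mathbb{C}^{n\times p}_*/\mathcal{O}_p,\,g^3)$ to $(T_{YY^*}\mathcal{H}^{n,p}_+,\,g)$ is precisely the argument that machinery was designed to support. Your closing remarks about the caveat that $R^E$, its local inverse, and $\mathcal{T}^E$ only exist locally, and about quotient objects descending via Lemma~\ref{lem:horizontal_lift}, are appropriate clarifications of points the paper leaves implicit.
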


\section{Implementation details}
\label{sec:details}
The algorithms in this paper can be applied for minimizing 
any smooth function $f(X)$ in \eqref{lowrank_prob}. For problems with large $n$, however, it is advisable to avoid constructing and storing the Fr\'{e}chet derivative $\nabla f(X)\in \mathbb C^{n\times n}$ explicitly. Instead, one directly computes the matrix-vector multiplications $\nabla f(X) U$. In the PhaseLift problem \cite{candes2013phaselift}, for example,  these matrix-vector multiplications can be implemented via the FFT at a cost of $\mathcal O(p n\log n )$ when $U\in \mathbb C^{n\times p}$; see \cite{huang_solving_2017}.

Below, we detail the calculations needed in Algorithms~\ref{alg:CG_embedded} and~\ref{alg:CG_quotient}. When giving flop counts, we assume that $\nabla f(X) U \in \mathbb{C}^{n\times p}$ can be computed  in $spn\log n$ flops with $s$ small. For $g^2$ and $g^3$ in Algorithms \ref{alg:grad_quotient} and \ref{alg:vector_transport_quotient1}, we use forwardslash "/" and backslash "\textbackslash" in Matlab command to compute the inverse of $Y^*Y$.

\subsection{Embedded manifold}
\begin{algorithm}[H]
\caption{Calculate the Riemannian gradient $\grad f(X)$}\label{alg:grad_embedded}
\begin{algorithmic}[1]
\Require $X = U\Sigma U^* \in \mathcal{H}^{n,p}_+$
\Ensure $\grad f(X) = UHU^* + U_p U^* + U U_p^* \in T_X\mathcal{H}^{n,p}_+$
\par $T \leftarrow \nabla f(X)U$ \Comment{\# $spn\log n$ flops}
\par $H \leftarrow U^*T$ \Comment{\# $p^2(2n-1)$ flops}
\par $U_p \leftarrow T - UH$ \Comment{\# $np+np(2p-1)$ flops}
\end{algorithmic}
\end{algorithm}

\begin{algorithm}[h]
\caption{Calculate the vector transport by projection to tangent space $P^t_{X_2}(\nu)$} 
\label{alg:vector_transport_embedded}
\begin{algorithmic}[1]
\Require $X_1 = U_1\Sigma_1U_1^*$, $X_2 = U_2\Sigma_2U_2^*$  and tangent vector $\nu = U_1H_1U_1^* + {U_p}_1 U_1^* + U_1 {U_p}_1^* \in T_{X_1} \mathcal{H}^{n,p}_+$. 
\Ensure $P^t_{X_2}(\nu) = U_2 H_2 U_2^* + {U_p}_2 U_2^* + U_2 {U_p}_2^*$ 
\par $A \leftarrow U_1^*U_2$ \Comment{\# $p^2(2n-1)$ flops}
\par $H_2^{(1)} \leftarrow A^*H_1 A$,\quad $U_p^{(1)}\leftarrow U_1(H_1A)$ \Comment{\# $3p^2(2p-1)+ np(2p-1)$ flops}
\par $H_2^{(2)} \leftarrow U_2^* {U_p}_1A$,\quad $U_p^{(2)}\leftarrow {U_p}_1A$ \Comment{\# $p^2(2n-1) + 2np(2p-1)$ flops}
\par $H_2^{(3)} \leftarrow {H_2^{(2)}}^*$, \quad $U_p^{(3)} \leftarrow U_1({U_1}_p^*U_2)$ \Comment{\# $np(2p-1) + p^2(2n-1)$ flops}
\par $H_2 \leftarrow H_2^{(1)} + H_2^{(2)} + H_2^{(3)}$ \Comment{\# $2p^2$ flops}
\par ${U_p}_2 \leftarrow U_p^{(1)} + U_p^{(2)} + U_p^{(3)}$, \quad ${U_p}_2 \leftarrow {U_p}_2 - U_2(U_2^* {U_p}_2)$  \Comment{\# $3np + np(2p-1)+p^2(2n-1)$ flops} 
\end{algorithmic}
\end{algorithm}

In implementation, we observe a vector transport that has better numerical performance if we only keep the first term in the above sum of $H_2$ and the second term of ${U_2}_p$ in Algorithm \ref{alg:vector_transport_embedded}, which is outlined in Algorithm \ref{alg:vector_transport_embedded_2}. 

\begin{algorithm}[h]
\caption{Calculate the simpler form of vector transport used in implementation that has a better performance $P^t_{X_2}(\nu)$} 
\label{alg:vector_transport_embedded_2}
\begin{algorithmic}[1]
\Require $X_1 = U_1\Sigma_1U_1^*$, $X_2 = U_2\Sigma_2U_2^*$  and tangent vector $\nu = U_1H_1U_1^* + {U_p}_1 U_1^* + U_1 {U_p}_1^* \in T_{X_1} \mathcal{H}^{n,p}_+$. 
\Ensure $P^t_{X_2}(\nu) = U_2 H_2 U_2^* + {U_p}_2 U_2^* + U_2 {U_p}_2^*$ 
\par $A \leftarrow U_1^*U_2$ \Comment{\# $p^2(2n-1)$ flops}
\par $H_2 \leftarrow A^*H_1 A$ \Comment{\# $2p^2(2p-1)$ flops}
\par $U_p\leftarrow {U_p}_1A$ \Comment{\# $np(2p-1)$ flops}
\par ${U_p}_2 \leftarrow {U_p} - U_2(U_2^* {U_p})$  \Comment{\# $np+p^2(2n-1) + np(2p-1)$ flops} 
\end{algorithmic}
\end{algorithm}

\begin{algorithm}[H]
\caption{Calculate the retraction $R_X(Z) = P_{\mathcal{H}^{n,p}_+}(X+Z)$}\label{alg:retraction_embedded}
\begin{algorithmic}[1]
\Require $X = U\Sigma U^* \in \mathcal{H}^{n,p}_+$, tangent vector $Z = UHU^* + U_pU^*+UU_p^*$. 
\Ensure $P_{\mathcal{H}^{n,p}_+}(X+Z) = U_+\Sigma_+U_+^*$. 
\par $(Q,R)\leftarrow \text{qr}(U_p,0)$ \quad $M \leftarrow \bmat{\Sigma + H & R^* \\ R & 0 }$ \Comment{\# $20np^2$ flops}
\par $[V,S] \leftarrow \text{eig}(M)$ \Comment{$O(p^3)$ flops}
\par $\Sigma+ \leftarrow S(1:p,1:p)$, \quad $U_+ \leftarrow\bmat{U&Q} V(:,1:p)$ \Comment{\# $np(4p-1)$ flops} 
\end{algorithmic}
\end{algorithm}

\subsection{Quotient manifold}
\begin{algorithm}[H]
\caption{Calculate the Riemannian gradient $\grad F(Y)$}\label{alg:grad_quotient}
\begin{algorithmic}[1]
\Require $Y\in \mathbb{C}^{n\times p}_*$
\Ensure $T = \grad F(Y) $ 
\If {metric is $g^1$}
\par $T \leftarrow 2\nabla f(YY^*)Y$. \Comment{\# $2spn\log n$ flops}
\ElsIf {metric is $g^2$}
\par $Z \leftarrow Y(Y^*Y)^{-1}$  \Comment{\# $np(2p-1)+p^2(2n-1) + O(p^3)$ flops}
\par $T\leftarrow 2\nabla f(YY^*)Z$ \Comment{\# $2spn\log n$ flops}
\ElsIf {metric is $g^3$}
\par $Z \leftarrow Y(Y^*Y)^{-1}$  \Comment{\# $np(2p-1)+p^2(2n-1) + O(p^3)$ flops}
\par $T \leftarrow 2 \nabla f(YY^*) Z$ \Comment{\# $2spn\log n$ flops}
\par $M  \leftarrow Y^*T$, \quad $T \leftarrow T - \tfrac{1}{2} ZM$ \Comment{\# $p^2(2n-1) + np + 2np^2$ flops}
\EndIf 
\end{algorithmic}
\end{algorithm}

\begin{algorithm}[H]
\caption{Calculate the quotient vector transport  $P^\mathcal{H}_{Y_2}(h_1)$}\label{alg:vector_transport_quotient1}
\begin{algorithmic}[1]
\Require $Y_1 \in \mathbb{C}^{n\times p}_*$, $Y_2 \in \mathbb{C}^{n\times p}_*$ and horizontal vector $h_1 \in \mathcal{H}_{Y_1}$.
\Ensure $h_2 = P^\mathcal{H}_{Y_2}(h_1) \in \mathcal{H}_{Y_2}$. 
\If {metric is $g^1$}
\par $E \leftarrow Y_2^*Y_2$ \Comment{\# $p^2(2n-1)$ flops}
\par $(Q,S) \leftarrow \text{eig}(E)$, \quad $d \leftarrow \diag(S)$ \Comment{\# $O(p^3)$ flops}
\par $\lambda \leftarrow d  \bmat{1,1,\cdots,1} + \bmat{1,1,\cdots,1}^T d^T$ \Comment{\# $ 2p^2$ flops}
\par $A \leftarrow Q^*(Y_2^*h_1 - h_1^*Y_2)Q$ \Comment{\# $p^2(2n-1)+np+2p^2(2p-1)$ flops}
\par $\Omega \leftarrow Q(A./\lambda )Q^*$ \Comment{\# $p^2 + 2p^2(2p-1)$ flops}
\par $h_2 \leftarrow h_1 - Y_2 \Omega $ \Comment{\# $np+np(2p-1)$ flops}
\ElsIf{metric is $g^2$ or $g^3$}
\par $\tilde \Omega \leftarrow (Y^*Y)^{-1}(Y_2^*h_1)$   \Comment{\# $2p^2(2p-1)+p^2(2n-1) + O(p^3)$ flops}
\par $\Omega \leftarrow \tfrac{1}{2} (\tilde \Omega - \tilde \Omega^*)$ \Comment{\#  $2p^2$ flops}
\par $h_2 \leftarrow h_1 - Y_2 \Omega$ \Comment{\# $np+np(2p-1)$ flops}
\EndIf 
\end{algorithmic}
\end{algorithm}

\subsection{Initial guess for the line search}

The initial guess for the line search generally depends on the expression of the cost function $f(X)$. For the important case of $f(X) = \frac{1}{2}\norm{\mathcal{A}(X) - b}_F^2$ where  $\mathcal{A}$ is a linear operator and $b$ is a matrix, the initial guess for embedded CG requires solving a linear equation and for quotient CG it requires solving a cubic equation. Below this calculation is detailed for $b$ of size $m n$ for some $m$ and assuming that $\mathcal{A}(X), \mathcal{A}(T)$ and $\mathcal{A}(Y\eta^*)$ can be evaluated in $sp^\alpha n\log n $ flops for $X\in  \mathcal{H}^{n,p}_+$, 
 $T \in T_X\mathcal{H}^{n,p}_+$ and
$Y, \eta\in  \mathbb{C}^{n\times p}_*$. 

\begin{algorithm}[H]
\caption{Calculate the initial guess $t_* = \argmin_t f(X+tT)$}\label{alg:initial_guess_embedded}
\begin{algorithmic}[1]
\Require $X\in \mathcal{H}^{n,p}_+$ and a descend direction $T \in T_X\mathcal{H}^{n,p}_+$
\Ensure $t_*= \argmin_t f(X+tT) = \argmin_t \frac{1}{2}\norm{\mathcal{A}(X+tT)-b}_F^2$
\par $R \leftarrow \mathcal{A}(X) - b$ \Comment{\# $sp^\alpha n\log n + mn$ flops}
\par $S \leftarrow \mathcal{A}(T)$ \Comment{\# $sp^\alpha n\log n$ flops}
\par $t_* \leftarrow - \frac{\ip{R,S}}{\ip{S,S}}$ \Comment{\# $4mn-1$ flops}
\end{algorithmic}
\end{algorithm}

\begin{algorithm}[H]
\caption{Calculate the initial guess $t_* = \argmin_t F(Y+t\eta)$}\label{alg:initial_guess_quotient}
\begin{algorithmic}[1]
\Require $Y \in \mathbb{C}^{n\times p}_*$, a descend direction  $\eta \in \mathcal{H}_Y$, 
\Ensure $t_*= \argmin_t F(Y+t\eta) = \argmin_t \frac{1}{2}\norm{\mathcal{A}((Y+t\eta)(Y+t\eta)^*)-b}_F^2$
\par $c_0 \leftarrow \mathcal{A}(YY^*) - b$ \Comment{\# $sp^\alpha n\log n  + mn$ flops}
\par $c_1^{(1)} \leftarrow \mathcal{A}(Y\eta^*),\quad c_1^{(2)} \leftarrow \mathcal{A}(\eta Y^*),\quad  c_1\leftarrow c_1^{(1)} + c_1^{(2)}$\Comment{\# $2sp^\alpha n\log n  + mn$ flops}
\par $c_2 \leftarrow \mathcal{A}(\eta \eta^*)$ \Comment{\# $sp^\alpha n\log n $ flops}
\par $d_4 \leftarrow \ip{c_2,c_2}$, \quad $d_3 \leftarrow 2\ip{c_2,c_1}$ \Comment{\# $4mn-1$ flops}
\par $d_2 \leftarrow 2\ip{c_2,c_0} + \ip{c_1,c_1}$, \quad $d_1 \leftarrow 2\ip{c_1,c_0}$ \Comment{\# $6mn-1$ flops}
\par $C \leftarrow \bmat{4d_4 & 3d_3 & 2d_2 & d_1}$
\par $S \leftarrow roots(C), \quad t_* \leftarrow \text{the smallest real positive root in } S$
\end{algorithmic}
\end{algorithm}

\section{Estimates of Rayleigh quotient for Riemannian Hessians}\label{section:rayleigh_quotient}

In many applications, 
\eqref{lowrank_prob} or \eqref{quotient_prob} is often used 
for solving \eqref{min-psd}.
In \cite{boumal2020deterministic}, it was proven that first-order and second-order optimality conditions for the nonconvex Burer--Monteiro approach are sufficient to find the global minimizer of certain convex semi-definite programs under certain assumptions. 
In practice, even if the global minimizer of \eqref{min-psd} has a known rank $r$, one might consider solving \eqref{lowrank_prob}
or \eqref{quotient_prob} for Hermitian PSD matrices with fixed rank $p> r$.
For instance, in PhaseLift \cite{candes2013phaselift} and interferometry recovery \cite{demanet2017convex}, the minimizer to \eqref{min-psd} is rank one, but in practice optimization over the set of PSD Hermitian matrices of rank $p$ with $p\geq 2$ is often used because of a larger basin of attraction \cite{demanet2017convex, huang_solving_2017}. If $p>r$, then
an algorithm that solves \eqref{lowrank_prob} or \eqref{quotient_prob} can generate a sequence that goes to the boundary of the manifold. Numerically, the smallest $p-r$ singular values of the iterates $X_k$ will become very small as $k\rightarrow\infty$. 

In this section, we analyze the 
eigenvalues of the Riemannian Hessian near the global minimizer. More specifically, we will obtain upper and lower bounds of the Rayleigh quotient at the point $X=YY^*$ (or $\pi(Y)$) that is close to the global minimizer $\hat{X}=\hat Y \hat Y^*$ (or $\pi(\hat{Y})$). 

We first define the Rayleigh quotient and the condition number of Riemannian Hessian. 

\begin{defn}[Rayleigh quotient of Riemannian Hessian]
The Rayleigh quotient of the Riemannian Hessian of $(\mathcal{H}^{n,p}_+, g)$ is defined by 
\[
    \rho^E(X,\zeta_X)  = \frac{g_X(\Hess f(X)[\zeta_X],\zeta_X)}{g_X(\zeta_X,\zeta_X)}
\]
for $\zeta_X \in T_X \mathcal{H}^{n,p}_+$. 

The Rayleigh quotient of the Riemannian Hessian of $(\mathbb{C}^{n\times p}_*/\mathcal{O}_p, g^i)$ is defined by 
\[
\rho^i(\pi(Y),\xi_{\pi(Y)}) = \frac{g_{\pi(Y)}^i(\Hess h(\pi(Y))[\xi_{\pi(Y)}],\xi_{\pi(Y)})}{g_{\pi(Y)}^i(\xi_{\pi(Y)},\xi_{\pi(Y)})}
\]
for $\xi_{\pi(Y)} \in T_{\pi(Y)}\mathbb{C}^{n\times p}_*/\mathcal{O}_p$.
If the Rayleigh quotient has lower bound $a$ and upper bound $b$, then we define $\frac{b}{a}$ as the upper bound on the condition number of the Riemannian Hessian. 
\end{defn}

\subsection{The Rayleigh quotient estimates}
We assume that the Fr\'{e}chet Hessian $\nabla^2 f$ is well conditioned when restricted to the tangent space. Formally, our bounds will be stated in terms of the constants $A,B$ defined in the following assumption:
\begin{assm}\label{assm:rip_embedded}
For a fixed $\epsilon>0$, there exists constants $A>0$ and $B>0$ such that for all $X$ with $\norm{X-\hat{X}}_F < \epsilon$, the following inequality  holds for all $\zeta_X \in T_X \mathcal{H}^{n,p}_+$.
\[
    A\norm{\zeta_X}^2_F \leq \ip{\nabla^2 f(X)[\zeta_X],\zeta_X}_{\mathbb{C}^{n\times n}} \leq B\norm{\zeta_X}^2_F. 
\]
\end{assm}
Observe that this assumption is always satisfied for sufficiently small $\epsilon$ when $f$ is smooth. However the condition number $B/A$ might be large in general. An important case for which this assumption holds everywhere is ${f}(X) = \frac{1}{2}\norm{X-H}^2_F$ with $H$ a given Hermitian PSD matrix. In this case, $\nabla^2 f(X)$ is  the identity operator thus $A = B = 1$.


Our main result is given in the following theorem.

\begin{thm}\label{thm:RQ}
Let $\hat X=\hat Y\hat Y^*$ be the global minimizer of \eqref{min-psd} with rank $r\leq p$. 
For $X=YY^*$ near $\hat{X}$ where $Y\in \mathbb C_*^{n\times p}$,  let $\zeta_X \in T_X\mathcal{H}^{n,p}_+$ be any tangent vector at $X$,    $\xi_{\pi(Y)} \in T_{\pi(Y)}\mathbb{C}^{n\times p}_*/\mathcal{O}_p$ be any tangent vector at $\pi(Y)$, and $\overline{\xi}_Y \in \mathcal{H}^i_Y$ be its horizontal lift at $Y$ w.r.t.\ the metric $g^i$. Let $X=U\Sigma U^*$ denote the compact SVD of $X$ and denote the $i$th diagonal entry of $\Sigma$ to be $\sigma_i$ with $\sigma_1 \geq \cdots \geq \sigma_p >0$. 
Under the Assumption \ref{assm:rip_embedded}, for any arbitrary tangent vectors $\zeta_X$ and $\xi_{\pi(Y)}$, the following bounds hold: 
\begin{enumerate}
\item For the embedded manifold,
\[
    A - \frac{2 }{\sigma_p} \norm{\nabla f(X)}  \leq \rho^E(X,\zeta_X) \leq B +\frac{2 }{\sigma_p}\norm{\nabla f(X)}.
\]
\item For the quotient manifold metric $g^i$,
\[
 2A\sigma_p - 2\norm{\nabla f(YY^*)} \leq \rho^1(\pi(Y),\xi_{\pi(Y)}) \leq B \cdot D^1_{\pi(Y)} +2\norm{\nabla f(YY^*)},
\] 
\[ 
 2A - \frac{4(\sqrt{p}+1)}{\sigma_p} \norm{\nabla f(YY^*)}
 \leq \rho^2(\pi(Y),\xi_{\pi(Y)}) \leq 4B +  \frac{4(\sqrt{p}+1)}{\sigma_p} \norm{\nabla f(YY^*)},
 \]
 \[
 A -\frac{1}{\sigma_p}\norm{\nabla f(YY^*)} \leq \rho^3(\pi(Y),\xi_{\pi(Y)}) \leq B + \frac{1}{\sigma_p}\norm{\nabla f(YY^*)},
\]
where $D^1_{\pi(Y)}$ satisfies $ 2\sigma_1 \leq D^1_{\pi(Y)}  \leq 2\left(\frac{\sigma_1^2}{\sigma_p}+\sigma_1\right)$.
\end{enumerate}
 
In particular, if $\hat X=\hat Y\hat Y^*$ has rank $p$, e.g., $\hat{X}$ has singular values $\hat{\sigma_1} \geq \cdots \geq  \hat{\sigma_p} >0$, then under the Assumption \ref{assm:rip_embedded}, we have the following limits, where the limits $X\to \hat{X}$ and $\pi(Y) \to \pi(\hat{Y})$ are taken in the sense of $\norm{X-\hat{X}}_F\to 0$ and $\norm{YY^* - \hat{Y}\hat{Y}^*}_F \to 0$:
 \begin{enumerate}
\item For the  embedded manifold 
\[
    A-\frac{2 }{\hat \sigma_p} \norm{\nabla f(\hat X)} \leq  \lim_{X \to \hat{X}} \rho^E(X,\xi_X) \leq B+\frac{2 }{\hat \sigma_p} \norm{\nabla f(\hat X)} .
\]
\item For the quotient manifold metric $g^i$,
\[
 2A\hat \sigma_p - 2\norm{\nabla f(\hat X)} \leq \lim_{\pi(Y) \to \pi(\hat{Y})}\rho^1(\pi(Y),\xi_{\pi(Y)}) \leq B \cdot D^1_{\pi(\hat Y)} +2\norm{\nabla f(\hat X)},
\] 
\[ 
 2A - \frac{4(\sqrt{p}+1)}{\hat \sigma_p} \norm{\nabla f(\hat X)}
 \leq \lim_{\pi(Y) \to \pi(\hat{Y})}\rho^2(\pi(Y),\xi_{\pi(Y)}) \leq 4B +  \frac{4(\sqrt{p}+1)}{\hat \sigma_p} \norm{\nabla f(\hat X)},
 \]
 \[
 A -\frac{1}{\hat \sigma_p}\norm{\nabla f(\hat X)} \leq\lim_{\pi(Y) \to \pi(\hat{Y})} \rho^3(\pi(Y),\xi_{\pi(Y)}) \leq B + \frac{1}{\hat \sigma_p}\norm{\nabla f(\hat X)},
\]
where $D^1_{\pi(\hat{Y})}$ satisfies $2\hat{\sigma}_1 \leq D^1_{\pi(\hat{Y})} \leq 2 \left( \frac{\hat{\sigma}_1^2}{\hat{\sigma}_p}+ \hat{\sigma}_1\right).$
\end{enumerate}
\end{thm}

\begin{rem}
If we further assume that $\nabla f(\hat X)=0$, then the limits above can be further simplified. Such an assumption $\nabla f(\hat X)=0$ may not be true in general, but it holds for all numerical examples considered in this paper, where the cost function takes the form $f(X) = \frac{1}{2}\norm{\mathcal{A}(X) - b}_F^2$ for some matrix-valued linear operator $\mathcal{A}$, and  the minimizer $\hat X$ for constrained minimization \eqref{lowrank_prob} or \eqref{min-psd} satisfies $f(\hat X)=0$. Thus $\hat X$ is also the global minimizer for minimizing $f(X)$ over all $X\in \mathbb C$, which implies  $\nabla f(\hat X) = 0$.
\end{rem}
\begin{rem}
We can define the ratio of the upper and lower bounds of the Rayleigh quotient as the  upper bound on the condition number of the Riemannian Hessian.
Then under the assumption $\nabla f(\hat X)=0$,
 the limit of the condition number of the Riemannian Hessian for the Bures-Wasserstein metric $g^1$ depends on the condition number of the minimizer $\hat X$. This 
 reflects a significant difference between $g^1$ and the other two metrics. \end{rem}
 \begin{rem}
For the case $\nabla f(\hat{X}) \neq 0$, if $\| \nabla f(\hat{X}) \| $ is sufficiently small in the sense that
\begin{equation} \label{eq:bound}
\| \nabla f(\hat{X}) \| < a,
\end{equation}
where $a$ is equal to $\hat{\sigma}_p A / 4$,
$\hat{\sigma}_p A / 8 / (\sqrt{p}+1)$, and $\hat{\sigma}_p A / 2$ for the embedded metric,
the condition numbers of the embedded metric, quotient metric $g^2$ and $g^3$  
are on the 
order of $B / A$.  The quotient manifold with $g^1$ is still different from the other metrics since the condition number of its Riemannian Hessian additionally depends on the ratio $\hat{\sigma}_1/\hat{\sigma}_p$.
\end{rem}

The rest of this subsection is the proof of Theorem \ref{thm:RQ}. 
By the expressions of Riemannian Hessian, we have
\begin{equation*}
    \rho^E(X,\zeta_X)   =
    \frac{\ip{\nabla^2 f(X)[\zeta_X],\zeta_X}_{\mathbb{C}^{n\times n}}}{g_X(\zeta_X,\zeta_X)} + \frac{g_X\left(P^p_X\left(\nabla f(X)(X^\dagger\zeta_X^p)^* + (\zeta_X^p X^\dagger)^*\nabla f(X)\right), \zeta_X\right)}{g_X(\zeta_X,\zeta_X)}.
\end{equation*}
\begin{equation*}
    \rho^1(\pi(Y),\xi_{\pi(Y)}) = \frac{\ip{\nabla^2 f(YY^*)[Y\overline{\xi}_Y^*+\overline{\xi}_YY^*], Y\overline{\xi}_Y^*+\overline{\xi}_YY^*}_{\mathbb{C}^{n\times n} }}{g^1_Y(\overline{\xi}_Y,\overline{\xi}_Y)}
     + \frac{g^1_Y(2\nabla f(YY^*)\overline{\xi}_Y ,\overline{\xi}_Y)}{g^1_Y(\overline{\xi}_Y,\overline{\xi}_Y)}.
\end{equation*}
\begin{eqnarray*}
\rho^2(\pi(Y),\xi_{\pi(Y)}) &=& \frac{\ip{\nabla^2 f(YY^*)[Y\overline{\xi}_Y^*+\overline{\xi}_YY^*], Y\overline{\xi}_Y^*+\overline{\xi}_YY^*}_{\mathbb{C}^{n\times n} }}{g^2_Y(\overline{\xi}_Y,\overline{\xi}_Y)}  \\
&& + \frac{\ip{\nabla f(YY^*)P_Y^\perp \overline{\xi}_Y, \overline{\xi}_Y}_{\mathbb{C}^{n\times p}}}{g^2_Y(\overline{\xi}_Y,\overline{\xi}_Y)} + \frac{\ip{ P_Y^\perp \nabla f(YY^*) \overline{\xi}_Y, \overline{\xi}_Y}_{\mathbb{C}^{n\times p}}}{g^2_Y(\overline{\xi}_Y,\overline{\xi}_Y)}  \\ 
&& + \frac{\ip{Y\overline{\xi}^*_Y\overline{\xi}_Y ,2 \nabla f(YY^*)Y(Y^*Y)^{-1}}_{\mathbb{C}^{n\times p}}}{g_Y^2(\overline{\xi}_Y, \overline{\xi}_Y)} \\
&& - \frac{\ip{ \overline{\xi}_YY^*\overline{\xi}_Y,2 \nabla f(YY^*)Y(Y^*Y)^{-1}}_{\mathbb{C}^{n\times p}}}{g_Y^2(\overline{\xi}_Y, \overline{\xi}_Y)}.
\end{eqnarray*}

\begin{eqnarray*}
    \rho^3(\pi(Y),\xi_{\pi(Y)})  
    &=& \frac{\ip{\nabla^2 f(YY^*)[Y\overline{\xi}_Y^*+\overline{\xi}_YY^*], Y\overline{\xi}_Y^*+\overline{\xi}_YY^*}_{\mathbb{C}^{n\times n} }}{g^3_Y(\overline{\xi}_Y,\overline{\xi}_Y)} \\
    &&+ \frac{g^3_Y((I-P_Y)\nabla f(YY^*)(I-P_Y)\overline{\xi}_Y(Y^*Y)^{-1},\overline{\xi}_Y)}{g^3_Y(\overline{\xi}_Y,\overline{\xi}_Y)}. 
\end{eqnarray*}

Observe that the leading terms in the above Rayleigh quotients take similar form:  the numerator involves the Fr\'{e}chet Hessian $\nabla^2 f$, and the denominator is the induced norm of  tangent vector from the respective Riemannian metric. We call the leading term  \textit{second order term} (SOT) as it involves  Fr\'{e}chet Hessian of $f$ as the second order information of $f$ and we call the other terms that follow the leading term \textit{first order terms} (FOTs) as they only contain the first order Fr\'{e}chet gradient.

Under the Assumption \ref{assm:rip_embedded}, we get bounds of the SOT in $\rho^E(X,\zeta_X)$ as:
\[
 A = A \frac{\norm{\zeta_X}^2_F}{g_X(\zeta_X,\zeta_X)}\leq \frac{\ip{\nabla^2 f(X)[\zeta_X],\zeta_X}_{\mathbb{C}^{n\times n}}}{g_X(\zeta_X,\zeta_X)}  \leq B\frac{\norm{\zeta_X}^2_F}{g_X(\zeta_X,\zeta_X)} = B.
\]

For the quotient manifold, observe that $Y\overline{\xi}_Y^* + \overline{\xi}_YY^* \in T_{YY^*}\mathcal{H}^{n,p}_+$. Hence Assumption \ref{assm:rip_embedded} also applies and we get
\[
A\frac{\norm{Y\overline{\xi}_Y^*+\overline{\xi}_YY^*}^2_F}{g^i_Y\left(\overline{\xi}_Y,\overline{\xi}_Y\right)} \leq \frac{\ip{\nabla^2 f(YY^*)[Y\overline{\xi}_Y^*+\overline{\xi}_YY^*], Y\overline{\xi}_Y^*+\overline{\xi}_YY^*}_{\mathbb{C}^{n\times n} }}{g^i_Y(\overline{\xi}_Y,\overline{\xi}_Y)}  \leq B \frac{\norm{Y\overline{\xi}_Y^*+\overline{\xi}_YY^*}^2_F}{g^i_Y\left(\overline{\xi}_Y,\overline{\xi}_Y\right)}.
\]

Hence the analysis of SOT for the quotient manifold now turns to analyzing 
$\frac{\norm{Y\overline{\xi}_Y^*+\overline{\xi}_YY^*}^2_F}{g^i_Y\left(\overline{\xi}_Y,\overline{\xi}_Y\right)}$. We denote its infimum and supremum by 
\begin{eqnarray*}
C^i_{\pi(Y)} &:=& \inf_{\xi_{\pi(Y)} \in T_{\pi(Y)}\mathbb{C}_*^{n\times p}/\mathcal{O}_p}\frac{\norm{Y\overline{\xi}_Y^*+\overline{\xi}_YY^*}^2_F}{g^i_Y(\overline{\xi}_Y,\overline{\xi}_Y)},\\
D^i_{\pi(Y)} &:=& \sup_{\xi_{\pi(Y)} \in T_{\pi(Y)}\mathbb{C}_*^{n\times p}/\mathcal{O}_p}\frac{\norm{Y\overline{\xi}_Y^*+\overline{\xi}_YY^*}^2_F}{g^i_Y(\overline{\xi}_Y,\overline{\xi}_Y)}.
\end{eqnarray*}
The subscript is used to emphasize that the infimum and supremum are dependent on $\pi(Y)$. The next lemma characterizes these infimum and supremum.

\begin{lem}\label{lemma: rayleigh_quotient_second_order_term} For any $Y \in \pi^{-1}(\pi(Y))$, let $YY^* = U\Sigma U^*$ denote the compact SVD of $YY^*$ and denote the $i$th diagonal entry of $\Sigma$ by $\sigma_i$ with $\sigma_1 \geq \cdots \geq \sigma_p >0$. Then the following estimates for the infimum $C^i_{\pi(Y)}$ and the supremum $D^i_{\pi(Y)}$ of $\frac{\norm{Y\overline{\xi}_Y^*+\overline{\xi}_YY^*}^2_F}{g^i_Y\left(\overline{\xi}_Y,\overline{\xi}_Y\right)}$ hold:
\begin{eqnarray*}
&C^1_{\pi(Y)}& = 2\sigma_p, \quad  2\sigma_1 \leq D^1_{\pi(Y)}\leq 2\left(\frac{\sigma_1^2}{\sigma_p}+\sigma_1\right). \\
&C^2_{\pi(Y)}& = 2, \quad D^2_{\pi(Y)} = 4.\\
&C^3_{\pi(Y)}& = D^3_{\pi(Y)} = 1.
\end{eqnarray*}
\end{lem}

Next we estimate the FOTs in the  Rayleigh quotient. The result is given in the next lemma.
\begin{lem}\label{lemma: rayleigh_quotient_first_order_term}
Let $X = YY^*$ for any $Y \in \pi^{-1}(\pi(Y))$ with $X \in \mathcal{H}^{n,p}_+$ and $\pi(Y) \in \mathbb{C}^{n\times p}_*/\mathcal{O}_p$. Let $U\Sigma U^*$ be the compact SVD of $X$ and denote the $i$th diagonal entry of $\Sigma$ with $\sigma_1 \geq \cdots \geq \sigma_p >0$. Then we have the following bounds for the FOTs in Rayleigh quotient of Riemannian Hessian. 
\begin{enumerate}
    \item For the embedded manifold we have 
    \[
        \abs{\text{FOT} } \leq \frac{2}{\sigma_p} \norm{\nabla f(X)}.
    \]
    \item For the quotient manifold with metric $g^1$ we have 
    \[
        \abs{\text{FOT}} \leq 2 \norm{\nabla f(YY^*)}.
    \]
    \item For the quotient manifold with $g^2$ we have 
    \[
        \abs{\text{FOTs}} \leq \frac{4(\sqrt{p}+1)}{\sigma_p}\norm{\nabla f(YY^*)}.
    \]
    \item For the quotient manifold with $g^3$ we have 
    \[
        \abs{\text{FOTs}} \leq \frac{1}{\sigma_p} \norm{\nabla f(YY^*)}.
    \]
\end{enumerate}
\end{lem}

The proofs for Lemma \ref{lemma: rayleigh_quotient_first_order_term} and Lemma \ref{lemma: rayleigh_quotient_second_order_term} are given in Appendix \ref{sec-appendix-lemma}.
With  Lemma \ref{lemma: rayleigh_quotient_first_order_term} and Lemma \ref{lemma: rayleigh_quotient_second_order_term}, the proof of Theorem \ref{thm:RQ} is concluded.

\subsection{The limit of the Rayleigh quotient for a rank-deficient minimizer $\hat X$}

Next we consider the rank deficient case $p> r$ where $r$ is the rank of the minimizer $\hat X$, i.e., the minimizer $\hat X$ lies on the boundary of the constraint manifold. Under the Assumption $\nabla f(\hat X)=0$,  any convergent algorithm that solves \eqref{lowrank_prob} or \eqref{quotient_prob} will generate a sequence such that both $\sigma_{r+1},\cdots, \sigma_p$ and $\nabla f(X)$ will vanish as $X \to \hat{X}$. We make one more assumption  for a simpler  quantification of the lower and upper bounds of the Rayleigh quotient near the minimizer.
\begin{assm}\label{assm:gradient_vanish_speed}
For a sequence $\{X_k\}$ with $X_k \in \mathcal{H}^{n,p}_+$ (or $\pi(Y_k) \in \mathbb{C}^{n\times p}_*/\mathcal{O}_p$ ) that converges to the minimizer $\hat{X}$ (or $\pi(\hat{Y})$), let ${(\sigma_p)}_k$ be the smallest nonzero singular value of $X_k = Y_kY_k^*$, assume the following limits hold. 
\begin{enumerate}
\item For the embedded manifold, 
\[
\lim_{k \to \infty} \frac{2}{{(\sigma_p)}_k} \norm{\nabla f(X_k)} \leq \frac{A}{2}.
\]
\item For the quotient manifold with metric $g^1$,
\[
\lim_{k \to \infty} \frac{1}{{(\sigma_p)}_k} \norm{\nabla f(Y_kY_k^*)} \leq \frac{A}{2}.
\]
\item For the quotient manifold with metric $g^2$,
\[
\lim_{k \to \infty} \frac{4(\sqrt{p}+1)}{{(\sigma_p)}_k} \norm{\nabla f(Y_kY_k^*)} \leq A. 
\]
\item For the quotient manifold with metric $g^3$, 
\[
\lim_{k \to \infty} \frac{1}{{(\sigma_p)}_k} \norm{\nabla f(Y_k Y_k^*)} \leq \frac{A}{2}.
\]
\end{enumerate}
\end{assm}
We remark that  Assumption \ref{assm:gradient_vanish_speed} may not always hold. In the next section, we will give some numerical evaluation of this assumption for four examples listed in Figure \ref{fig:gradOverSigmap_eigenvalueProblem} (eigenvalue problem),
Figure  \ref{fig:gradOverSigmap_matrixCompletion} (matrix completion),
Figure 
\ref{fig:gradOverSigmap_phaseRetrieval} (phase retrieval),
and 
Figure  \ref{fig:gradOverSigmap_interferometry} (interferometry recovery). Assumption \ref{assm:gradient_vanish_speed} holds numerically in most of these tests.
\begin{rem}In general, there exists a sequence such that the FOT in $\rho^3(\pi(Y),\xi_{\pi(Y)})$ may blow up. Consider the following simple example of eigenvalue problem. 
\begin{equation*}
	\MINone{X}{f(X) = \frac{1}{2}\norm{X-\hat{X}}_F^2}{X \in \mathcal{H}^{3,2}_+},
\end{equation*}
where $\hat{X} = \bmat{1&0&0\\0&0&0\\0&0&0}$ is a rank-1 minimizer. Suppose $X$ takes the simple diagonal form $X = \bmat{\sigma_1 & 0 &0 \\ 0 & \sigma_2 &0 \\ 0&0 &0}$. Then we have 
\[
\nabla f(X) = \bmat{\sigma_1 -1 &0 &0  \\ 0 & \sigma_2 & 0 \\ 0&0&0}. 
\]
Since $\nabla f(X) \to 0$ as $X \to \hat{X}$, we have $\sigma_1 \to 1$ and $\sigma_2 \to 0$.

Recall that the FOT in $\rho^3(\pi(Y),\xi_{\pi(Y)})$ is 
\[
\frac{g^3_Y((I-P_Y)\nabla f(YY^*)(I-P_Y)\overline{\xi}_Y(Y^*Y)^{-1},\overline{\xi}_Y)}{g^3_Y(\overline{\xi}_Y,\overline{\xi}_Y)} = \frac{\ip{
    \nabla f(YY^*)Y_\perp K,Y_\perp K}_{\mathbb{C}^{n\times p}}}{2\norm{YSY^*}_F^2+\norm{Y_\perp KY^*}_F^2}. 
\]
Hence if we choose  $S = 0$ and $Y_\perp K = \bmat{0&1\\0&0\\0&0}$, we have 
\[
\frac{\ip{
    \nabla f(YY^*)Y_\perp K,Y_\perp K}_{\mathbb{C}^{n\times p}}}{2\norm{YSY^*}_F^2+\norm{Y_\perp KY^*}_F^2} = \frac{\sigma_1 - 1}{ \sigma_2},
\]
whose limit is dependent on the path that the tuple $(\sigma_1, \sigma_2)$ goes to $(1,0)$ and hence may blow up. 
\end{rem}
 
If $\hat X$ has rank $r<p$ and $\{X_k\}$ is a sequence that satisfies Assumption \ref{assm:gradient_vanish_speed},
then Theorem \ref{thm:RQ} implies
\begin{enumerate}
    \item For the embedded manifold we have 
    \[
        \frac{A}{2} \leq \lim_{k \to \infty } \rho^E(X_k,\xi_{X_k}) \leq B + \frac{A}{2}.
    \]
    \item For the quotient manifold with  metric $g^i$ we have 
    \[
        A \leq \lim_{k \to \infty } \frac{\rho^1(\pi(Y_k),\xi_{\pi(Y_k)})}{{(\sigma_p)}_k} \leq B \lim_{k\to \infty} \frac{D^1_{\pi(Y_k)}}{(\sigma_p)_k} +2A,
    \]
    \[
        A \leq \lim_{k \to \infty } \rho^2(\pi(Y_k),\xi_{\pi(Y_k)}) \leq 4B +A,
    \]
    \[
        \frac{A}{2} \leq \lim_{k \to \infty } \rho^3(\pi(Y_k),\xi_{\pi(Y_k)}) \leq B+\frac{A}{2},
    \]
    where $\lim\limits_{k\to \infty} \frac{D^1_{\pi(Y_k)}}{(\sigma_p)_k} \geq \lim\limits_{k \to \infty} \frac{2 (\sigma_1)_k}{(\sigma_p)_k} = + \infty$ since $\sigma_p\to \hat \sigma_p=0$. 
\end{enumerate} 
Notice that the condition number in Bures-Wassertein metric $g^1$ is fundamentally different from the other ones since it is the only metric that blows up.

\section{Numerical experiments}\label{section:numerical_experiments}
In this section, we report on the numerical performance of the the conjugate gradient methods on three kinds of cost functions of $f(X)$: eigenvalue problem, matrix completion, phase-retrieval, and interferometry. In particular, we implement and compare the following four algorithms:
\begin{enumerate}[topsep=0pt,itemsep=0.5ex,partopsep=1ex,parsep=1ex]
    \item Riemannian CG on the quotient manifold $(\mathbb{C}^{n\times p}_*/\mathcal{O}_p, g^1)$, i.e.,  Algorithm \ref{alg:CG_quotient} with metric $g^1$. This algorithm is equivalent to Burer--Monteiro CG, that is, CG applied directly to \eqref{min-BR}.
   \item Riemannian CG on the quotient manifold $(\mathbb{C}^{n\times p}_*/\mathcal{O}_p, g^2)$, i.e.,  Algorithm \ref{alg:CG_quotient} with metric $g^2$. The same metric $g^2$ was used in \cite{huang_solving_2017}.
    \item Riemannian CG on the quotient manifold $(\mathbb{C}^{n\times p}_*/\mathcal{O}_p, g^3)$, i.e.,  Algorithm \ref{alg:CG_quotient} with metric $g^3$, and also a specific retraction, vector transport and initial step as described in Section \ref{sec:embedded_equal_quotient}. This special implementation is equivalent to Riemannian CG
    on embedded manifold, i.e., Algorithm \ref{alg:CG_embedded}.
    \item Burer--Monteiro L-BFGS method, that is, using the L-BFGS method directly applied to \eqref{min-BR}. This method was used in \cite{demanet2017convex}.
\end{enumerate}

\subsection{Eigenvalue problem}
For any $n$-by-$n$ Hermitian PSD matrix $A$, its top $p$ eigenvalues and associated eigenvectors can be found by solving the following minimization problem: 
\begin{equation*}
	\MINone{X}{f(X) := \frac{1}{2}\norm{X-A}_F^2}{X \in \mathcal{H}^{n,p}_+},
\end{equation*} 
or equivalently
\begin{equation*}
    \MINone{\pi(Y)}{h(\pi(Y)) :=  \frac{1}{2} \norm{YY^*-A}_F^2}{\pi(Y) \in \mathbb{C}^{n\times p}_*/\mathcal{O}_p}.
\end{equation*}

It is easy to verify that 
\begin{equation*}
    \nabla f(X) = X-A,\quad 
    \nabla^2f(X)[\zeta_X] = \zeta_X, \quad \zeta_X \in \mathbb{C}^{n\times n}.
\end{equation*}

In practice we only need $A$ as an operator $A: v \mapsto Av$.  We consider a numerical test for a random Hermitian PSD matrix $A$ of size 50\,000-by-50\,000 with rank  $10$.
We solve the minimization problem above with $p=15$. Obivously, the minimizer is rank-10 thus rank deficient for $\mathbb{C}^{n\times p}_*/\mathcal{O}_p$ with $p=15$. 
This corresponds to a scenario of finding the eigenvalue decomposition of a low rank Hermitian PSD matrix $A$ with estimated rank at most $15$.
The results are shown in Figure \ref{fig:eigenvalue_problem}.
The initial guess is the same random initial matrix for all four algorithms. We see that the simpler Burer--Monteiro approach, including the L-BFGS method and the CG method with metric $g^1$, is significantly slower.
 
In the second test of Figure \ref{fig:eigenvalue_problem2}, the minimizer has rank $r= 15$, and the fixed rank for the manifold is also set to $p= 15$; i.e., there is no rank deficiency. But the condition number of the the minimizer $A$ causes a difference in the asymptotic convergence rate for CG method with metric $g^1$. In \ref{eigenvalue_problem2-a}, the condition number of $A$ is large and we observe slower asymptotic convergence rate for CG method with metric $g^1$; while in  \ref{eigenvalue_problem2-b}, the condition number of $A$ is smaller and the asymptotic convergence rate becomes much faster. This is consistent with Theorem \ref{thm:RQ}. In the third test of Figure \ref{fig:gradOverSigmap_eigenvalueProblem}, we show the ratio term $\frac{\norm{\nabla f(Y_k Y_k^*)} }{ ({\sigma_p})_k }$ in Assumption 
 \ref{assm:gradient_vanish_speed} versus the iteration number $k$. This ratio does not blow up as $\pi(Y_k)$ converges to $\pi(\hat{Y})$.

\begin{figure}[htpb]
    \centering
    \includegraphics[width=0.5\textwidth]{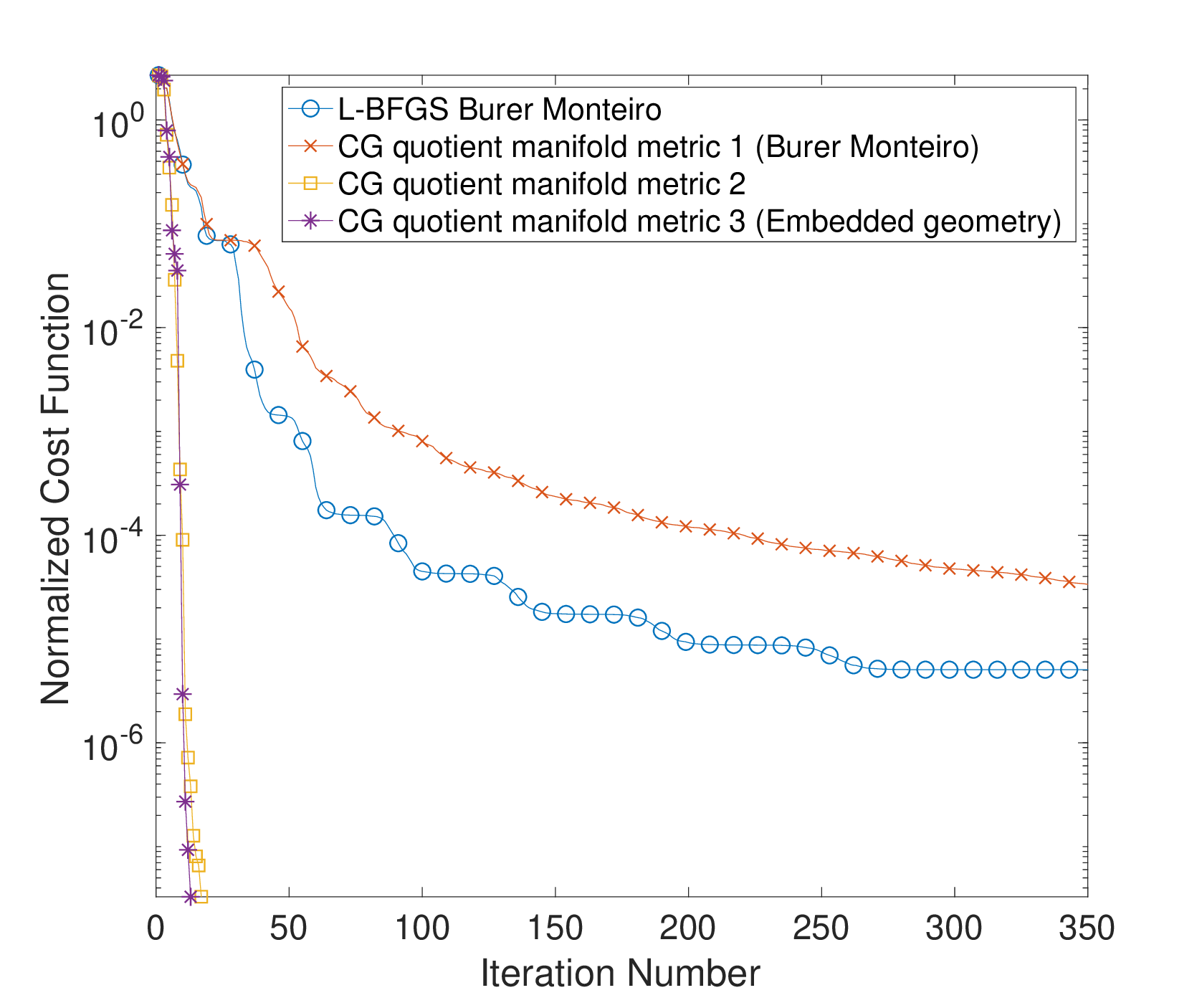}
    \caption{Eigenvalue problem of a random 50\,000-by-50\,000 PSD matrix of rank 10 solved on the rank 15 manifold: a comparison of normalized cost function value $\frac{\norm{Y_kY_k^* - A}_F}{\norm{A}_F}$ decrease versus iteration number $k$ when using L-BFGS approach and CG method with metric $g^i,i=1,2,3$.}
     \label{fig:eigenvalue_problem}
\end{figure}

\begin{figure}[htpb]
\centering
\subfigure[$\frac{\hat{\sigma}_1}{\hat{\sigma}_p} = 10^6$]{
\includegraphics[width=0.4\textwidth]{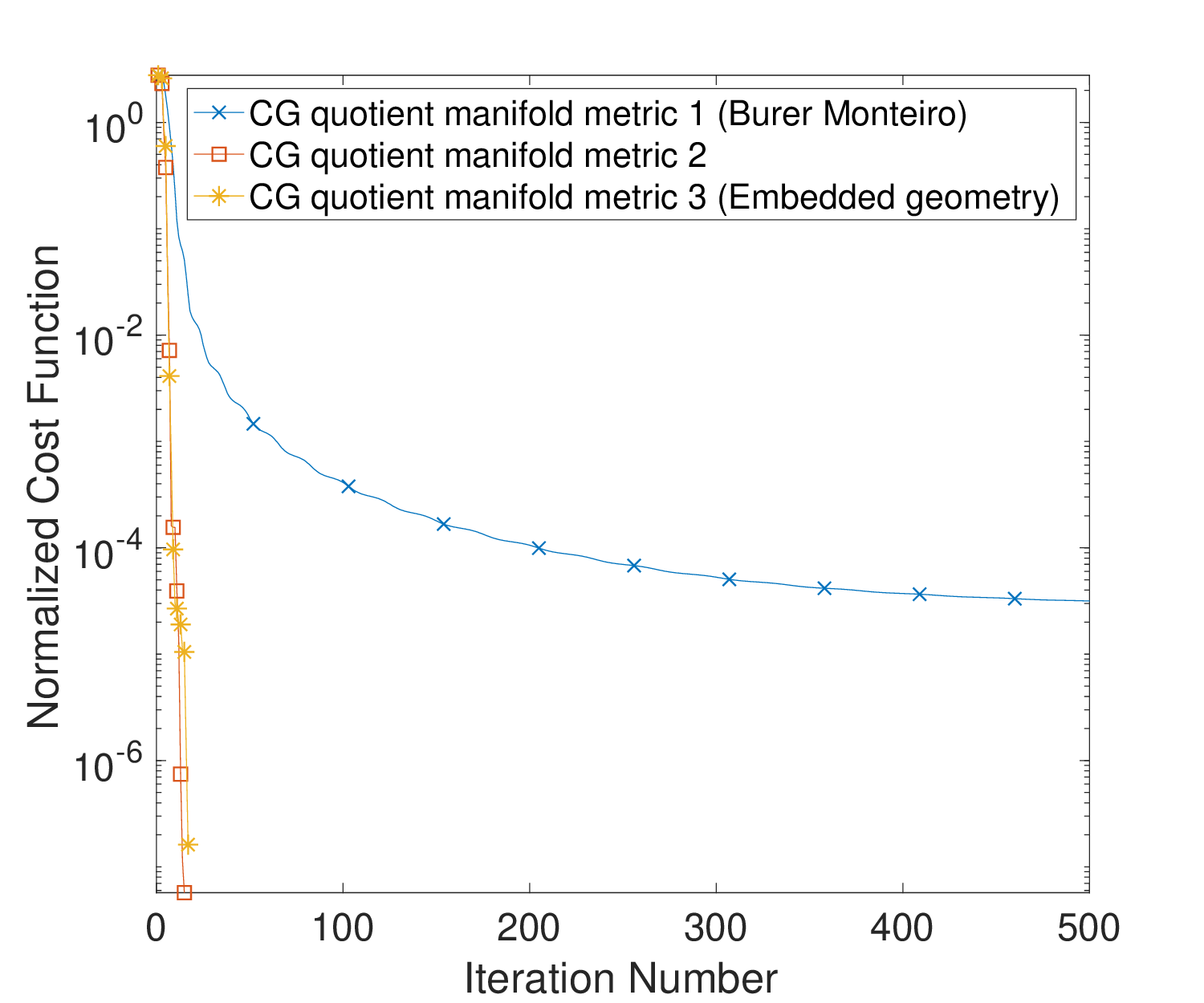}
\label{eigenvalue_problem2-a}
}
\subfigure[$\frac{\hat{\sigma}_1}{\hat{\sigma}_p} = 10^3$]{
\includegraphics[width=0.4\textwidth]{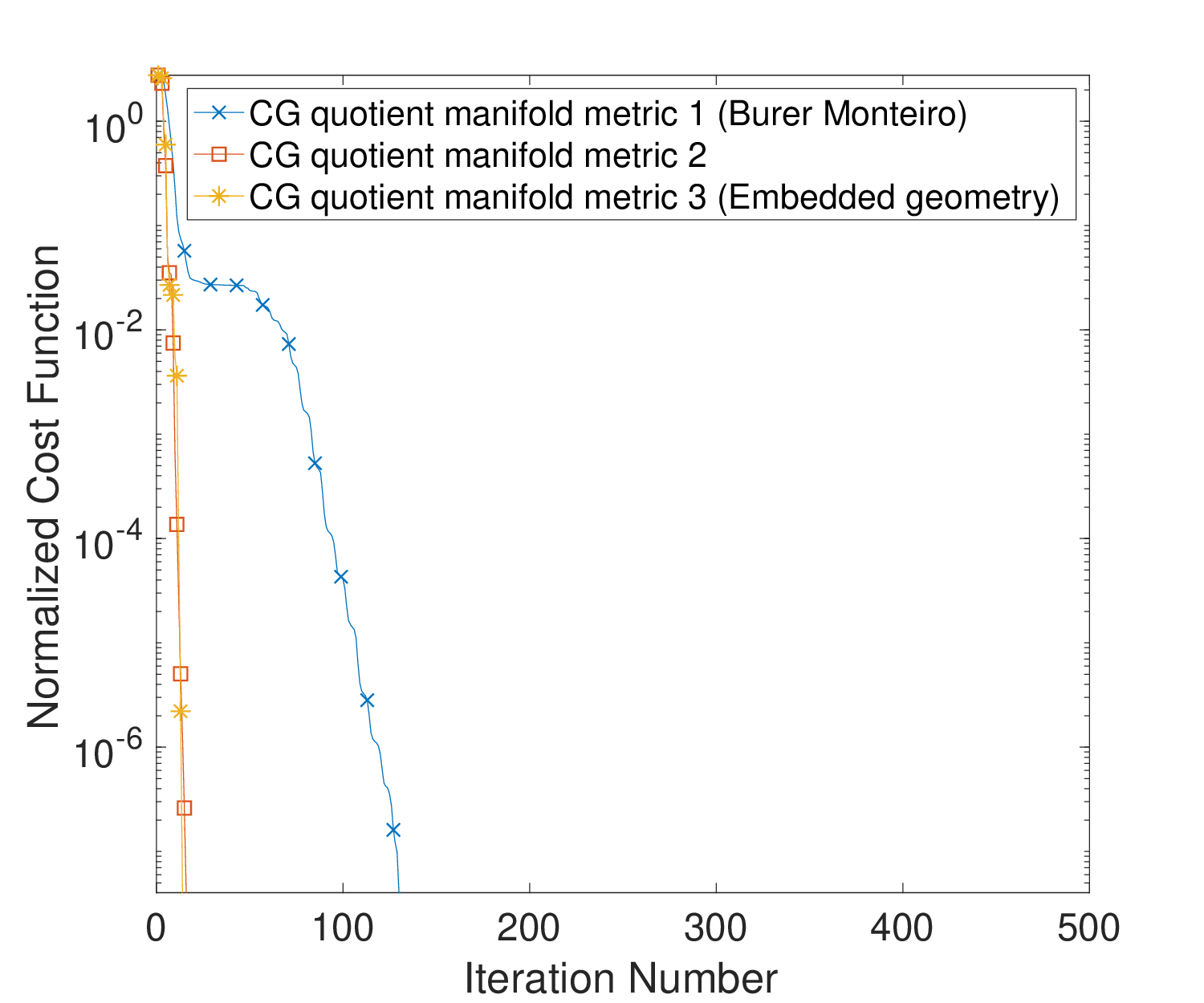}
\label{eigenvalue_problem2-b}
}
\caption{Numerical justification of Theorem \ref{thm:RQ} for the eigenvalue problem of a random 50\,000-by-50\,000 PSD matrix of rank 15 on the rank 15 manifold. Effect of condition number of $A$ on the convergence speed of normalized cost function value $\frac{\norm{Y_kY_k^* - A}_F}{\norm{A}_F}$ versus iteration number $k$. (a): when the condition number of $A$ is large, CG with metric $g^1$ is slower; (b): when the condition number of $A$ is smaller, CG with metric $g^1$ becomes faster.}
\label{fig:eigenvalue_problem2}
\end{figure}

\begin{figure}[htbp!]
\centering
\subfigure[L-BFGS Burer Monteiro]{
\includegraphics[width=0.23\textwidth]{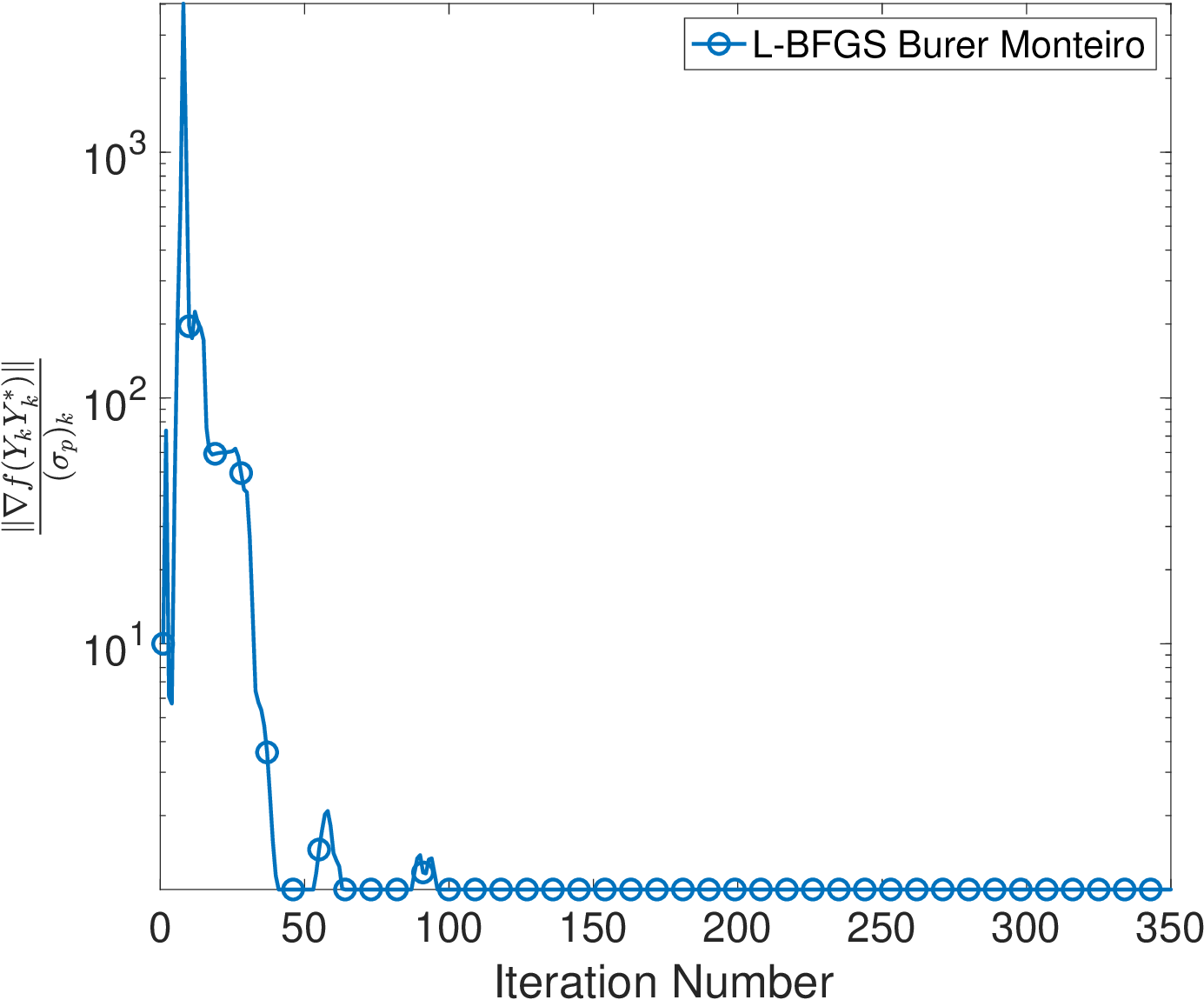}
}
\subfigure[CG quotient manifold metric 1 (Burer Monteiro)]{
\includegraphics[width=0.23\textwidth]{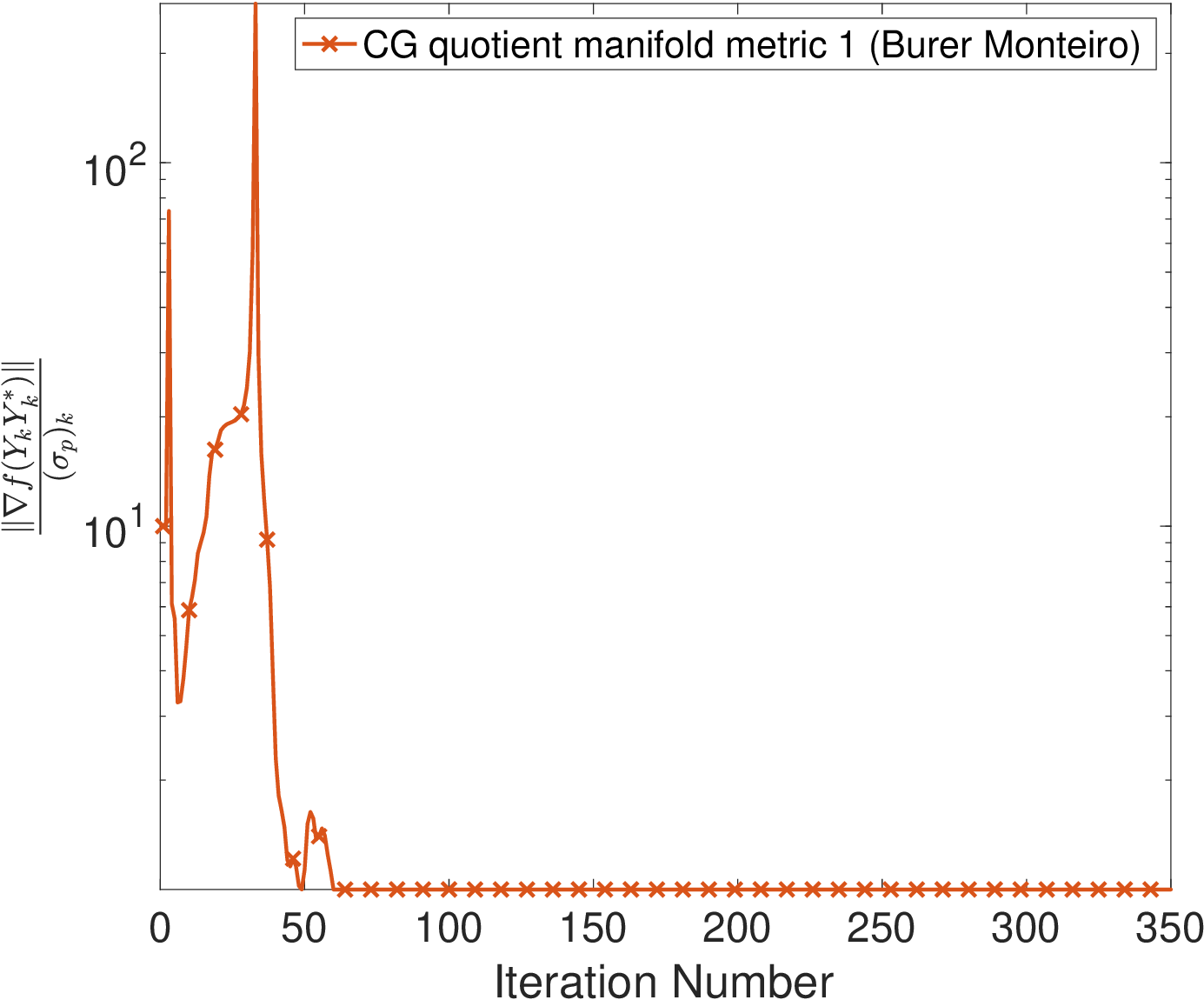}
}
\subfigure[CG quotient manifold metric 2]{
\includegraphics[width=0.23\textwidth]{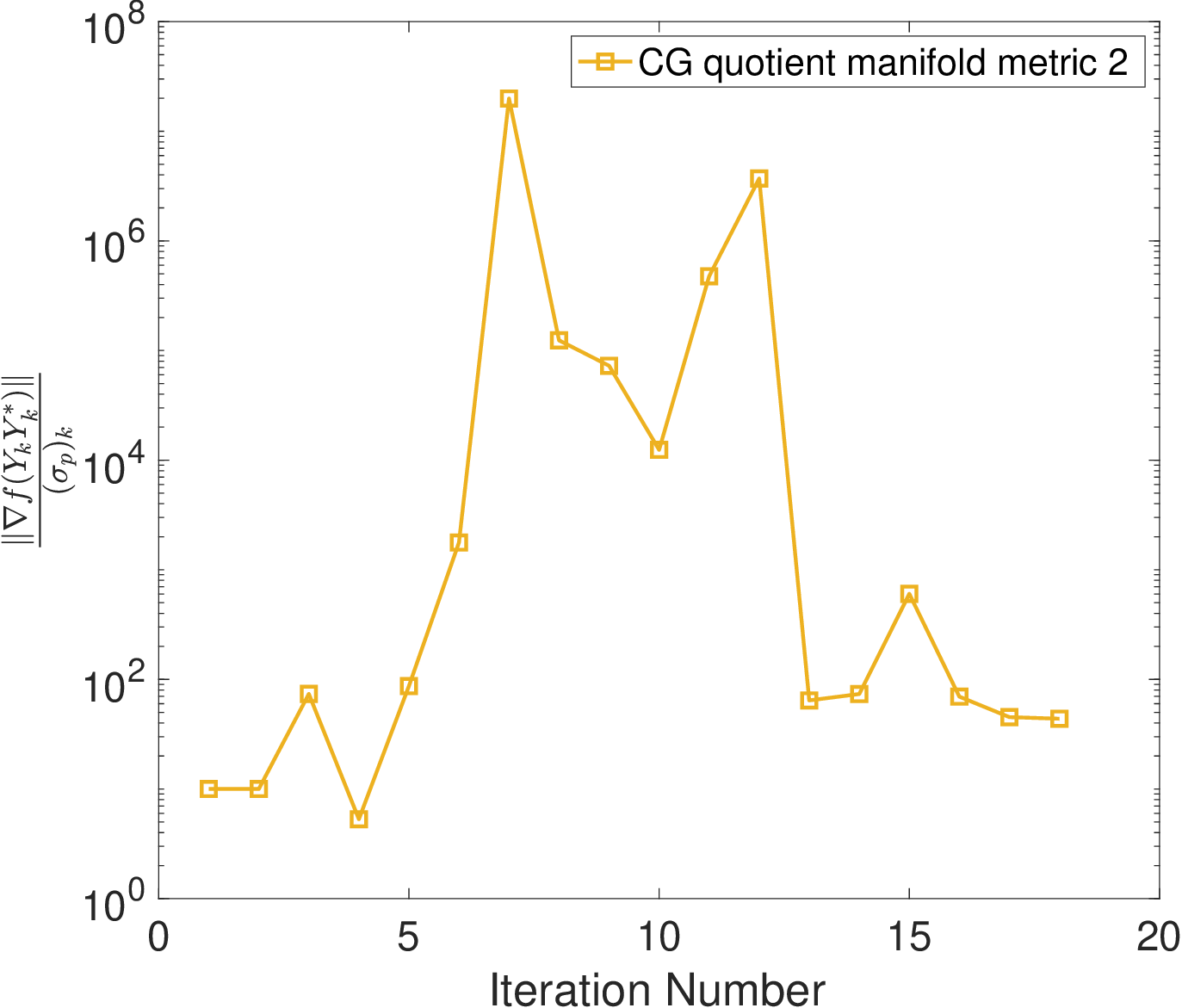}
}
\subfigure[CG quotient manifold metric 3 (Embedded geometry)]{
\includegraphics[width=0.23\textwidth]{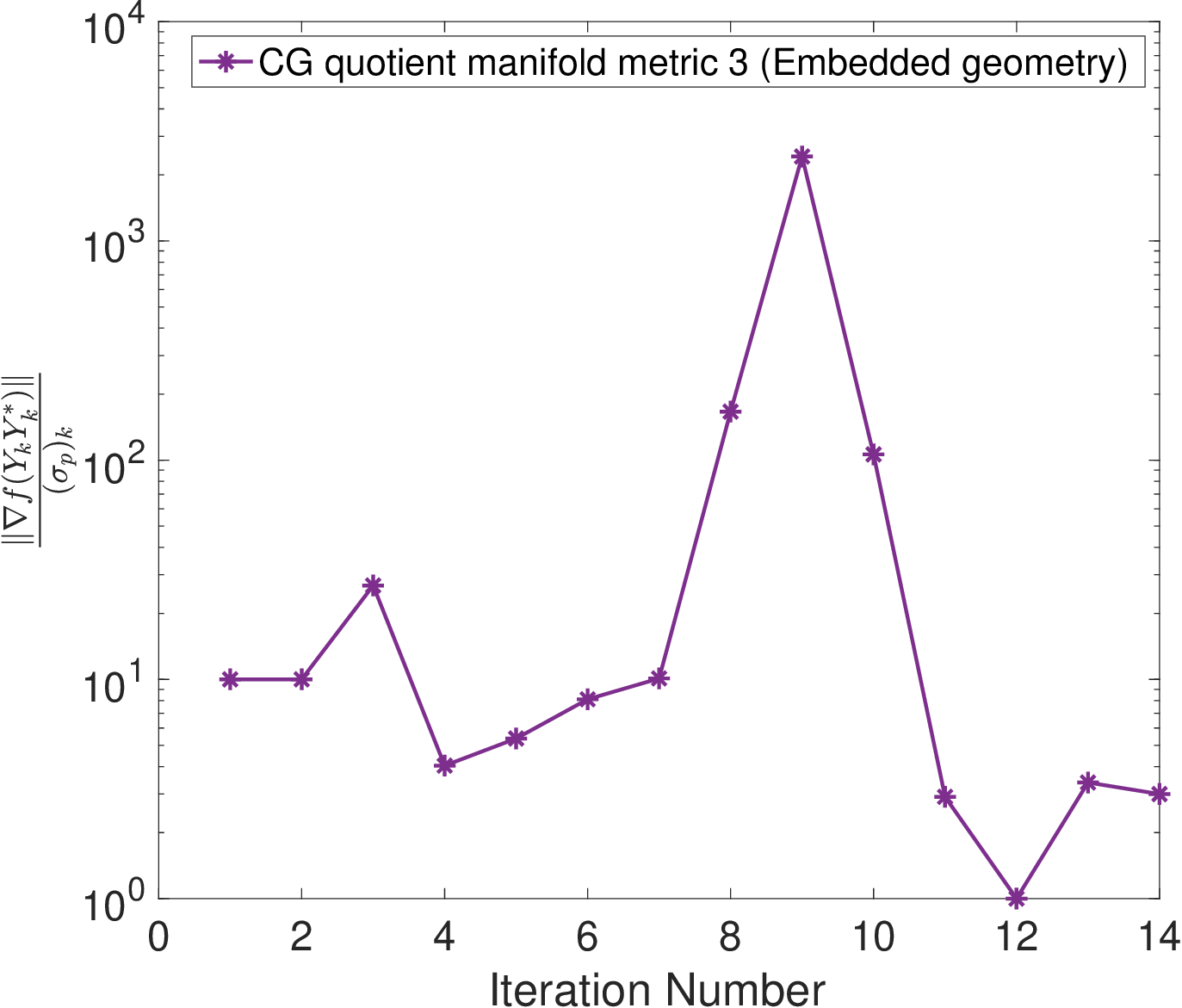}
}
\caption{Numerical justification of Assumption 
 \ref{assm:gradient_vanish_speed} for the eigenvalue problem of a random 50\,000-by-50\,000 PSD matrix of rank 10 on the rank 15 manifold, same setup as the numerical test shown in Fig \ref{fig:eigenvalue_problem}. Plots show the ratio term $\frac{\norm{\nabla f(Y_k Y_k^*)} }{ ({\sigma_p})_k }$ in Assumption 
 \ref{assm:gradient_vanish_speed} versus the iteration number $k$ for L-BFGS approach and CG method with metric $g^i,i=1,2,3$.}
\label{fig:gradOverSigmap_eigenvalueProblem}
\end{figure}

\subsection{Matrix completion}
 Let $\Omega$ be a subset of of the complete set $\{1,\cdots, n\} \times \{1,\cdots,n\}$. 
Then the projection operator onto $\Omega$ is a sampling operator defined as 
\begin{equation*}
    P_\Omega : \mathbb{C}^{n\times n} \rightarrow \mathbb{C}^{n\times n}:  X_{i,j} \mapsto  \left\{\begin{aligned}
        X_{i,j} \quad \text{if } (i,j)\in \Omega, \\
        0 \quad \text{if } (i,j)\notin \Omega.
    \end{aligned}\right.
\end{equation*}
The original matrix completion problem has no symmetry or Hermitian constraint. Here, we just consider an artificial Hermitian matrix completion problem for a given $A\in \mathcal H^{n,p}_+$: 
\[
	\MINone{X}{f(X) := \frac{1}{2}\norm{P_\Omega(X-A)}_F^2}{X \in \mathcal{H}^{n,p}_+},
\] 
or equivalently
\[
    \MINone{\pi(Y)}{h(\pi(Y)) :=  \frac{1}{2} \norm{P_\Omega(YY^*-A)}_F^2}{\pi(Y) \in \mathbb{C}^{n\times p}_*/\mathcal{O}_p}.
\]
 
Straightforward calculation shows
\begin{equation*}
    \nabla f(X) = P_\Omega(X-A),\quad 
    \nabla^2f(X)[\zeta_X] = P_\Omega(\zeta_X), \quad \zeta_X \in \mathbb{C}^{n\times n}.
\end{equation*}

 We consider a Hermitian PSD matrix $A\in \mathbb C^{n\times n}$ with $n=10\,000$ and $P_\Omega$ a random 90\% sampling operator. In the first test of Figure \ref{fig:matrixcompletion-a}, the minimizer has rank $r=25$, and the fixed rank for the manifold is set to $p=30$. 
 In the second test of Figure \ref{fig:matrixcompletion-b}, 
the minimizer has rank $r=25$, and the fixed rank for the manifold is set to $p=25$. The initial guess is the same random matrix for all four algorithms. 
  For both cases, we see that the simpler Burer--Monteiro approach, including the L-BFGS method and the CG method with metric $g^1$, is significantly slower.  
  
  In the third test of Figure \ref{fig:gradOverSigmap_matrixCompletion}, we show that the ratio term $\frac{\norm{\nabla f(Y_k Y_k^*)} }{ ({\sigma_p})_k }$ in Assumption 
 \ref{assm:gradient_vanish_speed} versus the iteration number $k$ does not blow up as $\pi(Y_k)$ converges to $\pi(\hat{Y})$.

\begin{figure}[htpb]
    \centering
    \subfigure[The algorithms are solved on the rank 30 manifold]{
    \includegraphics[scale =0.3]{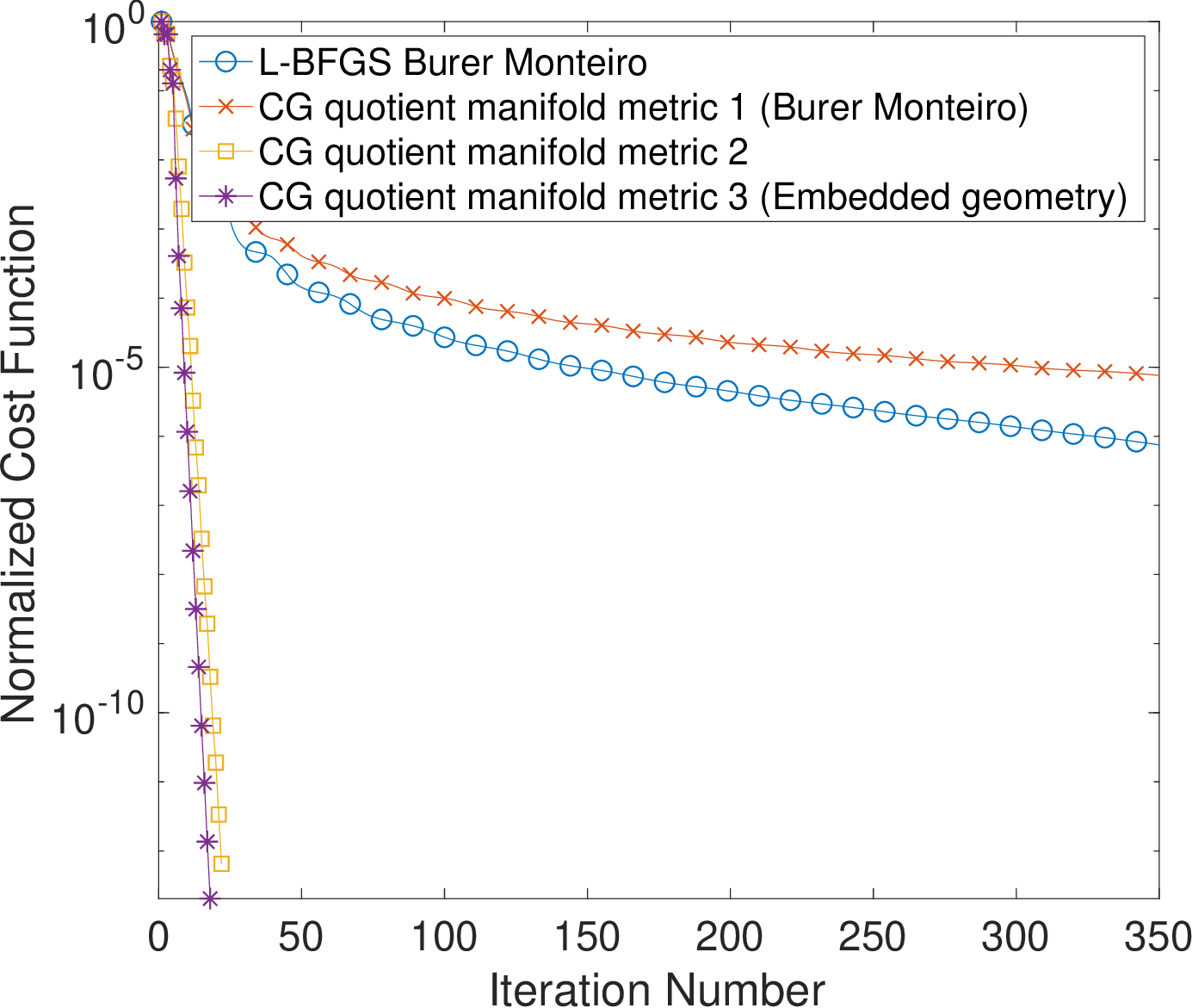}\label{fig:matrixcompletion-a}
    }
    \subfigure[The algorithms are solved on the rank 25 manifold]{
    \includegraphics[scale =0.255]{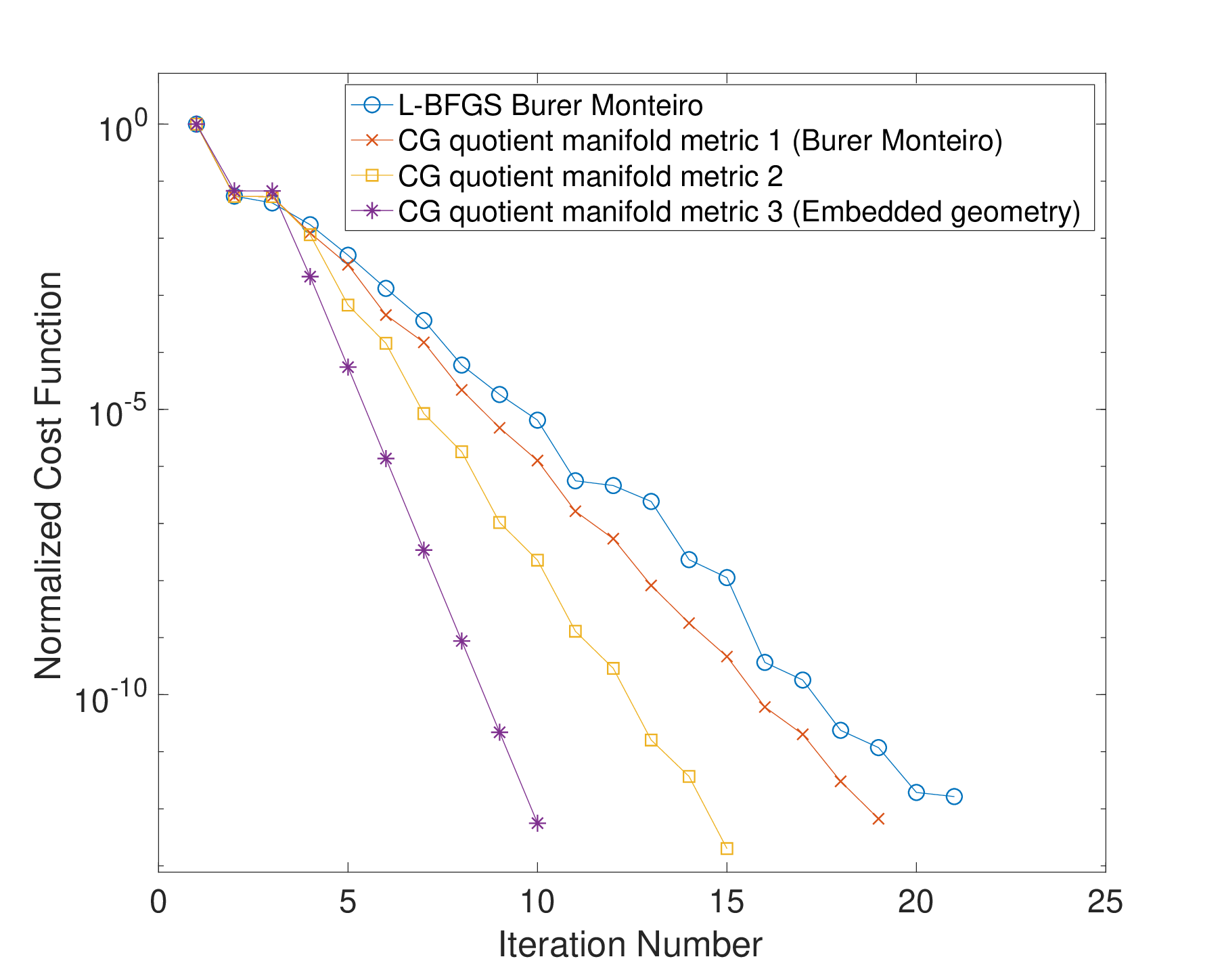}\label{fig:matrixcompletion-b}
    }
    \caption{Matrix completion of a random 10\,000-by-10\,000  PSD matrix of rank 25 observed at random 90\% entries. A comparison of decrease in normalized cost function value $\frac{\norm{P_\Omega(Y_kY_k^* - A)}_F}{\norm{P_\Omega(A)}_F}$  versus iteration number $k$ when using L-BFGS approach and CG method with metric $g^i,i=1,2,3$. When the minimizer is rank deficient (the case in (a)), L-BFGS approach and CG method with metric $g^1$ is significantly slower. }
    \label{fig:matrixcompletion-all}
\end{figure}

\begin{figure}[htbp!]
\centering
\subfigure[L-BFGS Burer Monteiro]{
\includegraphics[width=0.23\textwidth]{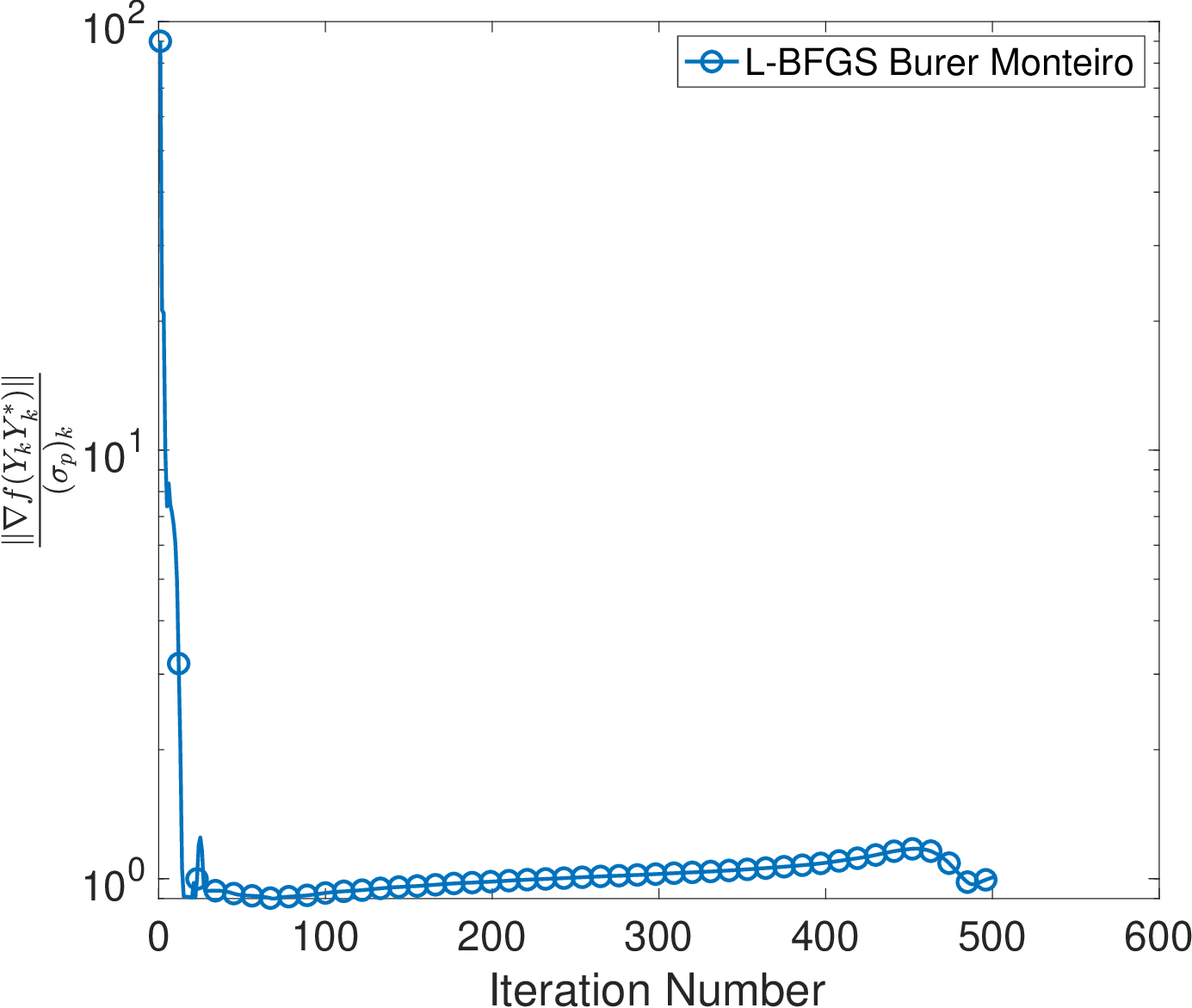}
}
\subfigure[CG quotient manifold metric 1 (Burer Monteiro)]{
\includegraphics[width=0.23\textwidth]{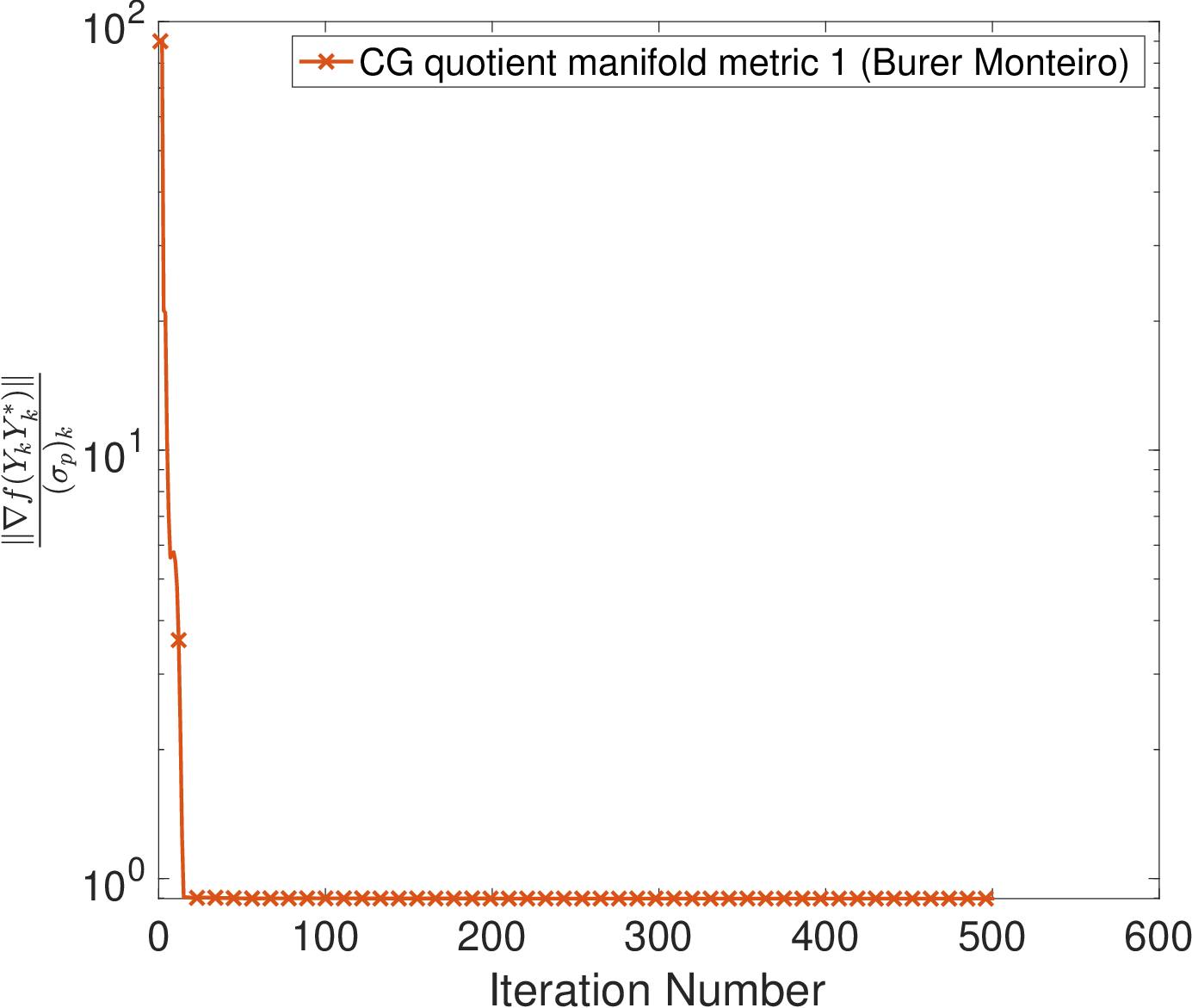}
}
\subfigure[CG quotient manifold metric 2]{
\includegraphics[width=0.23\textwidth]{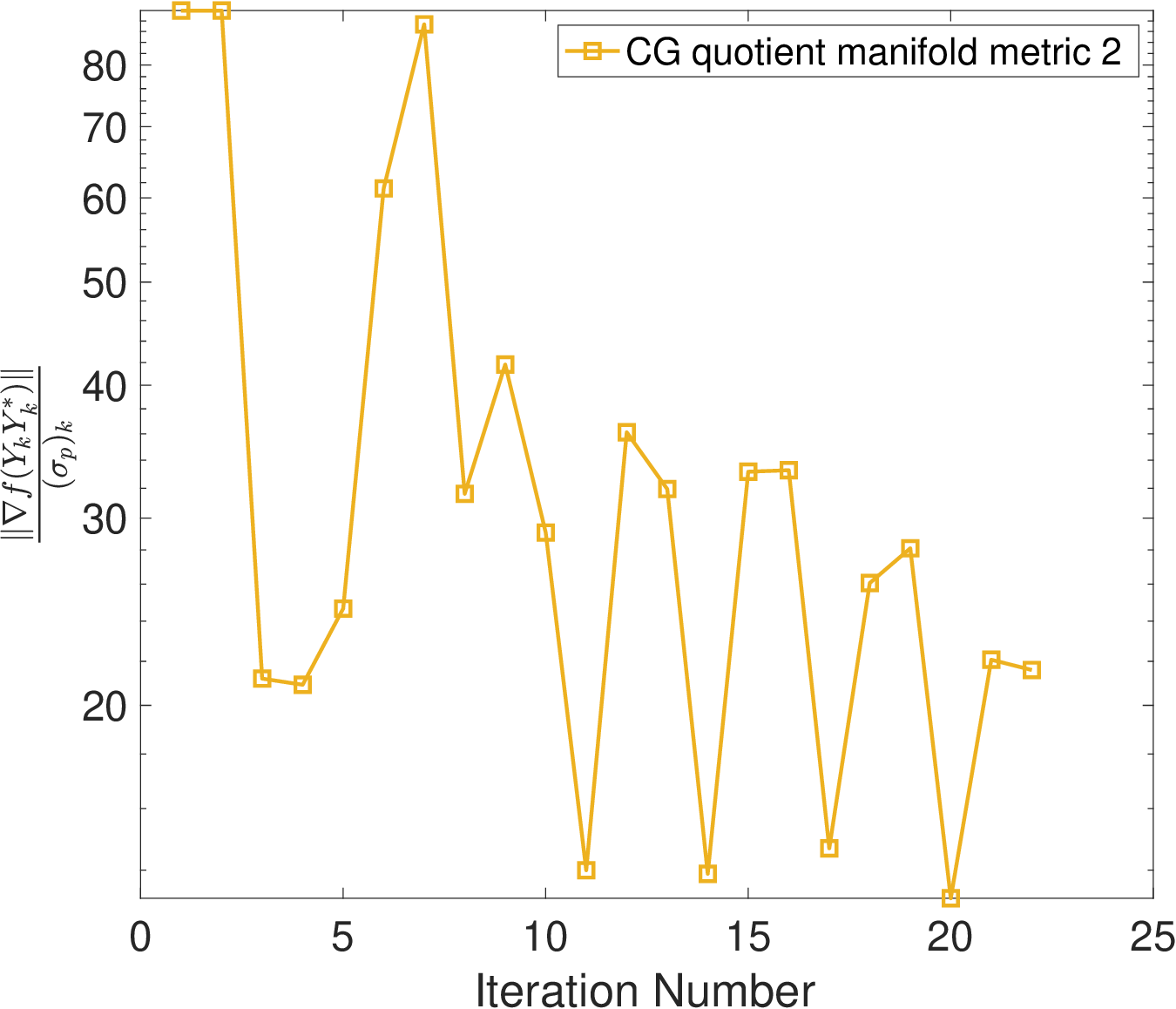}
}
\subfigure[CG quotient manifold metric 3 (Embedded geometry)]{
\includegraphics[width=0.23\textwidth]{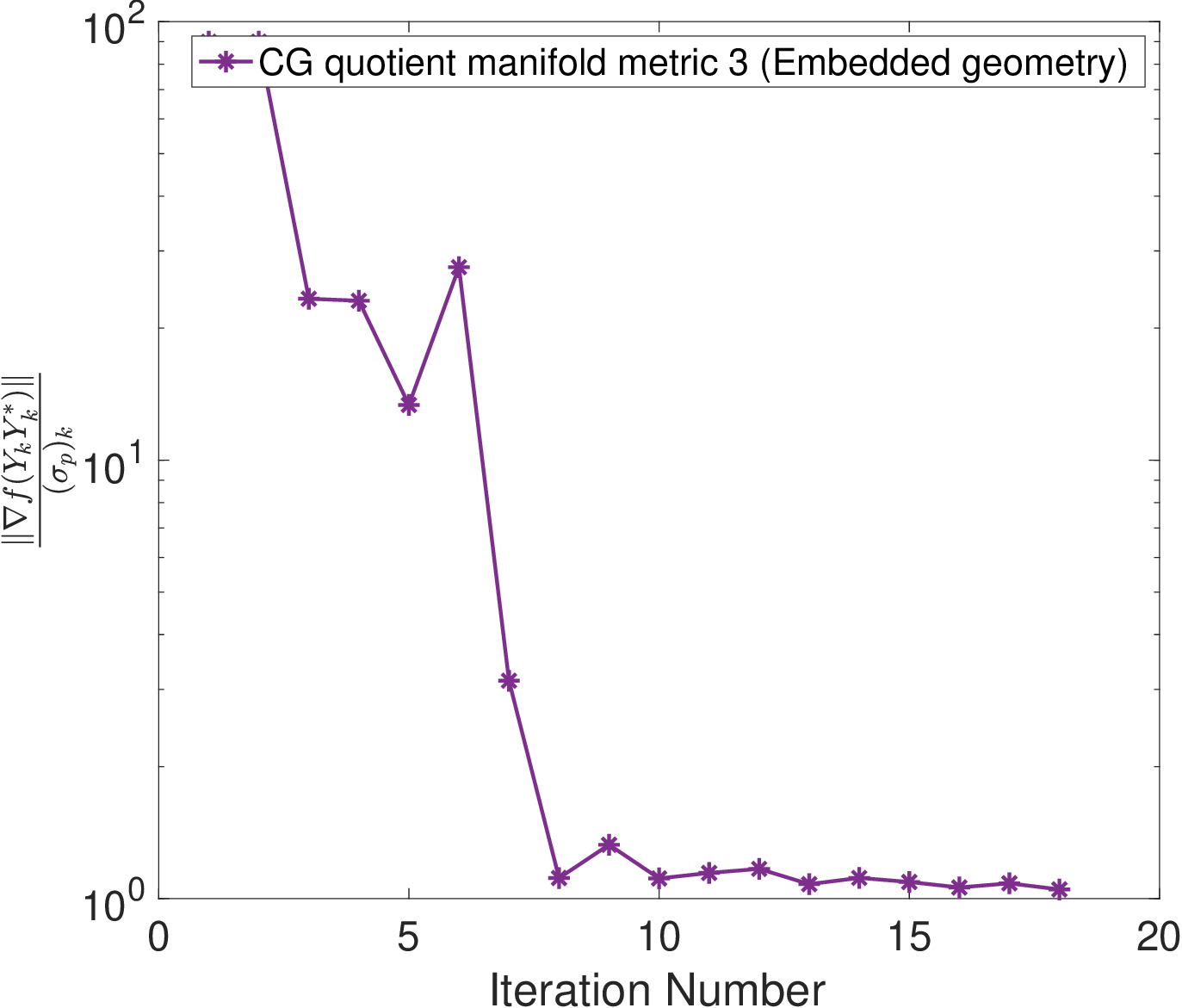}
}
\caption{Numerical justification of Assumption 
 \ref{assm:gradient_vanish_speed} for the matrix completion problem of a random 10\,000-by-10\,000 PSD matrix of rank 25 observed at random 90\% entries solved on the rank 30 manifold (same setup as the numerical test shown in Fig \ref{fig:matrixcompletion-a}). Plots show the ratio term $\frac{\norm{\nabla f(Y_k Y_k^*)} }{ ({\sigma_p})_k }$ in the Assumption 
 \ref{assm:gradient_vanish_speed} versus the iteration number $k$ for L-BFGS approach and CG method with metric $g^i,i=1,2,3$.}
\label{fig:gradOverSigmap_matrixCompletion}
\end{figure}

\subsection{The PhaseLift problem}
We now solve the phase retrieval problem as described in \cite{candes2013phaselift}:  Take an image $x \in \mathbb{C}^{N^2 \times 1} $ and a collection of masks denoted by $\{M_i\}_{i=1}^m$ where $N^2 = n$ is the size of the flattened image. Each $M_i$ is of the same size as $x$ and the elements in each $M_i$ are real or complex numbers with both real and imaginary parts between 0 and 1. We can choose $M_i$ to be random numbers or i.i.d. Gaussian. We have $m$ number of observations for each mask $i = 1,\cdots,m$: 
\begin{equation}\label{eqn:1}
d^i = \mathcal{N}(x) := |(\text{DFT}(\text{Diag}(M_i) * x) |^2,
\end{equation}
where $\mathcal{N}$ denotes the nonlinear operator. The squared power is taken element-wisely. $\text{Diag}(M_i)$ denotes the diagonal matrix whose diagonal is $M_i$. \text{DFT} denotes the $n \times n$ discrete fourier transform matrix. Therefore, $d^i$ is vector of size $  n \times 1$. 

Now we   lift $x$ so that $\mathcal{N}$ can be treated as a linear operator. Let $d_j^i $ denote the $j$th component of $d^i$. Let ${z^i}^* $ denote $\text{DFT}\cdot \text{Diag}(M_i)$ and ${z_j^i}^*$ denote the $j$th row of $\text{DFT} \cdot \text{Diag}(M_i)$.  Then equation (\ref{eqn:1}) can be written as

\[
	|\langle z_j^i , x \rangle|^2  = {z_j^i}^* x x^* z_j^i = d_j^i, \quad j = 1, \ldots n, \quad i = 1,\ldots, m.
\]

Denoting $X := xx^*$, the nonlinear operator $\mathcal{N}$ can be rewritten as the linear operator
\[
	\mathcal{A} : \mathbb{C}^{n \times n} \rightarrow \mathbb{R}^{ m n \times 1}, \quad X \mapsto [  tr(z_1^1{z_1^1}^* X), \cdots, tr(z_{n}^1{z_{n}^1}^* X),\cdots,tr(z_1^m{z_1^m}^* X), \cdots, tr(z_{n}^m{z_{n}^m}^* X) ]^T.
\]
Let $Z^i:= \text{DFT}\cdot\Diag(M_i)= \bmat{-{z^i_1}^*-\\\cdots\\-{z^i_n}^*-}$, then we have alternatively 
\[
    \mathcal{A} : \mathbb{C}^{n \times n} \rightarrow \mathbb{R}^{ m n \times 1}, \quad X \mapsto  [\diag(Z^1X{Z^1}^*),\cdots,\diag(Z^mX{Z^m}^*) ]^T.
\]
Denote $b = [d^1,\cdots, d^m]^T$ . Then the cost function can be written as  
\begin{equation*}
	f(X) = \frac{1}{2}\norm{\mathcal{A}(X) - b}^2
\end{equation*}

The conjugate of operator $\mathcal{A}$, detoted by $\mathcal{A}^*$ is given by 
\begin{equation*}
    A^*(b) = \left\{ \begin{split}
        \sum_{i=1}^m\sum_{j=1}^n b_j^iz_j^i{z_j^i}^*= \sum_{i=1}^m {Z^i}^*\Diag(b^i)Z^i,\quad  \text{if domain of } \mathcal{A} \text{ is } \mathbb{C}^{n\times n} \\
        \Re\left(\sum_{i=1}^m\sum_{j=1}^n b_j^iz_j^i{z_j^i}^* \right)= \Re\left(\sum_{i=1}^m {Z^i}^*\Diag(b^i)Z^i \right), \quad \text{if domain of } \mathcal{A} \text{ is } \mathbb{R}^{n\times n} . 
    \end{split}\right.
\end{equation*}

Straightforward calculation shows
\begin{equation*}
    \nabla f(X) = \mathcal{A}^*(\mathcal{A}(X)-b),\quad    \nabla^2 f(X)[\zeta_X] = \mathcal{A}^*(\mathcal{A}(\zeta_X)) \quad \text{for all $\zeta_X \in \mathbb{C}^{n\times n}$}.
\end{equation*}

For the numerical experiments, we take the phase retrieval problem for a complex gold ball image of size $256 \times 256$ as in \cite{huang_solving_2017}. Thus $n=256^2=65,536$  in \eqref{min-psd} or \eqref{lowrank_prob}. 
We consider the operator $\mathcal A$ that corresponds to $6$ Gaussian random masks. Hence, the size of $b$ is $6n=393,216.$ Remark that problem is easier to solve with more masks.

We first test the algorithms on the rank 3 manifold, and  then on the rank 1 manifolds. The results are visible in Figure \ref{fig:pr-all}. The initial guess is randomly generated. 
First, we observe that solving the PhaseLift problem on the rank $p$ manifold with $p>1$ can accelerate the convergence, compared to solving it on the rank 1 manifold.  
Second, when $p=r=1$, the asymptotic convergence rates of all algorithms are essentially the same, though the algorithms differ in the length of their convergence "plateaus".
When $p=3>r=1$, we can see that the Burer--Monteiro approach has slower asymptotic convergence rates. 
 
In the second test of Figure \ref{fig:gradOverSigmap_phaseRetrieval}, we show that the ratio term $\frac{\norm{\nabla f(Y_k Y_k^*)} }{ ({\sigma_p})_k }$ in Assumption 
 \ref{assm:gradient_vanish_speed} versus the iteration number $k$ does not blow up as $\pi(Y_k)$ converges to $\pi(\hat{Y})$.

\begin{figure}[htbp!]
    \centering
    \subfigure[The algorithms are solved on the rank 3 manifold]{
    \includegraphics[scale =0.25]{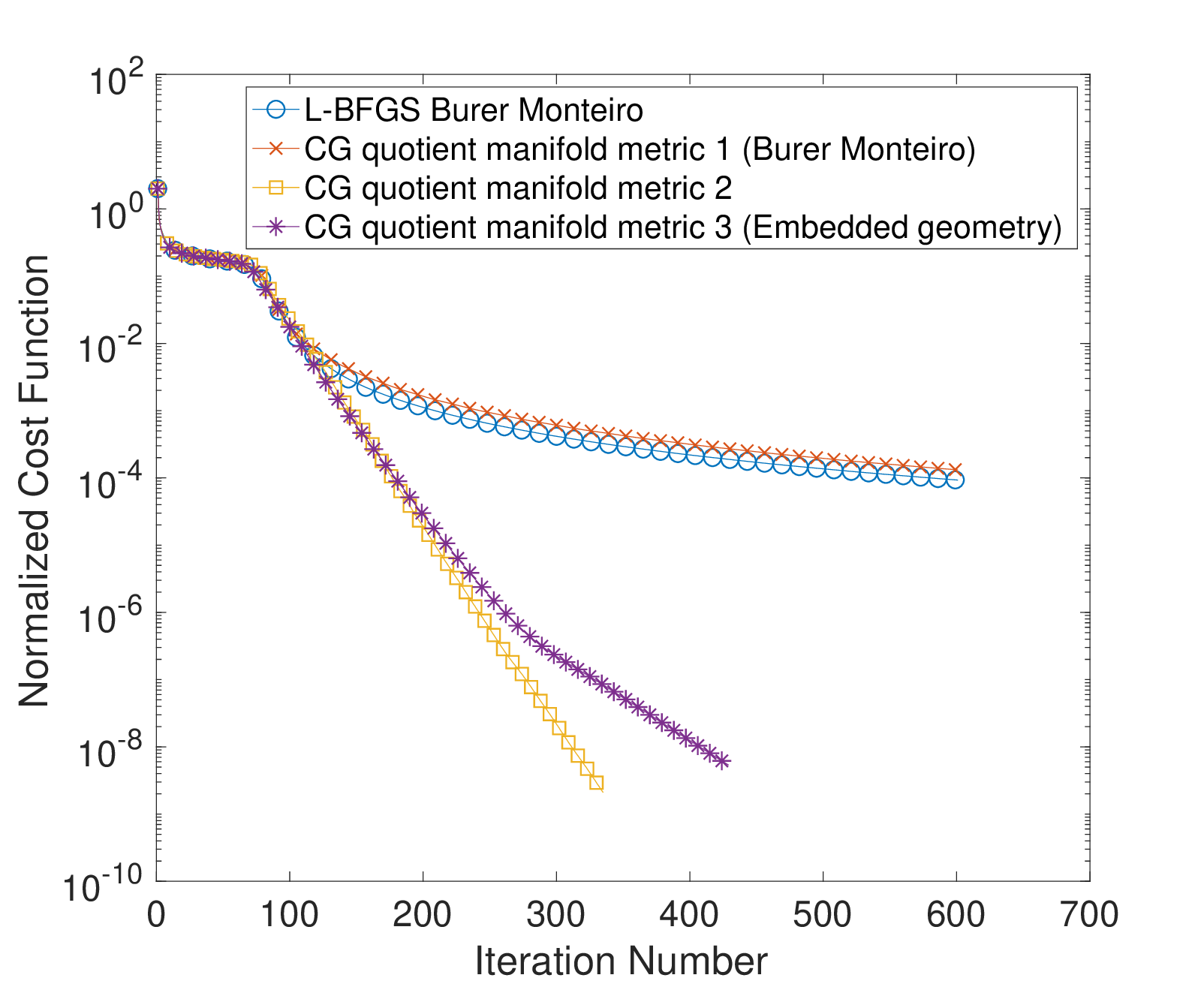}\label{fig:pr}
    }
    \subfigure[The algorithms are solved on the rank 1 manifold]{
    \includegraphics[scale =0.255]{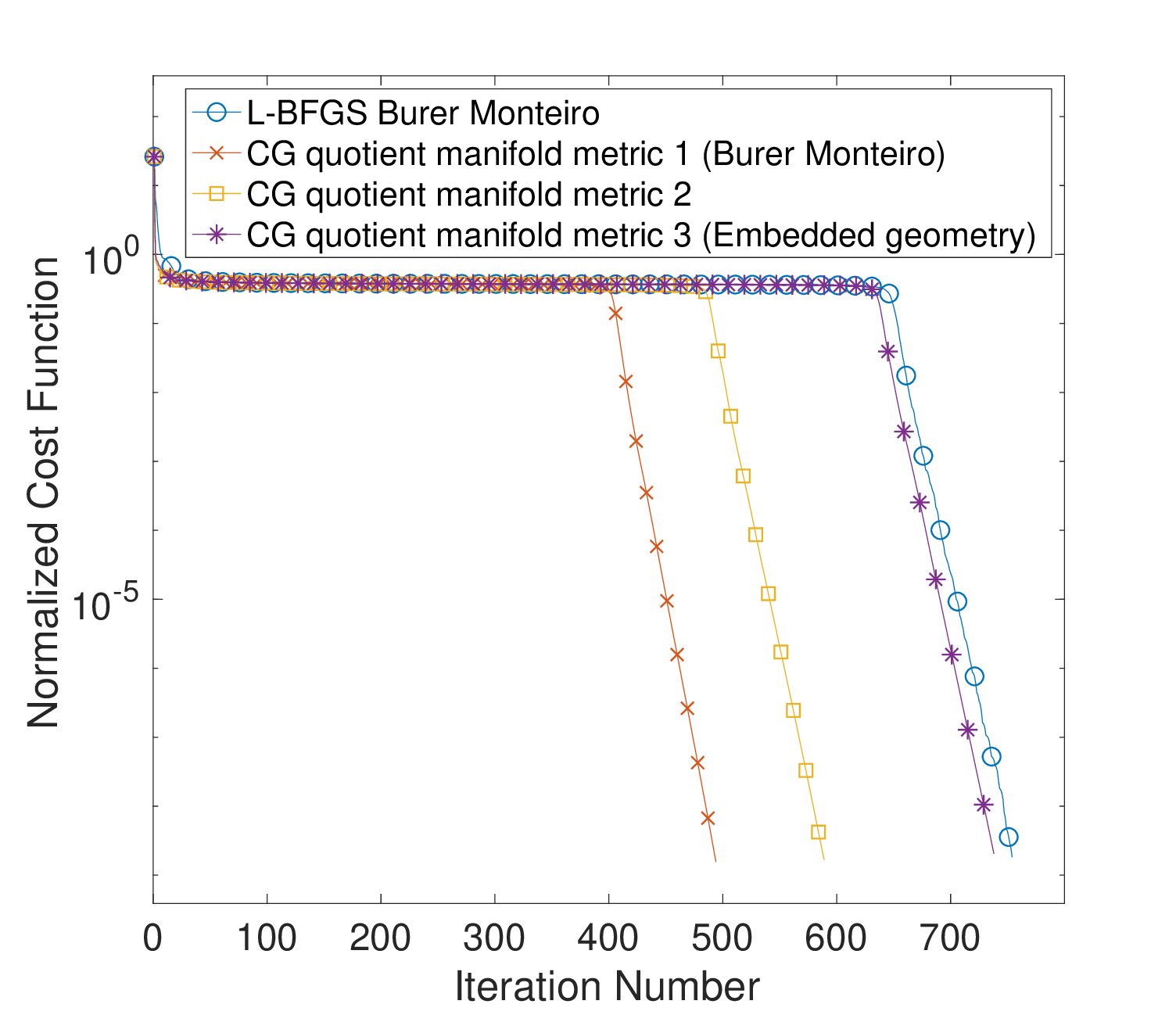}\label{fig:pr2}
    }
    \caption{Phase retrieval of a 256-by-256 image with 6 Gaussian masks. A comparison of normalized cost function value $\frac{\norm{\mathcal{A}(Y_k Y_k^*) - b}}{\norm{b}}$ versus iteration number $k$ when using L-BFGS approach and CG method with metric $g^i, i=1,2,3$.  When the minimizer is rank deficient (the case in \ref{fig:pr}), L-BFGS approach and CG method with metric $g^1$ is significantly slower. }
    \label{fig:pr-all}
\end{figure}

\begin{figure}[H]
\centering
\subfigure[L-BFGS Burer Monteiro]{
\includegraphics[width=0.23\textwidth]{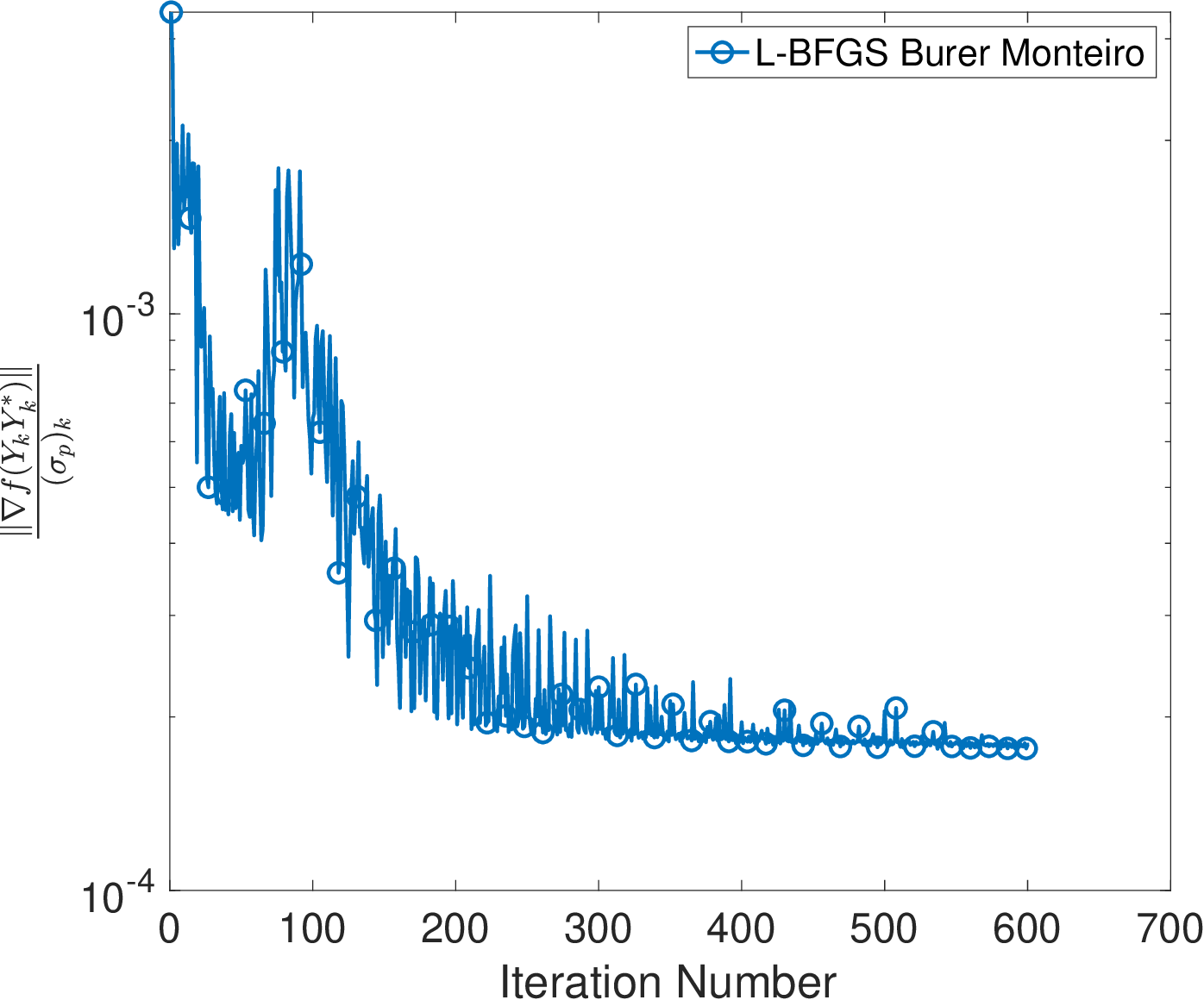}
}
\subfigure[CG quotient manifold metric 1 (Burer Monteiro)]{
\includegraphics[width=0.23\textwidth]{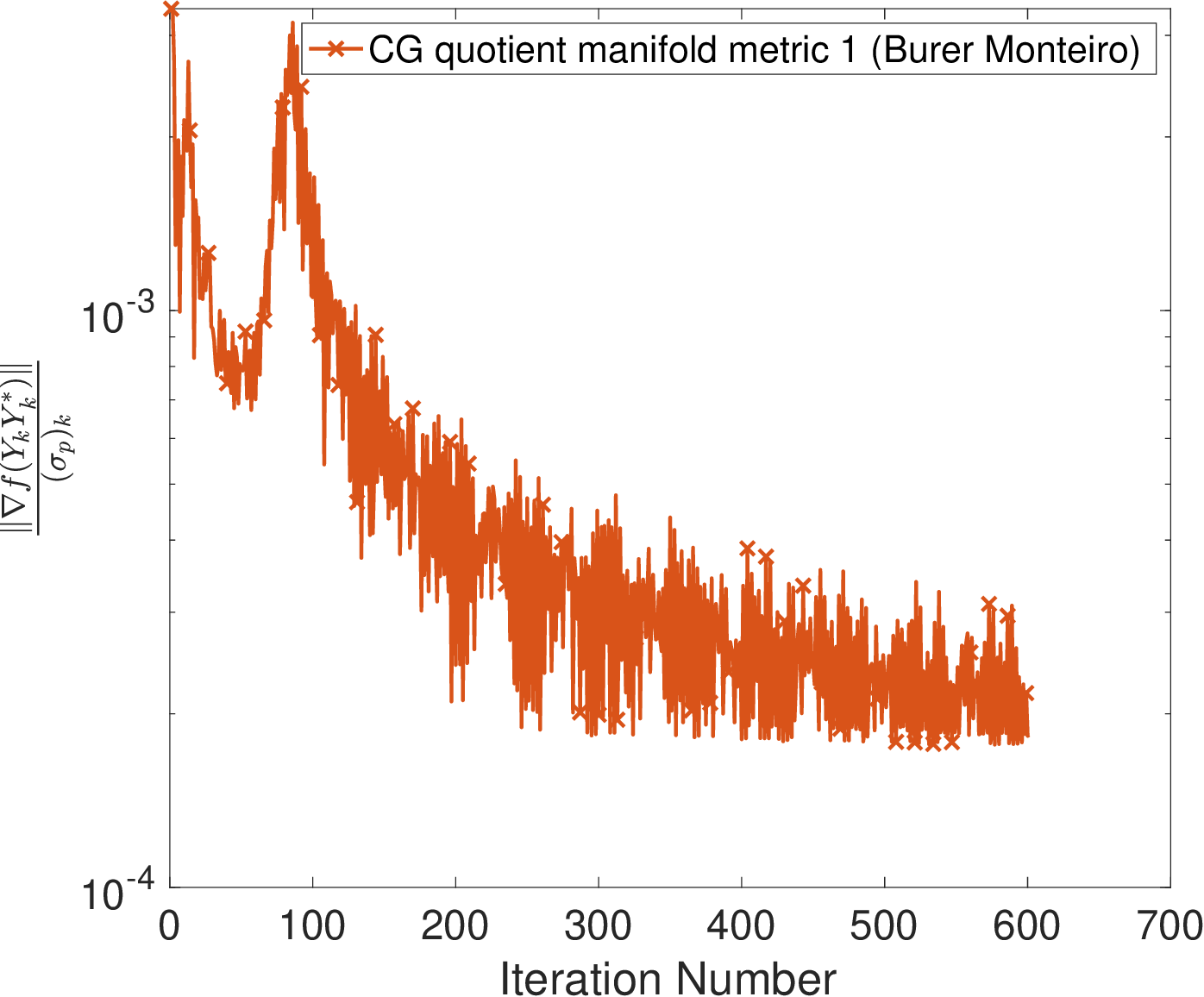}
}
\subfigure[CG quotient manifold metric 2]{
\includegraphics[width=0.23\textwidth]{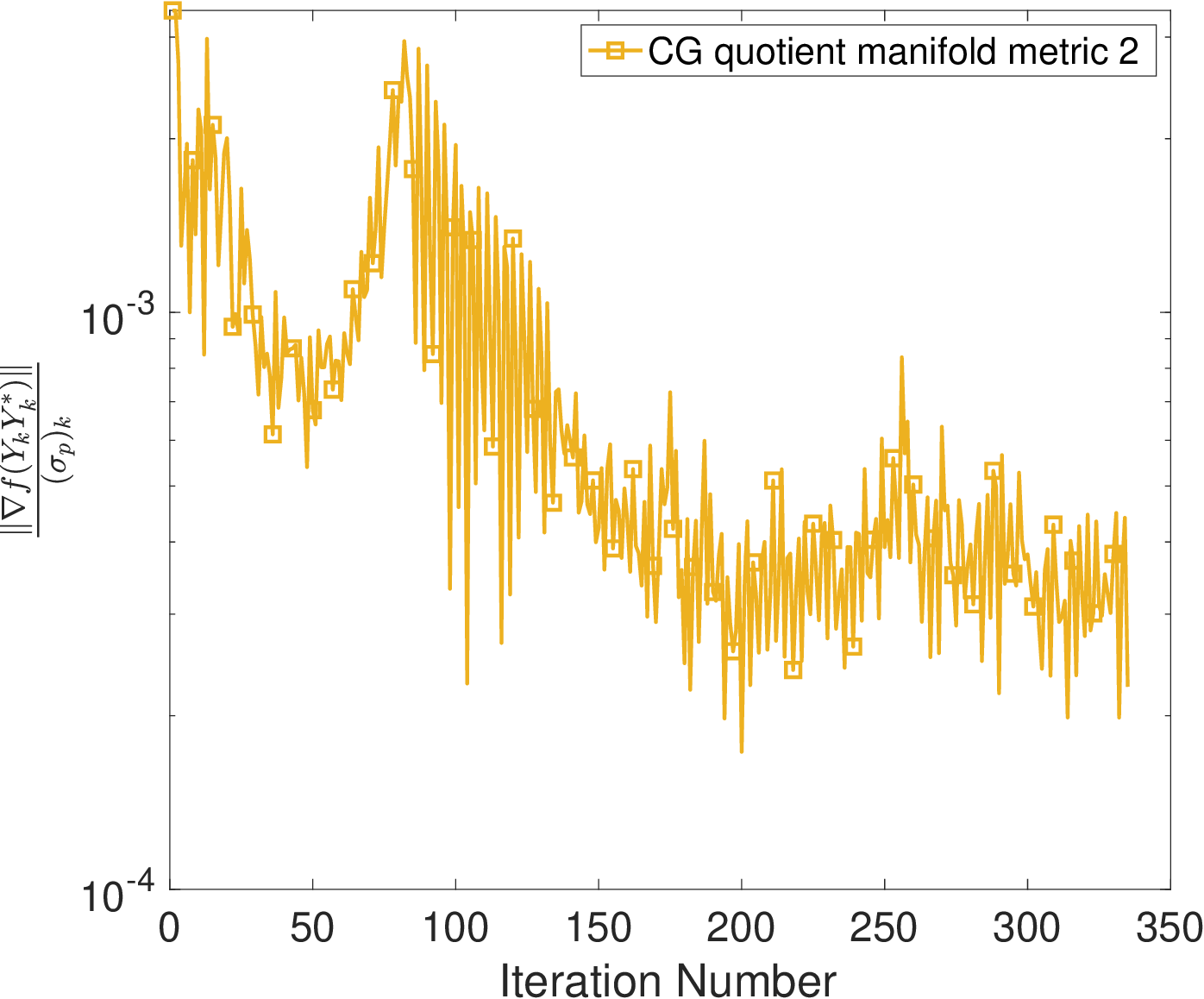}
}
\subfigure[CG quotient manifold metric 3 (Embedded geometry)]{
\includegraphics[width=0.23\textwidth]{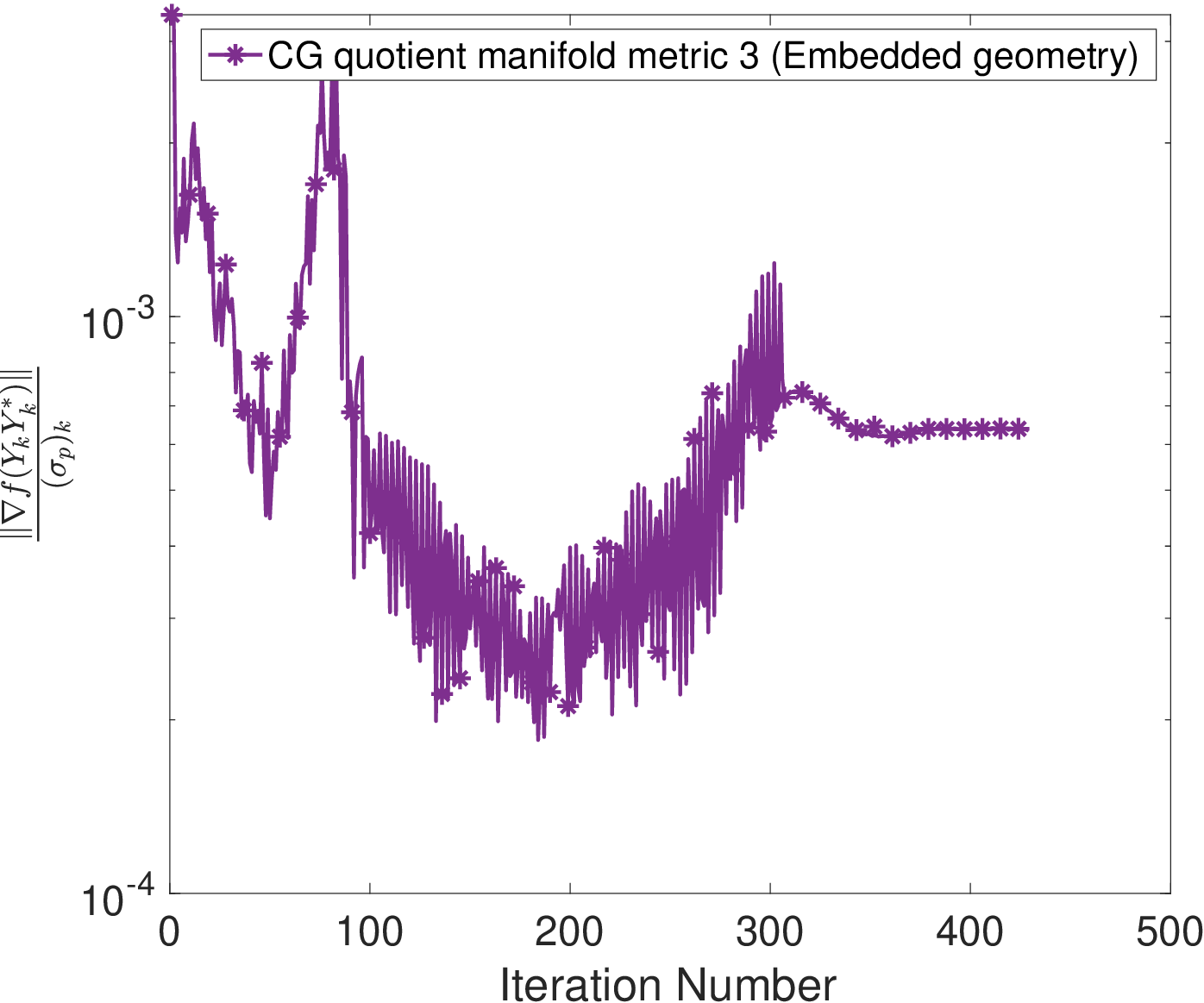}
}
\caption{Numerical justification of Assumption 
 \ref{assm:gradient_vanish_speed} for the phase retrieval problem of a 256-by-256 image with 6 Gaussian masks solved on the rank 3 manifold (same setup as the numerical test shown in Fig \ref{fig:pr}). Plots show the ratio term $\frac{\norm{\nabla f(Y_k Y_k^*)} }{ ({\sigma_p})_k }$ in the Assumption 
 \ref{assm:gradient_vanish_speed} versus the iteration number $k$ for L-BFGS approach and CG method with metric $g^i,i=1,2,3$.}
\label{fig:gradOverSigmap_phaseRetrieval}
\end{figure}

\begin{figure}[H]
    \centering
    \subfigure[The algorithms are solved on the rank 3 manifold]{
    \includegraphics[scale =0.25]{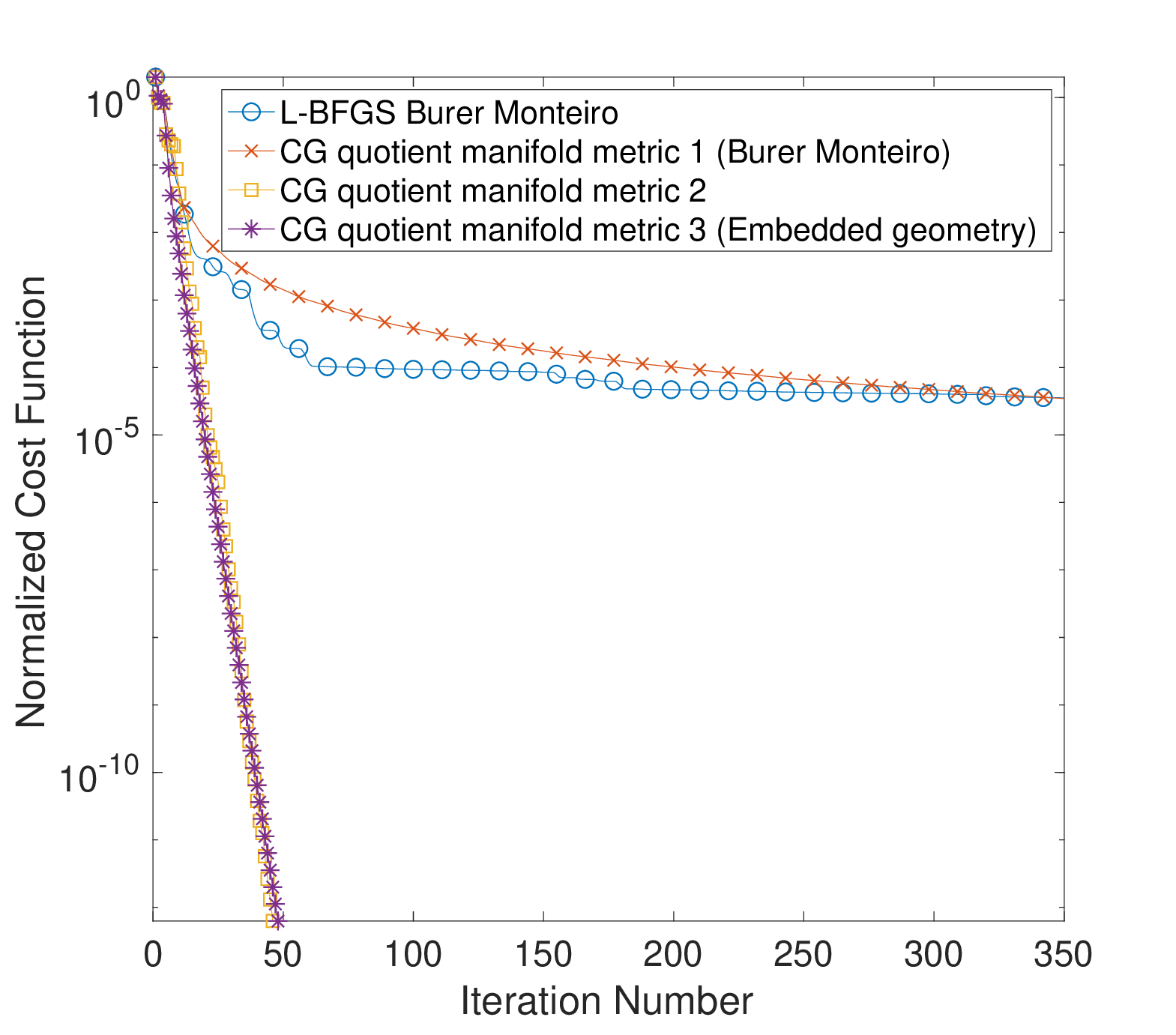}\label{fig:inter}
    }
    \subfigure[The algorithms are solved on the rank 1 manifold]{
    \includegraphics[scale =0.255]{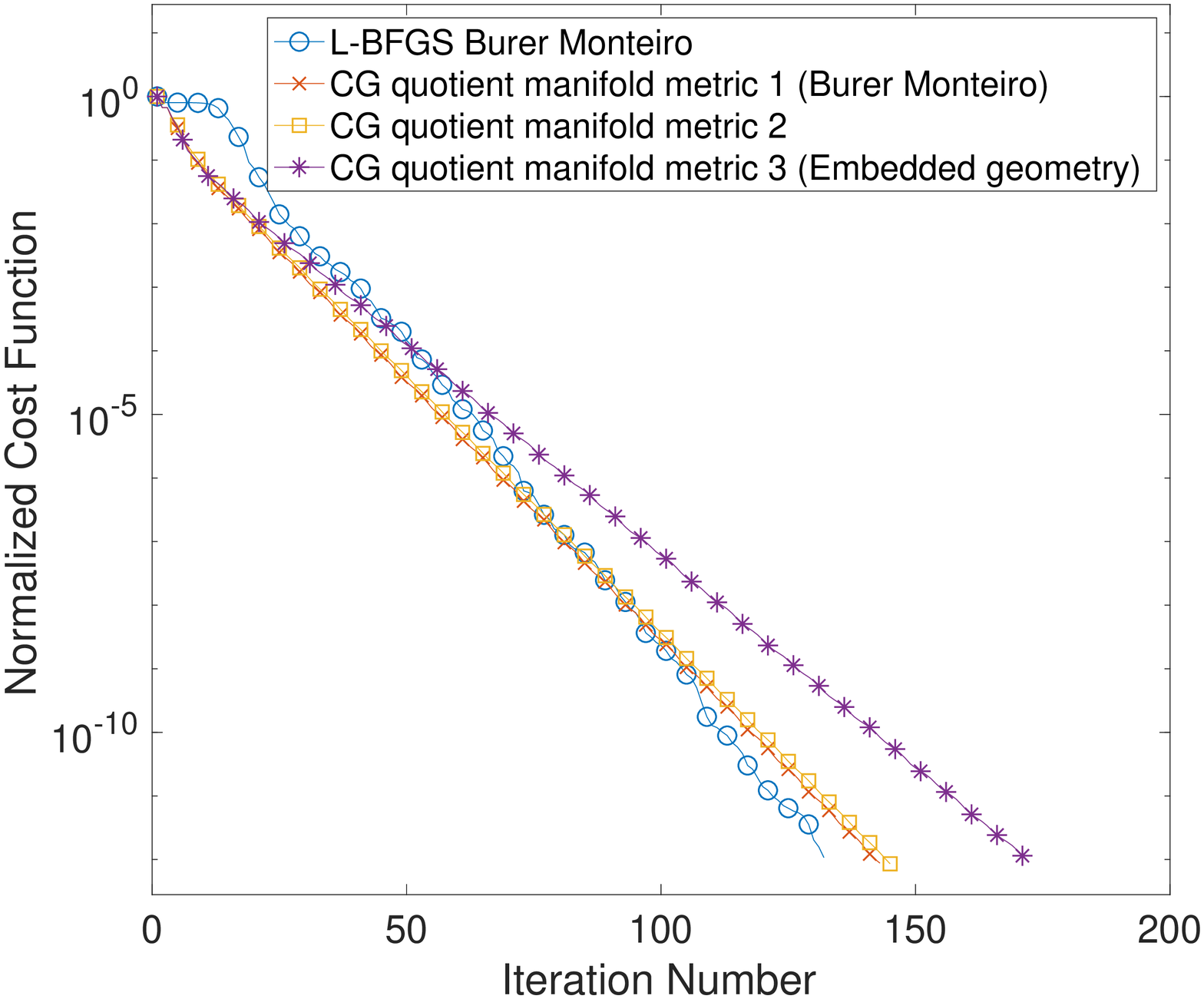}\label{fig:inter2}
    }
    \caption{Interferometry  recovery of a random 10\,000-by-1000 $F$ with 1\% sampling.  A comparison of normalized cost function value $\frac{\norm{P_\Omega(FY_kY_k^*F^* - dd^*)}_F}{\norm{P_\Omega(dd^*)}_F}$ versus iteration number $k$ when using L-BFGS approach and CG method with metric $g^i, i=1,2,3$.  When the minimizer is rank deficient (the case in (a)), L-BFGS approach and CG method with metric $g^1$ is significantly slower. }
    \label{fig:inter-all}
\end{figure}

\begin{figure}[H]
\centering
\subfigure[L-BFGS Burer Monteiro]{
\includegraphics[width=0.23\textwidth]{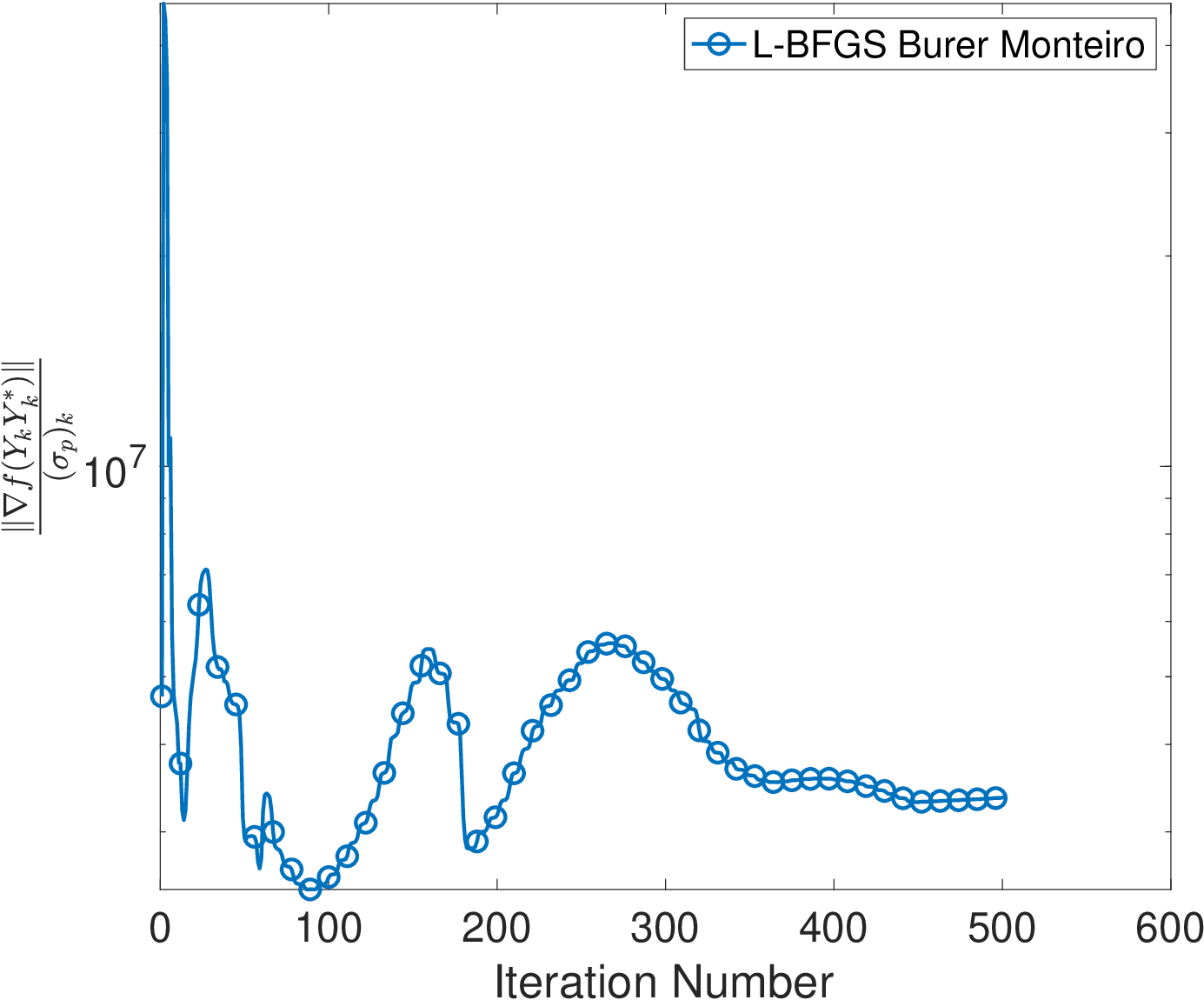}
}
\subfigure[CG quotient manifold metric 1 (Burer Monteiro)]{
\includegraphics[width=0.23\textwidth]{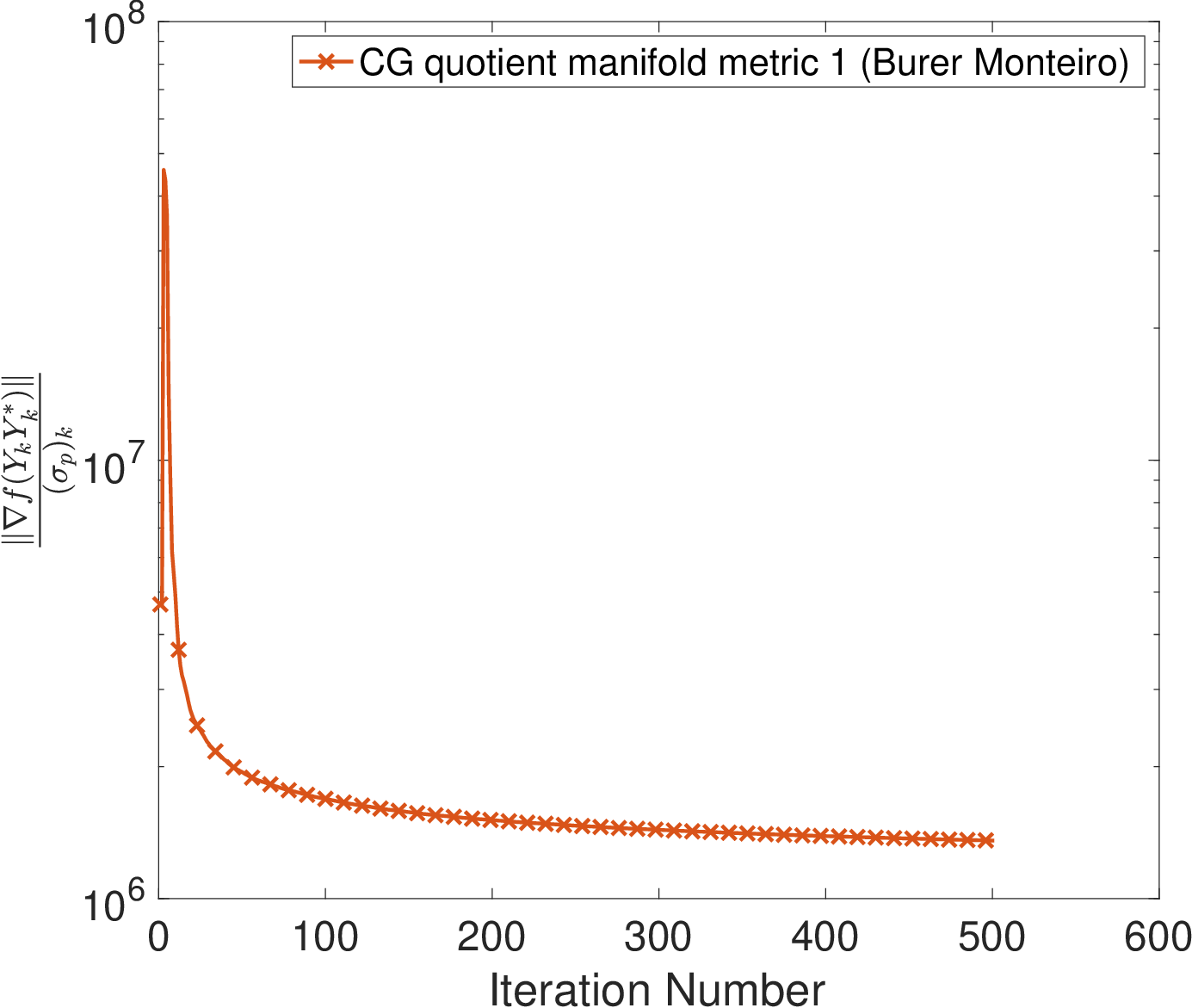}
}
\subfigure[CG quotient manifold metric 2]{
\includegraphics[width=0.23\textwidth]{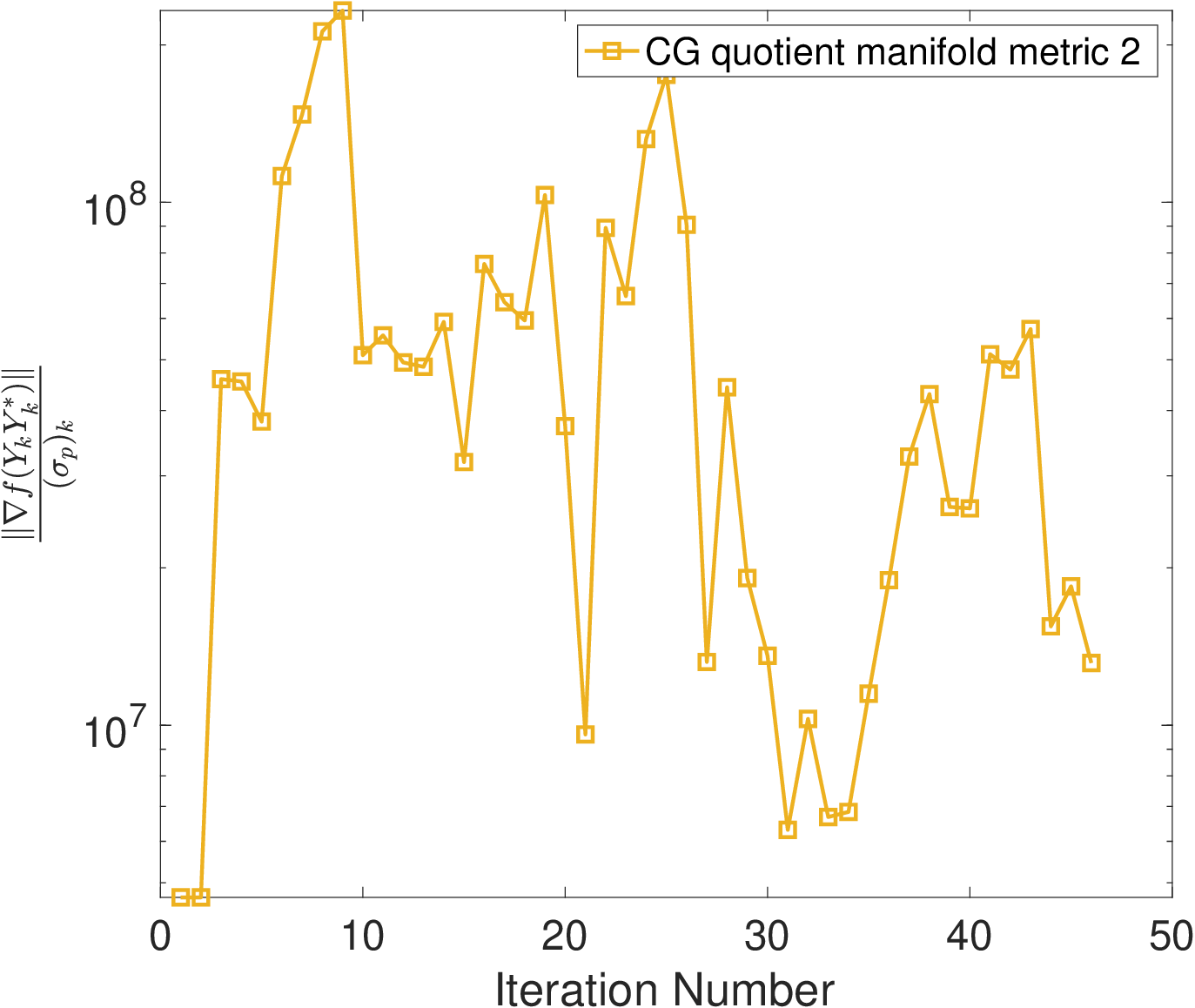}
}
\subfigure[CG quotient manifold metric 3 (Embedded geometry)]{
\includegraphics[width=0.23\textwidth]{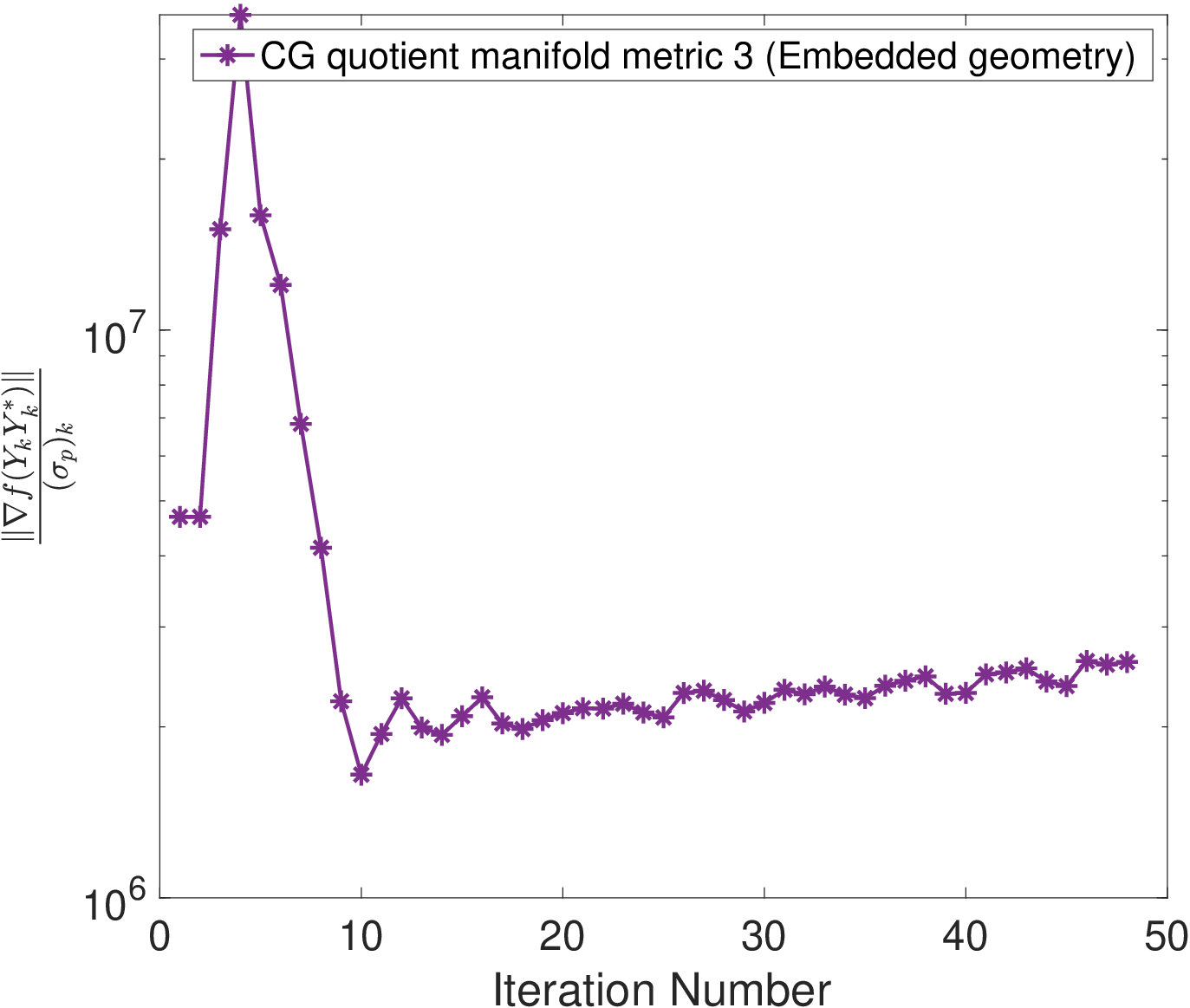}
}
\caption{Numerical justification of Assumption 
 \ref{assm:gradient_vanish_speed} for the interferometry recovery problem of a random 10\,000-by-1000 $F$ with 1\% sampling solved on a rank 3 manifold. (same setup as the numerical test shown in Fig \ref{fig:inter}). Plots show the ratio term $\frac{\norm{\nabla f(Y_k Y_k^*)} }{ ({\sigma_p})_k }$ in the Assumption 
 \ref{assm:gradient_vanish_speed} versus the iteration number $k$ for L-BFGS approach and CG method with metric $g^i,i=1,2,3$.}
\label{fig:gradOverSigmap_interferometry}
\end{figure}

\subsection{Interferometry recovery problem}
As last example, we consider solving the interferometry recovery problem
described in \cite{demanet2017convex}.
Consider solving the linear system $Fx = d$ where $F\in \mathbb{C}^{m\times n}_*$ with $m > n $ and $x\in \mathbb{C}^{n\times 1}$. 
For the sake of robustness, the interferometry recovery \cite{demanet2017convex} requires solving the lifted problem   
\begin{equation*}
    \MINone{X}{f(X) = \frac{1}{2}\norm{P_\Omega(FXF^*- dd^*)}_F^2}{ X \in \mathcal{H}^{n,p}_+ },
\end{equation*}
where
 $\Omega$ is a sparse and symmetric sampling index that includes all of the diagonal.

Straightforward calculation again shows
\begin{equation*}
    \nabla f(X) = F^*P_\Omega(FXF^*-dd^*)F,\quad    \nabla^2f(X)[\zeta_X] = F^*P_\Omega(F\zeta_XF^*)F \quad \text{for all $\zeta_X \in \mathbb{C}^{n\times n}$}.
\end{equation*}

We solve an interferometry problem with a randomly generated $F\in \mathbb C^{10\,000\times 1000}$. Hence $n=1000$  in \eqref{min-psd} or \eqref{lowrank_prob}. The sampling operator $\Omega$ is also randomly generated, with 1\% density. 
In Figure\ref{fig:inter},  $p=3$ and $r=1$ and we can see that the Burer--Monteiro approach has slower asymptotic convergence rates.
In Figure\ref{fig:inter2}, $p=r=1$ and we can see now that all algorithms have more or less the same asymptotic convergence rates.
 
In the second test of Figure \ref{fig:gradOverSigmap_interferometry}, we show that the ratio term $\frac{\norm{\nabla f(Y_k Y_k^*)} }{ ({\sigma_p})_k }$ in Assumption 
 \ref{assm:gradient_vanish_speed} versus the iteration number $k$ does not blow up as $\pi(Y_k)$ converges to $\pi(\hat{Y})$.

\section{Conclusion}

In this paper, we have shown that the nonlinear conjugate gradient method on the unconstrained Burer--Monteiro formulation for Hermitian PSD fixed-rank constraints is equivalent to a Riemannian conjugate gradient method on a quotient manifold
$\mathbb{C}^{n\times p}_*/\mathcal{O}_p$ with  the Bures-Wasserstein metric $g^1$, retraction, and vector transport. 
We have also shown that the Riemannian conjugate gradient method on the embedded geometry of $\mathcal H_+^{n,p}$ is 
equivalent to a Riemannian conjugate gradient method on a quotient manifold
$\mathbb{C}^{n\times p}_*/\mathcal{O}_p$ with a metric $g^3$,
a special retraction, and a special vector transport. 
We have analyzed the condition numbers of the Riemannian Hessians on
$(\mathbb{C}^{n\times p}_*/\mathcal{O}_p, g^i)$ for these metrics $g^1, g^3$ and another metric $g^2$ used in the literature. As a noteworthy result, we have shown that when the rank $p$ of the optimization manifold is larger than the rank of the minimizer to the original PSD constrained minimization, the condition number of the Riemannian Hessian on
$(\mathbb{C}^{n\times p}_*/\mathcal{O}_p, g^1)$ can be unbounded, which is consistent with the observation that the Burer--Monteiro approach often has a slower asymptotic convergence rate in numerical tests.

\newpage 

\begin{appendices}

\section{Derivatives}

\label{appendix-derivative}

See A.5 in \cite{absil_optimization_2008} for more details in this section.
\subsection{Fr\'{e}chet derivatives}\label{frechet_derivatives}
For any two finite-dimensional inner product vector spaces $\mathcal{U}$ and $\mathcal{V}$ over $\mathbb{R}$, a mapping $F: \mathcal{U}\rightarrow 
\mathcal{V}$ is {\it Fr\'{e}chet differentiable} at $x\in \mathcal{U}$ if there exists a linear operator
\[
\begin{array}{lll}
 \mathrm{D} F(x):& \mathcal{U}&\rightarrow \mathcal{V}\\
                 & h&\mapsto \mathrm{D} F(x)[h]
\end{array}
\] 
such that 
\[ F(x+h)=F(x)+\mathrm{D} F(x)[h]+o(\|h\|).\]
The operator $\mathrm{D} F(x)$ is called the {\it Fr\'{e}chet differential} and $\mathrm{D} F(x)[h]$ is called the {\it directional derivative} of $F$ at $x$ along $h$. 
The derivative satisfies the chain rule
\[\mathrm{D}(f\circ g)(x)[h]=\mathrm{D}f(g(x))[\mathrm{D} g(x)[h]].\]
For a smooth real-valued function $f: \mathcal{U}\rightarrow\mathbb{R}$, the {\it Fr\'{e}chet gradient} of $f$ at $x$, denoted by $ \nabla f(x)$, is the unique element in $\mathcal{U}$
satisfying
\begin{equation*}
\label{frechet}
\langle  \nabla f(x), h \rangle_\mathcal{U}=\mathrm{D}f(x)[h],\quad \forall h\in\mathcal{U}, 
\end{equation*}
where $\langle \cdot ,\cdot \rangle_\mathcal{U}$ is the inner product in $\mathcal{U}$.

In particular, regard $\mathcal{U} = \mathbb{C}^{n\times n}$ as a vector space over $\mathbb{R}$ with the standard inner product $\langle X ,Y \rangle_{\mathbb{C}^{n\times n}}
=\Re(\mbox{tr}(X^*Y))$. Regard $X$ as $(\Re(X),\Im(X))$ and regard $f(X)$ as $f(\Re(X),\Im(X))$. By the multivariate Taylor theorem to the function $f(\Re(X),\Im(X))$, we get
\begin{eqnarray*}
&&\left\lvert f(X+h) - f(X) - \langle  \nabla f(X),h \rangle_{\mathbb{C}^{n\times n}} \right\rvert  = \\ 
&&\left\lvert f\left(\Re(X)+ \Re(h), \Im(X) + \Im(h)) - f(\Re(X) , \Im(X)\right) - \left(\left\langle \frac{\partial f}{\partial \Re(X)}, \Re(h) \right\rangle_{\mathbb{R}^{n\times n}}+ \left\langle \frac{\partial f}{\partial \Im(X)} ,\Im(h) \right\rangle_{\mathbb{R}^{n\times n}} \right) \right\rvert\\
&&=
o(\norm{h}_{\mathbb{C}^{n\times n}}).
\end{eqnarray*}
Notice 
\[\left\langle \frac{\partial f}{\partial \Re(X)}, \Re(h) \right\rangle_{\mathbb{R}^{n\times n}}+ \left\langle \frac{\partial f}{\partial \Im(X)} ,\Im(h) \right\rangle_{\mathbb{R}^{n\times n}}=  \left\langle \frac{\partial f(X)}{\partial \Re_X}+\ci\frac{\partial f(X)}{\partial \Im_X}, h\right\rangle_{\mathbb{C}^{n\times n}}. \]

Thus the expression 
\begin{equation*}
    \nabla f(X) = \frac{\partial f(X)}{\partial \Re_X}+\ci\frac{\partial f(X)}{\partial \Im_X}
\end{equation*}
coincides with the Fr\'{e}chet gradient for $f(X)$ under the real inner product \eqref{real-inner-product}.

\begin{prop}\label{prop:Frechet_gradient_hermitian}
Regard $\mathcal{U} = \mathbb{C}^{n\times n}$ as a vector space over $\mathbb{R}$ with the standard inner product $\langle X ,Y \rangle_{\mathbb{C}^{n\times n}}
=\Re(\mbox{tr}(X^*Y))$ $\mathbb{C}^{n\times n}$.  Let $X \in \mathbb{C}^{n\times n}$.  If $X=X^*$, then $\nabla f(X) = (\nabla f(X))^*$. 
\end{prop}
\begin{proof}
Let $g: \mathbb{C}^{n\times n} \rightarrow \mathbb{C}^{n\times n}: X\mapsto X^*$. Then it is straightforward to verify that 
\[\D g(X)[h] = h^*, \forall h\in\mathbb{C}^{n\times n}.\]
Therefore for any $f: \mathbb{C}^{n\times n} \rightarrow \mathbb{R}$, by chain rule we have $\forall h\in \mathbb{C}^{n\times n}$
\begin{eqnarray*}
     \D (f\circ g)(X)[h] &=& \D f(g(X))[\D g(X) [h]] \\
     &=& \D f(X^*)[h^*]\\
     &=& \ip{\nabla f(X^*), h^*}_{\mathbb{C}^{n\times n}} \\
     &=& \ip{(\nabla f(X^*))^*, h}_{\mathbb{C}^{n\times n}}.
\end{eqnarray*}
Therefore we have 
\begin{equation*}
\nabla (f\circ g)(X) = (\nabla f(X^*))^*. 
\end{equation*}
So by the definition of Fr\'{e}chet derivative of $f\circ g$ at $X \in \mathbb{C}^{n\times n}$, we have the following.
\begin{equation*}
(f\circ g)(X+h) = (f\circ g)(X) + \ip{(\nabla f(X^*))^*, h}_{\mathbb{C}^{n\times n}} + o(\norm{h}), \quad \forall h\in \mathbb{C}^{n\times n}.
\end{equation*}

Let $\mathcal{H}^{n\times n} =\{X\in \mathbb C^{n\times n}: X^*=X\}$. Then $\mathcal{H}^{n\times n}$ is a linear subspace of the vector space $\mathbb{C}^{n\times n}$. Hence we can restrict $f$ to $\mathcal{H}^{n\times n}$ and define its Fr\'echet gradient in $\mathcal{H}^{n\times n}$. Let $\nabla^{\mathcal{H}} f$ denote the Fr\'{e}chet gradient of $f$ in $\mathcal{H}^{n\times n}$.
In particular, consider $X, h \in \mathcal{H}^{n\times n}$, then the above equality turns into
\begin{equation*}
    f(X+h) = f(X) + \ip{(\nabla f(X))^*, h}_{\mathbb{C}^{n\times n}} + o(\norm{h}), \quad \forall h \in \mathcal{H}^{n\times n}.
\end{equation*}
Hence we have
\begin{equation}\label{frechet_hermitian_1}
\forall X\in  \mathcal{H}^{n\times n},    \nabla^{\mathcal{H}}f(X) = (\nabla f(X))^*. 
\end{equation}
On the other hand, by the definition of Fr\'{e}chet derivative of $f$, we have 
\begin{equation*}
    f(X+h) = f(X) + \ip{\nabla f(X), h}_{\mathbb{C}^{n\times n}} + o(\norm{h}), \quad \forall h \in \mathbb{C}^{n\times n}.
\end{equation*}
In particular consider $X, h \in\mathcal{H}^{n\times n}$, then the above equality turns into 
\begin{equation*}
    f(X+h) = f(X) + \ip{\nabla f(X), h}_{\mathbb{C}^{n\times n}} + o(\norm{h}), \quad \forall h \in \mathcal{H}^{n\times n}.
\end{equation*}
This give us 
\begin{equation}\label{frechet_hermitian_2}
    \nabla^{\mathcal{H}}f(X) = \nabla f(X). 
\end{equation}

Combining \eqref{frechet_hermitian_1} and \eqref{frechet_hermitian_2}, we obtain the desired result.

\end{proof}

\begin{prop}
Let $\beta: \mathbb{C}^{n\times p} \rightarrow \mathbb{C}^{n\times n}: Y \mapsto YY^*$ and the the inner product on $\mathbb{C}^{n\times p}$ as the standard inner product $\ip{A,B}_{\mathbb{C}^{n\times p}} = \Re(\mbox{tr}(A^*B))$.  Then the Fr\'{e}chet gradient of $F := f\circ \beta$ satisfies
\begin{equation}\label{eqn:frechet_chain_rule}
    \nabla F(Y) = 2\nabla f(YY^*) Y.
\end{equation}
\end{prop}
\begin{proof}
Indeed, by the chain rule of Fr\'{e}chet derivative we have 
\begin{equation*}
    \D F(Y)[h] = \D f(\beta(Y))\left[ \D \beta(Y) [h]\right],\quad \forall h\in \mathbb{C}^{n\times p}.
\end{equation*}
Hence 
\begin{equation*}
    \ip{\nabla F(Y), h }_{\mathbb{C}^{n\times p}} = \ip{\nabla f(YY^*), \D \beta(Y)[h]}_{\mathbb{C}^{n\times n}}.
\end{equation*}
One can check by definition that $\D \beta(Y)[h] = Yh^* + hY^*$. 
Hence 
\begin{equation*}
    \ip{\nabla F(Y), h }_{\mathbb{C}^{n\times p}}= \ip{\nabla f(YY^*), Yh^* + hY^*}_{\mathbb{C}^{n\times n}} = \ip{2\nabla f(YY^*) Y, h}_{\mathbb{C}^{n \times p}}. 
\end{equation*}
This proves (\ref{eqn:frechet_chain_rule}). 
\end{proof}

\begin{prop}
If $f$ takes the form of $f(X)=\frac12\|\mathcal A(X)-b\|_F^2$  for a linear operator $\mathcal A$, then the Fr\'{e}chet gradient of $f(X)$ is given by 
\begin{equation*}
    \nabla f(X) = \mathcal{A}^*(\mathcal{A}(X)-b),
\end{equation*}
where $\mathcal{A}^*$ is the conjugate operator of $\mathcal{A}$. 
\end{prop}
\begin{proof}
We know by the definition of Fr\'{e}chet gradient that
\begin{equation*}
 \nabla f(X) =\frac{\partial f}{\partial \Re_X}+\mathbbm{i}\frac{\partial f}{\partial \Im_X},
\end{equation*}

Now for $f(X)=\frac{1}{2} \|\mathcal{A}(X)-b\|^2 = \frac{1}{2} \langle \mathcal{A}(X)  - b, \mathcal{A}(X) - b\rangle$,  by the linearity of $\mathcal{A}$, we have
\begin{eqnarray*}
	\nabla f(X) &=& \frac{1}{2} \left. \frac{\partial } {\partial X} \langle \mathcal{A} (X)-b, \mathcal{A}(\Delta ) -b \rangle \right|_{\Delta=X} + \frac{1}{2} \left. \frac{\partial } {\partial \Delta} \langle \mathcal{A} (X)-b, \mathcal{A}(\Delta ) -b \rangle \right|_{\Delta=X} \\  
	&=& \frac{1}{2} \left. \frac{\partial } {\partial X} \langle \mathcal{A} (X)-b, \mathcal{A}(\Delta ) -b \rangle \right|_{\Delta=X} + \frac{1}{2} \left. \frac{\partial } {\partial \Delta} \langle \mathcal{A} (\Delta)-b, \mathcal{A}(X ) -b \rangle \right|_{\Delta=X} \\ 
	&=& \frac{1}{2} \left. \frac{\partial } {\partial X} \langle \mathcal{A} (X)-b, \mathcal{A}(\Delta ) -b \rangle_{\mathbb{C}^{n\times n}} \right|_{\Delta=X} + \frac{1}{2} \left. \frac{\partial } {\partial \Delta} \langle \mathcal{A} (\Delta)-b, \mathcal{A}(X ) -b \rangle_{\mathbb{C}^{n\times n}} \right|_{\Delta=X} \\ 
	&=& \frac{1}{2} \left. \frac{\partial }{\partial X} \langle X, \mathcal{A}^*(\mathcal{A}(\Delta ) - b)\rangle_{\mathbb{C}^{n\times n }} \right|_{\Delta =X} + \frac{1}{2}\left. \frac{\partial }{\partial \Delta} \langle \Delta, \mathcal{A}^*(\mathcal{A}(X) - b) \rangle_{\mathbb{C}^{n\times n}} \right|_{\Delta = X}\\
	&=& \frac{1}{2} \left. \frac{\partial }{\partial X} \left( \langle \Re(X) , \Re(\mathcal{A}^*(\mathcal{A}(\Delta ) - b)) \rangle +  \langle \Im(X) , \Im(\mathcal{A}^*(\mathcal{A}(\Delta ) - b)) \rangle  \right) \right|_{\Delta=X} \\
	&& + \frac{1}{2} \left. \frac{\partial }{\partial \Delta} \left( \langle \Re(\Delta) , \Re(\mathcal{A}^*(\mathcal{A}(X ) - b)) \rangle +  \langle \Im(\Delta) , \Im(\mathcal{A}^*(\mathcal{A}(X ) - b)) \rangle  \right) \right|_{\Delta=X} \\
	&=& \frac{1}{2} \left. \left(\frac{\partial }{\partial \Re(X)}+ \mathbbm{i}\frac{\partial }{\partial \Im(X)} \right) \left( \langle \Re(X) , \Re(\mathcal{A}^*(\mathcal{A}(\Delta ) - b)) \rangle +  \langle \Im(X) , \Im(\mathcal{A}^*(\mathcal{A}(\Delta ) - b)) \rangle  \right) \right|_{\Delta=X} \\
	&& + \frac{1}{2} \left. \left(\frac{\partial }{\partial \Re(\Delta)}+ \mathbbm{i}\frac{\partial }{\partial \Im(\Delta)} \right)\left( \langle \Re(\Delta) , \Re(\mathcal{A}^*(\mathcal{A}(X ) - b)) \rangle +  \langle \Im(\Delta) , \Im(\mathcal{A}^*(\mathcal{A}(X ) - b)) \rangle  \right) \right|_{\Delta=X} \\
	&=& \left. \frac{1}{2}(\Re(\mathcal{A}^*(\mathcal{A}(\Delta ) - b)) + \mathbbm{i} \Im(\mathcal{A}^*(\mathcal{A}(\Delta ) - b))) \right|_{\Delta =X}\\&& + \left. \frac{1}{2}(\Re(\mathcal{A}^*(\mathcal{A}(X ) - b)) + \mathbbm{i} \Im(\mathcal{A}^*(\mathcal{A}(X ) - b))) \right|_{\Delta =X} \\
	&=& \mathcal{A}^*(\mathcal{A}(X)-b). 
\end{eqnarray*}
\end{proof}

\subsection{Fr\'{e}chet Hessian}\label{frechet_hessian}
For a Euclidean space $\mathcal{E}$ and a twice-differentiable, real-valued function $f$ on $\mathcal{E}$, the \textit{Fr\'{e}chet Hessian operator} of $f$ at $x$ is the unique symmetric operator $\nabla^2 f(x): \mathcal{E}\rightarrow\mathcal{E}$ defined by 
\begin{equation*}
   \nabla^2 f(x) [h] = \D (\nabla f)(x)[h]
\end{equation*}
for all $h \in \mathcal{E}$. 

\begin{prop}
Regard $\mathcal{E} = \mathbb{C}^{n\times n}$ as a Euclidean space over $\mathbb{R}$ with the standard inner product $\langle X ,Y \rangle_{\mathbb{C}^{n\times n}}
=\Re(\mbox{tr}(X^*Y))$. If $X=X^*$ and $h = h^*$, then 
\[
\nabla^2f(X)[h] = (\nabla^2f(X)[h])^*.
\]
\end{prop}
\begin{proof}
Let $g: \mathbb{C}^{n\times n} \rightarrow \mathbb{C}^{n\times n}: X\mapsto X^*$. 
Consider the Fr\'{e}chet Hessian of $f\circ g$.  
By the definition of Fr\'{e}chet Hessian we have
\[
\nabla(f\circ g)(X+h) = \nabla (f\circ g)(X) + \nabla^2(f\circ g)(X)[h] + o(\norm{h}^2).
\]
We know from the proof of Proposition \ref{prop:Frechet_gradient_hermitian} that 
\[
\nabla (f\circ g) (X) = (\nabla f(X^*))^*.
\]
Hence 
\[
(\nabla f(X^*+h^*))^* = (\nabla f(X^*))^* + \nabla^2(f\circ g)(X)[h]  + o(\norm{h}^2).
\]

Therefore 
\[
\nabla^2(f\circ g)(X)[h] = (\nabla^2 f(X^*)[h^*])^*. 
\]

Now restrict $f\circ g$ on the subspace $\mathcal{H}^{n\times n}$, we have $f\circ g|_{\mathcal{H}^{n\times n}} = f|_{\mathcal{H}^{n\times n}}$. Hence the Fr\'{e}chet Hessian operator of $f$ on $\mathcal{H}^{n\times n}$ is $(\nabla^2f(X^*)[h^*])^* = (\nabla^2f(X)[h])^*$. On the other hand, the Fr\'{e}chet Hessian operator of $f$ on $\mathcal{H}^{n\times n}$ is denoted as $\nabla^2 f(X) [h]$. Hence if $X, h \in \mathcal{H}^{n\times n}$, we have
\[
\nabla^2 f(X) [h] = (\nabla^2 f(X) [h])^*. 
\]
This proves the proposition.

\end{proof}

\subsection{Taylor's formula}
Let $\mathcal{E}$ be finite-dimensional Euclidean space. Let $f$ be a twice-differentiable real-valued function on an open convex domain $\Omega \subset \mathcal{E}$. Then for all $x$ and $x+h \in \Omega $, 
\begin{equation*}
    f(x+h) = f(x) + \ip{\nabla f(x), h}_\mathcal{E} + \frac{1}{2}\ip{\nabla^2 f(x)[h],h}_\mathcal{E} + O\left(\norm{h}_\mathcal{E}^3 \right).
\end{equation*}

\section{Embedded manifold $\mathcal{H}^{n,p}_+$}\label{appen:embedded}
The geometry of the real case, i.e., $\mathcal{S}^{n,p}_+$ has been explored in  \cite{vandereycken_embedded_2009}. However, it is not straightforward to extend these results directly to the complex case. Although the methods of proofs of the complex case turn out to be similar to the real case, we still need to provide. In this paper, recall that a complex matrix manifold is viewed as a manifold over $\mathbb{R}$ instead of $\mathbb{C}$. One way is to identify a complex matrix with the pair of its real and imaginary part; another way is to identify the matrix with its \textit{realification}.
\begin{defn}[Realification]
The realification is an injective mapping $\mathcal{R}: \mathbb{C}^{n\times n} \rightarrow \mathbb{R}^{2n \times 2n}$ defined by replacing each entry $a_{ij}$ of $A = (a_{ij})_{n\times n}\in \mathbb{C}^{n\times n}$ by the  $2\times 2$ matrix $\bmat{\Re(a_{ij}) & - \Im(a_{ij}) \\ \Im(a_{ij}) & \Re(a_{ij})} $. It can be shown that $\mathcal{R}$ preserves the algebraic structure:
\begin{itemize}
    \item $\mathcal{R}(A+B) = \mathcal{R}(A) + \mathcal{R}(B)$
    \item $\mathcal{R}(AB) = \mathcal{R}(A)\mathcal{R}(B)$
    \item $\mathcal{R}(aA) = a\mathcal{R}(A) \quad \forall a\in \mathbb{R}$
    \item $\mathcal{R}(I) = I $
    \item $\mathcal{R}(A^*) = (\mathcal{R}(A))^T$
\end{itemize}
Hence $A \in \mathbb{C}^{n\times n}$ is invertible if and only if $\mathcal{R}(A)$ is invertible. \footnote{. See for example \url{https://www.maths.tcd.ie/pub/coursework/424/LieGroups.pdf}}
\end{defn}

\begin{lem}\label{lem:semialgebraic}
Let $\Gl(n,\mathbb{C})$ be the general linear group viewed as a real Lie group. Then it is a semialgebraic set.
\end{lem}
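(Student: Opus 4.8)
The plan is to exhibit $\Gl(n,\mathbb{C})$ explicitly as the complement of the zero set of a single polynomial in the $2n^2$ real coordinates that parametrize $\mathbb{C}^{n\times n}$ as a real vector space. Recall that a subset of $\mathbb{R}^N$ is semialgebraic if it is a finite Boolean combination of sets of the form $\{x:p(x)>0\}$ or $\{x:p(x)=0\}$ with $p$ a real polynomial (see \cite[Section 2.1.1]{coste_introduction_2000}). First I would fix the identification $\mathbb{C}^{n\times n}\cong\mathbb{R}^{2n^2}$ sending $A$ to the tuple $\big(\Re(A_{ij}),\Im(A_{ij})\big)_{i,j}$, so that $\Gl(n,\mathbb{C})$, viewed as a real Lie group, becomes a subset of $\mathbb{R}^{2n^2}$, namely $\{A:\det A\neq 0\}$.

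Next I would invoke the realification $\mathcal{R}:\mathbb{C}^{n\times n}\to\mathbb{R}^{2n\times 2n}$ from the preceding definition. By construction each entry of $\mathcal{R}(A)$ is one of the real coordinates $\pm\Re(A_{ij})$ or $\pm\Im(A_{ij})$, so $P(A):=\det(\mathcal{R}(A))$ is (via the Leibniz formula) a polynomial with integer coefficients in the $2n^2$ real coordinates of $A$. Since $\mathcal{R}$ preserves the algebraic structure, $A$ is invertible if and only if $\mathcal{R}(A)$ is invertible, i.e.\ if and only if $P(A)\neq 0$. Hence
\[
\Gl(n,\mathbb{C}) = \{A : P(A)\neq 0\} = \{A : P(A)>0\}\cup\{A : -P(A)>0\},
\]
a finite union of sets cut out by polynomial inequalities, which is therefore semialgebraic. (One can note $P(A)=|\det A|^2\ge 0$, so in fact $\Gl(n,\mathbb{C})=\{P>0\}$, but this refinement is not needed.)

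There is essentially no obstacle here; the only point deserving a line of justification is that $\det\circ\,\mathcal{R}$ is genuinely polynomial in the chosen real coordinates, which is immediate from the entrywise description of $\mathcal{R}$. If one prefers to bypass $\mathcal{R}$, an alternative is to write $\det(A)=p(\Re A,\Im A)+\mathbbm{i}\,q(\Re A,\Im A)$ with $p,q$ real polynomials and observe that $\Gl(n,\mathbb{C})=\{p^2+q^2>0\}$, a single strict polynomial inequality yielding the same conclusion. I would present the realification version for consistency with the surrounding material.
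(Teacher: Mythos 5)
Your proof is correct, and it takes a slightly different route from the paper's. The paper identifies $\Gl(n,\mathbb{C})$ with its \emph{image} under the realification $\mathcal{R}$, i.e.\ as a subset of $\mathbb{R}^{2n\times 2n}$, and then describes that image as the intersection $\{X\in\Gl(2n,\mathbb{R}): XJ=JX\}$ with $J=\mathcal{R}(\mathbbm{i}I)$; this is semialgebraic as the intersection of the open set $\{\det X\neq 0\}$ with the linear subspace cut out by the commutation equations. You instead work directly in the $2n^2$ real coordinates $\bigl(\Re(A_{ij}),\Im(A_{ij})\bigr)$ identifying $\mathbb{C}^{n\times n}\cong\mathbb{R}^{2n^2}$, and exhibit $\Gl(n,\mathbb{C})$ as the nonvanishing set of the single real polynomial $\det\circ\,\mathcal{R}$ (equivalently $|\det A|^2$). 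Both are valid; your version is arguably cleaner in that it avoids the detour through the commutant of $J$ and produces $\Gl(n,\mathbb{C})$ directly as a basic open semialgebraic set in the ambient space $\mathbb{R}^{2n^2}$ that the rest of the paper actually works in, which also makes the subsequent claim that $\Gl(n,\mathbb{C})\times\mathbb{C}^{n\times n}$ is semialgebraic more immediate. What the paper's phrasing buys is a structural description of the image of $\mathcal{R}$ (as $\Gl(2n,\mathbb{R})$ intersected with a subalgebra), which is conceptually informative but not needed for the semialgebraicity conclusion.
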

\begin{proof}
Recall that a subset of $\mathbb{R}^{m}$ is a \textit{semialgebraic} set if it can be obtained by finitely many intersections, union and set differences starting from sets of the from $\{x \in \mathbb{R}^{m}: P(x) >0  \}$ with $P$ a  polynomial on $\mathbb{R}^m$ \cite[Appendix B]{gibson_singular_1979}. Since  $\Gl(n,\mathbb{C})$  is viewed as a real Lie group, $\Gl(n,\mathbb{C})$ is understood as a subset of $\Gl(2n,\mathbb{R})$ through realification. It can be shown that 
\begin{equation*}
    \Gl(n,\mathbb{C}) = \left\{ X\in \Gl(2n,\mathbb{R}): XJ = JX\right\},\quad \text{ with } J = \mathcal{R}(\mathbbm{i}I).
\end{equation*}
We know that $\Gl(2n,\mathbb{R})$ is a semialgebraic set since it is the non-vanishing points of determinant; and $\{ X\in \mathbb{R}^{2n \times 2n}: XJ = JX\}$ is also a semialgebraic set by definition. Hence $\Gl(n,\mathbb{C})$ is a semialgebraic set. 
\end{proof}

\subsection{Riemannian Hessian operator}\label{Appen:riemannian_hessian_embedded}
Let $f$ be a smooth real-valued function on $\mathcal{H}^{n,p}_+$. In this section we derive the Riemannian Hessian operator of $f$. 

By \cite[section 4]{absil_projection-like_2012} we know that the retraction $R$ defined in (\ref{eqn:embedded_retraction_by_projection}) is a second-order retraction.

 Proposition 5.5.5 in \cite{absil_optimization_2008} states that if $R$ is a second-order retraction, then the Riemannian Hessian of $f$ can be computed in the following nice way:
\begin{equation*}
    \Hess f(X) = \Hess (f\circ R_X)(0_X).
\end{equation*}
 Notice that now $f\circ R_X$ is a smooth function defined on a vector space. Hence, we obtain
\begin{eqnarray*}
    g_X\left(\Hess f(X)[\xi_X],\xi_X\right) = \left.\frac{d^2}{dt^2}f(R_X(t\xi_X))\right|_{t=0}.
\end{eqnarray*}
However, it is difficult to obtain a second-order derivative of $f\circ R_X $ using the retraction $R_X$ defined in  (\ref{eqn:embedded_retraction_by_projection}). The references \cite{vandereycken_riemannian_2010} and \cite{vandereycken_low-rank_2013} proposed a method to compute $\Hess f(X)$ by constructing a second-order retraction $R^{(2)}$ that has a second-order series expansion which makes it simple to derive a series expansion of $f\circ R_X^{(2)}$ up to second order and thus obtain the Hessian of $f$. 
We will summarize the derivation below.
\begin{lem}
For any $X\in \mathcal{H}^{n,p}_+$ with  $X^\dagger$ the pseudoinverse, the mapping $R_X^{(2)}: T_X \mathcal{H}^{n,p}_+ \rightarrow \mathcal{H}^{n,p}_+$ given by 
\begin{equation*}
    \xi_X \mapsto w X^\dagger  w^*, \text{ with } w = X + \frac{1}{2}\xi_X^s + \xi_X^p - \frac{1}{8}\xi_X^s X^\dagger \xi_X^s - \frac{1}{2}\xi_X^p X^\dagger\xi_X^s,
\end{equation*}
 is a second-order retraction on $\mathcal{H}^{n,p}_+$, 
where $\xi_X^s = P_X^s(\xi_X)$ and $\xi_X^p = P_X^p(\xi_X)$ as defined in \eqref{eqn:projection_sp}. Moreover we have 
\begin{equation*}
    R_X^{(2)}(\xi_X) = X + \xi_X + \xi_X^pX^\dagger\xi_X^p + O(\norm{\xi_X}^3).
\end{equation*}
\begin{proof}
It follows the same proof of \cite[Proposition 5.10]{vandereycken_riemannian_2010} . 
\end{proof}
\end{lem}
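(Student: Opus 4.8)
The plan is to follow the argument of \cite[Prop.~5.10]{vandereycken_riemannian_2010}: one checks directly that $R^{(2)}_X$ maps a neighbourhood of $0_X$ into $\mathcal{H}^{n,p}_+$, establishes the claimed second-order expansion by an explicit finite computation, and then reads off from that expansion that $R^{(2)}$ is a second-order retraction. Throughout I would write $X = U\Sigma U^*$ for the compact SVD, so that $X^\dagger = U\Sigma^{-1}U^*$ and $XX^\dagger = X^\dagger X = P_U := UU^*$, and use the block descriptions $\xi_X^s = UHU^*$ with $H = H^*$ and $\xi_X^p = U_\perp K U^* + U K^* U_\perp^*$ coming from Theorem~\ref{thm:tangent_space} and \eqref{eqn:projection_sp}; note that $\xi_X^s$ and $\xi_X^p$ are both Hermitian. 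The elementary identities I would use repeatedly are $P_U\xi_X^s = \xi_X^s P_U = \xi_X^s$, $P_U\xi_X^p = U K^* U_\perp^*$, $\xi_X^p P_U = U_\perp K U^*$, and $\xi_X^p X^\dagger\xi_X^p = U_\perp (K\Sigma^{-1}K^*) U_\perp^*$.

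First I would verify that $R^{(2)}_X$ stays on the manifold near the zero section. Writing $(X^\dagger)^{1/2} = U\Sigma^{-1/2}U^*$ and $C := w U \Sigma^{-1/2} U^*$, we have $R^{(2)}_X(\xi_X) = wX^\dagger w^* = CC^*$, which is Hermitian, positive semidefinite, and of rank at most $p$; moreover $w = X + O(\norm{\xi_X})$ gives $C = U\Sigma^{1/2}U^* + O(\norm{\xi_X})$, which has rank $p$ for $\norm{\xi_X}$ small enough, so that $R^{(2)}_X(\xi_X)\in\mathcal{H}^{n,p}_+$. Smoothness of $R^{(2)}$ on $T\mathcal{H}^{n,p}_+$ is immediate, since $X\mapsto X^\dagger$ is smooth on the fixed-rank manifold, $P^s_X$ and $P^p_X$ depend smoothly on $X$, and $wX^\dagger w^*$ is a polynomial in $X^\dagger$, $\xi_X^s$ and $\xi_X^p$.

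The core of the argument is the expansion. Substituting $t\xi_X$ for $\xi_X$, the matrix $w$ becomes $X + tA + t^2 B$ with $A := \tfrac12\xi_X^s + \xi_X^p$ (which is Hermitian) and $B := -\tfrac18\xi_X^s X^\dagger\xi_X^s - \tfrac12\xi_X^p X^\dagger\xi_X^s$, so $wX^\dagger w^*$ is a polynomial in $t$ of degree at most four and it suffices to collect the coefficients of $t^0$, $t^1$, $t^2$. The $t^0$ coefficient is $XX^\dagger X = X$. The $t^1$ coefficient is $XX^\dagger A + A X^\dagger X = P_U A + A P_U$, which by the identities above equals $\xi_X^s + \xi_X^p = \xi_X$. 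The $t^2$ coefficient is $P_U B^* + A X^\dagger A + B P_U$; expanding $A X^\dagger A$ and simplifying $P_U B^*$ and $B P_U$ with $P_U\xi_X^s = \xi_X^s P_U = \xi_X^s$, one finds that the four monomials $\xi_X^s X^\dagger\xi_X^s$, $\xi_X^s X^\dagger\xi_X^p$, $\xi_X^p X^\dagger\xi_X^s$, $\xi_X^p X^\dagger\xi_X^p$ appear with respective coefficients $-\tfrac18+\tfrac14-\tfrac18 = 0$, $-\tfrac12+\tfrac12 = 0$, $\tfrac12-\tfrac12 = 0$, and $1$. Hence $R^{(2)}_X(t\xi_X) = X + t\xi_X + t^2\,\xi_X^p X^\dagger\xi_X^p + O(t^3)$, which is the claimed formula.

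It remains to deduce the retraction properties. From the expansion, $R^{(2)}_X(0_X) = X$ and $\dt{0} R^{(2)}_X(t\xi_X) = \xi_X$, so together with smoothness $R^{(2)}$ is a retraction. For the second-order property, the acceleration at $t = 0$ equals $2\,\xi_X^p X^\dagger\xi_X^p = 2\,U_\perp(K\Sigma^{-1}K^*)U_\perp^*$, which lies in the normal space $N_X\mathcal{H}^{n,p}_+$ by Lemma~\ref{thm:normal_space} (it is the block with $\Omega = 0$, $L = 0$, $M = K\Sigma^{-1}K^*$); therefore $P^t_X\!\left(\left.\tfrac{d^2}{dt^2}\right|_{t=0}R^{(2)}_X(t\xi_X)\right) = 0$, which by the remark identifying the two notions of a second-order retraction (equivalently by \cite[Prop.~5.5.5]{absil_optimization_2008}) is precisely the statement that $R^{(2)}$ is a second-order retraction. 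The only genuinely tedious step is the bookkeeping for the $t^2$ coefficient, where $B$ is not Hermitian and $B$, $B^*$ must be tracked separately; everything else follows at once from the projector identities for $\xi_X^s$ and $\xi_X^p$.
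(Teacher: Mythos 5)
Your proposal is correct and follows essentially the argument the paper defers to (Prop.~5.10 of \cite{vandereycken_riemannian_2010}): a direct check that $R^{(2)}_X$ lands in $\mathcal{H}^{n,p}_+$, a term-by-term Taylor expansion using the projector identities for $\xi_X^s$ and $\xi_X^p$, and the observation that the quadratic term $\xi_X^p X^\dagger \xi_X^p = U_\perp(K\Sigma^{-1}K^*)U_\perp^*$ is a normal vector, so the projected acceleration vanishes. The coefficient bookkeeping for the $t^2$ term ($-\tfrac18+\tfrac14-\tfrac18$, $\tfrac12-\tfrac12$, etc.) checks out, as does the small-perturbation rank argument in the first part (rank $\leq p$ from the factorization, and rank $\geq p$ by lower semicontinuity near $U\Sigma^{1/2}U^*$).
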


From this the Riemannian Hessian operator of $f$ can be computed in essentially the same way as in \cite[Section A.2]{vandereycken_low-rank_2012} but applied to  the general cost function $f(X)$ instead of a least square cost function. Consider the Taylor expansion of $\hat{f}_X^{(2)} := f\circ R_X^{(2)}$, which is a real-valued function on a vector space. We get
\begin{eqnarray*}
    \hat{f}_X^{(2)}(\xi_X) &=& f(R^{(2)}_X(\xi_X)) \\
    &=& f\left(X + \xi_X + \xi_X^pX^\dagger\xi_X^p + O(\norm{\xi_X}^3)\right) \\
    &=& f(X) + \ip{\nabla f(X),\xi_X+ \xi_X^pX^\dagger\xi_X^p}_{\mathbb{C}^{n\times n}} + \frac{1}{2}\ip{\nabla^2f(X)[\xi_X+  \xi_X^pX^\dagger\xi_X^p], \xi_X+ \xi_X^pX^\dagger\xi_X^p}_{\mathbb{C}^{n\times n}} + O(\norm{\xi_X}^3)\\
    &=& f(X) + \ip{\nabla f(X),\xi_X}_{\mathbb{C}^{n\times n}} + \ip{\nabla f(X),\xi_X^pX^\dagger\xi_X^p}_{\mathbb{C}^{n\times n}} +\frac{1}{2}\ip{\nabla^2f(X)[\xi_X],\xi_X}_{\mathbb{C}^{n\times n}}+ O(\norm{\xi_X}^3). 
\end{eqnarray*}
We can immediately recognize the first order term and the second order term that contribute to the Riemannian gradient and Hessian, respectively. That is, 
\begin{eqnarray*}
    &&g_X\left(\grad f(X),\xi_X\right) = \ip{\nabla f(X),\xi_X}_{\mathbb{C}^{n\times n}}, \\
    && g_X\left(\Hess f(X)[\xi_X],\xi_X\right) = \underbrace{2\ip{\nabla f(X),\xi_X^pX^\dagger\xi_X^p}_{\mathbb{C}^{n\times n}}}_{f_1:= \ip{\mathcal{H}_1(\xi_X),\xi_X}_{\mathbb{C}^{n\times n}}} + \underbrace{\ip{\nabla^2 f(X)[\xi_X],\xi_X}_{\mathbb{C}^{n\times n}}}_{f_2:= \ip{\mathcal{H}_2(\xi_X),\xi_X}_{\mathbb{C}^{n\times n}}}.
\end{eqnarray*}
The first equation immediately gives us
\begin{equation*}
    \grad f(X) = P^t_X(\nabla f(X)). 
\end{equation*}

For the second equation, the inner product of the Riemannian Hessian consists of the sum of $f_1$ and $f_2$; and the Riemannian Hessian operator is the sum of two operators $\mathcal{H}_1$ and $\mathcal{H}_2$. Since $\xi_X$ is already separated in $f_2$, the contribution to Riemannian Hessian from $\mathcal{H}_2$ is readily given by 
\begin{equation*}
    \mathcal{H}_2(\xi_X) = P^t_X(\nabla^2 f(X)[\xi_X]). 
\end{equation*}

Now, we still need to separate $\xi_X$ in $f_1$ to see the contribution to Riemannian Hessian from $\mathcal{H}_1$. Since we can choose to bring over $\xi_X^p X^\dagger $ or $X^\dagger \xi_X^p$ to the first position of $\ip{.,.}_{\mathbb{C}^{n\times n}}$, we write $\mathcal{H}_1(\xi_X)$ as the linear combination of both:
\begin{equation*}
    f_1 = 2c \ip{\nabla f(X)(X^\dagger\xi_X^p)^*, \xi_X^p}_{\mathbb{C}^{n\times n}}+  2(1-c) \ip{(\xi_X^p X^\dagger)^*\nabla f(X), \xi_X^p}_{\mathbb{C}^{n\times n}}.
\end{equation*}

Operator $ \mathcal{H}_1$ is clearly linear. Since $\mathcal{H}_1$ is symmetric, we must have $\ip{\mathcal{H}_1(\xi_X),\nu_X}_{\mathbb{C}^{n\times n}} = \ip{\nu_X,\mathcal{H}_1(\xi_X)}_{\mathbb{C}^{n\times n}}$ for all tangent vector $\nu_X$. Hence we must have $c = \frac{1}{2}$ and we obtain
\begin{equation*}
    \mathcal{H}_1(\xi_X) = P^p_X\left(\nabla f(X)(X^\dagger\xi_X^p)^* + (\xi_X^p X^\dagger)^*\nabla f(X)\right). 
\end{equation*}

Putting $\mathcal{H}_1$ and $\mathcal{H}_2$ together, we obtain 
\begin{equation*}
    \Hess f(X) [\xi_X] = P^t_X(\nabla^2 f(X)[\xi_X]) + P^p_X\left(\nabla f(X)(X^\dagger\xi_X^p)^* + (\xi_X^p X^\dagger)^*\nabla f(X)\right).
\end{equation*}

\section{Quotient manifold $\mathbb{C}^{n\times p}_*/\mathcal{O}_p$}\label{appen:quotient}

\subsection{Calculations for the Riemannian Hessian}\label{Appen:riemannian_hessian_quotient}
In this section, we outline the computations of the Riemannian Hessian operators of the cost function $h$ defined on $\mathbb{C}^{n\times p}_*/\mathcal{O}_p$ under the three different metrics $g^i$. 
\begin{defn}\cite[Definition 5.5.1]{absil_optimization_2008}
Given a real-valued function $f$ on a Riemannian manifold $\mathcal{M}$, the Riemannian Hessian of $f$ at a point $x$ in $\mathcal{M}$ is the linear mapping $\Hess f(x)$ of $T_x\mathcal{M}$ into itself defined by 
\begin{equation*}
    \Hess f(x)[\xi_x] = \nabla_{\xi_x}\grad f(x)
\end{equation*}
for all $\xi_x$ in $T_x\mathcal{M}$, where $\nabla$ is the Riemannian connection on $\mathcal{M}$.
\end{defn}

\begin{lem}\label{lemma:horizontal_lift_of_quotient_hessian}
The Riemannian Hessian of $h: \mathbb{C}^{n\times p}_*/\mathcal{O}_p \mapsto \mathbb{R}$ is related to the Riemannian Hessian of $F: \mathbb{C}^{n\times p}_* \mapsto \mathbb{R}$ in the following way:
\begin{equation*}
    \overline{\left( \Hess h(\pi(Y))[\xi_{\pi(Y)}]\right)}_{Y} = P_Y^\mathcal{H}\left( \Hess F(Y)[\overline{\xi}_Y]\right),
\end{equation*}
where $\overline{\xi}_Y$ is the horizontal lift of $\xi_{\pi(Y)}$ at $Y$.
\end{lem}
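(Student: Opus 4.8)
\textbf{Proof proposal for Lemma \ref{lemma:horizontal_lift_of_quotient_hessian}.}

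The plan is to use the standard relation between the Riemannian connection on a Riemannian quotient manifold and the Riemannian connection on its total space, together with the fact that $\grad F(Y)$ is the horizontal lift of $\grad h(\pi(Y))$ (Theorem \ref{thm:lift_gradient_quotient}). First I would recall from \cite[Section 5.3.4]{absil_optimization_2008} the key formula: if $\nabla$ denotes the Levi-Civita connection on $\mathbb{C}^{n\times p}_*$ with the metric $g^i$ and $\bar\nabla$ the one induced on the quotient, then for vector fields $\xi,\eta$ on $\mathbb{C}^{n\times p}_*/\mathcal{O}_p$ with horizontal lifts $\bar\xi,\bar\eta$, one has
\[
\overline{\bar\nabla_{\xi}\eta}_Y = P_Y^\mathcal{H}\left(\nabla_{\bar\xi_Y}\bar\eta\right).
\]
This is the statement that the horizontal part of the total-space connection, applied to horizontal lifts, descends to the connection on the quotient.

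Next I would apply this with $\eta = \grad h$, so that $\bar\eta = \grad F$ by Theorem \ref{thm:lift_gradient_quotient}, and with $\xi = \xi_{\pi(Y)}$. By the definition of the Riemannian Hessian (the definition quoted just above the lemma), $\Hess h(\pi(Y))[\xi_{\pi(Y)}] = \bar\nabla_{\xi_{\pi(Y)}}\grad h$ and $\Hess F(Y)[\bar\xi_Y] = \nabla_{\bar\xi_Y}\grad F$. Substituting into the connection identity immediately yields
\[
\overline{\left(\Hess h(\pi(Y))[\xi_{\pi(Y)}]\right)}_Y = P_Y^\mathcal{H}\!\left(\Hess F(Y)[\bar\xi_Y]\right),
\]
which is the claim. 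One subtlety to address is that the formula for $\bar\nabla$ in terms of $\nabla$ requires that $g^i$ descends to a genuine Riemannian metric on the quotient, which is guaranteed by Lemma \ref{lem:horizontal_lift} (each $g^i$ induces a metric on $\mathbb{C}^{n\times p}_*/\mathcal{O}_p$), and that $\mathbb{C}^{n\times p}_*/\mathcal{O}_p$ is a Riemannian submersion image of $\mathbb{C}^{n\times p}_*$ — both already established.

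The main obstacle, and the only real content beyond quoting \cite{absil_optimization_2008}, is making sure the connection identity is invoked correctly: $P_Y^\mathcal{H}(\nabla_{\bar\xi_Y}\bar\eta)$ must be checked to be well-defined independent of the choice of representative $Y \in [Y]$ and independent of the local extension of $\bar\eta$ to a vector field, which follows from the $\mathcal{O}_p$-equivariance in Lemma \ref{lem:horizontal_lift}. I expect this verification to be routine given the results already in the excerpt, so the proof will be short; the genuinely laborious work is deferred to the subsequent propositions, where one substitutes the explicit expression for $\Hess F(Y)$ under each metric $g^i$ and simplifies.
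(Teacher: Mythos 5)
Your proposal matches the paper's proof: both invoke the quotient-manifold connection formula from Absil--Mahony--Sepulchre (Prop.~5.3.3, in their Section~5.3.4), Theorem~\ref{thm:lift_gradient_quotient} to identify $\grad F$ as the horizontal lift of $\grad h$, and the definition $\Hess h(\pi(Y))[\xi_{\pi(Y)}]=\nabla_{\xi_{\pi(Y)}}\grad h$. The paper compresses this into a single sentence; your added remarks on well-definedness under the $\mathcal{O}_p$-action are already implicit in the cited quotient-manifold framework and do not alter the argument.
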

\begin{proof}
 The result follows from \cite[Proposition 5.3.3]{absil_optimization_2008} and the definition of the Riemannian Hessian.
\end{proof}

\subsubsection{Riemannian Hessian for the metric $g^1$}
Using the Riemannian metric $g^1$, $\mathbb{C}^{n\times p}_*$ is a Riemannian submanifold of a Euclidean space.
By \cite[Proposition 5.3.2]{absil_optimization_2008}, the Riemannian connection on $\mathbb{C}^{n\times p}_*$ is the classical directional derivative
\begin{equation*}
    \nabla_{\eta_{Y}}\xi = \D\xi(Y)[\eta_Y]. 
\end{equation*}
Recall that for $g^1$, $\grad F(Y) = 2\nabla f(YY^*)Y$.  Hence, the Riemannian Hessian of $F$ at $Y$ is given by 
\begin{eqnarray*}
    \Hess F(Y)[\xi_Y] &=& \nabla_{\xi_Y}\grad F \\
    &=& \D\grad F(Y) [\xi_Y] \\
    &=& 2\nabla^2 f(YY^*)[Y\xi_Y^*+\xi_YY^*]Y + 2\nabla f(YY^*)\xi_Y.
\end{eqnarray*}
The last line is by product rule and chain rule of differential. 
Therefore we obtain
\begin{equation*}
    \overline{\left( \Hess h(\pi(Y))[\xi_{\pi(Y)}]\right)}_{Y} = P^{\mathcal{H}^1}_Y \left(2\nabla^2 f(YY^*)[Y\overline{\xi}_Y^*+\overline{\xi}_YY^*]Y + 2\nabla f(YY^*)\overline{\xi}_Y \right).
\end{equation*}

\subsubsection{Riemannian Hessian under metric $g^2$}
First, for any Riemannian metric $g$, $g$ satisfies the Koszul formula
\begin{eqnarray*}
    2g_x(\nabla_{\xi_x}\lambda, \eta_x) &=& \xi_x g(\lambda,\eta) + \lambda_x g(\eta,\xi) -\eta_xg(\xi,\lambda) \\ 
    &&- g_x(\xi_x,[\lambda,\eta]_x)+ g_x(\lambda_x,[\eta,\xi]_x)+g_x(\eta,[\xi,\lambda]_x) \\
    &=& \D g(\lambda,\eta)(x)[\xi_x] + \D g(\eta,\xi)(x)[\lambda_x] - \D g(\xi,\lambda)(x)[\eta_x] \\
    && -  g_x(\xi_x,[\lambda,\eta]_x)+ g_x(\lambda_x,[\eta,\xi]_x)+g_x(\eta,[\xi,\lambda]_x),
\end{eqnarray*}
where the {\it Lie bracket} $[\cdot ,\cdot ]$ is defined in \cite{absil_optimization_2008}.

In particular, for $g^2$ the above Koszul formula turns into
\begin{eqnarray*}
    2g^2_Y(\nabla_{\xi_Y}\lambda, \eta_Y) &=& \D g^2(\lambda,\eta)(Y)[\xi_Y] + \D g^2(\eta,\xi)(Y)[\lambda_Y] - \D g^2(\xi,\lambda)(Y)[\eta_Y] \\
    && -  g^2_Y(\xi_Y,[\lambda,\eta]_Y)+ g^2_Y(\lambda_Y,[\eta,\xi]_Y)+g^2_Y(\eta,[\xi,\lambda]_Y).
\end{eqnarray*}

Recall that $g^2(\lambda,\eta)(Y) = \Re(tr(Y^*Y\lambda_Y^*\eta_Y))$. Hence, the first term in the above sum equals
\begin{equation*}
    \D g^2(\lambda,\eta)(Y)[\xi_Y] = g_Y^2(\D \lambda(Y)[\xi_Y],\eta_Y) + g_Y^2(\lambda_Y,\D\eta(Y)[\xi_Y]) + \Re(tr(\xi_Y^*Y\lambda_Y^*\eta_Y))+\Re(tr(Y^*\xi_Y\lambda_Y^*\eta_Y)).
\end{equation*}

Following \cite[Section  5.3.4]{absil_optimization_2008}, 
since $\mathbb{C}^{n\times p}_*$ is an open subset of $\mathbb{C}^{n\times p}$, we also have
\begin{equation*}
    [\lambda, \eta]_Y  = \D \eta(Y)[\lambda_Y] - \D \lambda(Y)[\eta_Y]. 
\end{equation*}

Summarizing, we get  
\begin{eqnarray*}
    2g^2_Y(\nabla_{\xi_Y}\lambda, \eta_Y) &=& \D g^2(\lambda,\eta)(Y)[\xi_Y] + \D g^2(\eta,\xi)(Y)[\lambda_Y] - \D g^2(\xi,\lambda)(Y)[\eta_Y] \\
    && -g^2(\xi_Y, \D\eta(Y)[\lambda_Y]- \D \lambda(Y)[\eta_Y]) \\
    && + g^2(\lambda_Y, \D\xi(Y)[\eta_Y]- \D\eta(Y)[\xi_Y])\\
    && +g^2(\eta_Y, \D\lambda(Y)[\xi_Y]- \D\xi(Y)[\lambda_Y])\\
    &=& 2g_Y^2(\eta_Y,\D\lambda(Y)[\xi_Y]) \\
    && + \Re(tr(\eta_Y^*(\lambda_Y(\xi_Y^*Y+Y^*\xi_Y)+\xi_Y(Y^*\lambda_Y+\lambda_Y^*Y)-Y\lambda_Y^*\xi_Y-Y\xi_Y^*\lambda_Y))) \\
    &=& 2g_Y^2(\eta_Y,\D\lambda(Y)[\xi_Y])  \\
    && + g_Y^2(\eta_Y,(\lambda_Y(\xi_Y^*Y+Y^*\xi_Y)+\xi_Y(Y^*\lambda_Y+\lambda_Y^*Y)-Y\lambda_Y^*\xi_Y-Y\xi_Y^*\lambda_Y)(Y^*Y)^{-1}).
\end{eqnarray*}
We therefore obtain a closed-form expression for Riemannian connection on $\mathbb{C}^{n\times p}_*$ for $g^2$:
\begin{equation*}
    \nabla_{\xi_Y}\lambda = \D\lambda(Y)[\xi_Y] + \frac{1}{2}\left(\lambda_Y(\xi_Y^*Y+Y^*\xi_Y)+\xi_Y(Y^*\lambda_Y+\lambda_Y^*Y)-Y\lambda_Y^*\xi_Y-Y\xi_Y^*\lambda_Y\right)(Y^*Y)^{-1}.
\end{equation*}
Recall that for the Riemannian metric $g^2$, we have $\grad F(Y) = 2\nabla f(YY^*)Y(Y^*Y)^{-1}$. Hence we have 
\begin{eqnarray*}
\Hess F(Y)[\xi_Y] &=&  \nabla_{\xi_Y}\grad F\\
&=& \D_Y \grad F(Y) [\xi_Y] \\ && + \frac{1}{2}\{ \grad F(Y) (\xi_Y^*Y+Y^*\xi_Y)+\xi_Y(Y^* \grad F(Y) + \grad F(Y) ^*Y)-\\&& Y \grad F(Y) ^*\xi_Y -Y\xi_Y^* \grad F(Y) \}(Y^*Y)^{-1}   \\
&=& 2\nabla^2f(YY^*)[Y\xi_Y^*+\xi_YY^*]Y(Y^*Y)^{-1} + 2\nabla f(YY^*)\xi_Y(Y^*Y)^{-1} \\
&& - 2\nabla f(YY^*)Y(Y^*Y)^{-1}(Y^*\xi_Y+\xi_Y^*Y)(Y^*Y)^{-1} \\
&&+ \nabla f(YY^*)Y(Y^*Y)^{-1}(Y^*\xi_Y+\xi_Y^*Y)(Y^*Y)^{-1} \\
&&+\xi_Y\{Y^*\nabla f(YY^*)Y(Y^*Y)^{-1}+(Y^*Y)^{-1}Y^*\nabla f(YY^*)Y\}(Y^*Y)^{-1}\\
&&-\{Y(Y^*Y)^{-1}Y^*\nabla f(YY^*)\xi_Y + Y\xi_Y^*\nabla f(YY^*)Y(Y^*Y)^{-1}\}(Y^*Y)^{-1} \\
&=& 2\nabla^2f(YY^*)[Y\xi_Y^*+\xi_YY^*]Y(Y^*Y)^{-1} + 2\nabla f(YY^*)\xi_Y(Y^*Y)^{-1} \\ 
&& - \nabla f(YY^*)Y(Y^*Y)^{-1}(Y^*\xi_Y+\xi_Y^*Y)(Y^*Y)^{-1}\\
&& + \xi_Y\{Y^*\nabla f(YY^*)Y(Y^*Y)^{-1}+(Y^*Y)^{-1}Y^*\nabla f(YY^*)Y \}(Y^*Y)^{-1}\\
&&-\{Y(Y^*Y)^{-1}Y^*\nabla f(YY^*)\xi_Y+Y\xi_Y^*\nabla f(YY^*)Y(Y^*Y)^{-1} \}(Y^*Y)^{-1} \\
&=& 2\nabla^2f(YY^*)[Y\xi_Y^*+\xi_YY^*]Y(Y^*Y)^{-1} + 2\nabla f(YY^*)\xi_Y(Y^*Y)^{-1} \\ 
&& -\nabla f(YY^*)P_Y\xi_Y(Y^*Y)^{-1}-\nabla f(YY^*)Y(Y^*Y)^{-1}\xi_Y^*Y(Y^*Y)^{-1}\\
&&+\xi_YY^*\nabla f(YY^*)Y(Y^*Y)^{-2}+\xi_Y(Y^*Y)^{-1}Y^*\nabla f(YY^*)Y(Y^*Y)^{-1}\\
&&-P_Y\nabla f(YY^*)\xi_Y(Y^*Y)^{-1}-Y\xi_Y^*\nabla f(YY^*)Y(Y^*Y)^{-2}\\
&=& 2\nabla^2f(YY^*)[Y\xi_Y^*+\xi_YY^*]Y(Y^*Y)^{-1} \\
&&+\nabla f(YY^*)\xi_Y(Y^*Y)^{-1}-\nabla f(YY^*)P_Y\xi_Y(Y^*Y)^{-1}\\
&&+\nabla f(YY^*)\xi_Y(Y^*Y)^{-1} - P_Y\nabla f(YY^*)\xi_Y(Y^*Y)^{-1}\\
&& +2skew(\xi_YY^*)\nabla f(YY^*)Y(Y^*Y)^{-2}\\
&&+2skew\{\xi_Y(Y^*Y)^{-1}Y^*\nabla f(YY^*)\}Y(Y^*Y)^{-1} \\
&=& 2\nabla^2f(YY^*)[Y\xi_Y^*+\xi_YY^*]Y(Y^*Y)^{-1}  \\
&& + \nabla f(YY^*)P_Y^\perp\xi_Y(Y^*Y)^{-1}+P_Y^\perp \nabla f(YY^*)\xi_Y(Y^*Y)^{-1}\\
&& +2skew(\xi_YY^*)\nabla f(YY^*)Y(Y^*Y)^{-2}\\
&&+2skew\{\xi_Y(Y^*Y)^{-1}Y^*\nabla f(YY^*)\}Y(Y^*Y)^{-1}.
\end{eqnarray*}
To conclude, we obtain
\begin{eqnarray*}
    \overline{\left( \Hess h(\pi(Y))[\eta_{\pi(Y)}]\right)}_{Y} &=& P^{\mathcal{H}^2}_Y \left\{2\nabla^2f(YY^*)[Y\overline{\xi}_Y^*+\overline{\xi}_YY^*]Y(Y^*Y)^{-1} \right. \\
    &&+ \nabla f(YY^*)P_Y^\perp\overline{\xi}_Y(Y^*Y)^{-1}+P_Y^\perp \nabla f(YY^*)\overline{\xi}_Y(Y^*Y)^{-1}  \\
    &&  +2skew(\overline{\xi}_YY^*)\nabla f(YY^*)Y(Y^*Y)^{-2}\\
    &&+ \left. 2skew\{\overline{\xi}_Y(Y^*Y)^{-1}Y^*\nabla f(YY^*)\}Y(Y^*Y)^{-1} \right\}.
\end{eqnarray*}

\subsubsection{Riemannian Hessian under metric  $g^3$}

Recall that the Riemannian metric $g^3$  on $\mathbb{C}^{n\times p}_*$ satisfies 
\begin{eqnarray*}
    g^3_Y(\xi_Y,\eta_Y) &=& \tilde{g}_Y(\xi_Y,\eta_Y)+ g^2_Y(P_Y^\mathcal{V}(\xi_Y),P_Y^\mathcal{V}(\eta_Y)) \\
    &=& 2\Re( tr(Y^*\xi_YY^*\eta_Y+Y^*Y\xi_Y^*\eta_Y) )+ \Re (tr(YP_Y^\mathcal{V}(\xi_Y)^*P_Y^\mathcal{V}(\eta_Y)Y^*))
\end{eqnarray*}
where 
\begin{equation*}
    \tilde{g}_Y(\xi_Y,\eta_Y) = \ip{Y\xi_Y^*+\xi_YY^*,Y\eta_Y^*+\eta_YY^*}_{\mathbb{C}^{n\times n}}.
\end{equation*}
\begin{equation*}
    P_Y^\mathcal{V}(\lambda_Y) = Y skew((Y^*Y)^{-1}Y^*\lambda_Y).
\end{equation*}

Hence 
\begin{eqnarray*}
    && \D g^3(\lambda,\eta)(Y)[\xi_Y]\\
    &=& \tilde{g}_Y(\D\lambda(Y)[\xi_Y],\eta_Y) + \tilde{g}(\lambda_Y,D\eta(Y)[\xi_Y]) + 2\Re(tr(\xi_Y^*\lambda_Y Y^*\eta_Y +Y^*\lambda_Y\xi_Y^*\eta_Y + \xi_Y^*Y\lambda_Y^*\eta_Y + Y^*\xi_Y\lambda_Y^*\eta_Y)) \\
    && + g^2_Y(P_Y^\mathcal{V}(\lambda_Y),DP_Y^\mathcal{V}(\eta_Y)[\xi_Y]) + g^2(\D P_Y^\mathcal{V}(\lambda_Y)[\xi_Y],P_Y^\mathcal{V}(\eta_Y)) \\
    &&+ \Re(tr(\xi_Y P_Y^\mathcal{V}(\lambda_Y)^*P_Y^\mathcal{V}(\eta_Y)Y^*+YP_Y^\mathcal{V}(\lambda_Y)^*P_Y^\mathcal{V}(\eta_Y)\xi_Y^*)). 
\end{eqnarray*}

Suppose $\lambda,\eta$ and $\xi$ are horizontal vector fields, then many terms in the above equation vanish:
\begin{eqnarray*}
    \D g^3(\lambda,\eta)(Y)[\xi_Y] &=& \tilde{g}_Y(\D\lambda(Y)[\xi_Y],\eta_Y) + \tilde{g}_Y(\lambda_Y,\D\eta_Y[\xi_Y]) \\
    && + 2\Re(tr(\xi_Y^*\lambda_Y Y^*\eta_Y  +Y^*\lambda_Y\xi_Y^*\eta_Y + \xi_Y^*Y\lambda_Y^*\eta_Y + Y^*\xi_Y\lambda_Y^*\eta_Y)).
\end{eqnarray*}

Combining the above equation and the Koszul formul with $\xi, \eta, \lambda$ horizontal vector fields, we obtain
\begin{eqnarray*}
    &&2g^3_Y(\nabla_{\xi_Y}\lambda,\eta_Y) \\
    &=& \D g^3(\lambda,\eta)(Y)[\xi_Y] + \D g^3(\eta,\xi)(Y)[\lambda_Y] - \D g^3(\xi,\lambda)(Y)[\eta_Y] \\
    && - g^3_Y(\xi_Y,\D\eta(Y)[\lambda_Y]-\D\lambda(Y)[\eta_Y])  \\
    && + g^3_Y(\lambda_Y,\D\xi(Y)[\eta_Y] - \D\eta(Y)[\xi_Y])  \\
    && + g^3_Y(\eta_Y,\D\lambda(Y)[\xi_Y]- \D\xi(Y)[\lambda_Y])\\
    &=&\tilde{g}_Y(\D\lambda(Y)[\xi_Y],\eta_Y) + \tilde{g}_Y(\lambda_Y,\D\eta(Y)[\xi_Y]) + 2\Re(tr(\xi_Y^*\lambda_Y Y^*\eta_Y+ Y^*\lambda_Y \xi_Y^*\eta_Y + \xi_Y^*Y\lambda_Y^*\eta_Y + Y^*\xi_Y \lambda_Y^* \eta_Y))\\ 
    &&+ \tilde{g}_Y(\D\eta(Y)[\lambda_Y],\xi_Y) + \tilde{g}_Y(\eta_Y,\D\xi(Y)[\lambda_Y]) + 2\Re(tr(\lambda_Y^*\eta_Y Y^*\xi_Y+ Y^*\eta_Y \lambda_Y^*\xi_Y + \lambda_Y^*Y\eta_Y^*\xi_Y + Y^*\lambda_Y \eta_Y^* \xi_Y))\\ 
    &&- \tilde{g}_Y(\D\xi(Y)[\eta_Y],\lambda_Y)- \tilde{g}_Y(\xi_Y,\D\lambda(Y)[\eta_Y]) - 2\Re(tr(\eta_Y^*\xi_Y Y^*\lambda_Y+ Y^*\xi_Y \eta_Y^*\lambda_Y + \eta_Y^*Y\xi_Y^*\lambda_Y + Y^*\eta_Y \xi_Y^* \lambda_Y))\\ 
    &&- \tilde{g}_Y(\xi_Y,\D\eta(Y)[\lambda_Y]) + \tilde{g}_Y(\xi_Y,\D\lambda(Y)[\eta_Y])\\
    &&+\tilde{g}_Y(\lambda_Y,\D\xi(Y)[\eta_Y]) - \tilde{g}_Y(\lambda_Y,\D\eta(Y)[\xi_Y])\\
    &&+\tilde{g}_Y(\eta_Y,\D\lambda(Y)[\xi_Y]) - \tilde{g}_Y(\eta_Y,\D\xi(Y)[\lambda_Y]) \\
    &=& 2\tilde{g}_Y(\D\lambda(Y)[\xi_Y],\eta_Y) + 4\Re(tr(Y^*\xi_Y\lambda_Y^*\eta_Y + Y^*\lambda_Y \xi_Y^*\eta_Y)).
\end{eqnarray*}

It follows that 
\begin{equation*}
    g^3_Y(\nabla_{\xi_Y}\lambda, \eta_Y) = \tilde{g}_Y(\D\lambda(Y)[\xi_Y],\eta_Y) + 2\Re(tr(Y^*\xi_Y \lambda_Y^* \eta_Y + Y^*\lambda_Y \xi^* \eta_Y )).
\end{equation*}

By definition, we have $\Hess F(Y)[\xi_Y] = \nabla_{\xi_Y}\grad F$. By Lemma (\ref{lemma:horizontal_lift_of_quotient_hessian}), it suffices to assume that $\xi_Y$ is a horizontal vector in order to obtain the Hessian operator of $h$.  Therefore,
\begin{eqnarray*}
    &&g^3_Y(\Hess F(Y)[\xi_Y],\eta_Y)\\
    &=& g^3_Y(\nabla_{\xi_Y}\grad F, \eta_Y) \\
    &=& \tilde{g}(\eta_Y, \D\grad F(Y)[\xi_Y])+2\Re(tr(Y^*\xi_Y\grad F(Y)^*\eta_Y + Y^*\grad F(Y)\xi_Y^*\eta_Y)) \\
    &=&\tilde{g}(\eta_Y, \D\grad F(Y)[\xi_Y]) + \Re(tr((Y\eta_Y^*+\eta_YY^*)(\grad F(Y)\xi_Y^*+\xi_Y\grad F(Y)^*))) \\
    &=& \tilde{g}(\eta_Y, \D\grad F(Y)[\xi_Y]) \\
    && + \tilde{g}\left(\eta_Y,\left(I-\frac{1}{2}P_Y\right)(\grad F(Y)\xi_Y^*+\xi_Y\grad F(Y)^*)Y(Y^*Y)^{-1}\right).
\end{eqnarray*}

Recall that for Riemannian metric $g^3$, we have $\grad F(Y) = \left(I - \frac{1}{2}P_Y \right) \nabla f(YY^*)Y(Y^*Y)^{-1}$. Hence 
\begin{eqnarray*}
\D \grad F(Y) [\xi_Y] 
    &=& \left(I-\frac{1}{2}P_Y\right)\nabla^2 f(YY^*)[Y\xi_Y^*+\xi_YY^*]Y(Y^*Y)^{-1}  \\
    && - \frac{1}{2}(\D(P_Y)[\xi_Y])\nabla f(YY^*)Y(Y^*Y)^{-1}\\
    &&+\left(I-\frac{1}{2}P_Y\right)\nabla f(YY^*)\D(Y(Y^*Y)^{-1})[\xi_Y],
\end{eqnarray*}
where we have 
\begin{eqnarray*}
    \D(P_Y)[\xi_Y] &=& \D(Y(Y^*Y)^{-1}Y^*)[\xi_Y] \\
    &=&\xi_Y(Y^*Y)^{-1}Y^* -Y(Y^*Y)^{-1}(\xi_Y^*Y+Y^*\xi_Y)(Y^*Y)^{-1}Y^*+Y(Y^*Y)^{-1}\xi_Y^*
\end{eqnarray*}
and 
\begin{eqnarray*}
    \D(Y(Y^*Y)^{-1})[\xi_Y] &=& \xi_Y(Y^*Y)^{-1} - Y(Y^*Y)^{-1}(\xi_Y^*Y+Y^*\xi_Y)(Y^*Y)^{-1}. 
\end{eqnarray*}

Combining these equations we have 
\begin{eqnarray*}
    &&g^3_Y(\Hess F(Y)[\xi_Y],\eta_Y) \\
    &=& \tilde{g}\left(\eta_Y,\left(I-\frac{1}{2}P_Y\right)\nabla^2f(YY^*)[Y\xi_Y^*+\xi_YY^*]Y(Y^*Y)^{-1}\right) \\
    && -\tilde{g}\left(\eta_Y,\frac{1}{2}(\xi_Y(Y^*Y)^{-1}Y^*-Y(Y^*Y)^{-1}(\xi_Y^*Y+Y^*\xi_Y)(Y^*Y)^{-1}Y^*+Y(Y^*Y)^{-1}\xi_Y^*)\nabla f(YY^*)Y(Y^*Y)^{-1}\right) \\
    && + \tilde{g}\left(\eta_Y,\left(I-\frac{1}{2}P_Y\right)\nabla f(YY^*)\left(\xi_Y(Y^*Y)^{-1}-Y(Y^*Y)^{-1}(\xi_Y^*Y+Y^*\xi_Y)(Y^*Y)^{-1}\right)\right)\\
    &&+\tilde{g}\left(\eta_Y,\left(I-\frac{1}{2}P_Y\right)\left(\left(I-\frac{1}{2}P_Y\right)\nabla f(YY^*)Y(Y^*Y)^{-1}\xi_Y^* \right.\right.\\
    && +\left.\left.\xi_Y(Y^*Y)^{-1}Y^*\nabla f(YY^*)\left(I-\frac{1}{2}P_Y\right)\right) Y(Y^*Y)^{-1}\right) \\
    &=&\tilde{g}\left(\eta_Y,\left(I-\frac{1}{2}P_Y\right)\nabla^2f(YY^*)[Y\xi_Y^*+\xi_YY^*]Y(Y^*Y)^{-1}\right) \\
    && -\tilde{g}\left(\eta_Y,\frac{1}{2}(\xi_Y(Y^*Y)^{-1}Y^*-Y(Y^*Y)^{-1}(\xi_Y^*Y+Y^*\xi_Y)(Y^*Y)^{-1}Y^*+Y(Y^*Y)^{-1}\xi_Y^*)\nabla f(YY^*)Y(Y^*Y)^{-1}\right) \\
    && + \tilde{g}\left(\eta_Y,\left(I-\frac{1}{2}P_Y\right)\nabla f(YY^*)\left(\xi_Y(Y^*Y)^{-1}-Y(Y^*Y)^{-1}(\xi_Y^*Y+Y^*\xi_Y)(Y^*Y)^{-1}\right)\right)\\
    && + \tilde{g}\left(\eta_Y,\left(I-\frac{3}{4}P_Y\right)\nabla f(YY^*)Y(Y^*Y)^{-1}\xi_Y^*Y(Y^*Y)^{-1}\right)\\
    && + \tilde{g}\left(\eta_Y,\frac{1}{2}\left(I-\frac{1}{2}P_Y\right)\xi_Y(Y^*Y)^{-1}Y^*\nabla f(YY^*)Y(Y^*Y)^{-1}\right) \\
    &=& \tilde{g}\left(\eta_Y,\left(I-\frac{1}{2}P_Y\right)\nabla^2f(YY^*)[Y\xi_Y^*+\xi_YY^*]Y(Y^*Y)^{-1}\right) \\
    &&  - \tilde{g}\left(\eta_Y,\frac{1}{2}\xi_Y(Y^*Y)^{-1}Y^*\nabla f(YY^*)Y(Y^*Y)^{-1}\right)\\
    && - \tilde{g}\left(\eta_Y,\frac{1}{2}Y(Y^*Y)^{-1}\xi_Y^*\nabla f(YY^*)Y(Y^*Y)^{-1}\right)\\
    && +  \tilde{g}\left(\eta_Y,\frac{1}{2}Y(Y^*Y)^{-1}\xi_Y^*P_Y\nabla f(YY^*)Y(Y^*Y)^{-1}\right)\\
    && + \tilde{g}\left(\eta_Y,\frac{1}{2}P_Y\xi_Y(Y^*Y)^{-1}Y^*\nabla f(YY^*)Y(Y^*Y)^{-1}\right)\\ 
    && + \tilde{g}\left(\eta_Y, \left(I-\frac{1}{2}P_Y\right)\nabla f(YY^*)\left((I-P_Y)\xi_Y(Y^*Y)^{-1}-Y(Y^*Y)^{-1}\xi_Y^*Y(Y^*Y)^{-1}\right)\right)\\
    && + \tilde{g}\left(\eta_Y,\left(I-\frac{1}{2}P_Y\right)\nabla f(YY^*)Y(Y^*Y)^{-1}\xi_Y^*Y(Y^*Y)^{-1}-\frac{1}{4}P_Y\nabla f(YY^*)Y(Y^*Y)^{-1}\xi_Y^*Y(Y^*Y)^{-1}\right)\\
    && + \tilde{g}\left(\eta_Y,\frac{1}{2}\left(I-P_Y\right)\xi_YY(Y^*Y)^{-1}Y^*\nabla f(YY^*)Y(Y^*Y)^{-1}+\frac{1}{4}P_Y\xi_Y(Y^*Y)^{-1}Y^*\nabla f(YY^*)Y(Y^*Y)^{-1}\right) \\
    &=& \tilde{g}\left(\eta_Y,\left(I-\frac{1}{2}P_Y\right)\nabla^2f(YY^*)[Y\xi_Y^*+\xi_YY^*]Y(Y^*Y)^{-1}\right) \\
    &&+ \tilde{g}\left(\eta_Y,(I-P_Y)\nabla f(YY^*)(I-P_Y)\xi_Y(Y^*Y)^{-1}\right) \\
    &&+ \tilde{g}\left(\eta_Y,\frac{1}{2}Yskew\left((Y^*Y)^{-1}Y\xi_Y(Y^*Y)^{-1}Y^*\nabla f(YY^*)Y(Y^*Y)^{-1}\right)\right)\\
    &&+\tilde{g}\left(\eta_Y,Yskew\left((Y^*Y)^{-1}Y^*\nabla f(YY^*)(I-P_Y)\xi_Y(Y^*Y)^{-1}\right)\right) \\
    &=&\tilde{g}\left(\eta_Y,\left(I-\frac{1}{2}P_Y\right)\nabla^2f(YY^*)[Y\xi_Y^*+\xi_YY^*]Y(Y^*Y)^{-1}\right) \\
    && + \tilde{g}\left(\eta_Y,(I-P_Y)\nabla f(YY^*)(I-P_Y)\xi_Y(Y^*Y)^{-1}\right) \\
    &=& g^3_Y\left(\eta_Y,\left(I-\frac{1}{2}P_Y\right)\nabla^2f(YY^*)[Y\xi_Y^*+\xi_YY^*]Y(Y^*Y)^{-1}+(I-P_Y)\nabla f(YY^*)(I-P_Y)\xi_Y(Y^*Y)^{-1}\right)
\end{eqnarray*}

Hence for $\xi_Y \in \mathcal{H}_Y$, we have 
\begin{eqnarray*}
     \Hess F(Y) [\xi_Y] &=& \left(I-\frac{1}{2}P_Y\right)\nabla^2f(YY^*)[Y\xi_Y^*+\xi_YY^*]Y(Y^*Y)^{-1}\\
    && + (I-P_Y)\nabla f(YY^*)(I-P_Y)\xi_Y(Y^*Y)^{-1}
\end{eqnarray*}
To summarize, we obtain 
\begin{eqnarray*}
    \overline{\left( \Hess h(\pi(Y))[\eta_{\pi(Y)}]\right)}_{ Y} &=& P^{\mathcal{H}^3}_Y(\Hess F(Y)[\overline{\xi}_Y]) \\
    &=& \left(I-\frac{1}{2}P_Y\right)\nabla^2f(YY^*)[Y\overline{\xi}_Y^*+\overline{\xi}_YY^*]Y(Y^*Y)^{-1}\\
    && + (I-P_Y)\nabla f(YY^*)(I-P_Y)\overline{\xi}_Y(Y^*Y)^{-1}.
\end{eqnarray*}

\section{Proof of lemmas}\label{sec-appendix-lemma}
\subsection{Proof of Lemma \ref{lemma: rayleigh_quotient_second_order_term}}
\begin{proof}
 It is straightforward to see $C^3_{\pi(Y)} = D^3_{\pi(Y)} = 1$ by the definition of $g^3$. 
 
 For metric 2, write $\overline{\xi}_Y = YS + Y_\perp K$ for some $S = S^* \in \mathbb{C}^{p \times p}$ and $K \in \mathbb{C}^{n\times p}$. We have 
 \[
    \frac{\norm{Y\overline{\xi}_Y^*+\overline{\xi}_YY^*}^2_F}{g^2_Y(\overline{\xi}_Y,\overline{\xi}_Y)} = 2 + \frac{2\norm{YSY^*}^2_F}{\norm{YSY^*}^2_F + \norm{KY^*}^2_F}. 
\]
Hence it is easy to see $C^2_{\pi(Y)} = 2$ when $S$ is zero matrix and $D^2_{\pi(Y)} =4$ when $YSY^*$ is nonzero and $K$ is zero matrix. 

For metric 1, by its horizontal space, we can write $\overline{\xi}_Y = Y(Y^*Y)^{-1}S + Y_\perp K$ for some $S = S^* \in \mathbb{C}^{p \times p}$ and $K \in \mathbb{C}^{n\times p}$. 
Notice that the SVD of $Y$ can be given as $Y=U\Sigma^{\frac12} V^*$ where $V$ is unitary. Let $\bar S=V^*SV$ and $\bar K=KV$, and $\bar K_i$ be the $i$th column of $\bar K$,
then
\begin{eqnarray}
     \frac{\norm{Y\overline{\xi}_Y^*+\overline{\xi}_YY^*}^2_F}{g^1_Y(\overline{\xi}_Y,\overline{\xi}_Y)} \notag &=& \frac{\norm{Y((Y^*Y)^{-1}S + S(Y^*Y)^{-1})Y^*}_F^2+ 2\norm{KY^*}^2_F}{\norm{Y(Y^*Y)^{-1}S}^2_F + \norm{K}^2_F} \\ \notag
    &=& \frac{\norm{\Sigma^{-\frac{1}{2}} \bar S \Sigma^{\frac{1}{2}} + \Sigma^{\frac{1}{2}} \bar S \Sigma^{-\frac{1}{2}}}_F^2 + 2\norm{\bar K\Sigma^{\frac{1}{2}}}_F^2}{\norm{\Sigma^{-\frac{1}{2}}\bar S}_F^2 + \norm{\bar K}_F^2} \\\notag
    &=&\frac{\sum\limits_{i,j=1}^p \left( \frac{{\sigma_i}}{{\sigma_j}}+ \frac{{\sigma_j}}{{\sigma_i}} + 2 \right) \abs{\bar S_{ij}}^2 + 2 \sum\limits_{i=1}^p \sigma_i \norm{\bar K_i}_F^2}{\sum\limits_{i,j =1}^p \frac{\abs{\bar S_{ij}}^2}{\sigma_i} + \sum\limits_{i=1}^p \norm{\bar K_i}_F^2}\\
    &=&\label{eqn:rayleigh_quotient_easy_quotient1} \frac{2\sum\limits_{i,j=1}^p  \frac{{\sigma_j}}{{\sigma_i}}\abs{\bar S_{ij}}^2 + 2\sum\limits_{i,j=1}^p  \abs{\bar S_{ij}}^2 + 2 \sum\limits_{i=1}^p \sigma_i \norm{K_i}_F^2}{\sum\limits_{i,j =1}^p \frac{\abs{\bar S_{ij}}^2}{\sigma_i} + \sum\limits_{i=1}^p \norm{\bar K_i}_F^2},
\end{eqnarray} 
where symmetry $\bar S^*=\bar S$ is used in the last step. 
The lower bound is given by
\begin{eqnarray*}
\frac{2\sum\limits_{i,j=1}^p  \frac{{\sigma_j}}{{\sigma_i}}\abs{\bar S_{ij}}^2 + 2\sum\limits_{i,j=1}^p  \abs{\bar S_{ij}}^2 + 2 \sum\limits_{i=1}^p \sigma_i \norm{\bar K_i}_F^2}{\sum\limits_{i,j =1}^p \frac{\abs{\bar S_{ij}}^2}{\sigma_i} + \sum\limits_{i=1}^p \norm{\bar K_i}_F^2} &\geq& \frac{2\left(\frac{\sigma_p}{\sigma_1}+1\right)\sum\limits_{i,j=1}^p \abs{\bar S_{ij}}^2+2\sigma_p \sum\limits_{i=1}^p\norm{\bar K_i}^2_F}{\frac{1}{\sigma_p}\sum\limits_{i,j=1}^p \abs{\bar S_{ij}}^2 + \sum\limits_{i=1}^p\norm{\bar K_i}^2_F} \\
&=& \frac{2\left(\frac{\sigma_p^2}{\sigma_1}+\sigma_p\right)\sum\limits_{i,j=1}^p \abs{\bar S_{ij}}^2+2\sigma_p^2 \sum\limits_{i=1}^p\norm{\bar K_i}^2_F}{\sum\limits_{i,j=1}^p \abs{\bar S_{ij}}^2 + \sigma_p \sum\limits_{i=1}^p\norm{\bar K_i}^2_F} \\
&\geq& 2 \sigma_p. 
\end{eqnarray*}
This lower bound is sharp as one can choose $S = 0$ and $K$ with $\norm{\bar K_p}_F = 1$ and $\norm{\bar K_i}_F = 0 $  for $i < p$.

We have the upper bound as follows.
\begin{eqnarray*}
\frac{2\sum\limits_{i,j=1}^p  \frac{{\sigma_j}}{{\sigma_i}}\abs{\bar S_{ij}}^2 + 2\sum\limits_{i,j=1}^p  \abs{\bar S_{ij}}^2 + 2 \sum\limits_{i=1}^p \sigma_i \norm{\bar K_i}_F^2}{\sum\limits_{i,j =1}^p \frac{\abs{\bar S_{ij}}^2}{\sigma_i} + \sum\limits_{i=1}^p \norm{\bar K_i}_F^2} &\leq& \frac{2\left(\frac{\sigma_1}{\sigma_p}+1\right)\sum\limits_{i,j=1}^p \abs{\bar S_{ij}}^2+2\sigma_1 \sum\limits_{i=1}^p\norm{\bar K_i}^2_F}{\frac{1}{\sigma_1}\sum\limits_{i,j=1}^p \abs{\bar S_{ij}}^2 + \sum\limits_{i=1}^p\norm{\bar K_i}^2_F} \\
&=& \frac{2\left(\frac{\sigma_1^2}{\sigma_p}+\sigma_1\right)\sum\limits_{i,j=1}^p \abs{\bar S_{ij}}^2+2\sigma_1^2 \sum\limits_{i=1}^p\norm{\bar K_i}^2_F}{\sum\limits_{i,j=1}^p \abs{\bar S_{ij}}^2 + \sigma_1 \sum\limits_{i=1}^p\norm{\bar K_i}^2_F} \\
&\leq& 2\left(\frac{\sigma_1^2}{\sigma_p}+\sigma_1\right), 
\end{eqnarray*}
where the last inequality is obtained by investigating the range of the rational function $f(x,y) = \frac{ax+by}{x+dy}$ with $a = 2\left(\frac{\sigma_1^2}{\sigma_p}+\sigma_1\right), b = 2\sigma_1^2$  and $d = \sigma_1$ on $\{(x,y) | x\geq 0, y\geq 0, xy \neq 0 \}$.  

This upper bound $2\left(\frac{\sigma_1^2}{\sigma_p}+\sigma_1\right)$ may not be the supremum as the inequalities are not sharp. However, we can show that $D^1_{\pi(Y)} \geq 2\sigma_1$. To see this, choose $\bar S = 0$ and $K$ with $\norm{\bar K_1}_F = 1$ and $\norm{\bar K_i}_F = 0$ for $i>1$. Then \eqref{eqn:rayleigh_quotient_easy_quotient1} reaches the value $2\sigma_1$. Hence the supremum must be at least $2\sigma_1$. So we have
\begin{equation}
    2\sigma_1 \leq D^1_{\pi(Y)} \leq 2\left(\frac{\sigma_1^2}{\sigma_p}+\sigma_1\right). 
\end{equation}
\end{proof}

\subsection{Proof of Lemma \ref{lemma: rayleigh_quotient_first_order_term}}
\begin{proof}
We will use the inequality $\|B^* A^*\|_F=\|AB\|_F\leq \|A\| \|B\|_F\leq \|A\|_F \|B\|_F$ for two matrices where $\|A\|$ is the spectral norm. In particular, if $X$ is Hermitian, then we also have
$\|AX\|_F=\|XA^*\|_F\leq \|X\| \|A^*\|_F=\|X\| \|A\|_F.$

For the embedded manifold,  recall that  $\xi_X^s = P_X^s(\xi_X)$ and $\xi_X^p = P_X^p(\xi_X)$ and $P^t_X $ and $P^p_X$ are defined in \eqref{eqn:projection_sp}, and
the bound for the FOT is given by
\begin{eqnarray*}
    &&\frac{\abs{g_X\left(P^p_X\left(\nabla f(X)(X^\dagger\zeta_X^p)^* + (\zeta_X^p X^\dagger)^*\nabla f(X)\right), \zeta_X\right)}}{g_X(\zeta_X,\zeta_X)} =   \frac{\abs{\ip{P_X^p\left(\nabla f(X) \zeta_X^p X^\dagger + X^\dagger \zeta_X^p \nabla f(X) \right),\zeta_X}_{\mathbb{C}^{n\times n}}}}{\ip{\zeta_X,\zeta_X}_{\mathbb{C}^{n\times n}}}\\
    &\leq &\frac{\abs{\ip{P_X^p\left(\nabla f(X) \zeta_X^p X^\dagger  \right),\zeta_X}_{\mathbb{C}^{n\times n}}}}{\ip{\zeta_X,\zeta_X}_{\mathbb{C}^{n\times n}}}+\frac{\abs{\ip{P_X^p\left( X^\dagger \zeta_X^p \nabla f(X) \right),\zeta_X}_{\mathbb{C}^{n\times n}}}}{\ip{\zeta_X,\zeta_X}_{\mathbb{C}^{n\times n}}}
    \\
    &\leq &   2\frac{\|\nabla f(X) \zeta_X^p X^\dagger\|_F \|\zeta_X\|_F}{\ip{\zeta_X,\zeta_X}_{\mathbb{C}^{n\times n}}}\leq 2\frac{\|\nabla f(X)\| \| \zeta_X^p X^\dagger\|_F \|\zeta_X\|_F}{\ip{\zeta_X,\zeta_X}_{\mathbb{C}^{n\times n}}}
    \leq 2\frac{\|\nabla f(X)\| 
\| X^\dagger\| \| \zeta_X^p\|_F \|\zeta_X\|_F}{\ip{\zeta_X,\zeta_X}_{\mathbb{C}^{n\times n}}}
    \\
    &\leq& \frac{2\norm{\nabla f(X)} \norm{X^\dagger}\norm{\zeta_X}^2_F}{\ip{\zeta_X,\zeta_X}_{\mathbb{C}^{n\times n}}} = 2\norm{\nabla f(X)} \norm{X^\dagger}
   =\frac{2}{\sigma_p} \norm{\nabla f(X)}. 
\end{eqnarray*}

For the quotient manifold with $g^1$, the FOT is bounded by
\begin{eqnarray*}
\frac{\abs{g^1_Y(2\nabla f(YY^*)\overline{\xi}_Y ,\overline{\xi}_Y)}}{g^1_Y(\overline{\xi}_Y,\overline{\xi}_Y)} &=&  \frac{\abs{\ip{2\nabla f(YY^*)\overline{\xi}_Y ,\overline{\xi}_Y}_{\mathbb{C}^{n\times p}}}}{\ip{\overline{\xi}_Y,\overline{\xi}_Y}_{\mathbb{C}^{n\times p}}}\leq \frac{2\norm{\nabla f(YY^*)\overline{\xi}_Y}_F\norm{\overline{\xi}_Y}_F}{\ip{\overline{\xi}_Y,\overline{\xi}_Y}_{\mathbb{C}^{n\times p}}}  \\
&\leq& \frac{2\norm{\nabla f(YY^*)}\norm{\overline{\xi}_Y}_F^2}{\ip{\overline{\xi}_Y,\overline{\xi}_Y}_{\mathbb{C}^{n\times p}}}= 2 \norm{\nabla f(YY^*)}.
\end{eqnarray*}

For the quotient manifold with $g^2$, the FOTs are 
\begin{eqnarray}
\text{FOTs} &=& \frac{\ip{\nabla f(YY^*)P_Y^\perp \overline{\xi}_Y, \overline{\xi}_Y}_{\mathbb{C}^{n\times p}}}{g^2_Y(\overline{\xi}_Y,\overline{\xi}_Y)} + \frac{\ip{ P_Y^\perp \nabla f(YY^*) \overline{\xi}_Y, \overline{\xi}_Y}_{\mathbb{C}^{n\times p}}}{g^2_Y(\overline{\xi}_Y,\overline{\xi}_Y)}  \\ 
&& + \frac{\ip{Y\overline{\xi}^*_Y\overline{\xi}_Y ,2 \nabla f(YY^*)Y(Y^*Y)^{-1}}_{\mathbb{C}^{n\times p}}}{g_Y^2(\overline{\xi}_Y, \overline{\xi}_Y)} \\
&& - \frac{\ip{ \overline{\xi}_YY^*\overline{\xi}_Y,2 \nabla f(YY^*)Y(Y^*Y)^{-1}}_{\mathbb{C}^{n\times p}}}{g_Y^2(\overline{\xi}_Y, \overline{\xi}_Y)}.
\end{eqnarray}
We can estimate each term separately.
Notice that the SVD of $Y$ can be given as $Y=U\Sigma^{\frac12} V^*$ where $V$ is unitary. Let $\bar S=V^*SV$ and $\bar K=KV$, and $\bar K_i$ be the $i$th column of $\bar K$. For the first summand we have
\begin{eqnarray*}
\frac{\abs{\ip{\nabla f(YY^*)P_Y^\perp \overline{\xi}_Y, \overline{\xi}_Y}_{\mathbb{C}^{n\times p}}}}{g^2_Y(\overline{\xi}_Y,\overline{\xi}_Y)}  &=& \frac{\abs{\ip{\nabla f(YY^*)P_Y^\perp \overline{\xi}_Y, \overline{\xi}_Y}_{\mathbb{C}^{n\times p}}}}{\ip{\overline{\xi}_YY^*,\overline{\xi}_YY^*}_{\mathbb{C}^{n\times n}}} \\
&\leq& \frac{\norm{\nabla f(YY^*)}\norm{\overline{\xi}_Y}_F^2}{\ip{\overline{\xi}_YY^*,\overline{\xi}_YY^*}_{\mathbb{C}^{n\times n}}}. \\
&=& \frac{\norm{YS}^2_F+ \norm{K}^2_F}{\norm{YSY^*}^2_F+ \norm{KY^*}^2_F}\norm{\nabla f(YY^*)}\\
&\leq& \left(\frac{\norm{YS}^2_F}{\norm{YSY^*}^2_F}+ \frac{\norm{K}^2_F}{\norm{KY^*}^2_F} \right) \norm{\nabla f(YY^*)} \\
&=& \left(\frac{\norm{\sqrt{\Sigma}\bar S}^2_F}{\norm{\sqrt{\Sigma}\bar S\sqrt{\Sigma}}^2_F}+ \frac{\norm{\bar K}^2_F}{\norm{\bar K\sqrt{\Sigma}}^2_F} \right) \norm{\nabla f(YY^*)} \\
&\leq& \frac{2}{\sigma_p} \norm{\nabla f(YY^*)}. 
\end{eqnarray*}
Similarly we  have the bounds for the second term:
\begin{equation*}
    \frac{\abs{\ip{P_Y^\perp\nabla f(YY^*) \overline{\xi}_Y, \overline{\xi}_Y}_{\mathbb{C}^{n\times p}}}}{g^2_Y(\overline{\xi}_Y,\overline{\xi}_Y)}  \leq  \frac{2}{\sigma_p} \norm{\nabla f(YY^*)}. 
\end{equation*}

For the third term, with the fact $\|A^*A\|_F=\|A\|_F^2$, we have 
\begin{eqnarray*}
\frac{\abs{\ip{Y\overline{\xi}^*_Y\overline{\xi}_Y ,2 \nabla f(YY^*)Y(Y^*Y)^{-1}}_{\mathbb{C}^{n\times p}}}}{g_Y^2(\overline{\xi}_Y, \overline{\xi}_Y)} &=& \frac{\abs{\ip{Y\overline{\xi}_Y^*\overline{\xi}_YY^*,2\nabla f(YY^*)Y(Y^*Y)^{-2}Y^*}_{\mathbb{C}^{n\times n}}}}{g^2_Y(\overline{\xi}_Y,\overline{\xi}_Y)} \\
&\leq& \frac{\norm{Y\overline{\xi}_Y^*\overline{\xi}_YY^*}_F \norm{2\nabla f(YY^*) Y(Y^*Y)^{-2}Y^*}_F}{g^2_Y(\overline{\xi}_Y,\overline{\xi}_Y)} \\
&\leq &  \frac{\norm{\overline{\xi}_Y Y^*}_F^2\norm{2\nabla f(YY^*)}\norm{Y(Y^*Y)^{-2}Y^*}_F}{g^2_Y(\overline{\xi}_Y,\overline{\xi}_Y)} \\
&=& 2\norm{Y(Y^*Y)^{-2}Y^*}_F \norm{\nabla f(YY^*)} \\
&\leq& \frac{2\sqrt{p}}{\sigma_p} \norm{\nabla f(YY^*)}.
\end{eqnarray*}
Similarly we can bound the fourth term:
\[
\frac{\abs{\ip{ \overline{\xi}_YY^*\overline{\xi}_Y,2 \nabla f(YY^*)Y(Y^*Y)^{-1}}}_{\mathbb{C}^{n\times p}}}{g_Y^2(\overline{\xi}_Y, \overline{\xi}_Y)} \leq  \frac{2\sqrt{p}}{\sigma_p} \norm{\nabla f(YY^*)}.
\]

Thus, for the quotient manifold with $g^2$  we have
\[
\abs{\text{FOTs}} \leq \frac{4(\sqrt{p}+1)}{\sigma_p} \norm{\nabla f(YY^*)}.
\]

For the quotient manifold with $g^3$, recall that $P_Y^\perp=I-P_Y=I-Y(Y^*Y)^{-1}Y^*$, 
with the property \eqref{metric-3-horizontal} and the fact $(I-P_Y)^*Y=0$,
the FOT can be bounded as follows: 
\begin{eqnarray*}
    \abs{\text{FOT}} &=& \frac{\abs{g^3_Y((I-P_Y)\nabla f(YY^*)(I-P_Y)\overline{\xi}_Y(Y^*Y)^{-1},\overline{\xi}_Y)}}{g^3_Y(\overline{\xi}_Y,\overline{\xi}_Y)} \\
    &=& \frac{2\abs{\ip{P_Y^\perp \nabla f(YY^*)P_Y^\perp \overline{\xi}_Y, \overline{\xi}_Y}_{\mathbb{C}^{n\times p}}}}{g^3_Y(\overline{\xi}_Y, \overline{\xi}_Y)} \\
    &=& \frac{2\abs{\ip{
    \nabla f(YY^*)Y_\perp K,Y_\perp K}_{\mathbb{C}^{n\times p}}}}{g^3_Y(\overline{\xi}_Y, \overline{\xi}_Y)} \\
    &=& \frac{2\abs{\ip{
    \nabla f(YY^*)Y_\perp K,Y_\perp K}_{\mathbb{C}^{n\times p}}}}{\norm{Y\overline{\xi}_Y^*+ \overline{\xi}_YY^*}^2_F} \\
    &=& \frac{2\abs{\ip{
    \nabla f(YY^*)Y_\perp K,Y_\perp K}_{\mathbb{C}^{n\times p}}}}{\norm{2YSY^*+Y_\perp KY^* + YK^*Y_\perp^*}^2_F}\\
    &=& \frac{2\abs{\ip{
    \nabla f(YY^*)Y_\perp K,Y_\perp K}_{\mathbb{C}^{n\times p}}}}{\norm{2YSY^*}_F^2+\norm{Y_\perp KY^*}_F^2 + \norm{YK^*Y_\perp^*}^2_F}\\
    &=& \frac{\abs{\ip{
    \nabla f(YY^*)Y_\perp K,Y_\perp K}_{\mathbb{C}^{n\times p}}}}{2\norm{YSY^*}_F^2+\norm{Y_\perp KY^*}_F^2}\\
    &\leq& \frac{\abs{\ip{
    \nabla f(YY^*)Y_\perp K,Y_\perp K}_{\mathbb{C}^{n\times p}}}}{\norm{Y_\perp KY^*}_F^2}\\
    &\leq& \frac{\norm{Y_\perp K}^2_F}{\norm{Y_\perp K Y^*}^2_F} \norm{\nabla f(YY^*)} \\ 
    &\leq& \frac{1}{\sigma_p}\norm{\nabla f(YY^*)}.
\end{eqnarray*}
\end{proof}

\end{appendices}

\bibliographystyle{unsrt}  
\bibliography{references}

\begin{thebibliography}{10}

\bibitem{massart_quotient_2020}
Estelle Massart and P.-A. Absil.
\newblock Quotient {Geometry} with {Simple} {Geodesics} for the {Manifold} of {Fixed}-{Rank} {Positive}-{Semidefinite} {Matrices}.
\newblock {\em SIAM Journal on Matrix Analysis and Applications}, 41(1):171--198, January 2020.

\bibitem{vandereycken_riemannian_2013}
B.~Vandereycken, P.-A. Absil, and S.~Vandewalle.
\newblock A {Riemannian} geometry with complete geodesics for the set of positive semidefinite matrices of fixed rank.
\newblock {\em IMA Journal of Numerical Analysis}, 33(2):481--514, April 2013.

\bibitem{bonnabel2010adaptive}
Silv{\`e}re Bonnabel, Gilles Meyer, and Rodolphe Sepulchre.
\newblock Adaptive filtering for estimation of a low-rank positive semidefinite matrix.
\newblock In {\em Proceedings of the 19th International Symposium on Mathematical Theory of Networks and Systems}, 2010.

\bibitem{vandereycken_riemannian_2010}
Bart Vandereycken and Stefan Vandewalle.
\newblock A {Riemannian} {Optimization} {Approach} for {Computing} {Low}-{Rank} {Solutions} of {Lyapunov} {Equations}.
\newblock {\em SIAM Journal on Matrix Analysis and Applications}, 31(5):2553--2579, January 2010.

\bibitem{candes2013phaselift}
Emmanuel~J Candes, Thomas Strohmer, and Vladislav Voroninski.
\newblock Phaselift: Exact and stable signal recovery from magnitude measurements via convex programming.
\newblock {\em Communications on Pure and Applied Mathematics}, 66(8):1241--1274, 2013.

\bibitem{candes2015phase}
Emmanuel~J Candes, Xiaodong Li, and Mahdi Soltanolkotabi.
\newblock Phase retrieval via wirtinger flow: Theory and algorithms.
\newblock {\em IEEE Transactions on Information Theory}, 61(4):1985--2007, 2015.

\bibitem{jugnon2013interferometric}
Vincent Jugnon and Laurent Demanet.
\newblock Interferometric inversion: a robust approach to linear inverse problems.
\newblock In {\em 2013 SEG Annual Meeting}. OnePetro, 2013.

\bibitem{demanet2017convex}
Laurent Demanet and Vincent Jugnon.
\newblock Convex recovery from interferometric measurements.
\newblock {\em IEEE Transactions on Computational Imaging}, 3(2):282--295, 2017.

\bibitem{burer_local_2005}
Samuel Burer and Renato~D.C. Monteiro.
\newblock Local {Minima} and {Convergence} in {Low}-{Rank} {Semidefinite} {Programming}.
\newblock {\em Mathematical Programming}, 103(3):427--444, July 2005.

\bibitem{massart2019curvature}
Estelle Massart, Julien~M Hendrickx, and P-A Absil.
\newblock {Curvature of the manifold of fixed-rank positive-semidefinite matrices endowed with the Bures--Wasserstein metric}.
\newblock In {\em Geometric Science of Information: 4th International Conference, GSI 2019, Toulouse, France, August 27--29, 2019, Proceedings}, pages 739--748. Springer, 2019.

\bibitem{vandereycken_low-rank_2013}
Bart Vandereycken.
\newblock Low-{Rank} {Matrix} {Completion} by {Riemannian} {Optimization}.
\newblock {\em SIAM Journal on Optimization}, 23(2):1214--1236, January 2013.

\bibitem{kressner_low-rank_2014}
Daniel Kressner, Michael Steinlechner, and Bart Vandereycken.
\newblock Low-rank tensor completion by {Riemannian} optimization.
\newblock {\em BIT Numerical Mathematics}, 54(2):447--468, June 2014.

\bibitem{vandereycken_embedded_2009}
Bart Vandereycken, P.-A. Absil, and Stefan Vandewalle.
\newblock Embedded geometry of the set of symmetric positive semidefinite matrices of fixed rank.
\newblock In {\em 2009 {IEEE}/{SP} 15th {Workshop} on {Statistical} {Signal} {Processing}}, pages 389--392, Cardiff, United Kingdom, August 2009. IEEE.

\bibitem{huang_solving_2017}
Wen Huang, K.~A. Gallivan, and Xiangxiong Zhang.
\newblock Solving {PhaseLift} by {Low}-{Rank} {Riemannian} {Optimization} {Methods} for {Complex} {Semidefinite} {Constraints}.
\newblock {\em SIAM Journal on Scientific Computing}, 39(5):B840--B859, January 2017.

\bibitem{boumal2020deterministic}
Nicolas Boumal, Vladislav Voroninski, and Afonso~S Bandeira.
\newblock {Deterministic Guarantees for Burer-Monteiro Factorizations of Smooth Semidefinite Programs}.
\newblock {\em Communications on Pure and Applied Mathematics}, 73(3):581--608, 2020.

\bibitem{bonnabel_riemannian_2010}
Silvère Bonnabel and Rodolphe Sepulchre.
\newblock Riemannian {Metric} and {Geometric} {Mean} for {Positive} {Semidefinite} {Matrices} of {Fixed} {Rank}.
\newblock {\em SIAM Journal on Matrix Analysis and Applications}, 31(3):1055--1070, January 2010.
\newblock Publisher: Society for Industrial and Applied Mathematics.

\bibitem{journee_low-rank_2010}
M.~Journée, F.~Bach, P.-A. Absil, and R.~Sepulchre.
\newblock Low-{Rank} {Optimization} on the {Cone} of {Positive} {Semidefinite} {Matrices}.
\newblock {\em SIAM Journal on Optimization}, 20(5):2327--2351, January 2010.

\bibitem{oostrum2022bures}
Jesse~van Oostrum.
\newblock {Bures--Wasserstein geometry for positive-definite Hermitian matrices and their trace-one subset}.
\newblock {\em Information Geometry}, 5(2):405--425, 2022.

\bibitem{bhatia2019bures}
Rajendra Bhatia, Tanvi Jain, and Yongdo Lim.
\newblock {On the Bures--Wasserstein distance between positive definite matrices}.
\newblock {\em Expositiones Mathematicae}, 37(2):165--191, 2019.

\bibitem{han2021riemannian}
Andi Han, Bamdev Mishra, Pratik~Kumar Jawanpuria, and Junbin Gao.
\newblock {On Riemannian optimization over positive definite matrices with the Bures-Wasserstein geometry}.
\newblock {\em Advances in Neural Information Processing Systems}, 34:8940--8953, 2021.

\bibitem{huang2013optimization}
Wen Huang.
\newblock {\em {Optimization algorithms on Riemannian manifolds with applications}}.
\newblock PhD thesis, The Florida State University, 2013.

\bibitem{absil_geometric_2009}
P.-A. Absil, M.~Ishteva, L.~De~Lathauwer, and S.~Van~Huffel.
\newblock A geometric {Newton} method for {Oja}'s vector field.
\newblock {\em Neural Computation}, 21(5):1415--1433, May 2009.
\newblock arXiv: 0804.0989.

\bibitem{mishra2014riemannian}
Bamdev Mishra.
\newblock {\em {A Riemannian approach to large-scale constrained least-squares with symmetries}}.
\newblock PhD thesis, Universite de Liege, Liege, Belgique, 2014.

\bibitem{helmke_critical_1995}
Uwe Helmke and Mark~A. Shayman.
\newblock Critical points of matrix least squares distance functions.
\newblock {\em Linear Algebra and its Applications}, 215:1--19, January 1995.

\bibitem{helmke_optimization_2012}
Uwe Helmke and John~B. Moore.
\newblock {\em Optimization and {Dynamical} {Systems}}.
\newblock Springer Science \& Business Media, December 2012.
\newblock Google-Books-ID: yR7vBwAAQBAJ.

\bibitem{coste_introduction_2000}
Michel Coste.
\newblock {AN} {INTRODUCTION} {TO} {SEMIALGEBRAIC} {GEOMETRY}.
\newblock {\em Citeseer}, page~26, 2000.

\bibitem{gibson_singular_1979}
Christopher~G. Gibson.
\newblock {\em Singular points of smooth mappings}.
\newblock Number~25 in Research notes in mathematics. Pitman, London ; San Francisco, 1979.

\bibitem{lee_introduction_2012}
John~M. Lee.
\newblock {\em Introduction to {Smooth} {Manifolds}}, volume 218 of {\em Graduate {Texts} in {Mathematics}}.
\newblock Springer New York, New York, NY, 2012.

\bibitem{absil_optimization_2008}
P.-A. Absil, R.~Mahony, and R.~Sepulchre.
\newblock {\em Optimization algorithms on matrix manifolds}.
\newblock Princeton University Press, Princeton, N.J. ; Woodstock, 2008.
\newblock OCLC: ocn174129993.

\bibitem{absil_projection-like_2012}
P.-A. Absil and Jérôme Malick.
\newblock Projection-like {Retractions} on {Matrix} {Manifolds}.
\newblock {\em SIAM Journal on Optimization}, 22(1):135--158, January 2012.

\bibitem{HAG2017VT}
W.~Huang, P.-A. Absil, and K.~A. Gallivan.
\newblock Intrinsic representation of tangent vectors and vector transport on matrix manifolds.
\newblock {\em Numerische Mathematik}, 136(2):523--543, 2017.

\bibitem{vandereycken_low-rank_2012}
Bart Vandereycken.
\newblock Low-rank matrix completion by {Riemannian} optimization---extended version.
\newblock {\em arXiv:1209.3834 [math]}, September 2012.
\newblock arXiv: 1209.3834.

\bibitem{brickell_differentiable_1970}
F.~Brickell and R.~S. Clark.
\newblock {\em Differentiable {Manifolds}: an introduction}.
\newblock Van Nostrand Reinhold, 1970.
\newblock Google-Books-ID: 25EJNQEACAAJ.

\bibitem{huang_broyden_2015}
Wen Huang, K.~A. Gallivan, and P.-A. Absil.
\newblock A {Broyden} {Class} of {Quasi}-{Newton} {Methods} for {Riemannian} {Optimization}.
\newblock {\em SIAM Journal on Optimization}, 25(3):1660--1685, January 2015.

\end{thebibliography}

\end{document}